\documentclass[11pt]{amsart}
\usepackage[utf8]{inputenc}
\usepackage{csquotes}
\usepackage[foot]{amsaddr}

\usepackage[dvips]{graphics,epsfig}
\usepackage{times}
\usepackage{amsmath,amsthm,amssymb}
\usepackage{array}
\usepackage{bbm}
\usepackage{xcolor}
\usepackage{verbatim}
\usepackage{url}
\pagestyle{plain}

\def\BB{\mathcal{B}}

\def\NN{\mathcal{N}}

%%%%  jb abbreviations
\def\bK{{\bf K}}

\def\bU{{\bf U}}
\def\aut{\mathop{aut}}

\def\Uscr{\mathcal U}

\def\abar{{\bf a}}
\def\bbar{\bf b}
\def\cbar{\bf c}

\def\nmdeg{\mathop{\rm nmdeg}}
\def\icl{\mathop{\rm icl}}
% \sidebar{..} puts .. in comment form

 %%%%

\def\nmdeg{\operatorname{nmdeg}}
\newcommand{\Fmb}[1]{F_{Mb}\left( #1  \right)}

\newcommand{\fcb}[1]{F_{cb}\left( #1  \right)}
\newcommand{\ficb}[1]{F_{Icb}\left( #1  \right)}

\DeclareMathOperator{\tp}{tp}
\DeclareMathOperator{\acl}{acl}

\newtheorem{theorem}{Theorem}[section]
\newtheorem{thm}[theorem]{Theorem}
\newtheorem{question}[theorem]{Question}
\newtheorem{lemma}[theorem]{Lemma}
\newtheorem{cor}[theorem]{Corollary}
\newtheorem{prop}[theorem]{Proposition}
\newtheorem{fact}[theorem]{Fact}
\newtheorem{claim}[theorem]{Claim}
\newtheorem{subclaim}[theorem]{Subclaim}
\newtheorem{conj}[theorem]{Conjecture}

\newtheorem{observation}[theorem]{Observation}
\newtheorem{notation}[theorem]{Notation}

\theoremstyle{definition}
\newtheorem{definition}[theorem]{Definition}
\newtheorem{example}[theorem]{Example}

\theoremstyle{remark}
\newtheorem{remark}[theorem]{Remark}
\newcommand{\m}{\mathbb }
\newcommand{\mc}{\mathcal }

\newcommand{\mb}{F_{Mb} }

\newcommand{\U}{\mathrm{U} }

\def\Ind{\setbox0=\hbox{$x$}\kern\wd0\hbox to 0pt{\hss$\mid$\hss} \lower.9\ht0\hbox to 0pt{\hss$\smile$\hss}\kern\wd0}
\def\Notind{\setbox0=\hbox{$x$}\kern\wd0\hbox to 0pt{\mathchardef \nn=12854\hss$\nn$\kern1.4\wd0\hss}\hbox to 0pt{\hss$\mid$\hss}\lower.9\ht0 \hbox to 0pt{\hss$\smile$\hss}\kern\wd0}
\def\ind{\mathop{\mathpalette\Ind{}}}
\def\nind{\mathop{\mathpalette\Notind{}}}
\numberwithin{equation}{section}

\title{Simple Homogeneous Structures and Indiscernible Sequence Invariants}
\author[1]{John T. Baldwin}
\author[2]{James Freitag}
\author[3]{Scott Mutchnik}
%\affil[1]{Department of Mathematics, Statistics, and Computer Science, University of Illinois at Chicago}
%%%\affil[2]{Department of Biology, University Y}
%%\author{ John T. Baldwin}
%\author{James Freitag} 
\address{Department of Mathematics, Statistics, and Computer Science, University of Illinois at Chicago }
\email{jbaldwin@uic.edu}
\email{jfreitag@uic.edu}

%\author{Scott Mutchnik}
%\address{Department of Mathematics, Statistics, and Computer Science, University of Illinois at Chicago}
\email{mutchnik@uic.edu}

\thanks{JB was partially supported by Simons travel grant G3535. JF was partially supported by NSF CAREER award number 1945251. SM, who is additionally affiliated with the Institut de mathématiques de Jussieu – Paris Rive Gauche, was supported by the NSF under Grant No. DMS-2303034, and by the European Union’s Horizon 2020 research and innovation programme under the Marie Sklodowska-Curie grant agreement No 101034255.}

\date{\today}
\begin{document}

\begin{abstract}
We introduce new properties describing dependence in indiscernible sequences: $F_{ind}$ and its dual $F_{Mb}$, the definable Morley property, and $n$-resolvability. After developing these new properties, we resolve a number of open questions:

We show that the \textit{degree of nonminimality} introduced by Freitag and Moosa, which is closely related  to $F_{ind}$, may take on any positive integer value, answering a question of Freitag, Jaoui, and Moosa.

Proving a conjecture of Koponen, we show that every simple theory with quantifier elimination in a finite relational language has finite rank and is one-based. Our proofs of the supersimple case use Tomasi\'c and Wagner's results on pseudolinearity, and the forking degrees $F_{\lambda}$ of Freitag. The arguments closely rely on producing a type $q$ with $F_{Mb}(q) = \infty$. Our proof of the strictly simple case uses the notion of $n$-resolvability, which is related to $F_{Mb}$. Both $F_{Mb}$ in supersimple theories, and $n$-resolvability in simple theories, depend on the possible ranks of canonical bases, and in the setting of the Koponen conjecture, they allow us to distinguish between infinitely many indiscernible sequences over a finite set. 

We prove some variants of the \textit{simple Kim-forking conjecture}, a generalization of the stable forking conjecture to $\mathrm{NSOP}_{1}$ theories. We show a global analogue of the simple Kim-forking conjecture with infinitely many variables holds in \textit{every} $\mathrm{NSOP}_{1}$ theory, and show that Kim-forking with a realization of a type $p$ with $\mathrm{F}_{Mb}(p) < \infty$ satisfies a finite-variable version of this result. We then show, in a low $\mathrm{NSOP}_{1}$ theory or when $p$ is isolated, if $p \in S(C)$ has the definable Morley property for Kim-independence, Kim-forking with realizations of $p$ gives a nontrivial instance of the simple Kim-forking conjecture. In particular, when $F_{Mb}(p) < \infty$ and $|S^{F_{Mb}(p) + 1}(C)| < \infty$, Kim-forking with realizations of $p$ gives us a nontrivial instance of the simple Kim-forking conjecture.

We show that the quantity $F_{Mb}$, motivated in simple and $\mathrm{NSOP}_{1}$ theories by the above results, is in fact nontrivial even in stable theories. Using constructions based on the \textit{free projective planes} studied by Hyttinen and Paolini, for every $n < \infty$, we find a stable theory admitting types $p$ such that $F_{Mb}(p) = n$, and so that $F_{Mb}(p) \leq n$ for all types $p$.
\end{abstract}

\maketitle

\tableofcontents

\section{Introduction}
\numberwithin{theorem}{subsection} \setcounter{theorem}{0}
Forking, dividing, and their associated notions of independence are at the center of modern model theory from both a theoretical perspective and in applications. Many of the most important dividing lines in model theory are most easily defined by the presence or absence of certain local combinatorial properties, but many admit equivalent characterizations by properties of independence. These characterizations (which have been developed as far back as Harnik and Harrington (\cite{HarnikHarringtonForking})) often play a powerful role in results and give anyone who studies the subject a feeling that many of the prominent dividing lines are canonical - they arise inevitably from a number of different perspectives that do not immediately appear to be connected. For instance, Shelah \cite{shelah1980simple} introduced \emph{simple theories} as those in which a certain combinatorial configuration (the tree property) was absent. The class is equivalently characterized by the property that each type does not fork over a subset of its domain of size at most $|T|$ (see chapter three of  \cite{grossberg2002primer}). Kim and Pillay \cite{kim1998stability} gave another characterization, proving under a list of general assumptions on an abstract independence relation $\nind^0$ that the relation must be nonforking independence in a simple theory. In a more applied direction, there is for instance,  a large literature  whose central goal is characterizing forking dependence for realizations of a specific type (or class of types). For instance, there is extensive recent work just in the theory of differentially closed fields  \cite{freitag2023equations, freitag2017algebraic, freitag2017strong, nagloo2017algebraic}.

In a general theory, dividing of any partial type can always be detected by some indiscernible sequence in the parameters of the formulas. \emph{The central thrust of this paper is to make quantitative various aspects of these indiscernible sequences which control forking and dividing.} We do so by defining several new invariants of indiscernible sequences relative to a theory, all of which are closely related to how forking is witnessed in the theory. We believe many of these invariants are of intrinsic interest - some are already studied in the literature from a different perspective. For instance, Lemma \ref{cbmorleysequence} is a generalization of the most natural case\footnote{In the case that the outputs of system are the solutions themselves, the parameters to be identified are the coefficients, and the coefficients form a canonical base of the generic solution} of the main result of Ovchinnikov, Pillay, Pogudin, and Scanlon \cite[Theorem 3.1]{ovchinnikov2022multi},  a result about the number of independent observations needed to determine the value of the coefficients of a system given by differential equations with a fixed model. After developing basic results around our invariants and connections between them, we turn towards a number of applications. The paper targets three general areas: establishing bounds on the invariants under various tameness assumptions, the Koponen conjecture on one-basedness in simple theories, and variants of the stable forking conjecture. We describe these areas in detail next. 

\subsection{Nonminimality}

The inspiration for our invariants comes from several sources. Several of our notions are closely related to the \emph{degree of nonminimality} of a type \cite{freitag2023degree}. Given a type $p$ of Lascar rank larger than $1$, $\nmdeg {(p)}$ is defined\footnote{In Section \ref{nmdegfmb} there are careful definitions of the invariants we study.} to be the minimal number of realizations of the type $p$ such that $p$ has a nonalgebraic forking extension over the parameter set given by these realizations.

Powerful upper bounds for the degree of nonminimality were established in theories satisfying various hypotheses \cite{freitag2023degree}, which resulted in several applications \cite{freitag2022any, devilbiss2023generic}. But as the authors of \cite{freitag2023bounding} write: 
\begin{displayquote}
Concerning lower bounds on the degree of nonminimality, the situation is highly unsatisfactory. Examples of degree $>1$ seem very difficult to produce. In fact, we were unable to construct any of degree strictly greater than $2$.
\end{displayquote}
We resolve this problem, first in the setting of simple theories in Section \ref{nm>}, using a construction of Evans \cite{Ev02}. This examples will in fact be supersimple, and $\omega$-categorical. We solve the problem in Section~\ref{nm1cat} for the original setting of $\omega$-stable finite rank types using a more involved construction involving $\aleph_1$-categorical $(r,k)$-Steiner systems. For any $n \in \m N$, we build rank $2$ types whose degree of nonminimality is $n$. 

Of course, many variations on the degree of nonminimality are possible, and we introduce several new notions in Section \ref{nmdegfmb}. For instance, $F_{ind}(p) $ is defined (in a simple theory) to be the least $n$ such that there exists an infinite (nonconstant) indiscernible sequence $a_i$ in $p$, so that $\tp (a_{n+1}/a _{\leq n})$ is a forking extension of $p$.\footnote{See Definition \ref{finddef}.} As we note - it isn't hard to see that $F_{ind}(p) $ bounds the degree of nonminimality, but the two invariants can be different, even in the finite rank context, as we show in Section \ref{nm>}, though they are the same in a large class of theories, including, for instance differentially closed fields and compact complex manifolds. It is, however, interesting to note that $F_{ind} (p) $ can be bounded in terms of the degree of nonminimality and the Lascar rank of $p.$ 

In defining $F_{ind} (p)$ we quantify existentially over all indiscernible sequences in the type $p$. In a definition which evokes Kim's lemma, we consider the same definition but replacing the existential quantifier with a universal quantifier, defining $\Fmb {p}$ in Definition \ref{Mbr}. In special cases of the rank context, the notion is related to \emph{pseudolinearity} as considered for instance by Tomašić and Wagner \cite{TW03}. This invariant and the ideas around it play an essential role in Section \ref{Kop}.

\subsection{Countable categoricity and the Koponen conjecture} 
There is a long and rich tradition of studying the intersection of countable categoricity with tameness notions coming from classification theory. Before explaining the particular setting of Koponen's conjecture, which we resolve in Section~\ref{Kop}, we explain how the conjecture fits into this tradition. Lachlan \cite{lachlan1974two} conjectured that if $T$ is stable and $\omega$-categorical, then $T$ is totally transcendental and proves the weaker statement that if $T$ is superstable and $\omega$-categorical, then $T$ is totally transcendental. 
Cherlin, Harrington, and Lachlan \cite{cherlin1985aleph} later proved that superstable $\omega$-categorical theories have finite Morley rank. Buechler \cite{buechler1985invariants} gave a complete classification of $\omega$-stable $\omega$-categorical theories in terms of a simple collection of invariants. Hrushovski \cite{hrushovski1989totally} proved that any totally categorical theory can be finitely axiomatized relative to the collection of axioms saying the structure is infinite,  completing the resolution of questions about finite axiomatizability of categorical theories  pondered since the early 1960's \cite{M65}.

More recently, Palac\'in \cite{palacin2012omega} showed that an $\omega$-categorical $CM$-trivial simple theory is low and asked \cite[see the Question on Page 10]{palacin2012omega} 
%Which question are youn referring to?
if every simple $\omega$-categorical theory is low.\footnote{This also appears as \cite[Question 2, page 755]{casanovas2002local}. Palac\'in hints that without some additional assumption, there isn't strong evidence that that the question should have an affirmative answer.} Simon \cite{simon2022cat} classified $\omega$-categorical NIP structures of rank one. A version of Simon's classification result was extended to the higher rank setting by Onshous and Simon \cite{onshuus2021dependent}. Shelah \cite{shelah1971stability}  showed that any unstable NIP structure is SOP, and asked if one can show the stronger statement that the structure has an interpretable infinite linear order. Simon \cite{simon2022linear} showed this in the case that the theory is assumed to be $\omega$-categorical. Often under the (equivalent) assumption of having an oligomorphic permutation group, there is an equally extensive literature in more specific algebraic contexts. See page 7 of Cherlin's bibliography of homogeneity and related topcis \cite{cherlin2021homogeneity}.  

Koponen asks the following (\cite{Kop16}, \cite{Kop19}, \cite{Kop16slides}), conjecturing a positive answer to the second question:

\begin{question} \label{kopQ}
   Is every simple theory with quantifier elimination in a finite relational language one-based? Is every simple theory with quantifier elimination in a finite relational language supersimple of finite rank?
\end{question}

Note that a positive answer to the first question implies a positive answer to the second question. For a theory to have quantifier elimination in a finite relational language is the same thing as being $\omega$-categorical and $n$-ary for some $n < \omega$ (that is, having quantifier elimination in a language with only $n$-ary relation symbols). Assuming $T$ as in the question is not just simple, but \textit{supersimple}, we will see in Section \ref{Kop} a positive answer to Question \ref{kopQ} follows with a bit of effort from a result of Tomašić and Wagner \cite{TW03}, appearing in this paper as Fact \ref{pseudolinearity}. Again in the supersimple case, we give a second proof which goes through our development of $F_{Mb}$ via Corollary \ref{omegacategoricalsupersimple}. This uses the \textit{forking degrees} $F_{\lambda}$ (Definition \ref{flambda}), originally introduced in \cite{oberwolfachFdeg} to generalize the degree of nonminimality. Either proof supplies us with invariants that distinguish infinitely many indiscernible sequences over a finite set, thereby contradicting the assumption of quantifier elimination in a finite language.

In any case, the most difficult part of our proof of the full Koponen conjecture involves showing that a theory which is simple with quantifier elimination in a finite relational language must be supersimple. In less syntactic terms, the complicated part is showing that every $\omega$-categorical $n$-ary theory which is simple is in fact supersimple. In the finite rank case, the result of \cite{TW03} produces canonical bases (for types with fixed variables) of arbitrarily large $\mathrm{SU}$-rank--or equivalently, in the sense of Corollary \ref{rank2} and Corollary \ref{Mbpsquared}, a type $q$ with $F_{Mb}(q) = \infty$. In the general supersimple case, \cite{TW03} produce canonical bases of arbitrarily large finite $\mathrm{SU}_{p}$-rank, where $p$ is a regular type, again producing types $q$ with $F_{Mb}(q) = \infty$. So in the supersimple case, we are able to distinguish infinitely many indiscernible sequences over a finite set according to the $\mathrm{SU}$- or $\mathrm{SU}_{(p)}$-ranks of the canonical bases of their limit types. However, in the simple but not supersimple setting, $\mathrm{SU}$-rank will not be well-defined, nor will we necessarily get nontrivial ranks of the form $\mathrm{SU}_{p}$. To remedy this, in Definition \ref{grank} we introduce a new notion of rank for general simple theories, the \textit{$\mathrm{G}$-rank} of a set. Then in Theorem \ref{omegacategoricalsimplebutnotsupersimple}, we show that, in strictly simple $\omega$-categorical theories, there is an invariant distinguishing infinitely many indiscernible sequences (with terms of fixed length), given by producing canonical bases of arbitrarily large finite $\mathrm{G}$-rank.

The stable case of the Koponen conjecture is well-known: any stable theory with quantifier elimination in a finite relational language is superstable of finite rank, and in fact one-based. Cherlin and Lachlan (\cite{CL20}) show that a stable theory with quantifier elimination in a finite relational language is superstable of finite rank, by analyzing automorphism groups using the classification of finite simple groups. That every $\omega$-categorical superstable theory of finite rank is one-based is a classical consequence of the Zilber dichotomy; see Pillay (\cite{P96}), Chapter 1. Koponen (\cite{Kop17}) also observes that a stable theory with quantifier elimination in a finite relational language is superstable by a trivial type-counting argument. However, this argument cannot be easily generalized to the simple case, so new techniques are needed.\footnote{Namely, if $T$ is a stable theory, then if $\Delta(x; y)$ is a finite set of formulas, for any set $A$ there are at most $|A|$ many $\Delta$-types $p(x) \in S^{\Delta}(A)$. But if $T$ is a stable theory with quantifier elimination in a finite relational language, then for any variables $x$ and set $A$, there is a fixed finite set $\Delta(x; y)$ such that every type $p(x) \in S(A)$ is axiomatzed by a $\Delta$-type, so if $T$ is stable then for any set $A$ there are $|A|$ many types $p(x) \in S(A)$, and $T$ is $\omega$-stable. To give another version of this argument, if $T$ is stable and $p\in S(A)$ is axiomatized by a $\Delta$-type for finite $\Delta$, then there is some $A_{0} \subseteq A$ such that $p(x)$ does not fork over $A$ (by the definition of forking in terms of definability of types; see, say, \cite{P96}, Chapter 1), so if $T$ is a stable theory with quantifier elimination in a finite relational language, then $T$ is superstable.

However, this argument cannot be easily generalized to show that every simple theory with quantfier elimination in a finite relational language is supersimple. More precisely, it is \textit{not} known that, in a simple theory, for every partial type $p(x)$ over $A$ axiomatized by a $\Delta$-type for $\Delta$ a finite set, there is some set $A_{0}$ so that $p(x)$ does not contain any formulas that fork over over $A_{0}$. For example, in a nonlow $\omega$-categorical simple theory (if such a theory exists),  Palac\'in \cite{palacin2012omega} observes that there is a formula $\varphi(x, y)$ and a $\varphi(x, y)$-type $p(x) \in S^{\varphi(x, y)}(A)$, such that for every finite set $A_{0} \subseteq A$, $p(x)$ contains a formula $\varphi(x,b)$ that divides over $A_{0}$.

Of course, in this case the degree of dividing depends on $A_{0}$, but it is also not obvious that a simple theory cannot admit an example that does not even depend on the degree of dividing: a partial type $p(x)$ axiomatized by a $\Delta$-type for $\Delta$ a finite set, such that, for some $k < \omega$, for every finite $A_{0} \subseteq A $, there is a formula in $p(x)$ that $k$-divides over $A_{0}$. All that is obvious is that in a simple theory, for a $\Delta$-type $p(x)$ over $A$ for $\Delta$ a finite set and $k < \omega$, there is some $A_{0} \subseteq A$ so that no instance of a formula in $\Delta$ \textit{itself} belonging to $p(x)$ $k$-divides over $A_{0}$. But here, we do not assert that no \textit{conjunction} of instances of formulas in $\Delta$ belonging to $p(x)$ $k$-divides over $A_{0}$.}

In the unstable case, partial progress on the Koponen conjecture can be found in numerous sources \cite{aranda2013omega, AKop15, Kop16, Kop17, Pal17, Kop19}. Aranda \cite{aranda2013omega} shows that binary homogeneous relational structures cannot have infinite monomial $SU$-rank. Ahlman and Koponen \cite{AKop15} develop restrictions (towards the structure being binary) on the minimal (that is, $SU$-rank $1$) definable sets in $\omega$-categorical simple structures in a finite relational language. Koponen \cite{Kop16} resolved the conjecture when the theory is additionally assumed to be binary. Palac\'in \cite{Pal17} proves Koponen's conjecture in special cases with additional assumptions (e.g. the theory is binary with $2$-transitive automorphism group). Conant (\cite{Co15}) proves the case where $T$ has free amalgamation with respect to the finite relational language in which it eliminates quantifiers. Koponen \cite{Kop19} solves the special case when the relations are ternary and $T$ is supersimple. However, even the $4$-ary case remains open. Following our proof of Koponen's conjecture, we formulate a generalization to $\mathrm{NSOP}_{1}$ theories.

\subsection{Stable forking and generalizations}
The ideas investigated in Section \ref{kimforking} have their origins in the \emph{stable forking conjecture}, that every instance of forking in a simple theory is witnessed by a stable formula. See Kim and Pillay \cite{kim2001around} for the history of the problem. In previous sections, we introduced a number of invariants which allowed us to quantitatively limit the complexity of forking. In Section \ref{kimforking}, we relate these measures to older qualitative limits and characterizations of forking around the stable forking conjecture including, for instance, the variants considered by Palac\'in and Wagner \cite{Pal17}. Bounding $\Fmb p$ is one of several quantitative ways in this paper in which we've bounded the complexity of forking in various specific contexts. When $\Fmb p$ is finite, one might ask for a further reduction in the complexity of forking - among all indiscernible sequences in type $p$, is the collection of Morley sequences relatively definable? This holds, for example, in the setting of the Koponen conjecture, when a theory is assumed to be $\omega$-categorical and $n$-ary. We formalize this \emph{definable Morley property} of indiscernible sequences in Definition \ref{definablemorleyproperty} and investigate its properties, showing in Theorem \ref{definabilitysimplekimforking} that it gives us nontrivial instances of the \textit{simple Kim-forking conjecture} (Question \ref{simplekimforkingconjecture}) in $\mathrm{NSOP}_{1}$ theories. As a special case, we get nontrivial instances of the simple Kim-forking conjecture for types $p$ over finite sets with $F_{Mb}(p) < \infty$, assuming a type-counting condition.

In a slightly different direction, Palac\'in and Wagner \cite{Pal17} investigated the forking relation (not a priori a definable relation) itself as a ternary relation - asking in a variety of circumstances if this relation itself is stable. For instance, Wagner and Palac\'in show that in a supersimple CM-trivial theory, the independence relation is stable, that is, $x \ind _{y_1} y_2$ can not order an infinite indiscernible sequence. In Section \ref{kimforking}, we expand this investigation, broadening and formalizing what it means to ask if the (Kim)-forking relation is simple. As a consequence of Kaplan and Ramsey's results (\cite{KR19}, stated here as Fact \ref{witnessing}) we show in Theorem \ref{global1} that with enough variables, this global analogue of the simple Kim-forking conjecture holds over a model in \textit{any} $\mathrm{NSOP}_{1}$ theory. In the setting of where $F_{Mb}(p) < \infty$, we show in Theorem \ref{global2} that only finitely many variables are required for this global version of the simple Kim-forking conjecture.  As we shall also see, in the $\omega$-categorical setting, some of the boundaries between these properties and our earlier notions dissolve. 

\subsection{Overall Organization of the paper}
In Section \ref{nmdegfmb}, we introduce various invariants, and prove some basic facts about the notions. In Section \ref{Thebasics} we give examples of some nontrivial computations of the invariants introduced in the previous section. We also establish some more involved inequalities. Section \ref{Mbfinitestable} centers around the invariant $\Fmb p$, introduced in Section \ref{nmdegfmb}, relating it to existing work on psudolinearity and giving several nontrivial examples. We also prove some results about the robustness of the invariant. For instance, we show it is preserved under equidomination of types. We use the theory of free projective planes of Hyttinen and Paolini \cite{HP21} to show that in a stable theory, $\Fmb p$ can take on any positive integer value. Section \ref{nm>} is devoted to answering the problems of \cite{freitag2023degree, freitag2023bounding}, showing that the degree of nonminimality can take on any positive integer value.
Section \ref{Kop} resolves Koponen's conjecture. Section \ref{kimforking} introduces new variations of the stable forking conjecture which go beyond simple theories. Several of these variations naturally arise from the study of the invariant $\Fmb p$.

\section{Degrees, nonminimality, and invariants of indiscernibility}\label{nmdegfmb}

In this section we  define several ranks for the complexity of forking
and compare them.

\subsection{Degree of nonminimality}

In \cite{freitag2023bounding}, the following notion was defined: 
\begin{definition} \label{RahimJim}
(Degree of nonminimality of a stationary type in a stable theory) Given a stationary type $p \in S(A)$, in a stable theory $T$ with $\U (p) >1$, the degree of nonminimality of $p$ is the minimal length $n$ of a sequence $a_0, \ldots , a_{n-1}$ of realizations of the type $p$ such that $p$ has a nonalgebraic forking extension over $a_0, \ldots , a_{n-1}$. 
\end{definition}

%Alternatively,  $\nmdeg(p)$ is the maximal $n$ such that for any sequence $a_1 ,\ldots ,a_n$ of realizations of $p$, every forking extension of $p$ to $a_1,  \ldots , a_m$ for  $m < n$ is algebraic. 
The same definition makes sense in the case of a general type in an arbitrary theory (though it seems very little could be proved in that case!). We will essentially only develop the notion in the settings where $T$ is a simple theory (or later, an $\mathrm{NSOP}_{1}$ theory): 

\begin{definition} \label{gennm} 
Let $T$ be simple. Given a type $p \in S(A)$ the degree of nonminimality of $p$ ($\nmdeg(p)$) is the minimal length $n$ of a sequence of realizations of the type $p$, say $a_0, \ldots , a_{n-1}$ such that $p$ has a nonalgebraic forking extension over $Aa_0, \ldots  a_{n-1}$. If there is no such $n$ then we let $\nmdeg (p) = \infty.$ If $p$ is a minimal type and thus has no nonalgebraic forking extension, then we set $\nmdeg (p) = 1$ as a matter of convention.  
\end{definition}

\begin{remark} \label{calcnm}
To calculate $\nmdeg (p)$, it is sufficient to consider $\bar a = a_0, \ldots , a_{n-1} $ to be independent over $A$. %To see this, note that if $a_i \ind_ A \bar a_{<i}$, then the type of $a_i$ over $A \bar a_{<i}$ is uniquely determined if $p$ is stationary. [Changed: now that we assume only simplicity, the a_i can be assumed to be independent but not an initial segment of a Morley sequence.] 
To see this, note that if $a_i \nind_ A \bar a_{<i}$, then if $n$ is $\nmdeg (p)$ and $i<n$, it must be that $a_i \in \acl (A \bar a_{<i}).$ In this case, if $p$ has a nonalgebraic forking extension over $\bar a$, then $p$ has a nonalgebraic forking extension over $\bar a _{-i} = (a_0, \ldots , a_{i-1} , a_{i+1} , \ldots , a_{n-1} )$. But this is impossible if $n = \nmdeg(p)$. So, it must be that $\bar a$ is an independent tuple of realizations of $p$.
\end{remark}

Next, we consider a small variation on the idea of the degree of nonminimality. 

\begin{definition}\label{finddef}
Let $T$ be simple, and $p \in S(A)$. Let $F_{ind} (p)$ be the least $n \in \m N$ such that there is a nonconstant $A$-indiscernible sequence $(a_i)_{i < \omega}$ in the type $p$ such that $\tp (a_{n} /A a_{<n} )$ is a forking extension of $p$. If no such $n$ exists, then define $F_{ind} (p) = \infty$. When $p$ is minimal, any nonconstant indiscernible sequence is a Morley sequence, and we set $F_{ind} (p) =1 $ as a matter of convention. 
\end{definition}

\begin{remark}\label{findbigger}
Note that in Definition~\ref{finddef}, 
since the sequence $(a_i)$ is indiscernible and nonconstant, $\tp (a_{n} / a_{< n} )$ must be a nonalgebraic forking extension of $p$. Because of this, one can see immediately that $F_{ind} (p) \geq \nmdeg (p).$ Moreover, if $\mathrm{SU}(p) > 1$, for $ p \in S(A)$, $F_{ind}(p)$ is finite: let $q \in S(B)$ be a nonalgebraic forking extension, and let $(a_i)_{i < \omega}$ be a Morley sequence over $B$ where for $i < \omega$, $a_{i} \models q$. Then $(a_{i})_{i < \omega}$ is not constant, but is not a Morley sequence over $A$: if it were, by Kim's lemma, $B \ind_{A} a_{0}$, but by symmetry, $a_{0} \ind_{A} B$, contradicting that $q$ is a forking extension of $p$. So $\tp (a_{n} / a_{< n} )$ is a forking extension of $p$ for some $n$, and $F_{ind}(p) \leq n$. In Section \ref{Thebasics} we will develop the properties of $F_{ind} (-).$ 

In the original primary applications of $\nmdeg (-)$ (differentially closed fields and compact complex manifolds), we will see in Proposition \ref{dcfsame} that the two quantities are identical. In Section \ref{nm>} we will show that the two quantities are not equal in general, giving examples where each is a different finite number. It is, however, interesting to note that, as we show in Section \ref{basicineq}, the two quantities are bounded in terms of each other. 
\end{remark}

So, the quantity $F_{ind}(p)$ is the least $n$ so that \emph{there exists} an indiscernible sequence $(a_i)$ in the type $p$, for which we can see that $(a_i)$ is not a Morley sequence by looking only at $a_0, a_1, \ldots , a_{n}$. By changing the \emph{existential} to a \emph{universal quantifier} over the collection of indiscernible sequences in $p$, we arrive at the following definition: 

\begin{definition} \label{Mbr}
Let $T$ be simple. Let $p \in S(A)$ be nonalgebraic. Let $F_{Mb} (p)$ be the least $n \in \m N$ such that for all $A$-indiscernible sequences $(a_i)_{i \in \omega}$ in the type $p$ that are not Morley sequences, $\tp (a_{n} /A a_{<n} )$ is a forking extension of $p$. If no such $n$ exists, then define $F_{Mb} (p) = \infty.$
\end{definition}

Note that when $\mathrm{SU}(p) > 1$, because there is, as in Remark \ref{findbigger}, a nonconstant $A$-indiscernible sequence in $p$ that is not a Morley sequence over $A$, we can take Definition \ref{Mbr} to quantify over all \textit{nonconstant} $A$-indiscernible sequences that are not Morley sequences, as in Definition \ref{finddef}. We may also state the relationship between $F_{ind}(p)$ to $F_{Mb}(p)$ as follows: $F_{ind}(p)$ is the least $n$ so that there is a nonconstant $A$-indiscernible sequence $\{a_{i}\}$ in $p$ that is $n$-independent over $A$ (i.e. any $n$ of the $a_{i}$ are independent over $A$) but not $n+1$-independent over $A$, while $F_{Mb}(p)$ is the \textit{greatest} $n$ so that there is an $A$-indiscernible sequence $\{a_{i}\}$ in $p$ that is $n$-independent over $A$ but not $n+1$-independent over $A$.

We first compute $F_{Mb}(p)$ in the most basic case:

\begin{definition}\label{algfork}
Suppose the type $p \in S(A)$ has the property that for all $a \models p$, if $a \nind_{A} B$ then $\mathrm{acl}^{eq}(aA) \cap \mathrm{acl}^{eq}(BA) \supsetneq \mathrm{acl}^{eq}(A)$. In this case, let us say that $p$ has \emph{algebraic forking}.    
\end{definition} 
This generalizes one-basedness in the following sense:

\begin{fact}\label{forkingonebased}
   (see, e.g. \cite{P96} or \cite{Wag00}) A simple theory with elimination of hyperimaginaries is one-based if and only if every type has algebraic forking.
\end{fact}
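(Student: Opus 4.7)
The plan is to prove both directions separately, using the standard characterization of one-basedness available under elimination of hyperimaginaries (where $\mathrm{bdd}^{eq} = \acl^{eq}$): $T$ is one-based iff $\Cb(a/B) \subseteq \acl^{eq}(a)$ for every tuple $a$ and set $B$.

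For the forward direction, assume $T$ is one-based. Given $p = \tp(a/A)$ and $B$ with $a \nind_A B$, let $c := \Cb(a/AB)$. One-basedness gives $c \subseteq \acl^{eq}(a) \subseteq \acl^{eq}(aA)$, while the defining property of the canonical base yields $c \subseteq \acl^{eq}(AB)$. Since $\tp(a/AB)$ forks over $A$, we have $c \not\subseteq \acl^{eq}(A)$, so any $c_{0} \in c \setminus \acl^{eq}(A)$ witnesses $\acl^{eq}(aA) \cap \acl^{eq}(AB) \supsetneq \acl^{eq}(A)$, as required.

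For the reverse direction, assume every type has algebraic forking and fix $a, B$; the goal is $\Cb(a/B) \subseteq \acl^{eq}(a)$. The key move is to set $A^{*} := \acl^{eq}(a) \cap \acl^{eq}(B)$, which is $\acl^{eq}$-closed (as an intersection of $\acl^{eq}$-closed sets), and to show $a \ind_{A^{*}} B$. For if not, algebraic forking applied to $\tp(a/A^{*})$ would give $\acl^{eq}(aA^{*}) \cap \acl^{eq}(A^{*}B) \supsetneq \acl^{eq}(A^{*})$; but $A^{*} \subseteq \acl^{eq}(a)$ and $A^{*} \subseteq \acl^{eq}(B)$ collapse both sides to $A^{*}$, absurdly yielding $A^{*} \supsetneq A^{*}$. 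Hence $\tp(a/B)$ does not fork over $A^{*} \subseteq \acl^{eq}(B)$, and therefore $\Cb(a/B) \subseteq \acl^{eq}(A^{*}) \subseteq \acl^{eq}(a)$.

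Apart from the identification of $A^{*}$ as the natural "closed" base over which to test independence, the only point requiring care is the use of elimination of hyperimaginaries to identify $\mathrm{bdd}^{eq}$ with $\acl^{eq}$---both to state one-basedness in terms of $\acl^{eq}$ and to conclude that the a priori hyperimaginary $\Cb(a/B)$ indeed lies in $\acl^{eq}(A^{*})$.
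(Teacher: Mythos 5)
Your proof is correct. The paper offers no argument of its own here---the statement is recorded as a Fact with citations to \cite{P96} and \cite{Wag00}---and your two-direction argument (forward: extract an element of $\Cb(a/AB)\setminus\acl^{eq}(A)$, which one-basedness places in $\acl^{eq}(a)$; reverse: test independence over $A^{*}=\acl^{eq}(a)\cap\acl^{eq}(B)$ and derive $A^{*}\supsetneq A^{*}$ from a hypothetical failure) is exactly the standard proof found in those references, with elimination of hyperimaginaries correctly invoked to treat $\Cb$ as a set of imaginaries and identify $\mathrm{bdd}$ with $\acl^{eq}$.
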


In general, whenever $\mathrm{acl}^{eq}(aA) \cap \mathrm{acl}^{eq}(BA) \supsetneq \mathrm{acl}^{eq}(A)$, we have $a \nind_{A} B$, so: 

\begin{prop} \label{algforkingprop}
If the type $p \in S(A)$ has algebraic forking and $a \models p$, then 
$$a \nind_{A} B \iff \mathrm{acl}^{eq}(aA) \cap \mathrm{acl}^{eq}(BA) \supsetneq \mathrm{acl}^{eq}(A).$$    
\end{prop}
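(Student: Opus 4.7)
The statement is essentially the definition of algebraic forking combined with a standard fact about simple theories, so the plan is short. I will prove the two directions separately.

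For $(\Rightarrow)$, this is immediate from Definition \ref{algfork}: if $a \models p$ and $a \nind_{A} B$, then the assumption that $p$ has algebraic forking says exactly that $\mathrm{acl}^{eq}(aA) \cap \mathrm{acl}^{eq}(BA) \supsetneq \mathrm{acl}^{eq}(A)$. So nothing is to be done here.

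For $(\Leftarrow)$, the plan is to invoke the general fact mentioned in the sentence just before the proposition, namely that $\mathrm{acl}^{eq}(aA) \cap \mathrm{acl}^{eq}(BA) \supsetneq \mathrm{acl}^{eq}(A)$ always implies $a \nind_{A} B$. For completeness I would justify this as follows. Fix $c \in \mathrm{acl}^{eq}(aA) \cap \mathrm{acl}^{eq}(BA)$ with $c \notin \mathrm{acl}^{eq}(A)$. Since $c \in \mathrm{acl}^{eq}(BA)$ but $c \notin \mathrm{acl}^{eq}(A)$, the type $\mathrm{tp}(c/AB)$ is algebraic while $\mathrm{tp}(c/A)$ is not, which forces $c \nind_{A} B$ by the basic properties of forking in a simple theory (a nonalgebraic type becoming algebraic over a larger base is a forking extension). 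On the other hand, $c \in \mathrm{acl}^{eq}(aA)$ together with $a \ind_{A} B$ would give $c \ind_{A} B$ by monotonicity of independence under algebraic closure, contradicting the previous line. Hence $a \nind_{A} B$, as required.

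There is no real obstacle; the only thing worth flagging is that the $(\Leftarrow)$ direction does not use the hypothesis that $p$ has algebraic forking at all, it just uses that $T$ is simple, so the proposition is really a biconditional between a hypothesis on $p$ and an intersection equality for realizations of $p$, with one direction holding universally.
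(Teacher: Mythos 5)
Your proof is correct and matches the paper's treatment: the paper gives no separate proof, presenting the forward direction as immediate from Definition \ref{algfork} and the backward direction as the general observation stated in the sentence preceding the proposition, which you have simply spelled out (an element of $\mathrm{acl}^{eq}(BA)\setminus\mathrm{acl}^{eq}(A)$ forks with $B$ over $A$, and independence is inherited by $\mathrm{acl}^{eq}(aA)$). Your closing remark that the left-to-right implication is exactly the hypothesis and the converse holds for all types in a simple theory is also the paper's implicit point.
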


For example, if $SU(p)=1$ or $p$ is one-based, then $p$ has algebraic forking.  

\begin{prop}
    \label{typeswithalgebraicforkinghavembone}
    
    Suppose $p$ has algebraic forking. Then $F_{Mb}(p) = 1$.
\end{prop}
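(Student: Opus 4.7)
The plan is to show $F_{Mb}(p) \leq 1$ directly: for every $A$-indiscernible sequence $(a_i)_{i \in \omega}$ in $p$ that is not a Morley sequence over $A$, I will produce a forking witness $a_1 \nind_A a_0$. The lower bound $F_{Mb}(p) \geq 1$ is immediate, since $\tp(a_0/A) = p$ is not a forking extension of $p$ and non-Morley sequences (e.g.\ the constant sequence at any realization of the nonalgebraic $p$) do exist.

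Fix a non-Morley $A$-indiscernible $(a_i)$, and choose any $n \geq 1$ with $a_n \nind_A a_{<n}$. Applying Proposition \ref{algforkingprop} selects an element
$$c \in \mathrm{acl}^{eq}(a_n A) \cap \mathrm{acl}^{eq}(a_{<n} A) \setminus \mathrm{acl}^{eq}(A).$$
The strategy now is to use indiscernibility to manufacture many conjugates of $c$, each pinned to a distinct $a_k$, yet all confined to one fixed finite algebraic locus, so that a collision by pigeonhole forces dependence between two elements of the sequence.

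Concretely, for each $k \geq n$ the tuples $(a_0,\ldots,a_{n-1},a_n)$ and $(a_0,\ldots,a_{n-1},a_k)$ have the same type over $A$ by $A$-indiscernibility, so there is an $A$-automorphism $\sigma_k$ sending the first to the second. Setting $c_k := \sigma_k(c)$, we get $c_k \in \mathrm{acl}^{eq}(a_k A)$ (transporting the algebraicity over $a_n A$), and since $\sigma_k$ fixes $a_{<n}$ we also have $c_k \in \mathrm{acl}^{eq}(a_{<n}A)$ with $\tp(c_k/a_{<n}A) = \tp(c/a_{<n}A)$. Because $c \in \mathrm{acl}^{eq}(a_{<n}A)$, this type has only finitely many realizations, so $\{c_k : k \geq n\}$ is finite.

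Pigeonhole then produces $n \leq k < k'$ with $c_k = c_{k'} =: d$, and $d$ lies in $\mathrm{acl}^{eq}(a_k A) \cap \mathrm{acl}^{eq}(a_{k'} A) \setminus \mathrm{acl}^{eq}(A)$. This alone implies $a_k \nind_A a_{k'}$, as observed in the paragraph preceding Proposition \ref{algforkingprop} (and using no hypothesis beyond simplicity). Finally $A$-indiscernibility gives $\tp(a_k,a_{k'}/A) = \tp(a_0,a_1/A)$, and hence $a_1 \nind_A a_0$, as required. The only subtle point in the argument is choosing the correct pivot in the indiscernibility step: one must move $a_n$ while fixing $a_0,\ldots,a_{n-1}$, for otherwise the $c_k$ would not all land in the finite algebraic locus of $c$ over $a_{<n}A$ and the pigeonhole would fail.
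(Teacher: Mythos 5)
Your proof is correct, and it follows the same skeleton as the paper's: pick $n$ with $a_n \nind_A a_{<n}$, extract from algebraic forking a witness lying in $\mathrm{acl}^{eq}(a_nA)\cap\mathrm{acl}^{eq}(a_{<n}A)\setminus\mathrm{acl}^{eq}(A)$, and propagate it along the sequence to force a dependent pair. The two arguments differ only in the propagation step. The paper observes that the tail $(a_{n+i})_{i<\omega}$ is indiscernible over $\mathrm{acl}(a_{<n}A)$, hence over $B=\mathrm{acl}(a_nA)\cap\mathrm{acl}(a_{<n}A)$, so that $B\subseteq\mathrm{acl}(a_{n+1}A)$ as well and the \emph{consecutive} pair $a_n,a_{n+1}$ is already dependent. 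You instead transport a single element $c$ by automorphisms fixing $a_{<n}A$ and apply pigeonhole to its finite set of conjugates over $a_{<n}A$, landing on a not-necessarily-consecutive dependent pair $a_k,a_{k'}$, which indiscernibility then converts to $a_1\nind_A a_0$. What your route buys is that it avoids invoking the (true but not completely trivial) fact that an infinite $C$-indiscernible sequence is indiscernible over $\mathrm{acl}(C)$, replacing it with an elementary orbit-counting argument; what it costs is one extra reduction at the end. Your closing remark --- that the pivot must move $a_n$ while fixing $a_{<n}$, since otherwise the conjugates $c_k$ would not be confined to the finite locus of $c$ over $a_{<n}A$ --- correctly identifies the one point where a careless version would fail. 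Both proofs are complete.
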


\begin{proof}
Suppose $(a_{i})_{i < \omega}$ is an $A$-indiscernible sequence where $a_{0} \models p$, and is not a Morley sequence over $A$. If $p$ has algebraic forking, then for some $n < \omega$, $\mathrm{acl}(a_{n}A) \cap \mathrm{acl}(a_{0} \ldots a_{n-1}A) = B \supsetneq A$. The sequence $(a_{n+i})_{i < \omega}$ is indiscernible over $\mathrm{acl}(a_{0} \ldots a_{n-1}A)$, so in particular, is indiscernible over $B$. Then $B \subseteq \mathrm{acl}(a_{n}A)$, so $B \subseteq \mathrm{acl}(a_{n+1}A)$, and $\mathrm{acl}(a_{n}A) \cap \mathrm{acl}(a_{n+1}A) \supseteq B \supsetneq A$. So $a_{n+1} \nind_{A} a_{n}$, and by indiscernibility, $a_{1} \nind_{A} a_{0}$.
\end{proof}

Except for edge cases in which $\nmdeg(p)$ either doesn't make sense or is a matter of convention (e.g. when $\U(p) =1$), the definitions and above discussion make it transparent that $\nmdeg(p) \leq F_{ind} (p) \leq  \Fmb{p}$, but as we'll show in Section \ref{Thebasics}, the quantities can all be distinct. 

\subsection{Related notions involving canonical bases}
In this subsection, in order to formulate the notions, we must work in a setting with a suitable notion of canonical bases for types. For instance, it would suffice to work in a stable theory with elimination of imaginaries (or else in our formulations, work in $T^{eq}$). More generally, one might work in a simple theory with elimination of hyperimaginaries. 

Each of the three notions given above in Definitions \ref{gennm}, \ref{finddef}, \ref{Mbr} can be related to the rank of the canonical base of certain forking extensions of the type $p$; we will explain the connection in the coming pages. 

\begin{definition} \label{CBbounddef} Given a type $p,$ let 

\begin{enumerate} 
 \item $\fcb{p}$ be the minimal number $m$ of realizations $a_1, \ldots a_m$  of $p$ such that the canonical base of $p$ is contained in the algebraic closure of $a_1, \ldots , a_m$.
\item $\ficb{p}$ be the minimal number $m$ of \emph{independent} realizations $a_1, \ldots a_m$  of $p$ such that the canonical base of $p$ is contained in the algebraic closure of $a_1, \ldots , a_m$.
\end{enumerate}
\end{definition}

Though it isn't obvious, the quantity $\ficb{p}$ is intensively studied in the context of differential equations and modeling, where the notion is closely related to parameter identifiability, as we will explain\footnote{We are not saying that the notion itself is defined there or studied in these terms. Parameter identifiability problems have recently been considered in differential algebra, but have a much longer and more extensive history more generally.}. In parameter identifiability problems, one typically considers variations on the following question: given a fixed system of differential equations with unknown parameters and some output functions, can one identify the unknown parameters from a series of observations of \emph{independent} solutions to the system? Numerous variations on this general idea are extensively studied in the differential equations literature. See for instance \cite{dong2023differential, ovchinnikov2021computing, jeronimo2023weak} for a few recent examples in the differential algebraic context.

Specifically, when $p$ is the generic type of the solution set of a system of differential equations, $\Sigma$, the notion $\ficb{p}$ is closely related to the minimum value of $r$ such that the parameters of $\Sigma$ are multi-experiment identifiable in $r$ many experiments \cite[Section 2.2]{ovchinnikov2022multi}. There is a slight difference in that we are considering the algebraic closure, while in parameter identifiability problems, the rational expressions of the functions and their derivatives are typically used. This is a very small difference, and in the setting of parameter identifiability would yield values of $r$ which differ by at most one. In the differential context, the emphasis has often been on the design of efficient algorithms for checking the identifiability of a system.

It is natural to define the relative versions of these notions for forking extensions: 

\begin{definition} \label{cbrkdef} Given a type $p \in S( \emptyset )$ and $q$ a forking extension of $p$ let: 

\begin{enumerate}
 \item $\fcb{q/p}$ be the minimal number $m$ of realizations $a_1, \ldots a_m$  of $p$ such that the canonical base of $q$ is contained in the algebraic closure of $a_1, \ldots , a_m$
\item $\ficb{q/p}$ be the minimal number $m$ of independent realizations $a_1, \ldots a_m$  of $p$ such that the canonical base of $q$ is contained in the algebraic closure of $a_1, \ldots , a_m$.
\end{enumerate}
\end{definition}

Given a type $p$ of rank larger than one, there is a type $q$ such that $\nmdeg (p) = \fcb {q/p}$; in fact: 

\begin{eqnarray*}
\nmdeg(p) &=& \min \left\{\fcb {q/p} \, | \, q \text{ a nonalgebraic forking extension of }p \right\} \\
&=& \min \left\{\ficb {q/p} \, | \, q \text{ a nonalgebraic forking extension of }p \right\}.
\end{eqnarray*}

\section{Basic properties of the invariants} \label{Thebasics}
\subsection{Basic Examples} 
We next give a series of examples centered around the invariants of the previous section.

\begin{example} In any strongly minimal theory, if $a,b,c$ are independent, the
generic $2$-type $p$ has $\nmdeg(p) = F_{ind}(p)= 1$, witnessed by $(a,b)$
and $(a,c)$. This is also true when $p$ is any $n$-type of rank at least $2$ in a strongly minimal theory. 
\end{example}

\begin{example} \label{ACFMB}
For $2$-types in strongly minimal theories, the analysis of $\mb$ is more interesting. For instance, as we will explain, in the theory of algebraically closed fields, the generic $2$-type $p$ has $\mb(p)=\infty$.

Let $T=ACF_0$, the theory of algebraically closed fields of characteristic zero. Let $p \in S_2 ( \emptyset )$ be the generic type of affine $2$-space. 
Consider the indiscernible sequence in $p$ given by independent generic points of a generic affine curve of degree $d$, 
$$\sum_{i,j} c_{i,j} x^i y^j  +1 = 0,$$
where $c_{i,j}$ are independent transcendentals. 
Let $q$ be the generic type of the curve. It isn't hard to see that the tuple $\bar c$ of the transcendentals $c_{i,j}$ is the canonical parameter of the forking extension $q$ of $p$, and that $\bar c$ is in the field generated by $(d+1)^2-1$ many independent realizations of $q$, but no fewer (each point yields one independent linear condition on the coefficients). 
So, taking an indiscernible sequence in $q$, $(a_i)$, over $\emptyset$, $(a_i)_{i \leq (d+1)^2-1}$ is an initial segment of a Morley sequence (ie this collection of points in independent), but $(a_i)_{i \leq (d+1)^2}$ is not. Since a similar example exists for any $d \in \m N$, we can see that $\mb {(p)} = \infty$. 
\end{example}

The above example, while familiar and illustrative, turns out to not be particular to $ACF_0$, strongly minimal types, or even types in stable theories. In general, in Corollary \ref{pseudolinearitymb}, we show that if $p$ is a Lascar type with $\mathrm{SU}(p) = 1$ (in a stable or $\omega$-categorical simple theory) then either $\Fmb{p \otimes p} = \infty$ or $p$ is \emph{linear}.\footnote{See Definition \ref{deflin}.} 
In Section \ref{ocat} we give an example of a type of $\mathrm{SU}$-rank $2$ in an $\omega$-categorical supersimple theory of finite rank in which $\nmdeg (p) < F_{ind} (p)$, but this is rather more involved than the basic examples of this section.
Examples of stable theories with $\nmdeg (p) < F_{ind} (p)$ are given in Section \ref{nm>}.
In many cases, the two invariants are identical - see remarks following the proof of our next result. 

Let $DCF_0$ denote the theory of differentially closed fields of characteristic zero in the language of rings together with a distinguished derivation $\delta$. The proof of the following result is quite similar to that found in Section 2 of \cite{freitag2023degree}.

\begin{prop} \label{dcfsame}
    For a type $p$ in the theory $DCF_0$, $\nmdeg(p) = F_{ind} (p).$
\end{prop}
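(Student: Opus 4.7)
The inequality $\nmdeg(p) \leq F_{ind}(p)$ holds in any simple theory, as noted in Remark \ref{findbigger}, so only the reverse inequality in $DCF_0$ requires proof. The plan is to take any witness for $\nmdeg(p) = n$ and convert it into an $A$-indiscernible sequence in $p$ whose forking appears at position exactly $n$.

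Set $n := \nmdeg(p)$, and by Remark \ref{calcnm} choose $a_1, \dots, a_n$ independent realizations of $p \in S(A)$ together with $b \models p$ such that $q := \tp(b/Aa_1 \dots a_n)$ is a nonalgebraic forking extension of $p$. Minimality of $n$ forces $b \ind_A a_1 \dots a_{n-1}$, since otherwise $p$ would have a nonalgebraic forking extension over fewer than $n$ realizations. Set $C := \Cb(q)$, so $C \in \acl^{eq}(Aa_1 \dots a_n) \setminus \acl^{eq}(A)$, and choose a Morley sequence $(c_i)_{i \in \omega}$ over $AC$ in the type $q^{\ast} := \tp(b/AC)$. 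Each $c_i \models p$, and being $AC$-indiscernible the sequence is a fortiori $A$-indiscernible.

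The next step is to argue that $(c_i)$ is non-Morley over $A$ with forking first appearing at position $k$, where $k$ is least with $C \in \acl^{eq}(Ac_0 \dots c_{k-1})$. For $i < k$ the data available to $A c_{<i}$ does not see $C$, and $\tp(c_i/Ac_{<i})$ is a non-forking extension of $p$; for $i \geq k$ we have $c_i \nind_A C \subseteq \acl^{eq}(A c_{<i})$, hence $c_i \nind_A c_{<i}$. This yields $F_{ind}(p) \leq k$, and the problem reduces to $k \leq n$. The strategy here mirrors the canonical-base analysis in Section 2 of \cite{freitag2023degree}: in $DCF_0$, using stationarity of types over algebraically closed differential fields together with the Pillay-Ziegler style description of canonical bases of finite-rank types, the $C$-information captured in $n$ independent realizations of $p$ transports to $n$ independent realizations of $q^{\ast}$, yielding $C \in \acl^{eq}(A c_0 \dots c_{n-1})$ and hence $k \leq n$.

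The main obstacle is precisely this identification $k \leq n$. In an arbitrary stable theory the number of realizations of $q^{\ast}$ needed to determine $\Cb(q)$ is a priori unrelated to $\ficb{q/p}$, which counts realizations of $p$, and Section \ref{nm>} in fact exhibits stable examples with $\nmdeg(p) < F_{ind}(p)$. The proof therefore hinges on a $DCF_0$-specific property translating the minimality in $\nmdeg(p) = \ficb{q/p}$ over realizations of $p$ into the analogous minimality over realizations of $q^{\ast}$. This is exactly the canonical-base manipulation developed in \cite{freitag2023degree} for bounding the degree of nonminimality, adapted here to control the first forking level in an $A$-indiscernible sequence rather than the size of a parameter tuple witnessing forking.
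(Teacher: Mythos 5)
Your reduction to the inequality $F_{ind}(p)\leq\nmdeg(p)$ is right, and the construction (pass to $C=\Cb(q)$, take a Morley sequence in $q^{*}=\tp(b/AC)$, and bound the first forking position) is exactly the mechanism the paper uses to prove the weaker Corollary~\ref{eqnmfin}, $F_{ind}(p)\leq \mathrm{SU}(p)\cdot\nmdeg(p)$, via Lemma~\ref{cbmorleysequence}. But the step you yourself flag as the main obstacle --- that $C\subseteq\acl^{eq}(Ac_{0}\dots c_{n-1})$ for the first $n$ terms of the Morley sequence in $q^{*}$, i.e.\ that $k\leq n$ --- is a genuine gap, not a routine canonical-base manipulation. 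What you need is that $\ficb{q^{*}}\leq\ficb{q^{*}/p}$: that $\Cb(q)$ is captured by as few realizations of $q^{*}$ as of $p$. The paper's example immediately following this proposition (the generic type of $x''=0$ with the curve $a=\prod_{i=1}^{2d}(b-c_i)$) exhibits, in $DCF_0$ itself, a forking extension with $\fcb{q/p}=\ficb{q/p}=d$ but $\fcb{q}=\ficb{q}=2d$: each realization of $q$ imposes only one linear condition on the $2d$ coefficients, while a single well-chosen realization of $p$ carries two of them. So the transport principle you invoke is false for general forking extensions of types in $DCF_0$, and you give no argument that it holds for a \emph{minimal} witness to $\nmdeg(p)$. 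The citation to the canonical-base bounds of \cite{freitag2023degree} does not supply this; those arguments yield upper bounds on $\nmdeg$, not a comparison between $\ficb{q}$ and $\ficb{q/p}$. (A smaller issue: your claim that for $i<k$ the type $\tp(c_i/Ac_{<i})$ is nonforking is also unjustified --- forking can begin before $C$ is fully captured, cf.\ the rank-drop hypothesis needed in Proposition~\ref{dependencecanonicalbases}(2) --- though for the upper bound on $F_{ind}$ earlier forking would only help you.)

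The paper's actual proof takes a completely different, structural route: it reduces to types with no proper fibrations, splits according to whether $p$ is (inter)algebraic with realizations of a locally modular minimal type or internal to the constants, and in the latter case passes to a $2$-transitive binding group. Knop's classification of $2$-transitive algebraic group actions then shows the action preserves collinearity, so a Morley sequence of collinear points witnesses $F_{ind}(p)=2$ while $2$-transitivity gives $\nmdeg(p)=2$; in all other cases both invariants are $1$. In other words, the equality is obtained not by transporting a witness of $\nmdeg$ into an indiscernible sequence, but by computing both invariants independently and observing they take the same value ($1$ or $2$) in every case that $DCF_0$ permits. To salvage your approach you would need to prove the missing transport lemma for minimal witnesses, and the evidence in the paper suggests this would in effect require the same binding-group analysis.
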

\begin{proof}
Let $p \in S(A)$. If $p_n \rightarrow p_{n-1} \rightarrow \ldots \rightarrow p_1$ is an analysis of the type $p$ over $A$, $F_{ind} (p)$ is bounded by $F_{ind} (p_1)$ and also $F_{ind}$ of the types given by the generic fibers of the maps in the analysis. Thus, it suffices to prove the result in the case that $p$ has \emph{no proper fibrations} \cite[Definition 2.1]{moosa2014some}. It follows by \cite[Proposition 2.3]{moosa2014some} that $p$ is either 
\begin{enumerate}
    \item interalgebraic over $A$ with a finite collection of independent realizations of a locally modular minimal type, or 
    \item interalgebraic with a type $q$ which is internal to a nonlocally modular type. 
\end{enumerate}
    In the first case, both $F_{ind} (p)= \nmdeg{(p)} =1$. In the second case, if the binding group of $p$ relative to the constants is not generically $2$-transitive \cite[for the defintion of generic transitivity]{freitag2023bounding}, then $F_{ind}(p) = 1$, and $\nmdeg(p) \leq F_{ind} (p) $ in general (see Remark \ref{findbigger}), so the quantities are equal. In the second case, since $F_{ind} (-) $ is invariant up to interalgebraicity, we can assume $p$ is internal to the nonlocally modular type - in this case, the constants. 

    Suppose now that $F_{ind} (p) >1$. Then it must be that $p$ and also $p \otimes p$ are weakly orthogonal to the constants over $A$ (if not, it is easy to see that $F_{ind} (p) =1$ by taking a Morley sequence whose image in $\mc C^n$ coming from nonorthogonality lies on an algebraic curve). It follows that both $p$ and $p \otimes p$ are isolated types. We wish now to reduce to the case that the action of the binding group $G$ on $p$ is $2$-transitive. This is equivalent to any two realizations of $p$ being independent. As the binding group is generically $2$-transitive, with $p \otimes p$ being isolated, there must be an $A$-definable equivalence relation $\sim$ on $p$ such that $p/ \sim$ has a $2$-transitive binding group. 

    But now the binding group is $2$-transitive and definably isomorphic to an algebraic group action acting on an algebraic variety over a field of characteristic zero, and so the classification theorem of Knopp \cite{knop1983mehrfach} tells us that the binding group action on $p$ is isomorphic to either a group of affine transformations of affine space or the action of $PGL_{n+1}$ on $\m P^n.$ Here it must be that $n>1.$ Both of these group actions preserve collinearity of points. Taking the Morley sequence of points on a line, we can see that $F_{ind} (p) =2.$ $\nmdeg(p)=2$ in this case as well since the binding group acts $2$-transitively. 
\end{proof}

Again, similarly to the results of Section 2 of \cite{freitag2023degree}, in the previous result, one does not really need to assume that the theory $T$ is $DCF_0$.
Rather, it applies more generally when $T$ is $\omega$-stable, eliminates $\exists ^ \infty$, and any nonlocally modular minimal type over $A$ is nonorthogonal to a pure algebraically closed field over $A$. So, for instance, the proof applies to the theory of compact complex manifolds or more generally the class of theories laid out in Section 3 of \cite{freitag2023degree}.

The proof of the previous result also gives a reasonable indication of the difficulties associated with giving 
examples in which $\nmdeg(p) \neq F_{ind}(p)$ - one either has to give an example of a theory whose binding groups have different transitivity properties than algebraic groups or give a theory in which any minimal type in some nonorthogonality class of minimal types requires parameters to define. We don't really know examples of such groups - indeed conjectures like those of Cherlin-Zilber, Borovik-Cherlin \cite{bcorig}, and Borovik-Deloro \cite{borovik2019binding} are towards the nonexistence of such groups. Most natural theories don't seem to have the second property, but we build such a (simple) theory in Section \ref{ocat} using a construction of Evans, and show that for our example there are types $p$ with $\nmdeg(p) \neq F_{ind}(p)$.

\begin{example}
Work in the theory $DCF_0$, and let $p \in S(\mathbb{Q}(t))$ be the generic type of the equation $x''=0$, for $t$ a realization of $p$. It isn't hard to see that over $\m Q(t)$   the solutions of $x''=0$ are in bijective correspondence with $\m A^2 (\m C)$ with the induced structure of subsets definable in $ACF_0$ (e.g. by stable embeddedness). Let $q \in S(c_{1} \ldots c_{2d})$ be the forking extension of $p$ realized by solutions to $x''=0$ of the form $at+b$ where $a$ and $b$ are constants and $(a,b)$ is a generic point on the curve $x=\prod_{i=1}^{2d} (y-c_i)$ where $c_i$ are independent transcendental constants. Then taking solutions of $q$, one can see that $2d$ many are required to define the canonical base of $q$ (each solution gives another point on the curve $x=\prod_{i=1}^{2d} (y-c_i)$ and imposes one linear condition on the coeficients $c_i$). On the other hand, taking realizations of $p$ given by $c_i t+ c_{2d-i}$, it is easy to see that a canonical base of $q$ is contained in the $d$ many realizations as $i$ ranges from $1$ to $d$. In this example we have:  $$\fcb{q/p}=\ficb{q/p} < \fcb{q} = \ficb{q}.$$

Especially with regard to differential algebra, the quantities $\fcb{q/p}, \, \ficb{q/p} , \, \fcb{q} , \, \ficb{q}$ have natural interpretations in terms of existing work. For instance, when $q$ is the generic type of a solution to a system of ODE, and the parameters of the system are unknown constants which form a canonical base, $\ficb{q}$ or $\ficb{p} +1 $ is the number of experiments required to identify the parameters. In general,
\begin{equation*}
\nmdeg {p} = \min \left\{  \fcb{q/p} \, | \, q \text{ is a nonalgebraic forking extension of } p \right\} 
\end{equation*}

\end{example}

\subsection{Inequalities and basic results} \label{basicineq}

In this section, we assume that $T$ is simple. We will see that in a simple theory, $\mathrm{nmdeg}(p)$ can be strictly less than $F_{ind}(p)$. Nonetheless, in the finite-rank setting, the two quantities are related (can be bounded in terms of each other). The inspiration for these arguments comes from problems of parameter identifiability of differential equations; thanks to Scanlon and Pillay for discussions around this topic at various times. %of the fact that the length of an initial segment over which the canonical base is algebraic is bounded by the rank of the canonical base (see also \cite{Sc20}) in the more general setting of simple theories and bounded closure. 

\begin{lemma}\label{cbmorleysequence} 
 Let $T$ be a simple theory, and\footnote{$c\in\mathrm{bdd}(A) $ means $c$ has boundedly many images over A under automorphisms of $M$; for the more intricate definition of $S(\mathrm{bdd}(A))$, see \cite[Definition 1.2]{HartKP}.} $q \in S(\mathrm{bdd}(A))$. Let $\{a_{i}\}_{i < \omega}$ be a Morley sequence in $q$ over $A$. Suppose $\mathrm{SU}(\mathrm{Cb}(q)) = n < \omega$. Then  $\mathrm{Cb}(q) \subseteq \mathrm{bdd}(\{a_{i}\}_{i < n})$.  
\end{lemma}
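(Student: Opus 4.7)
The plan is to prove $c := \Cb(q) \in \mathrm{bdd}(a_{<n})$ by a rank-drop argument: iteratively adding each $a_k$ to the parameter set forces $\mathrm{SU}(c/a_{<k})$ to decrease strictly whenever it is positive, so that after $n = \mathrm{SU}(c)$ steps we reach $0$, which is equivalent to $c \in \mathrm{bdd}(a_{<n})$.

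First I would dispatch some preliminaries. Since $q \in S(\mathrm{bdd}(A))$, the canonical base $c$ lies in $\mathrm{bdd}(A)$. Because $\{a_i\}$ is Morley in $q$ over $A$, it is $\mathrm{bdd}(A)$-independent and $\mathrm{bdd}(A)$-indiscernible; in particular it is $c$-independent and $c$-indiscernible, and so it is also a Morley sequence of $q|_c$ over $c$. (This Morleyness-over-$c$ is what drives the main step.) Also, by the standard canonical-base-in-Morley-sequence lemma for simple theories with hyperimaginaries (cf.\ Hart--Kim--Pillay), $c \in \mathrm{bdd}(\{a_i\}_{i<\omega})$; the present result sharpens the length needed from $\omega$ to $n$.

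The main step is the following. Assume for contradiction that $\mathrm{SU}(c/a_{<k}) > 0$ while $c \ind_{a_{<k}} a_k$. Set $r := \tp(a_k / c\, a_{<k})$. Morleyness over $c$ gives $a_k \ind_c a_{<k}$, so $r$ is the nonforking extension of $r|_c = q|_c$ to $c\, a_{<k}$; hence $\Cb(r) = \Cb(q|_c) = \Cb(q) = c$. On the other hand, our assumption and symmetry give $a_k \ind_{a_{<k}} c$, i.e.\ $r$ does not fork over $a_{<k}$, and therefore $\Cb(r) = \Cb(r|_{a_{<k}}) \in \mathrm{bdd}(a_{<k})$. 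Combining, $c \in \mathrm{bdd}(a_{<k})$, contradicting $\mathrm{SU}(c/a_{<k}) > 0$. So whenever $\mathrm{SU}(c/a_{<k}) > 0$ we must have $c \nind_{a_{<k}} a_k$, which by the monotonicity and strictness properties of SU-rank (valid since all ranks in play are bounded by $n < \omega$) yields $\mathrm{SU}(c/a_{<k+1}) < \mathrm{SU}(c/a_{<k})$.

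Starting from $\mathrm{SU}(c) = n$, iterating this forced strict drop $n$ times gives $\mathrm{SU}(c/a_{<n}) = 0$, equivalently $c \in \mathrm{bdd}(a_{<n})$. The main technical obstacle is bookkeeping in the hyperimaginary setting: one must apply the canonical-base identities $\Cb(r) = \Cb(r|_c) = \Cb(q)$ and $\Cb(r) = \Cb(r|_{a_{<k}}) \in \mathrm{bdd}(a_{<k})$ within a framework in which $\Cb$ is invariant under nonforking extensions/restrictions. These are standard facts in Hart--Kim--Pillay style simple theories, but since we are not assuming elimination of hyperimaginaries, it is worth being explicit that all equalities live in $\mathrm{bdd}^{\mathrm{heq}}$ and that the inclusions $c \in \mathrm{bdd}(\cdot)$ are interpreted accordingly.
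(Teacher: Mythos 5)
Your proof is correct and follows essentially the same route as the paper's: both arguments squeeze the canonical base of $\tp(a_k/\,\cdot\,)$ between $c=\Cb(q)$ and $\mathrm{bdd}(a_{<k})$ under the assumption $a_k \ind_{a_{<k}} c$, and then count the forced strict $\mathrm{SU}$-rank drops over the first $n$ terms. One small repair: ``$\mathrm{bdd}(A)$-independent, in particular $c$-independent'' is not a valid implication by itself --- to get $a_k \ind_c a_{<k}$ you must first note $a_k \ind_c \mathrm{bdd}(A)$ (since $a_k \models q$ and $c = \Cb(q)$) and then apply transitivity, which is exactly the chain $a_i \ind_C A$, $a_i \ind_A a_{<i}$, hence $a_i \ind_C A a_{<i}$ used in the paper.
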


\begin{proof}
  Let $C= \mathrm{Cb}(q)$, and suppose otherwise; we show that, for $0 \leq i \leq n$, $a_{i} \nind_{a_{< i}} C$, contradicting $\mathrm{SU}(\mathrm{Cb}(q)) = n$. We may assume $A = \mathrm{bdd}(A)$, so $C \subseteq A$. Suppose that for some $0 \leq i \leq n$, $a_{i} \ind_{a_{< i}} C$. Since $a_{i} \models q$, $a \ind_{C} A$. Since $\{a_{i}\}_{i < \omega}$ is a Morley sequence in $p$ over $A$, $a_{i} \ind_{A} a_{< i} $. So by transitivity, $a_{i} \ind_{C} A a_{<{i}}$, and therefore $a_{i} \ind_{C a_{< i}} A a_{<{i}}$. By the assumption and transitivity, $a_{i} \ind_{a_{< i}} A a_{<_{i}}$. Since $a_{i} \ind_{A} a_{< i} $, $C = \mathrm{Cb}(q) \subseteq \mathrm{bdd}(\mathrm{Cb}(\mathrm{tp}(a_{i}/A a_{< i})))$. But by $a_{i} \ind_{a_{< i}} A a_{<_{i}}$, $\mathrm{Cb}(\mathrm{tp}(a_{i}/A a_{< i})) \subseteq \mathrm{bdd}(a_{< i})$, so $C \subseteq \mathrm{bdd}(a_{< i}) \subseteq \mathrm{bdd}(a_{< n})$, a contradiction.
\end{proof}

This allows us to bound $F_{ind}(p)$ in terms of $\mathrm{nmdeg}(p)$, when $p$ is of finite rank.

\begin{cor}\label{eqnmfin}
       Let $T$ be simple and $\mathrm{SU}(p) = n < \omega $. Then $\mathrm{nmdeg}(p) \leq F_{ind}(p) \leq n \cdot \mathrm{nmdeg}(p)$.
\end{cor}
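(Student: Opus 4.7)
The first inequality $\mathrm{nmdeg}(p) \leq F_{ind}(p)$ is already noted in Remark \ref{findbigger}, so my plan focuses on $F_{ind}(p) \leq n \cdot \mathrm{nmdeg}(p)$. Let $m = \mathrm{nmdeg}(p)$. By Remark \ref{calcnm}, I may choose witnesses $a_1, \ldots, a_m \models p$ that are independent over $A$, together with a nonalgebraic forking extension $q \in S(\mathrm{bdd}(Aa_1\ldots a_m))$ of $p$; let $C = \mathrm{Cb}(q) \subseteq \mathrm{bdd}(Aa_1\ldots a_m)$. My goal is to produce a nonconstant $A$-indiscernible sequence $(c_i)_{i < \omega}$ of realizations of $p$ for which $\tp(c_{mn}/A c_{<mn})$ already forks over $A$; this will yield $F_{ind}(p) \leq mn = n \cdot \mathrm{nmdeg}(p)$.

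The natural candidate is $(c_i)_{i < \omega}$, a Morley sequence in $q$ over $Aa_1\ldots a_m$: it is $A$-indiscernible (being $Aa_1\ldots a_m$-indiscernible), nonconstant because $q$ is nonalgebraic, and each $c_i$ realizes $q$ and hence $p$. The quantitative heart of the argument is a bound on $\mathrm{SU}(C/A)$. Since the $a_i$ are independent realizations of $p$ and $\mathrm{SU}(p) = n$, iterated Lascar inequality gives $\mathrm{SU}(a_1\ldots a_m/A) \leq mn$, and then $C \subseteq \mathrm{bdd}(Aa_1\ldots a_m)$ forces $\mathrm{SU}(C/A) \leq mn$. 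Working in the theory $T_A$ obtained by naming parameters for $A$, so that SU-rank over $A$ becomes the absolute SU-rank used in Lemma \ref{cbmorleysequence}, the lemma applied to $q$ and $(c_i)$ yields $C \subseteq \mathrm{bdd}(A c_0 \ldots c_{mn-1})$.

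It remains to verify that $c_{mn} \nind_A c_{<mn}$. Since $c_{mn} \models q$, and $q$ is a forking extension of $p$ with canonical base $C$, I have $c_{mn} \nind_A a_1\ldots a_m$ together with $c_{mn} \ind_{AC} a_1\ldots a_m$; transitivity of nonforking independence then forces $c_{mn} \nind_A C$. Combined with $C \subseteq \mathrm{bdd}(Ac_{<mn})$ and the closure of nonforking independence under $\mathrm{bdd}$-closures, this upgrades to $c_{mn} \nind_A c_{<mn}$, as needed. I expect the main subtlety to lie in the application of Lemma \ref{cbmorleysequence}: one must recognize it as exactly the tool that converts a bound on the rank of a canonical base into a bound on how many terms of a Morley sequence are required to capture that base algebraically, and then relativize carefully to $A$ so that the SU-rank input is $\mathrm{SU}(C/A) \leq mn$ rather than the a priori unbounded absolute $\mathrm{SU}(C)$.
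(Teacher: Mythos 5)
Your proposal is correct and follows essentially the same route as the paper: take independent witnesses to $\nmdeg(p)$, a Morley sequence in the nonalgebraic forking extension $q$, bound $\mathrm{SU}(\mathrm{Cb}(q)/A)\leq n\cdot\nmdeg(p)$ by the Lascar inequalities, and invoke Lemma \ref{cbmorleysequence} to trap the canonical base in the first $n\cdot\nmdeg(p)$ terms, forcing the next term to fork. The only differences are presentational (you relativize to $T_A$ explicitly and derive $c_{mn}\nind_A C$ directly, where the paper works over $\emptyset$ and argues by contradiction from $a'_{Nn}\ind_{a'_{<Nn}}A$), and both are sound.
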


\begin{proof}
   Assume without loss of generality that $p \in S(\emptyset)$. For the remaining inequality (by Remark~\ref{findbigger}, $\nmdeg (p)  \leq F_{ind}(p)$), let $\mathrm{nmdeg}(p) = N$. Then there are $a_{1}, \ldots, a_{N} \models p$ so that $p$ has a nonalgebraic forking extension $q$ over $\mathrm{bdd}(a_{1} \ldots a_{N})$. Let $I = \{a'_{i}\}_{i < \omega}$ be a Morley sequence in $q$ over $\mathrm{bdd}(a_{1} \ldots a_{N})$; then $I$ is nonconstant because $q$ is nonalgebraic. Because $\mathrm{Cb}(q) \subseteq \mathrm{bdd}(a_{1} \ldots a_{N})$, by the Lascar inequalities, $U(\mathrm{Cb}(q)) \leq n \cdot N$. So by the previous proposition, $C = \mathrm{Cb}(q) \subseteq \mathrm{bdd}(a'_{<Nn})$. But $a_{Nn} \ind_{A} a_{<Nn}$ because $I$ is a Morley sequence over $A$, and $a_{Nn} \ind_{C} A$ because $a \models q$, so $a_{Nn} \ind_{C} Aa_{<Nn}$ by transitivity, and therefore $a_{Nn} \ind_{a_{< Nn}} Aa_{<Nn}$ because $C  \subseteq \mathrm{bdd}(a'_{<Nn})$. Thus $a_{Nn} \ind_{a_{< Nn}} A$. But because $a_{Nn} \models q$ and $q$ forks over $p$, $a_{Nn} \nind A$, so $a_{Nn} \nind a_{< Nn}$, showing $F_{ind}(p) \leq n N$.
\end{proof}

In the finite-rank case (or the finite $\mathrm{SU}_{p_{0}}$-rank case, where $p_{0}$ is a regular type), the ranks of canonical bases will bound how much independence an indiscernible sequence has. In particular, each indiscernible sequence will be a Morley sequence in a type over some set, and the canonical base of that type will give upper and, in some cases, lower bounds on $k$ so that this sequence is $k$-dependent if it is not a Morley sequence.

\begin{prop} \label{dependencecanonicalbases}
   Let $A \subseteq B$ with $B=\mathrm{bdd}(B)$, $q \in S(B)$, $p = q|_{A}$. Let $I = \{a_{i}\}_{i < \omega}$ be a Morley sequence in $q$ over $B$.
   
   (1) If $\mathrm{SU}(\mathrm{Cb}(q)/A)\leq k$, where $1 \leq k < \omega$, then $a_{k} \nind_{A} a_{< k}$; that is, $I$ is $k+1$-dependent. 

     (2) If $\mathrm{SU}(q)=n < \omega$, $\mathrm{SU}(p) = n+1$, and $\mathrm{SU}(\mathrm{Cb}(q)/A) > k$ where $1 \leq k < \omega$, then $a_{k-1} \ind_{A} a_{< k-1} $; that is, $I$ is $k$-independent.
\end{prop}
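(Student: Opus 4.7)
The plan is to analyze how $\mathrm{SU}(C/Aa_{<i})$ evolves as $i$ grows, where $C = \mathrm{Cb}(q)$, using the Lascar inequalities together with a relative version of Lemma \ref{cbmorleysequence}. As a preliminary, since $a_i \models q$ gives $a_i \ind_C B$, combining with Morley-ness $a_i \ind_B a_{<i}$ via transitivity yields $a_i \ind_C Ba_{<i}$ for each $i$. In particular, $\mathrm{SU}(a_i/CAa_{<i}) = \mathrm{SU}(q)$, and by the same argument $\{a_i\}$ is a Morley sequence in $q|_C$ over $C$.

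For part (1), I would apply Lemma \ref{cbmorleysequence} in the expansion $T(A)$ obtained by naming elements of $A$ as constants (so $\mathrm{SU}_{T(A)}(C) = \mathrm{SU}(C/A) \leq k$ and the Morley structure over $B$ is preserved). This yields $C \subseteq \mathrm{bdd}(Aa_{<k})$. Combining with $a_k \ind_C Ba_{<k}$, base monotonicity together with $\mathrm{bdd}$-invariance of independence (since $C \subseteq \mathrm{bdd}(Aa_{<k}) \subseteq \mathrm{bdd}(Ba_{<k})$) shifts the base up to $Aa_{<k}$, giving $a_k \ind_{Aa_{<k}} B$. Therefore $\mathrm{SU}(a_k/Aa_{<k}) = \mathrm{SU}(a_k/Aa_{<k}B) = \mathrm{SU}(a_k/B) = \mathrm{SU}(q)$. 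Under the natural assumption that $q$ forks over $A$ (the interesting case, equivalent to $\mathrm{SU}(C/A) \geq 1$), we have $\mathrm{SU}(q) < \mathrm{SU}(p) = \mathrm{SU}(a_k/A)$, so $a_k \nind_A a_{<k}$.

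For part (2), the finite rank hypothesis forces $\mathrm{SU}(a_i/Aa_{<i}) \in \{n, n+1\}$, sandwiched between $\mathrm{SU}(a_i/CAa_{<i}) = n$ and $\mathrm{SU}(a_i/A) = n+1$. Decomposing $\mathrm{SU}(Ca_i/Aa_{<i})$ two ways via the Lascar inequality, which is tight for finite ranks, I obtain the recursion
\begin{equation*}
\mathrm{SU}(C/Aa_{<i+1}) = \mathrm{SU}(C/Aa_{<i}) - \bigl(\mathrm{SU}(a_i/Aa_{<i}) - n\bigr),
\end{equation*}
so the drop at step $i$ is $1$ when $a_i \ind_A a_{<i}$ and $0$ otherwise. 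By $A$-indiscernibility of $\{a_i\}$, once $a_j \nind_A a_{<j}$ occurs, it persists for all $j' \geq j$, so the ``dependent'' steps form a terminal segment beginning at some threshold $l$. To conclude, suppose for contradiction that $a_{k-1} \nind_A a_{<k-1}$; then $l \leq k-1$, and $\mathrm{SU}(C/Aa_{<j})$ stabilizes at $\mathrm{SU}(C/A) - l$ for $j \geq l$. But $C \subseteq \mathrm{bdd}(I)$, since the canonical base of $q$ lies in the $\mathrm{bdd}$-closure of its Morley sequence, forcing $\mathrm{SU}(C/AI) = 0$ and hence $\mathrm{SU}(C/A) = l \leq k-1$, contradicting $\mathrm{SU}(C/A) > k$. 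The main obstacle is verifying the tight Lascar recursion, which depends crucially on the finite rank hypothesis of part (2); the case $\mathrm{SU}(C/A) \geq \omega$ is handled separately by an ordinal analysis showing the sequence must then be Morley over $A$, so $k$-independence is automatic.
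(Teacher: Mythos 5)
Your argument follows essentially the same route as the paper's: part (1) uses the relativized form of Lemma \ref{cbmorleysequence} to place $\mathrm{Cb}(q)$ in $\mathrm{bdd}(Aa_{<k})$ and then shifts the base of $a_{k}\ind_{\mathrm{Cb}(q)}Ba_{<k}$ up to $Aa_{<k}$, and part (2) tracks the stepwise drop of $\mathrm{SU}(\mathrm{Cb}(q)/Aa_{<i})$ via the tight Lascar equality, which is exactly the paper's computation. Two repairs are needed. First, in part (1) you conclude $a_{k}\nind_{A}a_{<k}$ from $\mathrm{SU}(q)<\mathrm{SU}(p)$; this presupposes that $\mathrm{SU}(p)$ is ordinal-valued, which the hypotheses of (1) do not grant (only $\mathrm{SU}(\mathrm{Cb}(q)/A)$ is assumed finite there). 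The paper avoids this: from $a_{k}\ind_{Aa_{<k}}Ba_{<k}$ and the contradiction hypothesis $a_{k}\ind_{A}a_{<k}$, transitivity yields $a_{k}\ind_{A}B$, contradicting that $q$ forks over $A$ (equivalently $\mathrm{SU}(\mathrm{Cb}(q)/A)\geq 1$, which, as you correctly note, must be read into the statement). Second, your parenthetical disposal of the case $\mathrm{SU}(\mathrm{Cb}(q)/A)\geq\omega$ in part (2) is misstated: the sequence being Morley over $A$ would force $\mathrm{SU}(\mathrm{Cb}(q)/A)=0$, not $\geq\omega$. The correct observation is that once the dependent steps begin at some threshold $l$, the hypotheses $\mathrm{SU}(q)=n$ and $\mathrm{SU}(p)=n+1$ give $\mathrm{SU}(a_{m}/Aa_{<m})=n=\mathrm{SU}(a_{m}/Ba_{<m})$ for $m\geq l$, hence $a_{m}\ind_{Aa_{<m}}Ba_{<m}$ and $\mathrm{Cb}(q)\subseteq\mathrm{bdd}(Aa_{<m})$, so $\mathrm{SU}(\mathrm{Cb}(q)/A)$ is finite after all and the tight Lascar recursion is legitimate. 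With these two adjustments your proof coincides with the paper's.
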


\begin{proof}
    (1). Since $a_{k} \ind_{B} a_{< k}$, $\mathrm{bdd}(\mathrm{Cb}(\mathrm{tp}(a/\mathrm{bdd}(Ba _{< k})) = \mathrm{bdd}(\mathrm{Cb}(q)) \subseteq \mathrm{bdd}(Aa_{< k})$, the inclusion by Proposition \ref{cbmorleysequence}, so $a_{k} \ind_{Aa_{< k}} B a_{< k}$. Assume for a contradiction that $a_{k} \ind_{A} a_{< k}$; then by transitivity, $a_{k} \ind_{A} B$. But $a_{k} \models q$ and $q$ is a forking extension of $p$ (since $\mathrm{SU}(\mathrm{Cb}(q)/A) \geq 1$), a contradiction.

    (2)  Suppose, for a contradiction, that $a_{k-1} \nind_{A} a_{< k-1} $. Then $n = \mathrm{SU}(a_{k}/B) = \mathrm{SU}(a_{k}/Ba_{k-1}) \leq \mathrm{SU}(a_{k}/Aa_{k-1}) \leq n$, the first two equalities by $a_{k} \models q$ and the fact that $I$ is a Morley sequence, the last by the assumption. So $a_{k} \ind_{Aa_{<k}}Ba_{< k}$. Then $ \mathrm{bdd}(\mathrm{Cb}(q)) =\mathrm{bdd}(\mathrm{Cb}(\mathrm{tp}(a/\mathrm{bdd}(Ba _{< k}))\subseteq \mathrm{bdd}(Aa_{< k})$, so $\mathrm{SU}(\mathrm{Cb}(q)/ a_{< k})=0$, and it suffices to show that, for $0 < i < k$, $\mathrm{SU}(\mathrm{Cb}(q)/ A{a_{< i}}) - \mathrm{SU}(\mathrm{Cb}(q)/ A{a_{< i+1}}) \leq 1$.   By the Lascar inequalities, the quantity $\mathrm{SU}(\mathrm{Cb}(q)/ A{a_{< i}}) - \mathrm{SU}(\mathrm{Cb}(q)/ A{a_{< i+1}})$ is equal to the quantity $\mathrm{SU}(a_{i}/ Aa_{<i}) - \mathrm{SU}(a_{i}/ Aa_{<i}\mathrm{Cb}(q))$.  But $n+1= \mathrm{SU}(a_{i}/B)=\mathrm{SU}(a_{i}/a_{<i}B)$ while $n=\mathrm{SU}(a_{i}/A)$, so because $A \subseteq Aa_{<i} \subseteq Aa_{<i}\mathrm{Cb}(q) \subseteq \mathrm{bdd}(a_{<i}B)$, it follows that $\mathrm{SU}(\mathrm{Cb}(q)/ A{a_{< i}}) -\mathrm{SU}(\mathrm{Cb}(q)/ A{a_{< i+1}}) \leq 1$.  
    
\end{proof}

Note the resemblance between part (2) of this proposition and the direction ((b) $\Rightarrow$ (a)) of Theorem 3.3 of Kim (\cite{Kim10}), and the resemblance, in the case where $\mathrm{SU}(p)=2$, between part (1) of this proposition and the direction ((a) $\Rightarrow$ (b)) of that theorem of Kim, whose proof, as stated in \cite{Kim10}, uses similar ideas to the proof of Theorem 3.6 of \cite{DPK03}. We expect the above proofs to be similar to a proof of Theorem 3.3 of Kim (\cite{Kim10}).

When $\mathrm{SU}(p) = 1$, $\mathrm{nmdeg}(p)=F_{ind}(p)=F_{Mb}(p)=1$. So $\mathrm{SU}(p) = 2$ is the first case in which $\mathrm{nmdeg}(p)\leq F_{ind}(p) \leq F_{Mb}(p)$ can be greater than $1$. In this case, by the previous proposition, the possible ranks of the canonical bases of the forking extensions of $p$ will give both upper and lower bounds for $F_{ind}(p)$ and $F_{Mb}(p)$.

\begin{cor}\label{rank2}
    Let $p \in S(A)$. Let 
    $$N_{-}(p) = \mathrm{min}(\{\mathrm{SU}(\mathrm{Cb}(q)/A): p \subset q \in S(B), A \subseteq B = \mathrm{bdd}(B), \mathrm{SU}(q) > 0, \mathrm{SU}(\mathrm{Cb}(q)/A) \geq 1 \})$$

    and

    $$N_{+}(p) = \mathrm{max}(\{\mathrm{SU}(\mathrm{Cb}(q)/A): p \subseteq q \in S(B), A \subseteq B = \mathrm{bdd}(B) \}.)$$ (If not defined, define $N_{+}(p)= \infty$.)

    Then 

    (1) $F_{ind}(p) \leq N_{-}(p)  $, and if $\mathrm{SU}(p) = 2$, $N_{-}(p) = F_{ind}(p)$.

    (2) $F_{Mb}(p) \leq N_{+}(p)  $, and if $\mathrm{SU}(p) = 2$, $N_{+}(p) = F_{Mb}(p)$.
\end{cor}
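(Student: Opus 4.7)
The plan is to deduce both parts from Proposition~\ref{dependencecanonicalbases}, combined with the standard fact (available in a simple theory with elimination of hyperimaginaries) that every nonconstant $A$-indiscernible sequence $I = (a_i)_{i < \omega}$ in $p$ is a Morley sequence in a unique type $q \in S(B)$ over some $B \supseteq A$ with $B = \mathrm{bdd}(B)$ and $q|_{A} = p$, where $B$ may be taken to be the Kim canonical base of the indiscernible sequence $I$.

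For the inequalities $F_{ind}(p) \leq N_{-}(p)$ and $F_{Mb}(p) \leq N_{+}(p)$, which I expect to hold without the $\mathrm{SU}(p) = 2$ assumption, I would argue in two parallel directions. First, for the $F_{ind}$ bound, I choose a nonalgebraic forking extension $q \in S(B)$ of $p$ realizing $\mathrm{SU}(\mathrm{Cb}(q)/A) = N_{-}(p)$ and take a Morley sequence in $q$ over $B$; this sequence is nonconstant, $A$-indiscernible, and in $p$, and Proposition~\ref{dependencecanonicalbases}(1) applied with $k = N_{-}(p)$ places dependence at position $N_{-}(p)$. Next, for the $F_{Mb}$ bound, I start with an arbitrary non-Morley $A$-indiscernible sequence $I$ in $p$ and realize it as a Morley sequence in some $q' \in S(B)$; since $I$ is not Morley over $A$, $q'$ must be a forking extension of $p$, so $\mathrm{SU}(\mathrm{Cb}(q')/A) \leq N_{+}(p)$, and Proposition~\ref{dependencecanonicalbases}(1) combined with propagation of dependence in indiscernible sequences (as in Remark~\ref{findbigger}) yields $a_{N_{+}(p)} \nind_{A} a_{<N_{+}(p)}$.

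For the equalities under $\mathrm{SU}(p) = 2$, the key observation is that any proper forking extension $q$ of $p$ then satisfies $\mathrm{SU}(q) \leq 1$, and for the relevant $q$ arising from our sequences in fact $\mathrm{SU}(q) = 1$, which is precisely the hypothesis of Proposition~\ref{dependencecanonicalbases}(2) with $n = 1$. For $N_{-}(p) \leq F_{ind}(p)$, I would take a witness $I$ of $F_{ind}(p) = j$ (so the first $j$ terms of $I$ are $A$-independent by minimality, while $a_j \nind_{A} a_{<j}$), realize $I$ as a Morley sequence in some $q \in S(B)$, verify that $q$ is a nonalgebraic forking extension of $p$ with $\mathrm{SU}(q) = 1$, and then apply the contrapositive of Proposition~\ref{dependencecanonicalbases}(2) at position $j$ to bound $\mathrm{SU}(\mathrm{Cb}(q)/A) \leq j$. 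For $N_{+}(p) \leq F_{Mb}(p)$, I would pick a forking extension $q$ realizing $N_{+}(p) = d < \infty$, consider the Morley sequence in $q$ over $B$, and combine parts (1) and (2) of Proposition~\ref{dependencecanonicalbases} to show its first position of $A$-dependence sits exactly at $d$, producing a non-Morley $A$-indiscernible sequence in $p$ with the latest possible first dependence.

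The main obstacles I anticipate are twofold. First, the realization of $I$ as a Morley sequence over an appropriate base $B$ requires some care with hyperimaginaries and the choice of canonical base of an indiscernible sequence, but this is standard machinery. Second, and more subtly, one needs to pin down the arithmetic sharply in the $\mathrm{SU}(p) = 2$ case so that parts (1) and (2) of Proposition~\ref{dependencecanonicalbases} sandwich the first position of dependence in the Morley sequence of $q$ exactly at $\mathrm{SU}(\mathrm{Cb}(q)/A)$ rather than merely within an interval of width one; this sharp sandwich is precisely what breaks once $\mathrm{SU}(q) < \mathrm{SU}(p) - 1$ becomes possible, and explains why the $\mathrm{SU}(p) = 2$ restriction is essential for the equality assertions.
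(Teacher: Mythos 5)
Your proposal is correct and follows essentially the same route as the paper: the paper's own proof consists precisely of the observation that a nonconstant $A$-indiscernible sequence in $p$ which is not a Morley sequence over $A$ is the same thing as a Morley sequence over some $B=\mathrm{bdd}(B)\supseteq A$ in a forking extension $q$ of $p$ (with $\mathrm{SU}(q)=1$ when $\mathrm{SU}(p)=2$), followed by an appeal to both parts of Proposition \ref{dependencecanonicalbases}. The one subtlety you rightly flag in your closing paragraph --- that part (2) as literally stated only locates the first position of dependence within a window of width one, and one must use the sharp count implicit in its proof (each term of the sequence can drop $\mathrm{SU}(\mathrm{Cb}(q)/A\,a_{<i})$ by at most one) to get the exact equalities --- is glossed over by the paper's one-line argument as well, so your account is, if anything, more careful than the original.
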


\begin{proof}
    Note that $I$ is an indiscernible sequence over $A$ that is not a Morley sequence over $A$, if and only if $I$ is a Morley sequence over $B$ in $q \in S(B), A \subseteq B = \mathrm{bdd}(B)$, for $q \supset p$ some forking extension of $p$ (i.e. $\mathrm{SU}(\mathrm{Cb}(q)/A) \geq 1$). If additionally $\mathrm{SU}(p) = 2$, then $I$ is a nonconstant indiscernible sequence over $A$ that is not a Morley sequence over $A$, if and only if $I$ is a Morley sequence over $B$ in $q \in S(B), A \subseteq B = \mathrm{bdd}(B)$, with $\mathrm{SU}(q)=1$--thereby satisfying the hypothesis of (2) of the previous proposition. So the corollary follows from the previous proposition by the definitions of $F_{ind}$ and $F_{Mb}$.

\end{proof}

Even if $p$ is only assumed to be of finite rank, rather than rank $2$, the existence of some $N$ so that $F_{Mb}(q) \leq N$ \textit{for all extensions} $q \supseteq p$ is equivalent to there existing some $N'$ so that \textit{all extensions} $q \supseteq p$ have the property that the canonical bases of forking extensions of $q$ have rank at most $N$.

\begin{prop}
       \label{Mbbounds}
        
        Let $p \in S(A)$, $\mathrm{SU}(p) < \omega $. Suppose there is some $N$ so that for all $q \supseteq p$, $q \in S(B)$, $A \subseteq B $, $F_{Mb}(q) \leq N$. Then there is some $N'$ so that for all $q \supseteq p$, $q \in S(B)$, $A \subseteq B $, $N_{+}(q) \leq N'$  (where $N_{+}(q)$ is as in the previous corollary). (So the converse also holds, by the clause  $F_{Mb}(p) \leq N_{+}(p)  $ of part (2) of the previous corollary.)
\end{prop}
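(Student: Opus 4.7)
The plan is to argue via a descending iteration: I would repeatedly replace $q$ by a forking extension of strictly smaller $\mathrm{SU}$-rank while tracking how much of $\mathrm{Cb}(r)$ has been absorbed into the base, using the uniform bound $F_{Mb}(q') \leq N$ at each step to control the cost of enlarging the base. A telescoping Lascar inequality then yields the desired bound. Setting $n := \mathrm{SU}(p)$, I would show that $N' := N \cdot n(n+1)/2$ works: for any $q \in S(B)$ with $q \supseteq p$ and any $r \in S(C)$ extending $q$ with $B \subseteq C = \mathrm{bdd}(C)$, the rank $K := \mathrm{SU}(\mathrm{Cb}(r)/B)$ is at most $N'$.

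Starting from $q_0 := q$, $B_0 := B$, $r^{(0)} := r$, I would recursively construct extensions $q_k \in S(B_k)$ of $p$ and non-forking extensions $r^{(k)}$ of $r$ (so that $\mathrm{Cb}(r^{(k)}) = \mathrm{Cb}(r)$) such that $r^{(k)}$ is still a forking extension of $q_k$. As long as $\mathrm{SU}(\mathrm{Cb}(r)/B_k) > 0$, the type $q_k$ must be nonalgebraic, for otherwise Lemma \ref{cbmorleysequence} applied to any Morley sequence in $r^{(k)}$ (whose terms would then all lie in $\mathrm{bdd}(B_k)$) would force $\mathrm{Cb}(r) \subseteq \mathrm{bdd}(B_k)$. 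So the hypothesis $F_{Mb}(q_k) \leq N$ applies. Take a Morley sequence $(a_i^{(k)})_{i < \omega}$ in $r^{(k)}$ over $\mathrm{dom}(r^{(k)})$; it is $B_k$-indiscernible in $q_k$ but is not a Morley sequence of $q_k$ over $B_k$ (else the canonical base of its limit type, which equals $\mathrm{Cb}(r^{(k)}) = \mathrm{Cb}(r)$, would be contained in $\mathrm{bdd}(B_k)$). Hence there is a minimal $i_k^* \leq N$ with $a_{i_k^*}^{(k)} \nind_{B_k} a_{<i_k^*}^{(k)}$. Set $B_{k+1} := B_k \cup a_{<i_k^*}^{(k)}$, $q_{k+1} := \tp(a_{i_k^*}^{(k)}/B_{k+1})$, and $r^{(k+1)} := \tp(a_{i_k^*}^{(k)}/\mathrm{dom}(r^{(k)}) \cup a_{<i_k^*}^{(k)})$. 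By construction $q_{k+1}$ is a forking extension of $q_k$, so $\mathrm{SU}(q_{k+1}) < \mathrm{SU}(q_k)$, while $r^{(k+1)}$, by the Morley property of $(a_i^{(k)})$, is a non-forking extension of $r^{(k)}$ extending $q_{k+1}$ with canonical base $\mathrm{Cb}(r)$. The strict rank drop forces termination at some $\ell \leq \mathrm{SU}(q) \leq n$ with $\mathrm{SU}(\mathrm{Cb}(r)/B_\ell) = 0$.

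The Lascar inequality, which holds with equality in finite $\mathrm{SU}$-rank, then gives at each step
\[
\mathrm{SU}(\mathrm{Cb}(r)/B_k) - \mathrm{SU}(\mathrm{Cb}(r)/B_{k+1}) \leq \mathrm{SU}(B_{k+1}/B_k) \leq i_k^* \cdot \mathrm{SU}(q_k) \leq N \cdot \mathrm{SU}(q_k),
\]
which telescopes, using $\mathrm{SU}(q_k) \leq \mathrm{SU}(q) - k$, to $K \leq N \cdot \mathrm{SU}(q)(\mathrm{SU}(q)+1)/2 \leq N'$. The main obstacle will be verifying the dual role played by $a_{i_k^*}^{(k)}$ at each step: minimality of $i_k^*$ must guarantee that $\tp(a_{i_k^*}^{(k)}/B_{k+1})$ forks over $B_k$ (providing the rank drop among the $q_k$), while simultaneously the Morley property of the sequence over the larger domain $\mathrm{dom}(r^{(k)})$ must guarantee non-forking there (preserving $\mathrm{Cb}(r)$ across the iteration). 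The termination analysis, distinguishing between the canonical-base rank hitting zero directly and $q_\ell$ becoming algebraic first, also relies essentially on Lemma \ref{cbmorleysequence}.
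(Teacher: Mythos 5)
Your proof is correct and is essentially the paper's argument in different bookkeeping: both iterate the hypothesis $F_{Mb}(q_k)\le N$ at most $\mathrm{SU}(p)$ times, dropping $\mathrm{SU}$-rank by one and absorbing at most $N$ realizations of $p$ into the base at each step until $\mathrm{Cb}(r)$ lies in the bounded closure of the enlarged base, and then apply the Lascar inequalities. The only differences are organizational --- the paper first proves the standalone claim that any $A$-indiscernible sequence of realizations of $p$ becomes Morley over its first $kN$ terms and then specializes to a Morley sequence in $r$, whereas you track $\mathrm{Cb}(r)$ through a chain of nonforking extensions $r^{(k)}$ --- and your telescoped bound $N' = N\,n(n+1)/2$ is marginally sharper than the paper's $n^2 N$, an improvement the paper itself remarks is available.
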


\begin{proof}
   We claim that suffices to show the following: Suppose that $p$ has the property that for $q \supseteq p$, $q \in S(B)$, $A \subseteq B$, $F_{Mb}(q) \leq N$, and that $\mathrm{SU}(p)\leq k$. Then if $I=\{a_{i}\}_{i < \omega}$ is an $A$-indiscernible sequence of realizations of $p$, $a_{i} \ind_{Aa_{< kN}} a_{<i}$ for $i \geq kN$. To show this suffices, assume we have shown this and let $r \supseteq p$, $r \in S(B)$, $A \subseteq B = \mathrm{bdd}(B)$; we show that $\mathrm{SU}(\mathrm{Cb}(r)/A) \leq k^{2}N$, so that $N_{+}(p) \leq k^{2}N$--this is enough, since $\mathrm{SU}(q) \leq k$ for $q \supseteq p$ so we will also have shown that for all $q \supseteq p$ $q \in S(B)$, $A \subseteq B $, $N_{+}(q) \leq k^{2} N$. To show $\mathrm{SU}(\mathrm{Cb}(q)/A) \leq k^{2}N$, let $I$ be a Morley sequence in $r$ over $B$, so that $a_{i} \ind_{Aa_{< kN}} a_{<i}$ for $i \geq kN$ by assumption. Choose $a_{\omega}$ so that $I \smallfrown \langle a_{\omega }\rangle$ remains a Morley sequence over $B$; by strong finite character, we also have that $a_{\omega} \ind_{Aa_{< kN}} I$. Then since $I \smallfrown \langle a_{\omega }\rangle$ is a Morley sequence over $B$, $\mathrm{tp}(a_{\omega}/BI)$ is a nonforking extension of $q$, so $\mathrm{bdd}(\mathrm{Cb}(\mathrm{tp}(a_{\omega}/BI)))=\mathrm{bdd}(\mathrm{Cb}(q))$, but by finite satisfiability, $\mathrm{tp}(a_{\omega}/BI)$ is itself a nonforking extension of $\mathrm{tp}(a_{\omega}/AI)$, so by $a_{\omega} \ind_{Aa_{< kN}} I$ and transitivity, of  $\mathrm{tp}(a_{\omega}/Aa_{< kN})$. So $\mathrm{bdd}(\mathrm{Cb}(q))=\mathrm{bdd}(\mathrm{Cb}(\mathrm{tp}(a_{\omega}/BI))) \subseteq \mathrm{bdd}(Aa_{<kN})$, so by the Lascar inequalities, $\mathrm{SU}(\mathrm{Cb}(q)/A) \leq k^{2}N$. (Note that, using the ideas of the proof of part (2) of Proposition \ref{dependencecanonicalbases}, we could have improved this bound.) 

   Therefore, it remains to show that if $p$ has the property that for $q \supseteq p$, $q \in S(B)$, $A \subseteq B$, $F_{Mb}(q) \leq N$, and that $\mathrm{SU}(p)\leq k$, and if $I=\{a_{i}\}_{i < \omega}$ is an $A$-indiscernible sequence of realizations of $p$, then $a_{i} \ind_{Aa_{< kN}} a_{<i}$ for $i \geq kN$. We show this by induction on $k$. If $p$ is already a Morley sequence, then we are done. If not, then by $F_{Mb}(q) \leq N$, $p'= \mathrm{tp}(a_{N}/Aa_{< N})$ is a forking extension of $p$ and has $\mathrm{SU}(p)\leq k-1$; $I=\{a_{i}\}_{ N \leq i < \omega}$ is an $Aa_{< N}$-indiscernible sequence of realizations of $p'$. Moreover, $p'$ continues to have the property that for $q \supseteq p'$, $q \in S(B)$, $Aa_{< N} \subseteq B$, $F_{Mb}(q) \leq N$, so by the induction hypothesis we have that $a_{i} \ind_{Aa_{< N}a_{N} \ldots a_{Nk}} a_{<i}$ for $i \geq kN$.

\end{proof}

\begin{remark}
    When $p =\mathrm{tp}(a/A)$ where $a$ is a set of realizations of a regular type $p_{0} \in S(B)$, $\mathrm{bdd}(C)= C \subseteq A$, some of these results hold replacing $\mathrm{SU}$ with $\mathrm{SU}_{p_{0}}$, namely analogues of Proposition \ref{Mbbounds} giving lower bounds on independence:  when $I = \{a_{i}\}_{i < \omega }$  is a Morley sequence 
 in an extension $q$ of $p$ to $B=\mathrm{bdd}(B)$ with $\mathrm{SU}_{p_{0}}(q)=n < \omega$, $\mathrm{SU}_{p_{0}}(p) = n+1$, and $\mathrm{SU}_{p_{0}}(\mathrm{Cb}(q)/A) > k(n+1)$ where $1 \leq k < \omega$, then $a_{k-1} \ind_{A} a_{< k-1} $.

    Therefore, for 
    $$N^{p_{0}}_{+}(p) = \mathrm{max}(\{\mathrm{SU}_{p_{0}}(\mathrm{Cb}(q)/A): p \subseteq q \in S(B), A \subseteq B = \mathrm{bdd}(B) \})$$

    $$N^{p_{0}}_{-}(p) = \mathrm{min}(\{\mathrm{SU}_{p_{0}}(\mathrm{Cb}(q)/A): p \subseteq q \in S(B), A \subseteq B = \mathrm{bdd}(B) \})$$

    we have $N^{p_{0}}_{-}(p) \leq 2f_{Mb}(p)$, $N^{p_{0}}_{+}(p) \leq 2f_{Mb}(p)$ whenever $\mathrm{SU}_{p_{0}}(p) = 2$.
    
    Moreover, when $\mathrm{SU}_{p_{0}}(p)$ is finite and there is some $N$ so that for all $q \supseteq p$, $q \in S(B)$, $A \subseteq B $, $F_{Mb}(q) \leq N$, then there is some $N'$ so that for all $q \supseteq p$, $q \in S(B)$, $A \subseteq B $, $N^{p_{0}}_{+}(q) \leq N'$. The reason why these proofs work for $\mathrm{SU}_{p_{0}}$ is that, because $q$ and $\mathrm{lim}^{+}(I/I)$ have a common nonforking extension, what we are really showing is that when  $F_{Mb}(q) \leq N$ ,  and $\mathrm{SU}(p) - \mathrm{SU}(\mathrm{lim}^{+}(I/I)) =1$ (or $\mathrm{SU}(p)= k$), then $\mathrm{lim}^{+}(I/I)$ does not fork over $\{a_{i}\}_{i < N}$ ($\{a_{i}\}_{i < kN}$), which implies the conclusion about the rank of $\mathrm{Cb}(q)$, which satisfies $\mathrm{bdd}(\mathrm{Cb}(q))=\mathrm{bdd}(\mathrm{Cb}(\mathrm{lim}^{+}(I)))$. (The former case, where $\mathrm{SU}(p) - \mathrm{SU}(\mathrm{lim}^{+}(I/I)) =1$, is not Proposition \ref{Mbbounds} but is contained within the proof.)  Since this will all happen inside of $p_{0}$, the proof works for $\mathrm{SU}_{p_{0}}$ as well.

    % In fact, when $\mathrm{SU}_{p_{0}}(p)= 2$ or even $\mathrm{SU}_{p_{0}}(p)-\mathrm{SU}_{p_{0}}(q) = 1$, we can even get the conclusion of Lemma \ref{cbmorleysequence} and Proposition \ref{dependencecanonicalbases} (1). In the proof of Lemma \ref{cbmorleysequence}, when we suppose $\mathrm{Cb}(q) \not\subseteq \mathrm{bdd}(\{a_{i}\}_{i< n})$, we then know that the first $n+1$ terms of $\{a_{i}\}_{i < \omega}$ must be independent tuples of realizations of $p_{0}$ over $A$,  so $0 \leq i \leq n$, $a_{i} \nind_{a_{< i}} C$, actually contradicts $\mathrm{SU}_{p_{0}}(\mathrm{Cb}(q)) = n$, and the rest of the proof of Lemma \ref{cbmorleysequence} and Proposition \ref{dependencecanonicalbases} (1) goes through, so we even have  $N^{p_{0}}_{-}(p) = f_{Mb}(p)$, $N^{p_{0}}_{+}(p) = f_{Mb}(p)$ whenever $\mathrm{SU}_{p_{0}}(p) = 2$. [This may not work--it's not very important, though.]
\end{remark}

In rank $2$, one special case where the ranks of canonical bases are of interest is that of \textit{plane curves}, or minimal forking extensions of the type of two independent realizations of a regular (or minimal) type. The following extension of linearity was introduced for minimal types in \cite{P96}, then generalized to regular types in \cite{TW03} (an excellent exposition of which can be found in \cite{BTW02}; see \cite{Kim10} for further results on $k$-linearity); this is easily seen to be equivalent to Definition 16 of \cite{BTW02}.

\begin{definition} \label{deflin}
    Let $p$ be a regular type over $A$. Then $p$ is $k$\emph{-linear} if, when $p'=\mathrm{tp}(ab/A)$ where $a \ind_{A} b$, $a, b \models p$, every regular extension $q \supseteq p'$, with $q \in S(B)$, $A \subset B = \mathrm{bdd}(B)$, and $\mathrm{SU}_{p}(q) = 1$, has $\mathrm{SU}_{p}(\mathrm{cb}(q)/A) \leq k$, and moreover, there exists such a type $p'$ and extension $q$ with $\mathrm{SU}_{p}(\mathrm{cb}(q)/A) = k$. The type $p$ is \emph{linear} if and only if it is $1$-linear.
\end{definition}

Note that in the case where $\mathrm{SU}(p)=1$, $\mathrm{SU}_{p}$ is the usual $\mathrm{SU}$-rank and we can omit the assumption that $q$ is regular, because $\mathrm{SU}(q) = 1$ already implies that $q$ is regular.

\begin{remark}
   \label{independenceofpotimesp}
    
    At least in the case of $\mathrm{SU}(p)=1$, $k$-linearity does not depend on $p'$ in the above definition when $p \in S(A)$ is a Lascar type: if $p \otimes p$ is a type over $A$ realized by two independent realizations of $p$ over $A$, and $p\otimes p$ has a forking extension $q$ with $\mathrm{SU}(\mathrm{Cb}(q)/A ) \geq k$, then for $(p \otimes p)'$ some other type over $A$ realized by two independent realizations of $p$ over $A$, $(p \otimes p)'$ has a forking extension $q'$ with $\mathrm{SU}(\mathrm{Cb}(q)/A ) \geq k$. To see this, we apply the argument used by \cite{TW03} to get the group configuration from pseudolinearity; see Fact \ref{pseudolinearity} below and the sketch of its proof. By Lascarness the independence theorem holds, so there are $a, b, c$ independent over $A$ so that $ab \models p \otimes p$, $bc \models p \otimes p$, $ac \models (p \otimes p)'$. There are also $c_{ab}$, $c_{bc}$ so that $\mathrm{SU}(c_{ab}/A) = k$,  $\mathrm{SU}(c_{bc}/A) = k$, $c_{ab}= \mathrm{Cb}(\mathrm{tp}(ab/c_{ab}))$, $c_{bc}= \mathrm{Cb}(\mathrm{tp}(bc/c_{bc}))$; additionally we may choose $c_{ab}$ and $c_{bc}$ so that $ac_{ab} \ind_{A b} cc_{bc}$. Let $c_{ac}=\mathrm{Cb}(ac/c_{ab}c_{ac})$; we show that $\mathrm{SU}(c_{ac}/A) \geq k$, indeed that  $\mathrm{SU}(c_{ac}/Ac_{ab}) \geq k$.  We see that $c_{bc} \subseteq \mathrm{bdd}(c_{ab}c_{ac})$, since $b$ and $c$ are interalgebraic over $c_{ab}c_{bc}$. But $c_{bc} \ind_{A} c_{ab}$, so  $\mathrm{SU}(c_{bc}/Ac_{ab}) = k$, so $\mathrm{SU}(c_{ac}/Ac_{ab}) \geq k$.

    We expect this is also true for $p$ a regular type, though we only really need the above for Proposition \ref{Mboneinfinity}, where $T$ is of finite rank and all regular types have $\mathrm{SU}(p) = 1$.
\end{remark}

The following is then a consequence of Corollary \ref{rank2}, and the above Remark \ref{independenceofpotimesp}:

\begin{cor}\label{Mbpsquared}

Let $p \in S(A)$ with $\mathrm{SU}(p) = 1$. Then $p$ is $k$-linear if and only if the maximum value of $F_{Mb}(p \otimes p)$ is equal to $k$, where $p \otimes p$ ranges over the types over $A$ realized by two independent realizations of $p$. So if $p$ is a Lascar type (e.g. if $A=\mathrm{acl}^{eq}(A)$ and $T$ is supersimple), and $p\otimes p $ is any type over $A$ realized by two independent realizations of $p$ over $A$, $p$ is $k$-linear if and only if $F_{Mb}(p \otimes p) = k$. 
\end{cor}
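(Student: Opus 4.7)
The plan is to read this off directly by combining Corollary \ref{rank2}(2) with the definition of $k$-linearity (Definition \ref{deflin}) and, for the second assertion, with Remark \ref{independenceofpotimesp}.

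First I would note that, because $\mathrm{SU}(p)=1$, the Lascar inequalities force $\mathrm{SU}(p\otimes p)=2$ for any type $p\otimes p\in S(A)$ realized by two $A$-independent realizations of $p$. Hence Corollary \ref{rank2}(2) applies and gives $F_{Mb}(p\otimes p)=N_{+}(p\otimes p)$. Moreover, inspecting the proof of that corollary shows that the only forking extensions $q\supseteq p\otimes p$ which are relevant are those with $\mathrm{SU}(q)=1$: nonforking extensions satisfy $\mathrm{Cb}(q)\subseteq\mathrm{bdd}(A)$ and hence contribute rank zero, while algebraic extensions produce only constant indiscernible sequences in $p\otimes p$ (trivially Morley) and so cannot affect $F_{Mb}$.

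Next I would compare with Definition \ref{deflin}. When $\mathrm{SU}(p)=1$ we have $\mathrm{SU}_{p}=\mathrm{SU}$, and any $q$ with $\mathrm{SU}(q)=1$ is automatically regular, so $k$-linearity of $p$ is exactly the statement that, as $p\otimes p$ ranges over the types of two $A$-independent realizations of $p$ and $q$ ranges over extensions of each such $p\otimes p$ with $\mathrm{SU}(q)=1$, the supremum of $\mathrm{SU}(\mathrm{Cb}(q)/A)$ equals $k$. Combined with the previous paragraph, this supremum equals $\max_{p\otimes p}F_{Mb}(p\otimes p)$, and the first equivalence drops out immediately.

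For the second claim, I would invoke Remark \ref{independenceofpotimesp}: when $p$ is a Lascar type, the value $N_{+}(p\otimes p)$ is independent of which type $p\otimes p$ of two independent realizations of $p$ one chooses, so $F_{Mb}(p\otimes p)=k$ for every such $p\otimes p$. I do not expect a genuine obstacle here, as the real work is already packaged inside Corollary \ref{rank2} and Remark \ref{independenceofpotimesp}; the only delicate point is to keep track that the restriction $\mathrm{SU}(q)=1$ is the right one on both sides of the equivalence, so that the sup in the characterization of $F_{Mb}(p\otimes p)$ matches the sup in $k$-linearity.
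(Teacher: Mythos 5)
Your proposal is correct and follows essentially the same route as the paper, which derives this corollary directly from Corollary \ref{rank2}(2) and Remark \ref{independenceofpotimesp} without further elaboration. Your additional bookkeeping (that $\mathrm{SU}(p\otimes p)=2$, that only the $\mathrm{SU}(q)=1$ extensions matter on both sides, and that regularity of $q$ is automatic in rank one) is exactly the implicit content of the paper's one-line justification.
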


This just says that Theorem 3.3 (1) of \cite{Kim10}, the equivalence of $k$-linearity and being ``$k$-based" for a Lascar strong type $p$, is witnessed by two independent realizations of $p$; again, as stated above, we expect that its proof makes use of similar ideas. In fact, using Theorem 3.3 (1) of \cite{Kim10}, we can do even better:

\begin{fact}
Let $p \in S(A)$ with $\mathrm{SU}(p) =1$, and $n \geq 2$. Then  $p$ is $k$-linear if and only if the maximum value of $F_{Mb}(p^{(n)})$ is equal to $k$, where $p^{n}$ ranges over the types over $A$ realized by $n$ independent realizations of $p$. If $p$ is a Lascar type, and $p^{(n)}$ is any type over $A$ realized by $n$ independent realizations of $p$ over $A$, then $p$ is $k$-linear if and only if $F_{Mb}(p^{(n)}) = k$. 
\end{fact}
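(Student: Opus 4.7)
The plan is to combine three ingredients: Corollary \ref{Mbpsquared} (the case $n = 2$), Theorem 3.3(1) of \cite{Kim10} identifying $k$-linearity with $k$-basedness for Lascar strong types, and the bound $F_{Mb}(-) \leq N_{+}(-)$ of Corollary \ref{rank2}(2). For the upper bound, suppose $p$ is $k$-linear, so by Kim's result $p$ is $k$-based. Unpacking $k$-basedness, every canonical base $\mathrm{Cb}(q)$ of a type $q \in S(B)$ extending some $p^{(n)}$, with $A \subseteq B = \mathrm{bdd}(B)$, satisfies $\mathrm{SU}(\mathrm{Cb}(q)/A) \leq k$. Therefore $N_{+}(p^{(n)}) \leq k$ for every $n \geq 2$, and Corollary \ref{rank2}(2) yields $F_{Mb}(p^{(n)}) \leq k$.

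For the lower bound, invoke Corollary \ref{Mbpsquared} to obtain a type $p \otimes p$ of two independent realizations of $p$ with $F_{Mb}(p \otimes p) = k$, witnessed by an $A$-indiscernible sequence $((a_i, b_i))_{i < \omega}$ in $p \otimes p$ that is not a Morley sequence but remains $k$-independent over $A$. I would then adjoin $n-2$ further rows $(c^j_i)_{i < \omega}$ for $j = 1, \ldots, n-2$, each chosen inductively to be a Morley sequence in $p$ over $A$ that is independent of the previously constructed data; the resulting sequence $((a_i, b_i, c^1_i, \ldots, c^{n-2}_i))_{i < \omega}$ is $A$-indiscernible in a type $p^{(n)}$ of $n$ independent realizations of $p$. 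By transitivity of forking independence, the combined sequence remains $k$-independent over $A$ (the fresh rows contribute no new dependence), yet it fails to be Morley since its projection onto the first two coordinates already does. Hence $F_{Mb}(p^{(n)}) \geq k$ for this choice of $p^{(n)}$, and the maximum over all such $p^{(n)}$ is exactly $k$.

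When $p$ is a Lascar type, any two independent $n$-tuples of realizations of $p$ over $A$ share the same Lascar strong type, so $p^{(n)}$ is essentially unique and the maximum collapses to a single value $k$. A more hands-on alternative, paralleling Remark \ref{independenceofpotimesp}, would transport a witness indiscernible sequence between two choices $p^{(n)}$ and $(p^{(n)})'$ via iterated applications of the independence theorem; the transport only really needs to occur on two coordinates, with the remaining coordinates filled in by fresh Morley sequences as above. The main obstacle will be pinning down the precise formulation of Kim's $k$-basedness and checking that it supplies the uniform bound $N_{+}(p^{(n)}) \leq k$ across all $n \geq 2$; once this is secured, the rest is a routine combination of the cited corollaries with the extension-by-fresh-Morley-sequences construction.
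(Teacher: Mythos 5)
Your lower bound (extending a witness for $F_{Mb}(p\otimes p)=k$ by $n-2$ fresh, mutually independent Morley rows and re-extracting an $A$-indiscernible sequence) is essentially what the paper has in mind and can be made to work: $k$-independence is type-definable (Subclaims \ref{typedefinablemorley}, \ref{nindependencetypedefinable}), and the dependence of the projection onto the first two coordinates is inherited by the extracted sequence. The gap is in your upper bound. You claim that $k$-basedness in the sense of Theorem 3.3(1) of \cite{Kim10} unpacks to $\mathrm{SU}(\mathrm{Cb}(q)/A)\leq k$ for every extension $q$ of every $p^{(n)}$, i.e.\ that $N_{+}(p^{(n)})\leq k$. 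That is false. What $k$-basedness actually gives is containment of $\mathrm{Cb}(q)$ in the bounded closure of the first $k$ terms of a Morley sequence in $q$ --- each term being an $n$-tuple of realizations of $p$ --- so the rank bound extractable this way is only $\mathrm{SU}(\mathrm{Cb}(q)/A)\leq kn$, which via $F_{Mb}\leq N_{+}$ yields the too-weak bound $F_{Mb}(p^{(n)})\leq kn$. For a concrete failure of $N_{+}(p^{(n)})\leq k$: let $T$ be the (strongly minimal, one-based, hence $1$-linear) theory of an infinite $\mathbb{F}_{q}$-vector space $V$, let $p$ be the generic $1$-type, $n=3$, and let $q$ be the generic type over $\{d_{1},d_{2}\}$ of the affine line $\{(x,x+d_{2},x+d_{1}):x\in V\}$ for independent generics $d_{1},d_{2}$. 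Then $q$ extends a type $p^{(3)}$ of three independent generics, $\mathrm{SU}(q)=1$, and $\mathrm{Cb}(q)$ is interdefinable with $(d_{1},d_{2})$, of $\mathrm{SU}$-rank $2>k=1$. (Here $F_{Mb}(p^{(3)})=1$ anyway, since a Morley sequence in $q$ is already $2$-dependent; the point is that $F_{Mb}\leq N_{+}$ is strict above rank $2$, so bounding $N_{+}$ is both false as you state it and the wrong tool.)

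The repair --- and the paper's actual route --- is to use the sequence form of direction (a)$\Rightarrow$(b) of Kim's theorem directly: if $p$ is $k$-linear and $I=\{a_{i}\}_{i<\omega}$ is \emph{any} $A$-indiscernible sequence of realizations of $p^{(n)}$, then the tail $\{a_{k-1+i}\}_{i<\omega}$ is a Morley sequence over $Aa_{<k}$. Hence if $I$ is not a Morley sequence over $A$, the failure of independence must already occur among the first $k+1$ terms, that is $a_{k}\nind_{A}a_{<k}$, and $F_{Mb}(p^{(n)})\leq k$ follows with no detour through canonical-base ranks. Your final paragraph on the Lascar case (transporting witnesses between choices of $p^{(n)}$ via the independence theorem, reducing to two coordinates as in Remark \ref{independenceofpotimesp}) agrees with the paper.
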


\begin{proof}
Direction (a) $\Rightarrow$ (b) of Theorem 3.3 (1) of \cite{Kim10} says that if $p$ is $k$-linear and $I = \{a_{i}\}_{i < \omega}$ is an $A$-indiscernible sequence of realizations of $p^{(n)}$, for $p^{(n)}$ any type realized by $n$ independent realizations of $p$, then $I= \{a_{k-1 +i}\}_{i < \omega}$ is a Morley sequence over $Aa_{< k}$. (Note that Theorem 3.3 (1) of \cite{Kim10} says this explicitly only when $p$ is a Lascar type, but the proof should extend to any complete type.)  Therefore, if $I$ is not a Morley sequence over $A$, $a_{k} \nind_{A} a_{< k}$ and  $F_{Mb}(p^{(n)}) \leq k$. Since $n \geq 2$, if $p$ is $k$-linear, there is some type $p^{(n)}$ realized by $n$ independent realizations of $k$ so that $F_{Mb}(p^{(n)}) \geq k$; if $p$ is additionally a Lascar type, this holds for any $p^{(2)}$ and thus any $p^{(n)}$, by Remark \ref{independenceofpotimesp}.

\end{proof}

When $p$ is minimal, note that this definition is equivalent replacing $\mathrm{SU}_{p}$ with just $\mathrm{SU}$.

In some classes of theories, $k=1$ is the only possible value for which a regular type can be $k$-linearity. This is shown by \cite{P96}, \cite{TW03} and is highly nontrivial, relying on the \textit{group configuration theorem} in stable or $\omega$-categorical simple theories. %\sidebar{Jf: Question - would it be worth it here to sketch the argument in the stable case, say - basically, the argument I can think of goes like this - basically if you are k-linear, then do a group config to get a group action of a rank k-ish (maybe k-1 or k or k+1, I haven't thought it through) group action on a rank 1 type. Anyhow, one can draw the diagram set this up. Well, now these actions are classified by Hrushovski's thesis and are the simplest case of the Borovik-Cherlin conjecture - a group action as such has the group being rank 1,2, or 3. In the case of rank 1, the action is the regular action of an abelian group, in rank 2 it is an affine group action, and rank 3 it is pgl. In the latter two cases, we are dealing with a field which isn't k-linear for any k, and so we can eliminate those cases. The abelian group can be shown to be locally modular. }

\begin{fact} \label{pseudolinearity}(\cite{P96}, \cite{TW03})
    Let $T$ be a stable theory and $p$ a regular type, or let $T$ be an $\omega$-categorical simple theory, and let $p$ be a regular type over a finite set. Then if $p$ is $k$-linear, $k=1$.
\end{fact}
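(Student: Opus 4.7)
The plan is to derive a contradiction from the assumption that $p$ is $k$-linear with $k \geq 2$ by constructing a group configuration, applying the group configuration theorem, and then using the resulting type-definable group to violate the bound on the $\mathrm{SU}_p$-ranks of plane curves. First I would fix a plane curve $q \supset p\otimes p$ over some $B = \mathrm{bdd}(B) \supseteq A$ with $\mathrm{SU}_p(q) = 1$ and $\mathrm{SU}_p(\mathrm{Cb}(q)/A) = k$, and let $e = \mathrm{Cb}(q)$. In the stable case canonical bases are available in $T^{eq}$; in the $\omega$-categorical simple case over a finite set, $p$ is a Lascar type, so the independence theorem is available and canonical bases live in $\mathrm{bdd}$.

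Next I would build the group configuration. Take $a,b,c$ independent generics of $p$ over $A$ and use the independence theorem (together with $k$-linearity controlling the various canonical bases) to produce realizations $e_1, e_2, e_3$ of the type of $e$ such that $(a,b)$ is generic on the plane curve parametrized by $e_1$, $(b,c)$ on that parametrized by $e_2$, and $(a,c)$ on that parametrized by $e_3$, with $e_3 \in \mathrm{bdd}(A e_1 e_2 b)$ following from the fact that once we traverse the first two curves from $a$ through $b$ to $c$, the relation between $a$ and $c$ is $\mathrm{bdd}$-controlled by the traversal. Verifying the six standard axioms of the group configuration (independences of triples and pairs, and the three interalgebraicities along the sides of the hexagon) is a careful but routine check using Lascar-invariance of canonical bases and the Lascar inequalities for $\mathrm{SU}_p$; the assumption $k \geq 2$ is what makes the configuration nondegenerate in the sense that the relevant $\mathrm{SU}_p$-dimensions are $(1,1,1,k,k,k)$ rather than all equal to $1$.

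Applying the group configuration theorem in its appropriate form (Hrushovski's version for stable theories, and the Ben-Yaacov–Tomašić–Wagner version over hyperimaginaries for the simple case), I obtain a type-definable group $G$ (over $A$, up to $\mathrm{bdd}$) of $\mathrm{SU}_p$-rank $k$ acting generically transitively on a type interalgebraic with $p$, with the generic of $G$ interalgebraic with the canonical base of a generic plane curve. Since the generic orbit has $\mathrm{SU}_p$-rank $1$ and $\mathrm{SU}_p(G) = k$, the generic stabilizer $H$ of a pair $(a,b)$ of generics has $\mathrm{SU}_p(H/A) = k - 1 \geq 1$, so it is infinite. Translating $H$ back into the language of plane curves, the elements of $H$ correspond to a bdd-definable family of pairwise distinct plane curves of $\mathrm{SU}_p(\mathrm{Cb}(\cdot)/A) = k$ all passing through the fixed generic pair $(a,b)$; iterating this construction (by composing two such curves through different pairs of points) produces plane curves whose canonical bases have $\mathrm{SU}_p$-rank strictly exceeding $k$, contradicting that $k$ was the maximal value witnessing $k$-linearity.

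The main obstacle is the verification of the group configuration, and in particular checking that in the simple $\omega$-categorical setting the canonical bases $e_i$ satisfy the required independence relations over $A$ with the correct $\mathrm{SU}_p$-rank increments, which requires invoking the independence theorem for Lascar types together with the additivity of $\mathrm{SU}_p$ along forking chains in the presence of regularity. The ``iteration'' step at the end — extracting from the infinite stabilizer a genuine increase in the $\mathrm{SU}_p$-rank of some plane curve's canonical base — is also delicate and is where the theory of the generic stabilizer interacts nontrivially with the parametrization of plane curves by their canonical bases.
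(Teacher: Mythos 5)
Your construction of the group configuration and the passage to a type-definable group $G$ of $\mathrm{SU}_p$-rank $k$ acting on a rank-one type follows the same route as the paper (which does this via Remark \ref{independenceofpotimesp} and then cites \cite{Hr92} in the stable case and \cite{TW03} in the $\omega$-categorical case). The gap is in your concluding step. Once the curves through a generic pair are identified with a coset of a point-stabilizer in $G$, the family of plane curves you are working with is (generically) \emph{closed under composition}: if $g$ sends $a$ to $b$ and $h$ sends $b$ to $c$, then $hg$ sends $a$ to $c$ and is again a generic element of $G$, hence the canonical base of the composite curve still has $\mathrm{SU}_p$-rank exactly $k$, not $>k$. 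Concretely, the fiber of the multiplication map over a generic product has rank $k$, so $\mathrm{SU}_p(\mathrm{Cb})$ of the composite is $2k - k = k$; no contradiction arises. (Compare the family $y = ax+b$ in $\mathrm{ACF}_0$: a two-dimensional group family whose composites remain in the same two-dimensional family.) This is precisely why pseudolinearity is hard: the composition argument cannot distinguish a rank-$k$ group family from a genuinely non-linear geometry.

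What is actually needed at this point — and what the paper invokes — is a nontrivial external input about transitive group actions on rank-one types. In the $\omega$-categorical simple case, \cite{TW03} prove outright that no type-definable group of rank $k>1$ can act transitively on a rank-one type. In the stable case one uses the classification from Hrushovski's thesis: such a group has $\mathrm{U}$-rank $1$, $2$ or $3$, and in the rank $2$ or $3$ cases the type is nonorthogonal to a definable field, which then yields plane curves with canonical bases of unbounded rank (as in Example \ref{ACFMB}), contradicting $k$-linearity for every $k$. Neither of these ingredients can be replaced by iterated composition of curves, so as written your proof does not close.
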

\begin{proof}
    (Sketch) We sketch the $\mathrm{SU}$-rank $1$ case. Suppose $p$ is $k$-linear. Then as in Remark \ref{independenceofpotimesp}, we can get $a, b, c, c_{ab}, c_{bc}, c_{ac}$, where $a, b, c$ have $\mathrm{SU}$-rank $1$ over $A$, and $c_{ab}, c_{bc}, c_{ac}$ have $\mathrm{SU}$-rank $k$ over $A$; these will be arranged in a \textit{group configuration}, where any three of the $a, b, c, c_{ab}, c_{bc}, c_{ac}$ will be independent, except for $a, b, c_{ab}$; $b, c, c_{bc}$; and $a, c, c_{ac}$. By \cite{Hr92} in the stable case, \cite{TW03} in the $\omega$-categorical case, we then obtain a type-definable group of $\mathrm{SU}$-rank $k$ acting transitively on a type of $\mathrm{SU}$-rank $1$. In the $\omega$-categorical case, \cite{TW03} show that there can be no such group when $k > 1$; in the stable case, by Hrushovski's thesis, it is known that any group of $\mathrm{U}$-rank $k > 1$ acting on a type $p$ of $\mathrm{U}$-rank $1$ must be have $\mathrm{U}$-rank $1$, $2$ or $3$ (this proves a special case of the Borovik-Cherlin conjecture), and in the case where the $\mathrm{U}$-rank is $2$ or $3$ the type $p$ is nonorthogonal to a field, so can be shown not to be $k'$-linear for any $k'$ (see Example \ref{ACFMB} for a related observation).
\end{proof}

From Fact \ref{pseudolinearity}, Corollary \ref{Mbpsquared}, and Remark \ref{independenceofpotimesp} follows:

\begin{cor}\label{pseudolinearitymb}
Let  $p \in S(A)$ be a regular type in a stable theory, or a regular type over a finite set in an $\omega$-categorical simple theory. Then either there is no finite bound on $F_{Mb}(p \otimes p)$, where $p \otimes p$ ranges over all types over $A$ realized by two independent realizations of $p$ over $A$, or $p$ is linear.

Moreover, let $p \in S(A)$ be a regular \emph{Lascar} type, with $\mathrm{SU}(p) = 1$ in a stable theory or an an $\omega$-categorical supersimple theory, and let $p \otimes p$ be any type over $A$ realized by two independent realizations of $p$ over $A$. Then either $p$ is not $k$-linear for any $k$ and $F_{Mb}(p \otimes p) = \infty$, or $p$ is linear and $F_{Mb}(p\otimes p) =1$,

 \end{cor}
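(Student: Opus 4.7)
The plan is to deduce the corollary by combining the three cited results in a direct way, essentially by contrapositive on $k$-linearity.

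First I would handle the first (general regular) statement. Suppose $p$ is not linear. If $p$ were $k$-linear for some $k$, then by Fact \ref{pseudolinearity} we would have $k = 1$, i.e.\ $p$ linear, contradicting our assumption. Hence $p$ is not $k$-linear for any $k$. Now invoke Corollary \ref{Mbpsquared}: that corollary characterizes $k$-linearity of $p$ as the statement that the maximum of $F_{Mb}(p \otimes p)$, as $p \otimes p$ ranges over all types over $A$ realized by two independent realizations of $p$, equals $k$. Since no finite $k$ works, this maximum cannot be any finite value, i.e.\ there is no finite bound on $F_{Mb}(p \otimes p)$ as $p \otimes p$ varies. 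This gives the dichotomy in the first sentence.

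For the second (Lascar, $\mathrm{SU}(p)=1$) statement, I would additionally use Remark \ref{independenceofpotimesp}, which says that when $p$ is a Lascar type with $\mathrm{SU}(p) = 1$, the existence of a forking extension of some $p \otimes p$ with canonical base of $\mathrm{SU}$-rank $\geq k$ transfers to \emph{any} $p \otimes p$; equivalently, the statement in Corollary \ref{Mbpsquared} becomes independent of which $p \otimes p$ is chosen, so that $p$ is $k$-linear iff $F_{Mb}(p \otimes p) = k$ for any fixed $p \otimes p$. Thus if $p$ is not linear, then by Fact \ref{pseudolinearity} it is not $k$-linear for any $k$, and by Corollary \ref{Mbpsquared} (in this Lascar form) $F_{Mb}(p \otimes p) = \infty$. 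Otherwise $p$ is linear, i.e.\ $1$-linear, and $F_{Mb}(p \otimes p) = 1$.

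There is essentially no obstacle: all of the substantive work (the group configuration / Borovik--Cherlin style input for Fact \ref{pseudolinearity}, the equivalence between $k$-linearity and $F_{Mb}(p \otimes p)$ in Corollary \ref{Mbpsquared}, and the independence-theorem argument in Remark \ref{independenceofpotimesp}) has already been carried out. The one small point worth stating carefully is to observe that the ``maximum'' in Corollary \ref{Mbpsquared} is either attained at some finite $k$ or is genuinely unbounded (equivalently $= \infty$), which is immediate from the definition. So the proof is a short chain of citations, and can be written in a single paragraph.
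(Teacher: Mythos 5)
Your chain of citations correctly disposes of the first statement and of the stable case of the second statement, and it matches the paper's intent there. But there is a genuine gap in the second statement: in the $\omega$-categorical supersimple case the hypothesis places no finiteness restriction on $A$, whereas Fact \ref{pseudolinearity} for $\omega$-categorical simple theories is stated only for regular types \emph{over a finite set} (the group-configuration input of \cite{TW03} is used there). So your step ``if $p$ is not linear, then by Fact \ref{pseudolinearity} it is not $k$-linear for any $k$'' is not licensed when $A$ is infinite. This is precisely the case the paper's proof is entirely devoted to: it replaces $A$ by $\mathrm{bdd}(A)$, uses supersimplicity to find a finite $A_{0} \subseteq A$ over which $p$ does not fork and over which $p|_{A_{0}}$ is still a Lascar (strong) type, uses the independence theorem to match types of the form $p \otimes p$ over $A$ with types of the form $p|_{A_{0}} \otimes p|_{A_{0}}$ over $A_{0}$, and then transfers the value of $F_{Mb}$ back and forth via Claim \ref{Mbpreservedundernonforkingextension} (invariance of $F_{Mb}$ under nonforking extension). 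Without some such reduction your argument only proves the second statement when $A$ is finite.

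A secondary point: Corollary \ref{Mbpsquared} is stated only for $\mathrm{SU}(p)=1$, so for the first statement, where $p$ is merely regular (possibly of higher or infinite $\mathrm{SU}$-rank, as in the vector-space example), the equivalence between $k$-linearity and the maximum of $F_{Mb}(p \otimes p)$ should be routed through the $\mathrm{SU}_{p}$-rank analogue recorded in the remark following Proposition \ref{Mbbounds} (which gives $N^{p}_{+}(p\otimes p) \leq 2 F_{Mb}(p \otimes p)$ when $\mathrm{SU}_{p}(p \otimes p) = 2$), rather than through Corollary \ref{Mbpsquared} verbatim. The paper is itself somewhat terse on this, but your write-up should at least note that the rank-one corollary does not literally apply.
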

\begin{proof}
    The only case not yet clear from the facts quoted above is the case of a Lascar type $p(x) \in S(A)$ in an $\omega$-categorical supersimple theory where $A$ is not finite. (We include this case because it will be needed for Proposition \ref{Mboneinfinity}, showing that $F_{Mb}$ has values $1$ and $\infty$ when $T$ is $\omega$-categorical supersimple of finite rank.) We may clearly replace $A=\mathrm{bdd}(A)$. By supersimplicity, we may find some finite $A'_{0} \subset A$ over which $p(x)$ does not fork; we may also add the ($A_{0}$-definable, by $\omega$-categoricity) Lascar equivalence class of realizations of $p(x)$ over $A'_{0}$ to get a finite set $A_{0} \subseteq A$; then $p(x)$ will not fork over $A_{0}$, and $p(x) |_{A_{0}}$ will be a Lascar strong type over $A$ because all realizations of $p(x) |_{A_{0}}$ will be Lascar equivalent over $A'_{0}$, so over $A_{0} $ because $A_{0} \subset \mathrm{acl}^{{eq}}(A'_{0})$.  So by the independence theorem and the fact that $p(x)$ does not fork over $A_{0}$, every type $p|_{A_{0}}(x) \otimes p|_{A_{0}}(x) $ realized by two $A_{0}$-independent realizations of $p|_{A_{0}}(x)$ extends to a type $p(x) \otimes p(x)$ realized by two $A$-independent  realizations of $p(x)$, and any type of the form $p(x) \otimes p(x)$  restricts to a type of the form $p|_{A_{0}}(x) \otimes p|_{A_{0}}(x) $. So the case of $p(x)$ follows from the case of $p(x)|_{A_{0}}$, a Lascar type over a finite set, by Corollary \ref{Mbpsquared} and the below Claim \ref{Mbpreservedundernonforkingextension}.
\end{proof}

In Proposition \ref{Mboneinfinity} we will show that \textit{any} type $p$ in a supersimple theory of finite rank, whenever the conclusion of Fact \ref{pseudolinearity} holds, must have $F_{Mb}(p) =1$ or $F_{Mb}(p) = \infty$.

A good illustration of the case $F_{Mb}(p \otimes p) = \infty$ of Corollary \ref{pseudolinearitymb} is given by $T=ACF_0$, the theory of algebraically closed fields of characteristic zero. We saw in Example \ref{ACFMB} that $\Fmb {p} = \infty$, when $p \in S_2 ( \emptyset )$ is the generic type of affine $2$-space.

%Consider the indiscernible sequence in $p$ given by independent generic points of a generic affine curve of degree $d$, 
%$$\sum_{i,j} c_{i,j} x^i y^j  +1 = 0,$$
%where $c_{i,j}$ are independent transcendentals. 
%Let $q$ be the generic type of the curve. It isn't hard to see that the tuple $\bar c$ of the transcendentals $c_{i,j}$ is the canonical parameter of the forking extension $q$ of $p$, and that $\bar c$ is in the field generated by $(d+1)^2-1$ many independent realizations of $q$, but no fewer (each point yields one independent linear condition on the coefficients). 
%So, taking an indiscernible sequence in $q$, $(a_i)$, over the emptyset, $(a_i)_{i \leq (d+1)^2-1}$ is an initial segment of a Morley sequence (ie this collection of points in independent), but $(a_i)_{i \leq (d+1)^2}$ is not. Since a similar example exists for any $d \in \m N$, we can see that $\mb {(p)} = \infty$. 

\begin{remark} \label{rankofcanonicalbaseinvariant}
    For an indiscernible sequence $I$ of realizations of $p \in S(A)$, the value $\mathrm{SU}(\mathrm{Cb}(q)/A)$ when $T$ is supersimple (or of $\mathrm{SU}_{p}(\mathrm{Cb}(q)/A)$ when $p$ is a regular type), where $ q \in S(B)$, $A \subseteq B = \mathrm{bdd}(B)$ is an extension of $p$ such that $I$ is a Morley sequence over $B$ of realizations of $q$, can be easily seen to be an invariant of $I$ that does not depend on $q$, without making reference to how many terms of $I$ are independent over $A$.
\end{remark}

When $T$ is simple but no longer supersimple, it will be of interest to distinguish indiscernible sequences, but $\mathrm{SU}(\mathrm{Cb}(q)/A)$ may no longer be defined as in the above remark. To circumvent this, we define some invariants based on the ``$\mathrm{G}$-rank," $\mathrm{G}(\mathrm{Cb}(q)/A)$, of a canonical base. For our applications, we will only need to define this for $\emptyset$-indiscernible sequences, though the definitions can easily be generalized to any base.

\begin{definition}
   \label{resolvable}
    
    (1) Let $I$ be an indiscernible sequence over $\emptyset$. Then $I$ is \emph{$\kappa$-resolvable} if there is some set $A \subset \mathbb{M}$ (i.e. of real elements), $|A| \leq \kappa$ so that $I$ is a Morley sequence over $A$.
   
    (2) Let $p \in S(B)$ be a type. Then $p$ is \emph{$\kappa$-resolvable} if there is some set $A \subset \mathbb{M}$, $|A| \leq \kappa$ and $ q \in S(AB)$, $q \supseteq p$ so that $q$ forks over neither $A$ nor $B$.

\end{definition}

\begin{definition}\label{grank}
    Let $ B \subset \mathbb{M}^{\mathrm{heq}} $. Then $G(B)$ (the ``\emph{$\mathrm{G}$-rank}" of $B$) is the least $\kappa$ so that there is $A \subseteq \mathbb{M}$, $|A| \leq \kappa$ with $B \subseteq \mathrm{bdd}(A)$.
\end{definition}

The following proposition, connecting these definitions, is an exercise:

\begin{prop}\label{resolvabilitygrank}
    Let $T$ be simple, and let $I$ be an indiscernible sequence. Then the following are equivalent:

    (1) The indiscernible sequence $I$ is $\kappa$-resolvable.

    (2) For some type $p \in S(B)$, so that $I$ is a Morley sequence in $p$ over $B$, $p$ is $\kappa$-resolvable.

    (2') For every type $p \in S(B)$, so that $I$ is a Morley sequence in $p$ over $B$, $p$ is $\kappa$-resolvable.

    (2'') The type $\mathrm{lim}^{+}(I/I) \in S(I)$ is $\kappa$-resolvable.

    (3) For some type $p \in S(B)$, so that $I$ is a Morley sequence in $p$ over $B$, $G(\mathrm{Cb}(p)) \leq n$.

    (3') For every type $p \in S(B)$, so that $I$ is a Morley sequence in $p$ over $B$, $G(\mathrm{Cb}(p)) \leq n$.

    (3'') $G(\mathrm{Cb}(\mathrm{lim}^{+}(I/I))) \leq n$

\end{prop}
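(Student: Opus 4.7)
The plan is to establish the full set of equivalences by first reducing each ``$\kappa$-resolvable type'' condition to the corresponding statement about the $\mathrm{G}$-rank of a canonical base, then using standard canonical base computations to show that all three choices of canonical base in (3), (3'), (3'') are interbounded and therefore carry the same $\mathrm{G}$-rank.

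First I would prove a key lemma characterizing $\kappa$-resolvability of a type: a type $p \in S(B)$ is $\kappa$-resolvable in the sense of Definition \ref{resolvable}(2) if and only if $G(\mathrm{Cb}(p)) \leq \kappa$. For the forward direction, take the witnessing $A$ of size $\leq \kappa$ and $q \in S(AB)$ with $q \supsetneq p$. Since $q$ does not fork over $B$, we have $\mathrm{bdd}(\mathrm{Cb}(q)) = \mathrm{bdd}(\mathrm{Cb}(p))$; since $q$ does not fork over $A$, we have $\mathrm{Cb}(q) \subseteq \mathrm{bdd}(A)$, so $\mathrm{Cb}(p) \subseteq \mathrm{bdd}(A)$. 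Conversely, given $A$ with $|A| \leq \kappa$ and $\mathrm{Cb}(p) \subseteq \mathrm{bdd}(A)$, let $q$ be any nonforking extension of $p$ to $AB$ (enlarging $A$ by a dummy parameter if necessary to make the extension proper); then $q$ does not fork over $B$ by construction, and does not fork over $A$ because $\mathrm{bdd}(\mathrm{Cb}(q)) = \mathrm{bdd}(\mathrm{Cb}(p)) \subseteq \mathrm{bdd}(A)$. This immediately yields $(2) \Leftrightarrow (3)$, $(2') \Leftrightarrow (3')$, and $(2'') \Leftrightarrow (3'')$.

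Next I would invoke the standard canonical base computation: if $I$ is a Morley sequence in $p \in S(B)$ over $B$, then $\mathrm{bdd}(\mathrm{Cb}(p)) = \mathrm{bdd}(\mathrm{Cb}(\mathrm{lim}^{+}(I/I)))$. The idea is to extend $I$ by one more element $a_\omega$ realizing the nonforking extension of $p$ over $BI$; by Morleyness this element realizes $\mathrm{lim}^{+}(I/I)$, and by finite satisfiability of the limit type together with standard properties of nonforking, $\mathrm{tp}(a_\omega/BI)$ does not fork over either $B$ or $I$, forcing the equality of canonical bases up to $\mathrm{bdd}$. Since this equality is independent of the particular $p$ chosen, it gives $(3) \Leftrightarrow (3') \Leftrightarrow (3'')$.

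Finally, for $(1) \Leftrightarrow (3'')$: if $A$ of size $\leq \kappa$ witnesses $\kappa$-resolvability of $I$, then $I$ is a Morley sequence in $\mathrm{tp}(a_0/A) \in S(A)$ over $A$, so by the previous step $\mathrm{Cb}(\mathrm{lim}^{+}(I/I)) \subseteq \mathrm{bdd}(A)$. Conversely, if $\mathrm{Cb}(\mathrm{lim}^{+}(I/I)) \subseteq \mathrm{bdd}(A)$ with $|A| \leq \kappa$, then $I$ is Morley over $A$ by the standard fact that an indiscernible sequence is a Morley sequence over any base into which (the $\mathrm{bdd}$-closure of) its limit-type canonical base embeds. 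The main obstacle is the precise verification of $\mathrm{bdd}(\mathrm{Cb}(p)) = \mathrm{bdd}(\mathrm{Cb}(\mathrm{lim}^{+}(I/I)))$, which is routine but requires carefully invoking finite satisfiability of the limit type and the invariance of canonical bases under nonforking extensions in the hyperimaginary setting appropriate to a general simple theory.
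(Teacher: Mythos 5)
The paper gives no proof of this proposition (it is declared ``an exercise''), so there is nothing to compare against line by line; judged on its own, your architecture is good and most of it is correct. The key lemma --- $p\in S(B)$ is $\kappa$-resolvable iff $G(\mathrm{Cb}(p))\leq\kappa$, via the characterization that a nonforking extension to $AB$ has canonical base interbounded with $\mathrm{Cb}(p)$ and that nonforking over $A$ is equivalent to $\mathrm{Cb}\subseteq\mathrm{bdd}(A)$ --- is right (modulo the usual amalgamation-base caveats the paper itself glosses over), and so is the interboundedness $\mathrm{bdd}(\mathrm{Cb}(p))=\mathrm{bdd}(\mathrm{Cb}(\mathrm{lim}^{+}(I/I)))$ via a common nonforking extension $\mathrm{tp}(a_\omega/BI)$; the paper asserts exactly this fact in the remark after Proposition \ref{Mbbounds}. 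This cleanly gives the equivalence of (2), (2'), (2''), (3), (3'), (3''), and the implication from (1) into that block.

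The gap is in your last step, recovering (1). The ``standard fact'' you invoke --- that an indiscernible sequence is a Morley sequence over any base $A$ with $\mathrm{Cb}(\mathrm{lim}^{+}(I/I))\subseteq\mathrm{bdd}(A)$ --- is false: take $A$ to contain $a_0$ itself (or any set not independent from $I$), and $I$ fails even to be indiscernible over $A$, let alone independent. Containment of the canonical base controls nonforking of the limit type, not indiscernibility or independence of $I$ over an arbitrarily chosen witness. The repair is to route through (2) and use the chain condition, exactly as the paper does in the proof of Claim \ref{Mbpreservedundernonforkingextension}: given $q\in S(AB)$ extending $p$ and nonforking over both $A$ and $B$, with $I$ Morley in $p$ over $B$, replace $A$ by a $B$-conjugate $A'$ with $A'\ind_B I$ such that $I$ is an $A'B$-indiscernible sequence of realizations of the conjugate $q'$ and is Morley over $A'B$. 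Then each $a_i\ind_{A'}B$ (since $q'$ does not fork over $A'$) and $a_i\ind_{A'B}a_{<i}$, so by transitivity $a_i\ind_{A'}a_{<i}$, and $I$ is $A'$-indiscernible; hence $I$ is Morley over $A'$ with $|A'|\leq\kappa$. With that substitution your proof is complete. (The mismatch between $\kappa$ and $n$ in clauses (3)--(3'') is a typo in the statement, not something you need to resolve.)
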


For our application to the Koponen conjecture in Section \ref{Kop}, we will need the equivalence of (1), (2) and (2'); (3), (3') and (3'') are included to justify the analogy with the other invariants, and (3'') is included to obtain a canonical invariant distinguishing indiscernible sequences.

In Section \ref{nmdegfmb}, we noted that for $p \in S(A)$ in a simple theory, $\mathrm{nmdeg}(p) \leq F_{Mb}(p) $. The following notions, introduced in \cite{oberwolfachFdeg}, also generalize the degree of nonminimality:

\begin{definition}
Let $p \in S(A)$ with $\mathrm{SU}(p) > \lambda$. Let the \emph{$\lambda$-forking degree of $p$}, $F_{\lambda}(p)$, be the least $n$ so that there are $a_{0}, \ldots a_{n-1} \models p$, so that $p$ has a forking extension $q \in S(Aa_{0} \ldots a_{n-1})$ with $\mathrm{SU}(q) \geq \lambda$.
\end{definition}

It follows as in Remark \ref{findbigger} that $F_{\lambda}(p)$ is well-defined, and that

\begin{prop} \label{flambda}
    Let $p \in S(A)$ with $\mathrm{SU}(p) > \lambda$. Then $F_{\lambda}(p) \leq  F_{Mb}(p)$. So if $\mathrm{SU}(p)= \alpha$ is a limit ordinal and $\mathrm{lim}_{\lambda \to \alpha} F_{\lambda}(p) = \infty$, $F_{Mb}(p) = \infty$.
\end{prop}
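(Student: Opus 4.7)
The plan is to imitate (and slightly extend) the argument in Remark~\ref{findbigger} showing that $F_{ind}(p)$ is finite when $\mathrm{SU}(p) > 1$: the same construction automatically produces a forking extension whose $\mathrm{SU}$-rank is as large as one likes. We may assume $F_{Mb}(p) = n < \infty$, since otherwise the inequality is vacuous. The goal is to produce $a_0, \ldots, a_{n-1} \models p$ and a forking extension $q \in S(Aa_0 \ldots a_{n-1})$ of $p$ with $\mathrm{SU}(q) \geq \lambda$.

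First, because $\mathrm{SU}(p) > \lambda$, by the definition of $\mathrm{SU}$-rank via forking extensions, $p$ admits some forking extension $q_0 \in S(B)$ with $B \supseteq A$ and $\mathrm{SU}(q_0) \geq \lambda$. Let $(a_i)_{i<\omega}$ be a Morley sequence in $q_0$ over $B$. Then $(a_i)$ is $A$-indiscernible and consists of realizations of $p$. Next, I would verify that $(a_i)$ is \emph{not} a Morley sequence over $A$: if it were, Kim's lemma would give $B \ind_A a_0$, hence $a_0 \ind_A B$ by symmetry, contradicting that $q_0$ forks over $A$. This is exactly the argument used in Remark~\ref{findbigger}.

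Now I would invoke the hypothesis $F_{Mb}(p) = n$: since $(a_i)$ is an $A$-indiscernible sequence in $p$ that is not a Morley sequence, $\mathrm{tp}(a_n / A a_{<n})$ is a forking extension of $p$. It remains to check that this extension has large $\mathrm{SU}$-rank. Since $(a_i)$ is Morley over $B \supseteq A$, we have $a_n \ind_B a_{<n}$, so
\[
\mathrm{SU}(a_n / A a_{<n}) \geq \mathrm{SU}(a_n / B a_{<n}) = \mathrm{SU}(a_n / B) = \mathrm{SU}(q_0) \geq \lambda.
\]
This witnesses $F_\lambda(p) \leq n = F_{Mb}(p)$, proving the first assertion. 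The second assertion is then immediate: if $\mathrm{SU}(p) = \alpha$ is a limit and $F_\lambda(p) \to \infty$ as $\lambda \to \alpha$, then the bound $F_\lambda(p) \leq F_{Mb}(p)$ valid for all $\lambda < \alpha$ forces $F_{Mb}(p) = \infty$.

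There is no real obstacle here; the only subtlety is the verification that the Morley sequence in $q_0$ over $B$ fails to be Morley over $A$, which is the standard Kim's lemma argument already used in Remark~\ref{findbigger}. Everything else is a rank computation using transitivity of nonforking along $A \subseteq B$.
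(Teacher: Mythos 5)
Your proof is correct and is essentially the paper's own argument: the paper simply states that the proposition "follows as in Remark~\ref{findbigger}," and your write-up is exactly that construction (Morley sequence in a high-rank forking extension $q_0$ over $B$, Kim's lemma to see it is not Morley over $A$, then the transitivity/rank computation $\mathrm{SU}(a_n/Aa_{<n}) \geq \mathrm{SU}(a_n/Ba_{<n}) = \mathrm{SU}(q_0) \geq \lambda$) spelled out in detail.
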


\begin{example}
\label{vectorspace}

The following example of a non-one-based theory, every type of which is linear, appears in as Example 6.2.14 in \cite{P96}: let $T$ be the two-sorted theory of an infinite-dimensional vector space $V$ over an algebraically closed field $K$, in the language with the field operations, vector addition, and scalar multiplication. This is superstable, with a single type $p \in S^{1}_{V}
(\emptyset)$ with $\mathrm{U}(p)= \omega$. The type $p$ is a linear regular type, and the characterization of forking in \cite{P96} implies that for $a_{0}, \ldots a_{n} \models p$, $a_{n} \nind a_{0} \ldots a_{n-1}$ if and only if $a_{0}$ is in the linear span of $a_{0} \ldots a_{n-1}$; if $q \in S(a_{0} \ldots a_{n-1})$ is the generic of the linear span of $a_{0} \ldots a_{n-1}$, then $\mathrm{U}(q) = n$. It follows that for $n < \omega$, $F_{n}(p) = n$, so in particular, $\mathrm{lim}_{\lambda \to \omega} F_{\lambda}(p) = \infty$ and $F_{Mb}(p) = \infty$.

\end{example}

An open problem, mentioned in \cite{Wag00}, is whether there exists an $\omega$-categorical supersimple theory of infinite $\mathrm{SU}$-rank. If $T$ is $\omega$-categorical and supersimple, let $p(x) \in S(A)$ have $SU(p) = \alpha$, for $\alpha$ a limit ordinal; then $\mathrm{lim}_{\lambda \to \alpha} F_{\lambda}(p) = \infty$. To see this, assume, without loss of generality because $T$ is supersimple, that $A$ is finite, and suppose $\mathrm{lim}_{\lambda \to \alpha} F_{\lambda}(p) = n$ for $n < \omega$; then for $\lambda < \alpha$, there are $a_{0} \ldots a_{n} \models p$ with $\mathrm{SU}(a_{n}/a_{0} \ldots a_{n-1}A) > \lambda$. Since $\lambda$ is a limit ordinal, this implies there are infinitely many types in $x_{0}, \ldots, x_{n-1}, x_{n}$ over the finite set $S(A)$, contradicting $\omega$-categoricity. So, we've shown: 

\begin{cor}
\label{omegacategoricalsupersimple}
    Let $T$ be $\omega$-catgorical and supersimple, and $SU(p)$ a limit ordinal. Then $F_{Mb}(p) = \infty$.
\end{cor}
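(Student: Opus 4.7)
The plan is to apply Proposition~\ref{flambda}: since $\mathrm{SU}(p) = \alpha$ is a limit ordinal and $F_{\lambda}(p) \leq F_{Mb}(p)$ for every $\lambda < \alpha$, it suffices to prove $\lim_{\lambda \to \alpha} F_{\lambda}(p) = \infty$. I will argue this by contradiction, assuming there is a fixed $n < \omega$ with $F_{\lambda}(p) \leq n$ for every $\lambda < \alpha$.

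Using that $T$ is supersimple, $p$ does not fork over some finite $A_{0} \subseteq A$; standard extension/descent properties of nonforking let me replace $p$ by the nonforking restriction $p|_{A_{0}}$ and assume $A$ itself is finite. For each $\lambda < \alpha$, the hypothesis $F_{\lambda}(p) \leq n$ then supplies a tuple $\bar a^{\lambda} = (a_{0}^{\lambda}, \ldots, a_{n-1}^{\lambda})$ of realizations of $p$, together with a forking extension $q_{\lambda} \in S(A\bar a^{\lambda})$ of $p$ satisfying $\mathrm{SU}(q_{\lambda}) \geq \lambda$.

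Now $\omega$-categoricity enters twice. First, $|S_{n}(A)| < \omega$, so by pigeonhole some single type $\bar r \in S_{n}(A)$ satisfies $\mathrm{tp}(\bar a^{\lambda}/A) = \bar r$ for a cofinal set of $\lambda < \alpha$. Fix one $\bar a \models \bar r$ and use $A$-automorphisms to transport each such $\bar a^{\lambda}$ to $\bar a$; this yields, for unboundedly many $\lambda < \alpha$, a forking extension of $p$ lying in the single Stone space $S(A\bar a)$ of $\mathrm{SU}$-rank at least $\lambda$. Second, $|S(A\bar a)| < \omega$, so only finitely many forking extensions of $p$ over $A\bar a$ exist. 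Each such extension has $\mathrm{SU}$-rank strictly below $\alpha = \mathrm{SU}(p)$, so the maximum $\beta$ of this finite collection of ordinals is itself a single ordinal with $\beta < \alpha$, contradicting the existence of forking extensions of rank approaching $\alpha$.

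The main obstacle is essentially a bookkeeping point: one has to check that the reduction to finite $A$ genuinely preserves the hypothesis $F_{\lambda}(p) \leq n$ (so that the $n$ does not degrade and one still obtains forking extensions of unbounded $\mathrm{SU}$-rank on the nose). Once that is in hand, the argument is just the combination of the two finiteness facts supplied by $\omega$-categoricity with the observation that a limit ordinal cannot coincide with the maximum of a finite set of ordinals strictly below it.
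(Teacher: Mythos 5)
Your proof is correct and follows essentially the same route as the paper: reduce to a finite base using supersimplicity, then use $\omega$-categoricity to bound the number of types over a finite set and contradict the existence of forking extensions of $\mathrm{SU}$-rank unbounded below the limit ordinal $\alpha$. The only cosmetic difference is that you pigeonhole in two stages (first on $\tp(\bar a/A)$, then on the finitely many extensions of $p$ over $A\bar a$), whereas the paper counts the types of the full $(n+1)$-tuples over $A$ in one step; the reduction to finite $A$ that you flag as delicate is handled in the paper simply by passing to the nonforking restriction, since $F_{Mb}$ and $\mathrm{SU}$ are preserved.
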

Note the similarity of this proof to that of Corollary 3.12 of Palacín \cite{Pal17}, exposited in Koponen \cite{Kop19}.

\section{$F_{Mb}$ can be finite, but larger than $1$, in a stable theory}\label{Mbfinitestable}
\numberwithin{theorem}{section} \setcounter{theorem}{0}
So far, we have only discussed examples of types $p \in S(A)$ where $p$ has the property that has algebraic forking (Definition \ref{algfork})--which fully characterizes the forking extensions of $p$, and implies $F_{Mb}(p) =1 $-- or 
 where $F_{Mb}(p) = \infty $. This leads us to ask: in a simple theory, under what conditions is it possible that $1 < F_{Mb}(p) < \infty$, and when $F_{Mb}(p)= 1$, when must $p$ have algebraic forking? 

 We first show that in a supersimple theory with finite rank, the first question is reducible to the following conjecture of Kim.

 \begin{conj}
    (Pseudolinearity conjecture, \cite{Kim10}.)

     Let $T$ be simple, and let $p \in S(A)$ be a Lascar type with $\mathrm{SU}(p) =1$. Then if $p$ is $k$-linear for some $k < \omega$, $p$ is linear.
 \end{conj}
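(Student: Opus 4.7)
The plan is to imitate the group-configuration strategy that underlies Fact \ref{pseudolinearity} in the stable and $\omega$-categorical simple settings, and to try to push it through to arbitrary simple theories. First, working with the Lascar type $p \in S(A)$, I would unpack $k$-linearity as in Remark \ref{independenceofpotimesp}: choose $a,b,c$ pairwise and triple-$A$-independent realizations of $p$, then produce canonical bases $c_{ab} = \mathrm{Cb}(\mathrm{tp}(ab/\cdots))$, $c_{bc}$, $c_{ac}$ of suitable rank-$1$ forking extensions of $p\otimes p$, each of $\mathrm{SU}$-rank exactly $k$ over $A$. Using the independence theorem for Lascar types (which is the key new ingredient beyond the stable situation, and which is available to us since $p$ is assumed Lascar), arrange the six elements into a simple-theoretic group configuration in the sense of Ben Yaacov--Tomasi\'c--Wagner.

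Second, I would invoke the group configuration theorem for simple theories to extract a hyperdefinable group $G$, type-definable up to finite index, acting faithfully and transitively on a type $q$ with $\mathrm{SU}(q) = 1$, where $\mathrm{SU}(G) = k$. At this point linearity of $p$ would follow from $k = 1$, so the goal is to rule out $k > 1$. This is exactly where the proofs of Pillay and Tomasi\'c--Wagner diverge from what is available in a general simple theory: in the stable case one uses Hrushovski's classification of groups of $\mathrm{U}$-rank $\leq 3$ acting on a rank-$1$ type, together with the fact that the rank-$2$ and rank-$3$ cases yield a nonorthogonal infinite field, whose generic $2$-type refutes linearity as in Example \ref{ACFMB}; in the $\omega$-categorical case one uses Tomasi\'c--Wagner's rigidity result for such groups.

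The key step, and the main obstacle, is then to establish a Borovik--Cherlin-type dichotomy for hyperdefinable groups of finite $\mathrm{SU}$-rank acting faithfully and transitively on a rank-$1$ Lascar type in a general simple theory: either $k = 1$, or the action produces enough multiplicative structure (a type-definable field, or at least a definable family of plane curves with canonical parameters of unbounded rank) that $p$ fails to be $k'$-linear for any $k'$. I would first try to imitate Hrushovski's dichotomy by analyzing stabilizers of generic pairs, using the structure theory of hyperdefinable groups in simple theories (e.g. stabilizer chain conditions and Wagner's work on simple groups) to force either abelianness (forcing $k=1$) or an auxiliary field via Hrushovski--Weil--style group chunk arguments adapted to the simple setting.

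If the full Borovik--Cherlin dichotomy is out of reach, a fallback strategy is to argue directly from the group action: if $G$ has rank $k > 1$, construct indiscernible sequences of generic plane curves using independent group translates, and use the forking calculus together with Corollary \ref{Mbpsquared} to produce canonical bases of arbitrarily large rank over $A$, contradicting $k$-linearity. Either route reduces the conjecture to a structural fact about hyperdefinable groups in simple theories, and I expect that identifying precisely which such fact suffices -- and proving it -- will be the hardest part of the argument, consistent with the fact that the conjecture has remained open since \cite{Kim10}.
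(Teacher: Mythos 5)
The statement you are attempting is stated in the paper as a \emph{conjecture} (the pseudolinearity conjecture of Kim, \cite{Kim10}); the paper offers no proof of it and does not claim one. What the paper does instead is prove an \emph{equivalence} (Theorem \ref{Mboneinfinity}): for supersimple theories of finite rank, the conclusion of the pseudolinearity conjecture holds if and only if every type $p$ has $F_{Mb}(p)\in\{1,\infty\}$, with the known cases (stable; $\omega$-categorical simple) supplied by Fact \ref{pseudolinearity} of Pillay and Tomasi\'c--Wagner. So there is no ``paper proof'' to match your argument against, and your proposal should be judged on its own as an attempted proof of an open problem.

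As such, it has a genuine gap: the entire difficulty is deferred to your ``key step,'' the Borovik--Cherlin-type dichotomy for hyperdefinable groups of finite $\mathrm{SU}$-rank acting transitively on a rank-$1$ Lascar type in an arbitrary simple theory. Everything before that point (extracting the group configuration from $k$-linearity via the independence theorem, and invoking the Ben Yaacov--Tomasi\'c--Wagner group configuration theorem to get a hyperdefinable transitive group action of rank $k$) is standard and is essentially the reduction already implicit in \cite{TW03} and in the paper's sketch of Fact \ref{pseudolinearity}. But the dichotomy you need is precisely what is missing in the general simple setting: Hrushovski's classification of rank-$\leq 3$ group actions is a stability-theoretic result, and Tomasi\'c--Wagner's rigidity argument uses $\omega$-categoricity in an essential way; neither transfers. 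Your fallback (producing curves of unbounded canonical-base rank from translates under a rank-$k>1$ group) is likewise not an argument but a restatement of the goal, since showing that such a group action forces failure of $k'$-linearity for all $k'$ is exactly the open content of the conjecture. In short, your proposal is a reasonable research outline that correctly identifies where the difficulty lies, but it does not constitute a proof, and you candidly acknowledge as much in your final sentence.
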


 \begin{thm}
      \label{Mboneinfinity}
 Let $T$ be a supersimple theory of finite rank. Then the following are equivalent:

 (1) $T$ satisfies the conclusion of the pseudolinearity conjecture.

 (2) For every $p \in S(A)$, either $F_{Mb}(p) =1$ or $F_{Mb}(p) = \infty$.
  \end{thm}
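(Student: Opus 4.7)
For the direction $(2) \Rightarrow (1)$, let $p \in S(A)$ be a Lascar type with $\mathrm{SU}(p) = 1$ that is $k$-linear for some $k < \omega$. By Corollary \ref{Mbpsquared}, for any type $p \otimes p$ realized by two $A$-independent realizations of $p$, we have $F_{Mb}(p \otimes p) = k$. Hypothesis (2) forces $F_{Mb}(p \otimes p) \in \{1, \infty\}$, and since $k$ is finite this gives $k = 1$, i.e., $p$ is linear.

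For $(1) \Rightarrow (2)$, suppose $T$ is supersimple of finite rank and satisfies the pseudolinearity conjecture, and assume toward a contradiction that some $p \in S(A)$ has $F_{Mb}(p) = k$ with $1 < k < \omega$. The plan is to produce a Lascar rank-$1$ type $r$ in $T$ that is $k'$-linear with $2 \leq k' < \omega$, contradicting pseudolinearity. First, choose an $A$-indiscernible sequence $I = (a_i)_{i < \omega}$ in $p$ witnessing $F_{Mb}(p) = k$: so $a_0, \ldots, a_{k-1}$ are $A$-independent but $a_k \nind_A a_{<k}$. Let $q = \mathrm{lim}^{+}(I/I)$ and $c = \mathrm{Cb}(q)$. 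By Proposition \ref{dependencecanonicalbases}(1), $\mathrm{SU}(c/A) \geq k \geq 2$, since otherwise $I$ would fail Morleyness at a strictly earlier position, contradicting the $A$-independence of $a_{<k}$.

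Using the semi-minimal analysis of $c$ over $A$ available in a supersimple theory of finite rank, extract two rank-$1$ pieces $c_1, c_2 \in \mathrm{bdd}(Ac)$ realizing a common Lascar rank-$1$ type $r \in S(\mathrm{bdd}(A^*))$ for some base $A \subseteq A^* \subseteq \mathrm{bdd}(Ac)$ with $c_1 \ind_{A^*} c_2$. Applying the independence theorem to transport the rank-$k$ canonical-base configuration of $q$ across the two copies of $r$, in the spirit of the argument preceding Remark \ref{independenceofpotimesp}, produces a forking extension of $\tp(c_1 c_2 / A^*)$ whose canonical base has $\mathrm{SU}$-rank at least $2$ over $A^*$. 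By Corollary \ref{Mbpsquared}, $r$ is $k'$-linear with $2 \leq k' < \omega$, and the pseudolinearity conjecture applied to the rank-$1$ Lascar type $r$ then forces $k' = 1$, the desired contradiction.

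The main obstacle is the transport step: given a type $p$ of arbitrary rank together with a canonical-base witness $c$ of rank at least $2$, one must cleanly extract two $A^*$-independent realizations of a \emph{common} rank-$1$ Lascar type $r$ from the semi-minimal analysis of $c$ (which in particular requires arranging that the two rank-$1$ pieces lie in a common non-orthogonality class), and then transfer the rank-$k$ forking configuration of $q$ into a canonical base of $\mathrm{SU}$-rank at least $2$ inside $r^{(2)}$. The finite-rank hypothesis on $T$ is essential here, both to guarantee the existence of a suitable rank-$1$ regular piece of $c$ and to enable the relevant rank computations; the gluing via the independence theorem mirrors the group-configuration-style arguments underlying Fact \ref{pseudolinearity}.
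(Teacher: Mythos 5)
Your direction $(2) \Rightarrow (1)$ is exactly the paper's argument (Corollary \ref{Mbpsquared} plus the definition of $k$-linearity), and your opening move in $(1) \Rightarrow (2)$ --- extracting a witness sequence for $F_{Mb}(p)=k$ and reading off $\mathrm{SU}(\mathrm{Cb}(q)/A)\geq k\geq 2$ from Proposition \ref{dependencecanonicalbases} --- is sound. The problem is the step you yourself flag as ``the main obstacle'': it is not a technical detail to be filled in, it is the entire content of the implication, and the route you propose for it does not work. Knowing that some type $p$ of arbitrary finite rank has a forking extension whose canonical base $c$ has $\mathrm{SU}$-rank $\geq 2$ does not, by itself, locate a non-linear rank-one type. (In a one-based theory of rank $2$, canonical bases of rank $2$ abound while every rank-one type is linear; what saves your argument from outright contradiction there is only that such theories have $F_{Mb}\equiv 1$, which is precisely what you would still need to prove.) Concretely: (i) the semi-minimal analysis of $c$ gives rank-one pieces $c_1,c_2$, but nothing forces them to be non-orthogonal, to realize a common Lascar type, or to be independent over a usable base --- and if $c$ decomposes into orthogonal trivial pieces your configuration evaporates; (ii) even granting two independent realizations of a common rank-one type $r$, you must exhibit a forking extension of $\tp(c_1c_2/A^*)$ of $\mathrm{SU}_r$-rank $1$ whose canonical base has rank $\geq 2$, and ``applying the independence theorem to transport the configuration'' does not do this: the argument of Remark \ref{independenceofpotimesp} moves a rank-$k$ canonical base from one copy of $r\otimes r$ to another copy of $r\otimes r$ for $r$ \emph{already} of rank one; it does not convert a canonical-base configuration attached to a higher-rank type $p$ into one living inside $r^{(2)}$.

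The paper sidesteps this dualization entirely. It quotes the equidominance decomposition (Fact \ref{equidominancetheorem}): in a finite-rank supersimple theory every finitary type is equidominant with the type of a finite independent set of realizations of minimal types. It then proves that $F_{Mb}$ is invariant under nonforking extension (Claim \ref{Mbpreservedundernonforkingextension}) and under equidominance (Claim \ref{Mbpreservedunderequidominance}), reducing to $p$ a product of independent minimal types. For the orthogonal summands, $F_{Mb}$ of the product is the maximum of the factors (each $1$, since $\mathrm{SU}$-rank-one types have algebraic forking); for non-orthogonal summands, after passing to a common base and using equidominance to make all coordinates realize the same rank-one type, Corollary \ref{Mbpsquared} and the pseudolinearity hypothesis force the value to be $1$ or $\infty$. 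If you want to salvage your approach, you would essentially have to reprove this decomposition-and-invariance machinery; as written, the passage from $\mathrm{SU}(c/A)\geq 2$ to a non-linear rank-one type is a gap, not a sketch.
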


Here, (2) $\Rightarrow$ (1) is just Corollary \ref{Mbpsquared} and the definition of $k$-linearity.

Note that in (2) of this proposition, we may assume $p$ is finitary\footnote{Recall a hyperimaginary is finitary if it is the class of a finite tuple modulo
a type-definable equivalence relation.}. We will see that \ref{Mboneinfinity} is essentially a corollary of a powerful classical result of geometric stability theory, Fact \ref{equidominancetheorem}. To state the form of the result we will need we first require a definition: 

\begin{definition}
    \label{equidominance}

    Two types $p, q \in S(A)$ are \emph{equidominant} over $A$ if there are realizations $a$ and $b$ of $p$ and $q$ respectively if, for any set $B$, $a \nind_{A} B$ if and only if $b \nind_{A} B$. Two complete types $p$, $q$ over any sets are \emph{equidominant} if there is some set $A$ containing the domains of $p$ and $q$ so that there are nonforking extensions of $p$ and $q$ to $A$ which are equidominant over $A$.
\end{definition}

\begin{fact}
   \label{equidominancetheorem}
    
    Let $T$ be supersimple of finite rank, and $p \in S(A)$ a finitary type. Then $p$ is equidominant with the type, over some set $B$, of a finite $B$-independent set of realizations of minimal types over $B$.
\end{fact}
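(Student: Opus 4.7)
The plan is to induct on $SU(p)$, exploiting the fact that in a supersimple theory of finite rank every type admits a finite semi-minimal analysis. The base cases are immediate: if $SU(p)=0$ then $p$ is algebraic and is vacuously equidominant with the empty tuple, while if $SU(p)=1$ then $p$ itself is minimal over $A$ and is equidominant with the type of a single one of its realizations (take $B=A$).

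For the inductive step with $SU(p)=n>1$, I would first extract a single ``minimal coordinate.'' Fix $a \models p$. Using the fact that a nonalgebraic finite-$SU$-rank type in a supersimple theory is non-orthogonal to a minimal type (equivalently, admits a semi-minimal analysis), one produces a set $B \supseteq A$ with $a \ind_A B$ together with some $c \in \mathrm{bdd}(aB) \setminus \mathrm{bdd}(B)$ such that $\mathrm{tp}(c/B)$ is minimal. By the Lascar inequalities, $SU(a/cB)=n-1$, so the inductive hypothesis applied to $\mathrm{tp}(a/cB)$ yields a set $B' \supseteq cB$ and a $B'$-independent tuple $(d_1,\dots,d_k)$ of realizations of minimal types over $B'$ with $\mathrm{tp}(a/cB)$ equidominant with $\mathrm{tp}(d_1 \ldots d_k/B')$.

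I would then combine $c$ with $(d_1,\dots,d_k)$ over a common base $B'' \supseteq B'$ chosen so that $\mathrm{tp}(c/B'')$ remains minimal (using preservation of minimality under nonforking base extensions) and so that $(c,d_1,\dots,d_k)$ is $B''$-independent (by realizing $c$ suitably nonforking over $cB$ relative to $B''$). The verification of equidominance of $\mathrm{tp}(a/B'')$ with $\mathrm{tp}(c,d_1,\dots,d_k/B'')$ then splits into two directions: in one direction, $a \in \mathrm{bdd}(c,d_1,\dots,d_k,B'')$ combines the inductive equidominance (giving $a \in \mathrm{bdd}(d_1,\dots,d_k,cB')$) with $B' \subseteq B''$; in the other direction, $c \in \mathrm{bdd}(aB) \subseteq \mathrm{bdd}(aB'')$ and each $d_i$ is dominated by $a$ over $cB' \subseteq B''$, whence the tuple is dominated by $a$ by transitivity of nonforking.

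The main obstacle is the bookkeeping around moving the various minimal types to a single common base $B''$ while simultaneously preserving the minimality of each coordinate and the $B''$-independence of the combined tuple, and then transferring the inductive equidominance cleanly across this base change. These manipulations rest on careful use of symmetry, transitivity, and monotonicity of nonforking, together with the preservation of minimality by nonforking base extensions over algebraically closed sets; finiteness of the rank is used precisely here, since it guarantees the induction terminates after finitely many coordinates and hence produces a \emph{finite} $B$-independent tuple of minimal-type realizations.
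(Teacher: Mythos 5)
First, note that the paper does not prove this statement at all: it is labelled a Fact and attributed to Wagner's \emph{Simple Theories} (Corollary 5.2.19 together with Proposition 5.1.12 and Lemma 5.2.11, the latter two reducing regular types of finite rank to minimal types). Your strategy --- induct on $\mathrm{SU}(p)$, pass to a nonforking extension over $B$, peel off a minimal coordinate $c \in \mathrm{bdd}(aB)\setminus\mathrm{bdd}(B)$, and note $\mathrm{SU}(a/Bc)=n-1$ by the Lascar inequalities --- is exactly the strategy of that cited proof, so you are on the right road. But the two places where you wave at ``bookkeeping'' are where the actual content lives, and as written both contain errors.

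First, you claim the inductive equidominance ``gives $a \in \mathrm{bdd}(d_1,\ldots,d_k,cB')$.'' It does not: equidominance of $\mathrm{tp}(a/cB)$ with $\mathrm{tp}(d_1\ldots d_k/B')$ only says the two fork with the same sets; it carries no information about bounded closure. To get $a\in\mathrm{bdd}(d_1\ldots d_k cB')$ you must strengthen the inductive invariant --- choose each minimal coordinate inside $\mathrm{bdd}(aB')$ so that after $n$ steps the rank of $a$ over the base together with the coordinates is $0$ --- which is a different (and correct) argument from the one you give. Second, and more seriously, your final verification establishes equidominance of $\mathrm{tp}(a/B'')$ with $\mathrm{tp}(c,d_1,\ldots,d_k/B'')$ over a base $B''\supseteq cB$. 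Since $c\in\mathrm{bdd}(aB)\setminus\mathrm{bdd}(B)$ and $a\ind_A B$, the type $\mathrm{tp}(a/B'')$ is a \emph{forking} extension of $p$; by the paper's Definition \ref{equidominance}, equidominance of $p$ with the product type requires a common base over which \emph{nonforking} extensions of both are equidominant. So what you have shown is that some forking extension of $p$ is equidominant with a product of minimal types, which is strictly weaker (a rank-$1$ forking extension of a rank-$2$ type is certainly not equidominant with the type itself). Closing this gap requires showing that $\mathrm{tp}(a/B)$ is equidominant with $\mathrm{tp}(c/B)\otimes\mathrm{tp}(a/Bc)$, where the product is formed from \emph{independent} realizations over a common base not containing $c$ itself --- i.e., the transfer of domination across base change and the compatibility of equidominance with products. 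That is precisely the content of Wagner's Propositions 5.1.12--5.1.16, and it is the missing idea here rather than routine use of symmetry, transitivity, and monotonicity.
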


The previous result follows from Corollary 5.2.19, Proposition 5.1.12, and Lemma 5.2.11 of \cite{Wag00}, the latter two of which imply that the regular types of finite $\mathrm{SU}$-rank are exactly the types of $\mathrm{SU}$-rank $1$.

The theorem will then follow from the following three claims and Fact \ref{equidominancetheorem}:

\begin{claim}
\label{Mbpreservedundernonforkingextension}
Let $p$ and $q$ be complete types so that $q$ is a nonforking extension of $p$. Then $F_{Mb}(p) = F_{Mb}(q)$.

\end{claim}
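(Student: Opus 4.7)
Let $p \in S(A)$ and $q \in S(B)$ with $A \subseteq B$ and $q$ a nonforking extension of $p$. The plan is to prove $\Fmb{p} \leq \Fmb{q}$ and $\Fmb{q} \leq \Fmb{p}$ separately, by translating between $A$-indiscernible sequences in $p$ and $B$-indiscernible sequences in $q$. The basic observation, which comes from nonforking transitivity in the simple theory $T$, is that for $a \models q$ one has $a \ind_A B$, so $a \ind_B c$ implies $a \ind_A Bc$ and hence $a \ind_A c$ for any tuple $c$, and equivalently $a \nind_A c \Rightarrow a \nind_B c$. The converse direction $a \nind_B c \Rightarrow a \nind_A c$ holds as soon as $a \ind_{Ac} B$ is available, which is automatic whenever $a$ is a member of a sequence $I$ with $I \ind_A B$.

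For $\Fmb{q} \leq \Fmb{p}$, let $I = (a_i)$ be $B$-indiscernible in $q$ and not Morley over $B$; it is automatically $A$-indiscernible in $p$. Using the canonical base characterization of Morley sequences implicit in Proposition~\ref{resolvabilitygrank} --- $I$ is Morley over $C$ iff it is $C$-indiscernible and $\Cb(\mathrm{lim}^+(I/I)) \subseteq \mathrm{bdd}(C)$ --- the inclusion $\mathrm{bdd}(A) \subseteq \mathrm{bdd}(B)$ implies that if $I$ were Morley over $A$, then being also $B$-indiscernible it would already be Morley over $B$, contradicting the hypothesis. So $I$ is not Morley over $A$ either. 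Setting $n = \Fmb{p}$, the definition of $F_{Mb}(p)$ gives $a_n \nind_A a_{<n}$, and since $a_n \models q$ the observation above yields $a_n \nind_B a_{<n}$, so $\Fmb{q} \leq n$.

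For $\Fmb{p} \leq \Fmb{q}$, let $I = (a_i)$ be $A$-indiscernible in $p$ and not Morley over $A$. The key --- and hardest --- step is to produce an $A$-conjugate $I' = (a'_i)$ of $I$ that is also $B$-indiscernible (hence a sequence in $q$) and satisfies $I' \ind_A B$. This is done by the standard combination of nonforking extension and indiscernible extraction: extend $I$ to a long $A$-indiscernible sequence $\tilde I$; use the extension axiom to find $\tilde I' \equiv_A \tilde I$ with $\tilde I' \ind_A B$; and then extract from $\tilde I'$ an $AB$-indiscernible sequence $I'$ of the same $A$-EM-type as $I$. Since $A$-indiscernible sequences with the same $A$-EM-type are $A$-conjugate as indexed sequences, $I' \equiv_A I$; the independence $I' \ind_A B$ is inherited from $\tilde I'$; and each $a'_i \models q$ because $a'_i \ind_A B$. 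The canonical base characterization, combined with $I' \ind_A B$ --- which forces $\Cb(\mathrm{lim}^+(I'/I'))$, if it lies in $\mathrm{bdd}(B)$, also to lie in $\mathrm{bdd}(A)$ --- shows that $I'$ is not Morley over $B$. Setting $n = \Fmb{q}$, we obtain $a'_n \nind_B a'_{<n}$; the converse transitivity argument, licensed by $I' \ind_A B$ (which provides $a'_n \ind_{Aa'_{<n}} B$), then yields $a'_n \nind_A a'_{<n}$. By $A$-conjugacy this transfers to $a_n \nind_A a_{<n}$, proving $\Fmb{p} \leq n = \Fmb{q}$.
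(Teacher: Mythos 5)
Your overall strategy is the same as the paper's: prove the two inequalities separately by transferring indiscernible sequences between $A$ and $B$, using exactly the transitivity computations you describe. Your direction $\Fmb{q}\le\Fmb{p}$ is correct and matches the paper's second step (the paper establishes ``Morley over $A$ plus $B$-indiscernible implies Morley over $B$'' directly by a Kim's-lemma blocking argument rather than by quoting a canonical-base characterization of Morley sequences, but the underlying fact is the same and your appeal to it is legitimate).

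The gap is in the direction $\Fmb{p}\le\Fmb{q}$, at the step ``each $a'_i \models q$ because $a'_i \ind_A B$.'' From $a'_i\models p$ and $a'_i\ind_A B$ you only get that $\tp(a'_i/B)$ is \emph{some} nonforking extension of $p$ to $B$; in a simple theory a complete type typically has many nonforking extensions to a given set, and nothing in the ``extend, apply the extension axiom, extract over $AB$'' recipe forces the extracted sequence to land in the particular extension $q$: extraction controls the EM-type over $A$ (and, by finite character, the independence $I'\ind_A B$), but not $\tp(a'_0/B)$. So you may only apply the definition of $F_{Mb}$ to the type $q'=\tp(a'_0/B)$ that is actually realized, and concluding $a'_n\nind_B a'_{<n}$ from $\Fmb{q}=n$ is unjustified; knowing that $F_{Mb}(q')=F_{Mb}(q)$ for two nonforking extensions $q',q$ of $p$ is essentially the statement being proved, so this cannot be waved away. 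The paper repairs precisely this point with the chain condition: after an $A$-automorphism one may assume $a_0\models q$, so $B\ind_A a_0$; the chain condition then yields $B'\equiv_{Aa_0}B$ with $I$ indiscernible over $B'$ and $I\ind_A B'$, and moving $B'$ back to $B$ over $Aa_0$ produces a $B$-indiscernible $A$-conjugate of $I$ whose first term still realizes $q$ --- whence, by $B$-indiscernibility, every term realizes $q$. With that substitution (anchor a realization of $q$ and move $B$, rather than moving the sequence and extracting) the remainder of your argument goes through.
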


\begin{claim}
\label{Mbpreservedunderequidominance}
Let $p, q \in S(A)$ be equidominant over $A$. Then $F_{Mb}(p) = F_{Mb}(q)$.

\end{claim}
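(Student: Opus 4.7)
By the symmetry of equidominance in $p$ and $q$, it suffices to establish $F_{Mb}(p) \leq F_{Mb}(q)$. Fix realizations $a \models p$ and $b \models q$ witnessing equidominance over $A$, and set $m = F_{Mb}(q)$. Given an $A$-indiscernible sequence $I = (a_i)_{i<\omega}$ in $p$ that is not a Morley sequence over $A$, the plan is to show that $a_m \nind_A a_{<m}$, which yields $F_{Mb}(p) \leq m$.

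First I would lift $I$ to a joint $A$-indiscernible sequence $J = (a_i, b_i)_{i<\omega}$ in $r := \tp(ab/A)$ whose $a$-reduct equals $I$. This is obtained by realizing $b_i$ over each $a_i$ with $\tp(a_i b_i / A) = r$, extracting an $A$-indiscernible subsequence via the standard Ramsey--compactness procedure, and using an $A$-preserving automorphism to identify the $a$-reduct with $I$ (valid because $I$ is already $A$-indiscernible and so has the same EM-type as the extracted $a$-reduct). Each pair $(a_i, b_i) \equiv_A (a,b)$ then also witnesses equidominance.

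Next I would use equidominance to transfer independence relations between the $a$- and $b$-reducts. The symmetric-domination condition $a \ind_A C \Leftrightarrow b \ind_A C$ extends in a simple theory to the joint-independence form $(a,b) \ind_A C \Leftrightarrow a \ind_A C \Leftrightarrow b \ind_A C$, via a canonical-base argument establishing that $a$ and $b$ are bdd-interbounded over $A$, i.e.\ $\mathrm{bdd}(Aa) = \mathrm{bdd}(Ab)$. Applying this to each pair $(a_i, b_i)$, together with the consequent inclusions $b_{<i} \subseteq \mathrm{bdd}(A a_{<i})$ and $a_{<i} \subseteq \mathrm{bdd}(A b_{<i})$, gives the chain of equivalences
\[
a_i \ind_A a_{<i} \iff (a_i, b_i) \ind_A (a_{<i}, b_{<i}) \iff b_i \ind_A b_{<i}
\]
for each $i$. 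In particular, $I$ is Morley over $A$ iff $J$ is Morley over $A$ iff the $b$-reduct $K = (b_i)$ is Morley over $A$.

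Since $I$ is not a Morley sequence over $A$, neither is $K$. Applying the hypothesis $F_{Mb}(q) = m$ to $K$ yields $b_m \nind_A b_{<m}$, and by the equivalence above, $a_m \nind_A a_{<m}$, as desired. The hard part will be the transfer step: promoting the bare symmetric-domination $a \ind_A C \Leftrightarrow b \ind_A C$ to the joint-independence (equivalently, bdd-interbounded) form. This is a classical consequence of equidominance in simple theories, obtained by applying the equidominance hypothesis to the canonical base $\mathrm{Cb}(\tp(a/\mathrm{bdd}(Ab)))$ and showing it lies in $\mathrm{bdd}(Aa) \cap \mathrm{bdd}(Ab)$, but it is the only step requiring substantive model-theoretic input; the rest of the argument is formal manipulation of indiscernibles.
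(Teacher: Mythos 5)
Your overall strategy --- pair each $a_i$ with a $b_i$ so that $(a_i,b_i)$ realizes the equidominance type $r=\tp(ab/A)$, transfer (in)dependence coordinatewise, and read the conclusion off the $b$-reduct --- is essentially the paper's. The gap is exactly at the step you flag as the only one needing substantive input. Equidominance of $a$ and $b$ over $A$ does \emph{not} imply $\mathrm{bdd}(Aa)=\mathrm{bdd}(Ab)$, and this is not a classical consequence of the definition; domination-equivalence is useful precisely because it is much coarser than interboundedness (cf.\ Fact \ref{equidominancetheorem}). For a counterexample, take two sorts $P,Q$ with a definable surjection $\pi:Q\to P$ having infinite fibres and no further structure: for $b$ generic in $Q$ and $a=\pi(b)$, one checks that $\tp(a/B)$ forks over $\emptyset$ iff $a\in\acl(B)$ iff $\tp(b/B)$ forks over $\emptyset$, so $a$ and $b$ are equidominant, yet $b\notin\acl(a)$. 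Nor does the weaker joint-independence form $(a,b)\ind_A C\Leftrightarrow a\ind_A C$ follow from equidominance: it asserts that $\tp(ab/A)$ is itself equidominant with $p$, a strictly stronger condition. In the two-sorted vector space of Example \ref{vectorspace}, a generic vector $a$ and $b=\lambda a$ with $\lambda$ a scalar generic over $a$ are equidominant over $\emptyset$ (forking of a generic vector is detected by membership in definable finite-dimensional subspaces, and $a$, $\lambda a$ span the same line), and each is independent from $\lambda$, but $\lambda\in\dcl(ab)\setminus\acl(\emptyset)$, so $(a,b)\nind\lambda$. Hence your biconditional chain $a_i\ind_A a_{<i}\iff(a_i,b_i)\ind_A(a_{<i},b_{<i})\iff b_i\ind_A b_{<i}$ is unjustified as you derive it.

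The transfer itself is true, and the paper obtains it by successive single-coordinate replacements that never require a joint statement about the pair. Since $I$ is $A$-indiscernible, $a_i\ind_A a_{<i}$ forces $\{a_0,\dots,a_i\}$ to be an $A$-independent set. Now replace $a_j$ by $b_j$ one index at a time: at each step the current set is independent, so $a_j$ is independent over $A$ from the remaining (partially replaced) tuple; applying the equidominance of the single pair $(a_j,b_j)$ with $B$ equal to that tuple gives $b_j$ independent from it, and adjoining to an independent set an element independent from it yields an independent set. This shows $\{b_0,\dots,b_i\}$ is $A$-independent, i.e.\ $b_i\ind_A b_{<i}$; the converse and the transfer of dependence follow by symmetry of the roles of $p$ and $q$ and by contraposition. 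Substituting this replacement argument for your interboundedness lemma repairs the proof, at which point it coincides with the paper's.
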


\begin{claim}
\label{Mboneorinfinityforproductofminimaltypes}

Let $p \in S(A)$ be the type of an $A$-independent set of realizations of minimal types over $A$, and assume the conclusion of the pseudolinearity conjecture holds. Than either $F_{Mb}(p)=1$ or $F_{Mb}(p) = \infty$.

\end{claim}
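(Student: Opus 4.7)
The strategy is to reduce, via orthogonality of types and Claim~\ref{Mbpreservedunderequidominance}, to the case of independent realizations of a single minimal Lascar type, to which the unnamed Fact following Corollary~\ref{Mbpsquared} applies. Assume without loss of generality that $A = \mathrm{bdd}(A)$, so that each of the minimal types $p_{(1)}, \ldots, p_{(n)}$ whose realizations compose the tuple $(a_1, \ldots, a_n) \models p$ is itself a Lascar type over $A$. Partition $\{1, \ldots, n\}$ into nonorthogonality equivalence classes $C_1, \ldots, C_m$ of the $p_{(i)}$ (recalling that nonorthogonality is an equivalence relation on minimal types), and let $p_{C_j} \in S(A)$ denote the type of the sub-tuple $(a_i)_{i \in C_j}$; then $p$ is the orthogonal product of the $p_{C_j}$.

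First, I would establish that $F_{Mb}(p) = \max_j F_{Mb}(p_{C_j})$. Any $A$-indiscernible sequence $(\bar a^{k})_{k}$ in $p$ projects to an $A$-indiscernible sequence $(\bar a^{k}|_{C_j})_{k}$ in each $p_{C_j}$. A standard argument using pairwise orthogonality between the $p_{C_j}$'s shows that the joint sequence is a Morley sequence in $p$ over $A$ if and only if each projection is a Morley sequence in $p_{C_j}$ over $A$. Therefore, if the joint sequence is not Morley, some projection is not Morley, and the first index at which the joint forks coincides with the first forking index of that projection, yielding $F_{Mb}(p) \leq \max_j F_{Mb}(p_{C_j})$. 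The reverse inequality follows by lifting any non-Morley indiscernible sequence in some $p_{C_j}$ to one in $p$ by augmenting with $A$-independent Morley sequences in the other classes.

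Next, within each class $C_j$, fix a representative minimal Lascar type $q_j$, and show $p_{C_j}$ is equidominant with $q_j^{(|C_j|)}$ in the sense of Definition~\ref{equidominance}. Since each $p_{(i)}$ for $i \in C_j$ is nonorthogonal to $q_j$ over $A = \mathrm{bdd}(A)$, for every such $i$ one can produce, over a suitable nonforking extension $A_i' \supseteq A$, realizations $a_i \models p_{(i)}|_{A_i'}$ and $b_i \models q_j|_{A_i'}$ that are interbounded over $A_i'$. Choosing the witnessing data $A_i'$ jointly $A$-independent allows these to combine into a single nonforking extension $A' \supseteq A$ and realizations $(a_i)_{i \in C_j} \models p_{C_j}|_{A'}$ and $(b_i)_{i \in C_j} \models q_j^{(|C_j|)}|_{A'}$ that are interbounded over $A'$. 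By Claims~\ref{Mbpreservedundernonforkingextension} and~\ref{Mbpreservedunderequidominance}, this gives $F_{Mb}(p_{C_j}) = F_{Mb}(q_j^{(|C_j|)})$.

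Finally, apply the Fact following Corollary~\ref{Mbpsquared}: when $|C_j| \geq 2$, $F_{Mb}(q_j^{(|C_j|)})$ equals the least $k$ for which $q_j$ is $k$-linear, or $\infty$ if no such $k$ exists; when $|C_j| = 1$, $F_{Mb}(q_j) = 1$ since every nonconstant $A$-indiscernible sequence in a minimal type is already a Morley sequence. Under the conclusion of the pseudolinearity conjecture, each $q_j$ is either linear (yielding $F_{Mb}(q_j^{(|C_j|)}) = 1$) or fails to be $k$-linear for any $k$ (yielding $F_{Mb}(q_j^{(|C_j|)}) = \infty$). Taking the maximum over $j$ then gives $F_{Mb}(p) \in \{1, \infty\}$. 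The main technical hurdle is the equidominance step: one must arrange the pairwise interbounded witnesses within $C_j$ coherently enough that their joint parameter set is nonforking over $A$ and that the full tuples, rather than just the individual realizations, become interbounded over the resulting extension.
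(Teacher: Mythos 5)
Your proposal is correct and follows essentially the same route as the paper: decompose by nonorthogonality classes and prove $F_{Mb}$ of an orthogonal product is the maximum of the factors, then within a class use a nonforking extension (via the independence theorem) and equidominance/interalgebraicity to reduce to independent powers of a single minimal Lascar type, and finally invoke the $k$-linearity characterization of $F_{Mb}(q^{(n)})$ together with the pseudolinearity conjecture. The only cosmetic difference is that you cite the Fact on $F_{Mb}(p^{(n)})$ for general $n\geq 2$ where the paper appeals to Corollary~\ref{Mbpsquared}; both handle the same technical point about arranging the witnesses coherently over a common nonforking base.
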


We first prove Claim \ref{Mbpreservedundernonforkingextension}; this only requires simplicity. Let $F_{Mb}(p) = n$, and choose an $A$-indiscernible sequence $I$, consisting of realizations of $p$, any $n$ terms of which are independent over $A$, but which is not a Morley sequence over $A$. By the chain condition in simple theories, and the fact that $q$ does not fork over $p$, there is some $I' \equiv_{A} I$, with $I' \ind_{A} B$, consisting of realizations of $q$ and indiscernible over $B$. But then any $n$ terms of $I'$ are independent over $B$, and $I'$ is not a Morley sequence over $B$, so $F_{Mb}(q) \geq n$. Now suppose $F_{Mb}(q) = n$; it suffices to show $F_{Mb}(p) \geq n$. Let $I$ be an indiscernible sequence over $B$ of realizations of $q$, that is not a Morley sequence over $B$, but so that any $n$ terms of $I$ are independent over $B$. Since $q$ is a nonforking extension of $p$, so each term of $I$ is independent from $B$ over $A$, it follows from the forking calculus that every $n$ terms of $I$ (which consists of realizations of $p$) are independent over $A$. But $I=\{a_{i}\}_{i < \omega}$ is not a Morley sequence over $A$, because if it were, for each $N < \omega$,  $\{a_{Ni} \ldots a_{Ni + (N-1)}\}_{i < \omega}$ would be a Morley sequence over $A$ that remains indiscernible over $B$, so the first term of this would be independent from $B$ over $A$ by Kim's lemma and symmetry. So $I \ind_{A} B$ (see, say, the proof of the independence clause of Proposition 3.21 of \cite{KR17}), and then since $I$ is assumed to be a Morley sequence over $A$, it will be a Morley sequence over $B$, a contradiction.  Thus $F_{Mb}(p) \geq n$.

To prove Claim \ref{Mbpreservedunderequidominance}, supppose $F_{Mb}(p) = n$; by symmetry, it suffices to show $F_{Mb}(q) \geq n$. Let $I = \{a_{i}\}_{i < \omega}$ be an indiscernible sequence over $A$, $a_{i} \models p$, $a_{n-1} \ind_{A} a_{< n-1}$, $I$ not a Morley sequence. We may then find $I' = \{a'_{i}\}_{i < \omega}$, $a'_{i} \models q$, so that for each $i < \omega$, $a_{i}$ forks over $A$ with exactly the same sets as $a'_{i}$.  Then it is easily seen by successive replacements that $a'_{n-1} \ind_{A} a'_{< n-1}$, and $I'$ is not a Morley sequence. So $F_{Mb}(q) \geq n$.

Finally, it remains to prove Claim \ref{Mboneorinfinityforproductofminimaltypes}. There are two cases, first where all of the independent minimal types are orthogonal, and second where some two are not orthogonal. In the first case, this follows from Proposition \ref{typeswithalgebraicforkinghavembone} and the following subclaim:

\begin{subclaim}
    Let $p(x), q(y) \in S(A)$ be orthogonal types and $r(x,y)$ the type over $A$ of some (necessarily independent) realizations of $p$ and $q$. Then $F_{Mb}(r) = \mathrm{max}(F_{Mb}(p), F_{Mb}(q))$. 
\end{subclaim}

\begin{proof}
   Obviously $F_{Mb}(r) \geq \mathrm{max}(F_{Mb}(p), F_{Mb}(q))$ (take a witness to $F_{Mb}(p)$, and extend the terms independently to realizations of $r$; then extract an $A$-indiscernible sequence.) To see   $F_{Mb}(r) \leq \mathrm{max}(F_{Mb}(p), F_{Mb}(q))$, let $\{a_{i}b_{i}\}_{i < \omega}$ be an $A$-indiscernible sequence of realizations of $r$ that is not a Morley sequence. It suffices to show that either $I = \{a_{i} \}_{i < \omega}$ or $J = \{b_{i} \}_{i < \omega}$ is not a Morley sequence over $A$, because, supposing without loss of generality that $I$ is not a Morley sequence, $n+1$ terms of it will be dependent for $n \leq F_{Mb}(p)$, so $n \leq \mathrm{max}(F_{Mb}(p), F_{Mb}(q)) + 1$ terms of $\{a_{i}b_{i}\}_{i < \omega}$ will be dependent. If both $I$ and $J$ are Morley sequences over $A$, then (see, say, \cite{P96}) $I \ind_{A} J$, so $\{a_{i}b_{i}\}_{i < \omega}$ will be a Morley sequence over $A$, a contradiction.
\end{proof}

This proves the case where the minimal types are orthogonal. When the minimal types are nonorthogonal, we may assume, by the subclaim, that $p$ is realized by an $A$-independent set $\{a_{i}\}_{i < n}$ for $n \geq 2$ where the $p_{i}= \mathrm{tp}(a_{i}/A)$ all belong to the same nonorthogonality class; we may assume $A = \mathrm{bdd}(A)$ (so we can apply the independence theorem). So there is some set $B$ with $\mathrm{bdd}(B) = B$ and nonforking extensions $q_{i}$ of $p_{i}$ to $B$ so that no two of the $q_{i}$ are weakly orthogonal, and by the independence theorem we may additionally chose $B$ so that for each $i$, $a_{i} \models q_{i}$, and $B \ind_{A} \{a_{i}\}_{i < n}$.  By Claim \ref{Mbpreservedundernonforkingextension},  it suffices to show that $F_{Mb}(\mathrm{tp}(\{a_{i}\}_{i < n} ))$ is either $1$ or $\infty$. By Claim \ref{Mbpreservedunderequidominance} we can replace each of the $a_{i}$ by tuples interalgebraic over $B$, thereby assuming that each $a_{i}$ realizes the same type over $B$. But then by the conclusion of the pseudolinerity conjecture either this type is one-based, so $F_{Mb}(\mathrm{tp}(\{a_{i}\}_{i < n} )) = 1$, or it is not $k$-pseudolinear for any $k$, so because $n \geq 2$, by Corollary \ref{Mbpsquared},  $F_{Mb}(\mathrm{tp}(\{a_{i}\}_{i < n} )) = \infty$. 

This completes the proof of Theorem \ref{Mboneinfinity}. It follows from Corollary \ref{pseudolinearitymb} that:

\begin{cor}\label{infMb}
Let $T$ be either superstable of finite rank or supersimple of finite rank and $\omega$-categorical. Then for $p \in S(A)$, either $F_{Mb}(p)=1$ or $F_{Mb}(p) = \infty$.

\end{cor}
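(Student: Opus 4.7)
The plan is to deduce the corollary by combining Theorem \ref{Mboneinfinity} with Corollary \ref{pseudolinearitymb}. By Theorem \ref{Mboneinfinity}, for $T$ supersimple of finite rank it is enough to verify that $T$ satisfies the conclusion of the pseudolinearity conjecture: every Lascar type $p \in S(A)$ with $\mathrm{SU}(p)=1$ that is $k$-linear for some $k < \omega$ is in fact linear. Since $T$ is of finite rank, any type with $\mathrm{SU}$-rank $1$ is minimal, and hence regular, so we are in the setting of Fact \ref{pseudolinearity} and Corollary \ref{pseudolinearitymb}.

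In the superstable finite rank case, $T$ is stable and $p$ is a regular type, so Fact \ref{pseudolinearity} applies directly: if $p$ is $k$-linear then $k=1$, so $p$ is linear. Equivalently, one quotes the ``moreover'' clause of Corollary \ref{pseudolinearitymb}, which says that for a regular Lascar type $p$ of $\mathrm{SU}$-rank $1$ in a stable theory, either $p$ is linear or $F_{Mb}(p \otimes p) = \infty$, and reads off that the only finite value of $k$-linearity is $k=1$. This verifies the pseudolinearity conjecture in this case, so Theorem \ref{Mboneinfinity} gives the dichotomy.

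In the supersimple finite rank and $\omega$-categorical case, the same reduction to finite parameter sets used in the proof of Corollary \ref{pseudolinearitymb} applies: given $p(x) \in S(A)$ a Lascar strong type with $\mathrm{SU}(p) =1$, one uses supersimplicity to find a finite $A'_{0} \subseteq A$ over which $p$ does not fork, and then enlarges to a finite $A_{0} \subseteq A$ containing the ($A'_{0}$-definable) Lascar class of $p$, so that $p|_{A_{0}}$ is a Lascar strong type over the finite set $A_{0}$. Then $F_{Mb}(p|_{A_0} \otimes p|_{A_0}) = F_{Mb}(p\otimes p)$ by Claim \ref{Mbpreservedundernonforkingextension}, and $p$ is $k$-linear if and only if $p|_{A_{0}}$ is, so Fact \ref{pseudolinearity} applied in the $\omega$-categorical simple case to the regular type $p|_{A_{0}}$ over the finite set $A_{0}$ forces $k=1$. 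Hence the pseudolinearity conjecture holds for $T$, and Theorem \ref{Mboneinfinity} again delivers the dichotomy $F_{Mb}(p) \in \{1, \infty\}$ for every $p \in S(A)$.

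The only mildly delicate point is the reduction to a finite base in the $\omega$-categorical case, since Fact \ref{pseudolinearity} is stated for regular types over finite sets in the $\omega$-categorical simple setting; but this is handled verbatim as in the proof of Corollary \ref{pseudolinearitymb}, using supersimplicity together with $\omega$-categoricity of the Lascar equivalence relation. No new ingredient beyond the results already assembled is needed.
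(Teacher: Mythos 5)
Your proposal is correct and follows essentially the same route as the paper, which derives Corollary \ref{infMb} in one line by combining Theorem \ref{Mboneinfinity} with Corollary \ref{pseudolinearitymb} (itself resting on Fact \ref{pseudolinearity}). Your more explicit treatment of the reduction to a finite base in the $\omega$-categorical case is exactly the argument already carried out in the proof of the ``moreover'' clause of Corollary \ref{pseudolinearitymb}, so no new content is added or missing.
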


Note that this proof does not tell us that types $p$ with $F_{Mb}(p) = 1$ have algebraic forking. For example, it is not known that algebraic forking is preserved under nonforking extension, so the analogue of Claim \ref{Mbpreservedundernonforkingextension} may not hold.  We may then ask:

\begin{question}
Let $T$ be supersimple of finite rank and let $p \in S(A)$, $F_{Mb}(p) = 1$. Does $p$ have algebraic forking?

\end{question}

If the answer to this question is positive, by Propositions \ref{typeswithalgebraicforkinghavembone} and \ref{Mboneinfinity}, then, assuming the pseudolinearity conjecture, in a supersimple theory with finite rank, the types $p$ with $F_{Mb}(p) $ finite will be exactly those with algebraic forking.

\begin{example}
\label{Mbonenonalgebraic}

There are superstable theories $T$ so that for all $p \in S(A)$, $F_{Mb}(p) = 1$, and which are not one-based, and therefore have types without algebraic forking. In particular, there are superstable theories, which are not one-based, which have the following property (See e.g. \cite{P96}, where stable theories with the below property are called ``trivial'').

\begin{definition}
\label{geometricallytrivialforking} A simple theory $T$ has \emph{geometrically trivial forking} if $a \nind_{A} bc$ implies $a \nind_{A} b$ or $a \nind_{A} c$. A regular type $p \in S(A)$ in a simple theory has \emph{trivial pregeometry} if for $a, b, c \subseteq p(\mathbb{M})$, $a \nind_{A} bc$ implies $a \nind_{A} b$ or $a \nind_{A} c$.

\end{definition}
The following is an obvious consequence of the definitions:

\begin{prop}
If $T$ is a simple theory with geometrically trivial forking, then $F_{Mb}(p)=1$ for all complete types $p$. If $p$ is a regular type in a simple theory with trivial pregeometry, then $F_{Mb}(p)=1$.

\end{prop}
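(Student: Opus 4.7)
The plan is to unpack $F_{Mb}(p) = 1$ directly from the definition: we need to show that every $A$-indiscernible sequence $(a_i)_{i<\omega}$ in $p$ that is not a Morley sequence over $A$ already has $a_1 \nind_A a_0$. Fix such a sequence. Since it fails to be a Morley sequence, there is a least $n \geq 1$ with $a_n \nind_A a_0 \ldots a_{n-1}$.

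The key step is an iterated application of geometric triviality. Writing $a_0 \ldots a_{n-1} = (a_0 \ldots a_{n-2})\cdot a_{n-1}$ and applying the definition of geometrically trivial forking, we get either $a_n \nind_A a_0 \ldots a_{n-2}$ or $a_n \nind_A a_{n-1}$; a straightforward induction on $n$ then produces some index $i < n$ with $a_n \nind_A a_i$. By $A$-indiscernibility of $(a_i)$, the pair $(a_i, a_n)$ realizes the same type over $A$ as $(a_0, a_1)$, so $a_1 \nind_A a_0$, i.e.\ $\mathrm{tp}(a_1/Aa_0)$ is a forking extension of $p$. This gives $F_{Mb}(p) \leq 1$, and since $p$ is non-algebraic the inequality is an equality.

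For the second statement, let $p \in S(A)$ be regular with trivial pregeometry and let $(a_i)_{i < \omega}$ be any $A$-indiscernible sequence of realizations of $p$ that is not a Morley sequence. All terms lie in $p(\mathbb{M})$, so the trivial pregeometry version of the definition applies to tuples of the form $a_n$ versus $(a_0 \ldots a_{n-2}, a_{n-1})$, both built from realizations of $p$. The same induction as above yields $i < n$ with $a_n \nind_A a_i$, and indiscernibility once again transfers this to $a_1 \nind_A a_0$.

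I do not foresee any real obstacle: both halves of the proposition reduce to the same two-line argument (iterate the trivial-forking hypothesis to collapse an $n$-term dependence to a two-term dependence, then transport via indiscernibility). The only care needed is to state the induction cleanly and to note that in the regular case all intermediate tuples one feeds into the trivial pregeometry axiom are subsets of $p(\mathbb{M})$, so the hypothesis genuinely applies.
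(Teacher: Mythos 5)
Your argument is correct and is exactly the intended one: the paper states this proposition as ``an obvious consequence of the definitions'' and gives no proof, and your iterated application of triviality to collapse $a_n \nind_A a_{<n}$ to some $a_n \nind_A a_i$, followed by transport to $a_1 \nind_A a_0$ via indiscernibility, is precisely the obvious argument being alluded to. No gaps.
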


The $\omega$-stable free pseudoplane, Example 4.6.1 of \cite{P96}, is an example of a superstable theory with geometrically trivial forking which is not one-based. This is the theory of undirected graphs without loops, each vertex of which has infinite degree; it is the case $n =1$ of the construction of Theorem  \ref{Mbn} below. As noted in example 7.2.10 of \cite{P96}, the unique complete type $p$ over the empty set is regular, so has trivial pregeometry; however, the proof from \cite{P96} that $T$ is not one-based should show that $p$ does not have algebraic forking.

The Farey graph is another example of an $\omega$-stable theory (Corollary 1.15 of \cite{DKG23}) which is geometrically trivial (Lemma 1.14 of \cite{DKG23}) but is not one-based (it has weak elimination of imaginaries, Corollary 11.15 of \cite{DKG23}, but $A \ind_{C} B$ does not coincide with $\mathrm{acl}(AC) \cap \mathrm{acl}(BC) =\mathrm{acl}(C)$ for the real algebraic closure, Lemma 11.4 of \cite{DKG23}.)

\end{example}
However, even though there are examples of types $p$ in a supersimple theory with $F_{Mb}(p) < \infty$ without algebraic forking, there are no known examples of types $p$ in a supersimple theory with $F_{Mb}(p)$ other than $1$ or $\infty$.

\begin{question}
   \label{Mboneinfinitysupersimple}
    
    Let $T$ be supersimple, and $p$ a complete type. Is $F_{Mb}(p)$ either $1$ or $\infty$?
\end{question}

We have also given two conditions under which a regular type $p$ in a supersimple theory must have $F_{Mb}(p)=1$. Do these account for all examples?

\begin{question}
    Let $T$ be a supersimple theory, and $p$ a regular type with $F_{Mb}(p)=1$. Must $T$ have either algebraic forking, or trivial pregeometry?
\end{question}

Though we do not have an answer to the question of whether $F_{Mb}(p)$ is $1$ or $\infty$ for $p$ a type in a supersimple (or even superstable) theory (Question \ref{Mboneinfinitysupersimple}), we discuss this question further, in particular the case of regular types. Regular types are an important special case in supersimple theories, because of the fact (see e.g.  Corollary 5.2.19 of \cite{Wag00}, and compare Fact \ref{equidominancetheorem}) that every type in a supersimple theory is equidominant to the type of a set of independent realizations of regular types. More precisely, if all regular types $p$, and types $p$ of independent sets of realizations of a \textit{linear} regular type, have $F_{Mb}(p) = 1$ or $F_{Mb}(p) = \infty$, we can use the proof of Proposition \ref{Mboneinfinity} to prove that all types $p$ in superstable theories have $F_{Mb}(p) = 1$ or $F_{Mb}(p) = \infty$.

However, unlike types $p$ with $\mathrm{SU}(p)=1$, which have $F_{Mb}(p) =1$ because they have algebraic forking, it is not necessary that a regular type $p$ has $F_{Mb}(p) =1$ (the unique type over the empty set in the vector space sort in Example \ref{vectorspace}, for example.) 

We give some characterizations of $F_{Mb}(p)$, where $p$ is a regular type. To motivate this characterization, we reproduce the following definition from \cite{Kop11}, \cite{AKop15}, which generalizes geometric triviality:

\begin{definition}
   Let $T$ be simple. Then $T$ has \emph{$n$-degenerate dependence} if $a \nind_{C} B$ implies that, for some $B_{0} \subset B$ with $|B_{0}| \leq n$,  $a \nind_{C} B_{0}$.
\end{definition}

This suggests the following definition, where we assume that $B$ is a sufficiently saturated model, rather than just any set. For the purposes of our chracterization of $F_{Mb}(p)$ for $p$ regular, we present this relative to a type in the theory $T$, fixing the base set $C$ as well as taking $a$ to be a single realization of $p$.

\begin{definition}
    Let $T$ be simple, and $p(x) \in S(C)$.  Then $p$ has \emph{weakly $n$-degenerate dependence over $C$} if for $a \models p(\mathbb{M})$ and $|C|^{+}$-saturated models $M \supset C$, if $a \nind_{C} p(M)$ then for some $B_{0} \subset p(M)$ with $|B_{0}| \leq n$, $a \nind_{C} B_{0}$.
\end{definition}

\begin{theorem}
    Let $T$ be simple, and let $p \in S(C)$ be a regular type. Then the following are equivalent:

    (1) $F_{Mb}(p) \leq n$.
    
    (2) $p$ has weakly $n$-degenerate dependence over $C$.
    
    (3) For every forking extension $q$ over $p$, the solution set of $q$ is covered by finitely many sets of the form $\mathrm{cl}_{p}(a_{0} \ldots a_{n-1})$, where $a_{0}, \ldots ,a_{n-1} \models p$.
    
\end{theorem}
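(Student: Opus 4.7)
The plan is to establish the cycle $(1) \Rightarrow (2) \Rightarrow (3) \Rightarrow (1)$, exploiting the regularity of $p$ to identify forking of realizations of $p$ with membership in the pregeometric closure $\mathrm{cl}_p$ on $p(\mathbb{M})$. For $(3) \Rightarrow (1)$, I would start with a $C$-indiscernible non-Morley sequence $(a_i)_{i<\omega}$ in $p$, let $m$ be the least index with $a_m \nind_C a_{<m}$, and set $q = \mathrm{tp}(a_m/Ca_{<m})$, a forking extension of $p$. By (3), $q(\mathbb{M}) \subseteq \bigcup_{j=1}^{N} \mathrm{cl}_p(\bar a^j)$ for finitely many tuples $\bar a^j \in p(\mathbb{M})^n$. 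Indiscernibility forces $a_i \models q$ for every $i \geq m$, so pigeonhole delivers an infinite subsequence of $(a_i)_{i \geq m}$ contained in a single $\mathrm{cl}_p(\bar a^{j_0})$. Since the pregeometric dimension of this set over $C$ is at most $n$, any $n+1$ of its elements are $C$-dependent, and because subsequences of indiscernibles are indiscernible with inherited dependence patterns, this compels $a_n \nind_C a_{<n}$ in the original sequence, giving $F_{Mb}(p) \leq n$.

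For $(2) \Rightarrow (3)$, I would take a forking extension $q \in S(D)$ of $p$, pass to a $|D|^+$-saturated $M \supseteq D$, and extend $q$ to a nonforking $q' \in S(M)$; then $q'$ still forks over $C$. For each $a \models q'$, $a \nind_C M$, and regularity of $p$ upgrades this to $a \nind_C p(M)$. Applying (2) yields $B_0(a) \subseteq p(M)$ with $|B_0(a)| \leq n$ and $a \in \mathrm{cl}_p(B_0(a))$. Since forking in a simple theory is type-definable, so is the relation $x \in \mathrm{cl}_p(\bar y)$, and compactness converts this pointwise cover into a finite cover $q'(\mathbb{M}) \subseteq \bigcup_{j=1}^{N} \mathrm{cl}_p(B_0^j)$, which restricts to the required cover of $q(\mathbb{M})$.

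For $(1) \Rightarrow (2)$, I would assume (1) and suppose for contradiction that (2) fails at some $a \models p$ and $|C|^+$-saturated $M$ with $a \nind_C p(M)$ but $a \ind_C B_0$ for all $B_0 \subseteq p(M)$ of size $\leq n$. Choose $\bar b = (b_0, \ldots, b_m) \subseteq p(M)$ with $a \nind_C \bar b$ of minimum possible length $m+1$; the hypothesis forces $m + 1 \geq n + 1$, and minimality also forces $\bar b$ to be $C$-independent, so $(b_0, \ldots, b_m, a)$ is a minimal dependence configuration: any $m+1$ of its $m+2$ entries are $C$-independent while the full tuple is $C$-dependent. Using regularity of $p$ (with base $\mathrm{bdd}(C)$ if necessary), the $C$-type of such a configuration is determined by its pregeometric combinatorics and hence permutation-invariant, so the configuration is $C$-indiscernible; compactness then extends it to an $\omega$-long $C$-indiscernible sequence in $p$ that is $(m+1)$-independent but not a Morley sequence. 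Since $m+1 \geq n+1$, this contradicts $F_{Mb}(p) \leq n$.

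The main obstacle will be $(1) \Rightarrow (2)$: extending the finite minimal dependence configuration $(b_0, \ldots, b_m, a)$ to an $\omega$-long $C$-indiscernible sequence with the prescribed independence profile. This rests on the pregeometric/Lascar-strong-type rigidity of minimal spanning configurations for regular types, and may require either a direct verification of permutation-invariance of the configuration's $C$-type, or a Ramsey extraction from many automorphic copies of the configuration within a sufficiently saturated model. A secondary technical point, used in $(2) \Rightarrow (3)$, is the standard fact for regular $p$ that forking of a realization of $p$ against a model is witnessed by a finite subset of the model's realizations of $p$.
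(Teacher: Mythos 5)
Your cycle runs $(1)\Rightarrow(2)\Rightarrow(3)\Rightarrow(1)$, whereas the paper runs it the other way, proving $(1)\Rightarrow(3)$, $(3)\Rightarrow(2)$, and $(2)\Rightarrow(1)$; this difference is not cosmetic, because it is exactly your two "new" legs that break. The fatal problem is $(1)\Rightarrow(2)$. Your plan is to take a minimal dependent configuration (circuit) $(b_0,\ldots,b_m,a)$ of realizations of the regular type $p$ and argue that "the $C$-type of such a configuration is determined by its pregeometric combinatorics and hence permutation-invariant, so the configuration is $C$-indiscernible." This is false: for the generic type of a vector space (or of $\mathrm{ACF}_0$), the triple $(x,y,x+y)$ with $x,y$ independent generics is a circuit, but $\tp(x,y,x+y)\neq\tp(y,x+y,x)$, so circuits of regular types are not indiscernible and their types are not determined by the dependence pattern. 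Your fallback, a Ramsey extraction, does not repair this: from a single finite circuit there is nothing to extract from, and even given infinitely many copies, the property "the sequence is not a Morley sequence over $C$" is not an EM-condition that survives extraction. The paper's corresponding construction avoids this precisely by first producing (in the leg $\neg(3)\Rightarrow\neg(1)$) an infinite sequence all of whose terms realize a \emph{fixed} forking extension $q\in S(C')$ of $p$; after extracting a $C'$-indiscernible sequence, non-Morley-ness over $C$ is then automatic from Kim's lemma and symmetry, while $(n+1)$-independence is type-definable and hence preserved. Your circuit gives no such common forking extension, so the contradiction with $F_{Mb}(p)\leq n$ never materializes.

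There is a second, smaller gap in $(2)\Rightarrow(3)$: you invoke as "standard" that for regular $p$ and saturated $M$, $a\nind_C M$ upgrades to $a\nind_C p(M)$. In the generality of simple theories this is not an off-the-shelf fact, and your stated justification for the subsequent compactness step ("forking in a simple theory is type-definable") is also wrong as written — what is true, and what the compactness argument actually needs, is that for $\bar b$ of fixed type over $C$ the condition $x\nind_C\bar b$ is a small union of definable conditions. The upgrade itself is repairable: rather than fixing $M\supseteq D$ first, one can note that $a$ forks over $C$ with the canonical base of $\tp(a/\mathrm{bdd}(D))$, which lies in the bounded closure of a Morley sequence of realizations of $p$, and then choose $M$ to contain that sequence. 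By contrast, your $(3)\Rightarrow(1)$ is correct and is a pleasant direct counterpart to the paper's contrapositive: instead of greedily building an $(n+1)$-independent non-Morley sequence from an uncovered forking extension, you take an arbitrary non-Morley indiscernible sequence, observe its tail realizes a common forking extension $q$, and use the finite cover plus pigeonhole to trap an infinite subsequence inside a single $\mathrm{cl}_p$-set of dimension at most $n$. But since $(1)\Rightarrow(2)$ fails, the equivalence is not established; the easiest repair is to reroute as the paper does, proving $(1)\Rightarrow(3)$ and $(3)\Rightarrow(2)$ in place of your first leg.
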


\begin{proof}
    (3 $\Rightarrow$ 2) Suppose (2) fails, so there is some $a \models p$ and $|C|^{+}$-saturated model $M$ with $a \nind_{C} p(M)$, and $a \ind_{C} B_{0}$ for every $B_{0} \subset p(M)$ with $|B_{0}| \leq n$. It suffices to show that $q = \mathrm{tp}(a/p(M))$, which is a forking extension of $p$, fails the conclusion of (3). Suppose otherwise, so there are $B^{0}_{0}, \ldots B_{0}^{k} \subset p(M)$ with $|B^{i}_{0}| \leq n$ for $0 \leq i \leq k$, so that $q(\mathbb{M}) \subseteq \cup^{k}_{i=0} \mathrm{cl}_{p} (B^{i}_{0})$. Then $a \in q(\mathbb{M})$, so $a \in \mathrm{cl}_{p}(B^{i}_{0})$ for some $0 \leq i \leq k$, so $a \nind_{C} B^{i}_{0}$, a contradiction.

    (1 $\Rightarrow$ 3) Suppose the negation of (3) holds, for some forking extension $q \in S(C')$ of $p$. We first construct inductively a sequence $\{a_{i}\}_{i < \omega}$, $a_{i} \models q$, so that any $n+1$ terms of $\{a_{i}\}_{i < \omega}$ are independent over $C$. Suppose $\{a_{i}\}_{i < k}$ is a sequence, though of length $k$, satisfying these same requirements. We want to find $a_{k} \models q$ that is independent from every $n$-element subset of $\{a_{i}\}_{i < k}$, thereby preserving the requirements.  Let $B^{0}_{0}, \ldots B_{0}^{m} \subset p(M)$ enumerate the  $n$-element subsets of $\{a_{i}\}_{i < k}$. Then by assumption, $q(\mathbb{M}) \not\subseteq \cup^{m}_{i=0} \mathrm{cl}_{p} (B^{i}_{0})$; choosing $a_{k} \in q(\mathbb{M}) \backslash \mathrm{cl}_{p} (B^{i}_{0})$ will give us the desired $a_{k}$. 

  Now choose a $C'$-indiscernible sequence $\{a'_{i}\}_{i < k}$ with the same EM-type over $C$ as $\{a_{i}\}_{i < k}$. The condition that any $n+1$ terms are independent over $A$ is type-definable (see Subclaims \ref{typedefinablemorley} and \ref{nindependencetypedefinable}), so $\{a'_{i}\}_{i < k}$ also has this condition. However, $\{a'_{i}\}_{i < k}$ is not a Morley sequence, since if $\{a'_{i}\}_{i < \omega}$ were a Morley sequence over $C$, by Kim's lemma and symmetry $a'_{0} \ind_{C} C'$, contradicting the fact that $a'_{0} \models q$, a forking extension of $p$. So since $\{a'_{i}\}_{i < k}$ is an $n+1$-independent sequence of realizations of $p$ that is not a Morley sequence, $F_{Mb}(p) > n$.

  (1 $\Rightarrow$ 2). Suppose that $p$ has $n$-degenerate independence over $C$, and that $F_{Mb}(p) > n$. Then there is a $C$-indiscernible sequence $\{a_{i}\}_{i< \omega}$, $a_{i} \models p$, so that for $N > n$, $a_{0}, \ldots a_{N-1}$ form an independent set over $C$ but $a_{i} \nind a_{0} \ldots a_{N-1}$ for $i > N-1$. The sequence $\{b_{i}\}_{i < \omega}$ where $b_{i}=a_{i+N-1}$ has, by $C$-indiscernibility of $\{a_{i}\}_{i< \omega}$, the property that $b_{0}, \ldots b_{N-1}$ forms an independent set over $C$, and is indiscernible over $C' = Ca_{0} \ldots a_{N-1} $, so we may choose some $|C|^{+}$-saturated model $M \supset C'$ so that  $\{b_{i}\}_{i < \omega}$ remains indiscernible over $Cp(M)$. By $n$-degenerate independence and indiscernibility over $p(M)$, there is then some $|B_{0}| \subset p(M)$ with $B_{0} \leq n$ so that $b_{i} \in \mathrm{cl}_{p}(B_{0})$, in particular for $0 \leq i \leq N-1$. But because $SU_{p}(B_{0}) \leq n$, this contradicts independence of $b_{0}, \ldots b_{N-1}$.

\end{proof}
The direction   (3 $\Rightarrow$ 1) of the above suggests a new way of obtaining $n$-independent indiscernible sequences that are not Morley sequences, possibly giving a strategy for answering Question \ref{Mboneinfinitysupersimple}; note that its proof uses a different construction method than the arguments in the results on canonical bases in the previous section (Proposition \ref{dependencecanonicalbases}, Corollary \ref{rank2}), where a Morley sequence is taken in a forking extension of $p$. 

We now give our first examples of types $p$ with $1 < F_{Mb}(p) < \infty$. In
general, it seems quite difficult to construct such examples - for instance,
since our constructions in Section \ref{nm>} are either superstable or
$\omega$-categorical supersimple of finite rank, by Corollary \ref{infMb}
every type $p$ in these theories has either $F_{Mb}(p) = 1$ or $F_{Mb}(p) =
\infty$. We show that for any $n < \omega$, it is possible in a stable theory
that $F_{Mb}(p) = n $.

\begin{theorem}
   \label{Mbn}

    Let $n < \omega + 1$. Then there is an stable theory $T $ with a type $p$ so that $F_{Mb}(p) = n$, and so that,
    for all types $q$, $F_{Mb}(q) \leq n$.
\end{theorem}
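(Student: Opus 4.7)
The plan is to observe first that the case $n = \omega$ (in which $F_{Mb}(p) = \infty$ is allowed) is already handled by Example \ref{vectorspace}, where the generic type of the vector-space sort has $F_{Mb}(p) = \infty$. So the content lies in the finite cases: for each $n \geq 1$, I would construct a stable theory $T_n$ via a Hrushovski-style Fra\"iss\'e construction whose combinatorics pin $F_{Mb}$ at exactly $n$. The case $n=1$ is covered by the free pseudoplane of Example \ref{Mbonenonalgebraic}, and the case $n=2$ by (a suitable variant of) the free projective plane of Hyttinen--Paolini \cite{HP21}, where the collinearity of three points on a line yields the minimal genuinely ternary dependence. For general $n \geq 2$, I would carry out the analogous construction in a relational language with one unary predicate $P$ (``points'') and one $(n{+}1)$-ary predicate $R$ (``$n{+}1$ points lying on a common flat''), together with a predimension $\delta(A) = |P(A)| - c \cdot |R(A)|$ for a suitable constant $c$ ensuring that the resulting class $K_n$ of finite self-sufficient structures admits strong amalgamation. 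Let $M_n$ be its Fra\"iss\'e limit, $T_n = \mathrm{Th}(M_n)$, and $p$ the generic $1$-type of a point over $\emptyset$.

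First, I would verify stability of $T_n$ by adapting the Hyttinen--Paolini stability argument for the free projective plane, and describe forking via self-sufficient closure in the standard manner of Hrushovski amalgamation constructions. For the lower bound $F_{Mb}(p) \geq n$, I would take an indiscernible sequence $(a_i)_{i<\omega}$ of points lying on a single generic ``flat,'' so that any $n{+}1$ of the $a_i$ satisfy $R$. By the choice of $\delta$, any $n$ of these points form a self-sufficient independent set, while the $(n{+}1)$st is forced into an $R$-relation with any earlier $n$. Hence $(a_i)$ is not a Morley sequence, but its first $n$ terms are independent; so $F_{Mb}(p) \geq n$. The matching $F_{Mb}(p) \leq n$ then follows from the observation that any failure of independence among realizations of $p$ is witnessed by an instance of $R$, which involves only $n{+}1$ points.

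The main obstacle is promoting the bound $F_{Mb}(q) \leq n$ to \emph{all} complete types $q$, not just to the designated generic $p$. To address this, I would analyze an arbitrary indiscernible sequence $(b_i)_{i<\omega}$ of tuples via the self-sufficient closures $\mathrm{scl}(b_{\leq i})$: any dependence created at stage $m{+}1$ must, by the structure of $K_n$ and strong amalgamation, be witnessed by a single $R$-relation, which has arity $n{+}1$. Thus if $b_n \ind b_{<n}$, a back-and-forth inside $K_n$ forces the full sequence to be Morley, yielding $F_{Mb}(q) \leq n$ uniformly. The delicate points will be the choice of the predimension constant $c$ and the verification of strong amalgamation for $K_n$ with no spurious interpretable relations of higher arity; these are precisely what determine both that the ``generic flat'' witness for $F_{Mb}(p) \geq n$ exists and that no tuple type can encode a dependence of arity greater than $n{+}1$. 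Both steps I expect to follow the template of \cite{HP21} with modifications of a combinatorial flavor.
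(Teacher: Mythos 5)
Your high-level instinct (a Hyttinen--Paolini-style construction whose combinatorics cap dependence at arity $n+1$) matches the paper's, but the specific mechanism you propose fails, and the gap is already in the lower bound. With a predimension $\delta(A)=|P(A)|-c\,|R(A)|$ counting all instances of an $(n+1)$-ary relation, the witnessing configuration you describe --- an infinite indiscernible sequence of points on a ``generic flat'' in which every $(n+1)$-subset satisfies $R$ --- cannot exist in the class: $m$ such points carry $\binom{m}{n+1}$ instances of $R$, so $\delta$ goes negative for large $m$. (The paper uses exactly this observation in the proofs of Theorems \ref{simpnm} and \ref{simpfind} to conclude that an indiscernible sequence of singletons realizes \emph{no} instance of $R$ and is therefore \emph{more} independent, giving $F_{ind}(p)>n$ there.) If you repair this by counting nullity of flats so that infinite flats are permitted, as in Section \ref{nm1cat}, you land in an almost strongly minimal, hence superstable finite-rank, theory, and Corollary \ref{infMb} then forces $F_{Mb}(p)\in\{1,\infty\}$; the paper notes explicitly after Theorem \ref{2,2} that the type there has $F_{Mb}(p)=\infty$. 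Flats of unbounded degree, as in Example \ref{ACFMB}, are precisely what drives $F_{Mb}$ to infinity, so tying the dependence to an $(n+1)$-ary ``collinearity'' predicate together with a dimension-theoretic geometry is the wrong mechanism for pinning $F_{Mb}$ at a finite value bigger than $1$.

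The paper's construction instead uses a single \emph{binary} edge relation and no predimension: $T$ is the theory of infinite graphs with no finite subgraph of minimum degree at least $n+1$, in which every $n$ nodes have infinitely many common neighbours. The arity $n+1$ of dependence comes from the degree threshold a point needs in order to enter the algebraic closure, and the witness for $F_{Mb}(p)\geq n$ is a star: leaves $b_0,b_1,\dots$ with a common neighbour $*$, where $*$ falls into $\mathrm{acl}(b_0\dots b_n)$ only once $n+1$ leaves are present, so the first $n$ terms are independent but the $(n+1)$-st is not. Your upper-bound argument for arbitrary types $q$ is also only asserted; the hard case, handled by the Claim in the paper's proof, is dependence witnessed not by a relation among the given tuples but by \emph{new} points entering $\mathrm{acl}(B_0\dots B_n)$ whose $n+1$ required neighbours are distributed across $n+1$ distinct terms of the indiscernible sequence. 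Controlling that distribution uses indiscernibility and a Ramsey-style uniformization of the intersections $\mathrm{acl}(A'B_i)\cap\mathrm{acl}(A'B_j)$, not merely strong amalgamation of the class.
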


Fix $n < \omega$. We define $T$, based on the free projective planes
discussed by Hyttinen and Paolini (\cite{HP21}); for $n =1$ this will
coincide with the free $\omega$-stable pseudoplane (see e.g. \cite{P96}). In
fact, this theory will itself work for $n = 2$, and we expect that the open
generalized $n$-gons of \cite{AT23} will work for any value of $n$. However,
these examples are more complicated than necessary; for example, in the free
projective planes of \cite{HP21}, the algebraic closure is determined not
just by the requirement that the incidence relation has no finite subgraphs
of minimum degree at least $n+1$, but also the requirement that there  is
exactly one line incident to any two points, and vice versa.
We only need a
requirement of the first kind to obtain the desired theory. So we work in a vocabulary
with a single binary relation and study undirected graphs to build a family of theories $T_n$.

\begin{definition}\label{newopen} For our fixed $n$,
\begin{enumerate}
\item An undirected graph is \textit{open} if every finite subgraph $B$ has
    an element with degree in $B$ at most   $n$.

\item For $A \subseteq B$ open graphs and $C$ with $A\subseteq C \subseteq
    B$ finite, $C$ is \textit{closed} over $A$ if every point of $C
    \backslash A$ has at least $n+1$ neighbors in $C$.  If not, it is open
    over $A$.
    \item Write $A \leq B$ if there is no $C \subseteq B$, $C$ not
    contained in $A$, which is closed over $A$.
    \end{enumerate}
\end{definition}

We borrow the $\leq$ terminology from \cite{HP21}.
Observe that if $C \leq A$, $C \leq B$ and $A$ and $B$ are freely amalgamated
over $C$, $C \leq AB$. This is because, under these assumptions, if $D \subseteq AB$ is
closed over $C$, then so is $D \cap A$ and $D \cap B$, and if $D$ contains
points not in $C$ then so does one of these two. By the same reasoning, if
$A$ and $B$ are open then $AB$ is an open graph.

%\sidebar{Call an undirected graph \textit{open} if it does not contain a
%finite subgraph of minimum degree at least $n+1$. For $A \subseteq B$ open
%graphs and $C \subseteq B$ finite, call $C$ \textit{closed} over $A$ if every
%point of $C \backslash A$ has at least $n+1$ neighbors in $C$; write $A \leq
%B$ if there is no $C \subseteq B$, $C$ not contained in $A$, which is closed
%over $A$. (Here we borrow the terminology of \cite{HP21}.) If $C \leq A$, $C
%\leq B$ and $A$ and $B$ are freely amalgamated over $C$, $C \leq AB$: under
%these assumptions, if $D \subseteq AB$ is closed over $C$, then so is $D \cap
%A$ and $D \cap B$, and if $D$ contains points not in $C$ then so does one of
%these two. By the same reasoning, $AB$ is an open graph,
%
%{\bf Note: revision added hypothesis that $C$ is open in last line and
%reformulated open.} }

\begin{notation}\label{Tndef} $T_n$ is the theory of
infinite open undirected graphs, every $n$ nodes of which have infinitely
many common neighbors.
\end{notation}

%\sidebar{\color{red} What does common neighbor mean? First guess for neighbor
%is $R(x,y)$. But no point in an open graph can have infinitely many close
%neighbors.   second guess: Perhaps neighbor means in the same component.
%Then, in the construction  we could just add (thinking of an
%$\omega$-ordering of the base set, put a point between each $a_m$ and
%$a_{m+1}$ and for $n>3$.  It seems unlikely this is what is intended.}

By varying the classic construction of Hall (\cite{HP21}), there is an
infinite model of $T_n$. At stage $0$, start with an infinite independent set
$I$, and at stage $k+1$, for each $n$ nodes of stage $k$, add infinitely many
nodes connected to each of those nodes and no others. The graph is open
because at each stage the new points have degree $n$.

We rely on Fact~\ref{HP}, whose proof is a straightforward generalization of
the content of the proof of Theorem 1.1 of \cite{HP21}:

\begin{fact}\label{HP}
    Let $M \models T_n$ be saturated, and let $A, B$ be small open undirected
    graphs so that $A \leq M$, $A \leq B$. Then there is an embedding $\iota: B \to M$,
    which is the identity on $A$, so that $\iota(B) \leq M$.
\end{fact}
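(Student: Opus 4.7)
The plan is to mirror the standard Hrushovski--Fra\"iss\'e extension argument (see \cite{HP21}): realize in the saturated model $M$ a type encoding both the configuration of $B$ over $A$ and the closedness of $\iota(B)$ in $M$. First, by a $\le$-chain argument--write $A = B_0 \le B_1 \le \cdots$ with $\bigcup_i B_i = B$ and each $B_{i+1}\setminus B_i$ finite, and extend the embedding stepwise using saturation of $M$--I reduce to the case where $B \setminus A = \{b_1, \ldots, b_k\}$ is finite.

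For this case, I would consider the partial type $p(x_1,\ldots,x_k)$ over $M$ whose formulas specify: (i) the adjacencies of the $x_i$'s with one another and with each $a \in A$, matching $B$; (ii) non-adjacency of each $x_i$ to every element of $M\setminus A$; and (iii) for each finite $D\subset M\setminus A$ and each nonempty $S\subseteq D\cup\{x_1,\ldots,x_k\}$ meeting $D$, the first-order assertion ``some vertex of $S\setminus A$ has at most $n$ neighbors inside $S\cup A$.'' Clause (iii) exactly expresses that no new $\le$-extension of $A\cup\{\iota(b_1),\ldots,\iota(b_k)\}$ arises in $M$, so any realization of $p$ yields the required embedding. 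To show $p$ is consistent, by saturation it suffices to show finite consistency; a finite subtype mentions only a finite $M_0\subset M$ containing $A$. Form the free amalgam $N_0 := B\oplus_A M_0$; the excerpt already observes that $M_0\le N_0$ (because $A\le M_0$ is inherited from $A\le M$, and $A\le B$) and that $N_0$ is open. Extend $N_0$ to a model $N\models T$ by iteratively adjoining generic common neighbors to $n$-tuples, at each step using the free amalgamation fact to preserve $M_0\le N$. Finally, by saturation of $M$, realize $\operatorname{tp}(N\setminus M_0 / M_0)$ in $M$; this realization discharges the chosen finite subtype.

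The hard part is the last step: showing that every finite open $\le$-extension of $M_0$ is realized in $M$ over $M_0$. This is the generic/Fra\"iss\'e-limit content of the theorem and must be extracted from the axiomatization of $T$ (which is $\forall\exists$ in character, forcing infinitely many common neighbors for every $n$-tuple) together with saturation of $M$. Concretely, one realizes the new vertices of $N\setminus M_0$ one at a time using the ``infinitely many common neighbors'' axiom to produce a vertex with the correct neighborhood inside the parameters already embedded, and uses saturation to force the negative conditions (non-adjacency with everything else in $M_0$, and non-closedness of auxiliary subsets). This inductive back-and-forth, made possible by saturation and the axioms, is what converts the finite free amalgam into an actual embedding into $M$ and thereby completes the proof.
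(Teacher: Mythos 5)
The paper itself supplies no proof of this Fact: it only asserts that the proof of Theorem 1.1 of \cite{HP21} ``generalizes straightforwardly.'' So your proposal must be measured against the standard genericity argument, and as written it does not work. The decisive problem is clause (ii) of your type: it is not merely too strong but unsatisfiable by any tuple of $M$. Every vertex of a model of $T$ has infinitely many neighbours (apply the axiom to any $n$-tuple containing it), while $A \leq B$ forces each $b_i$ to have at most $n$ neighbours in $A$ and only finitely many among the other $b_j$'s; hence any genuine image $\iota(b_i) \in M$ must have infinitely many neighbours in $M \setminus (A \cup \iota(B))$, which clause (ii) forbids. You have conflated ``$\iota(B) \leq M$'' (no \emph{finite} set outside $\iota(B)$ is closed over it) with ``$\iota(B)$ has no edges to the rest of $M$''; in these Hrushovski-style classes a strong subset typically has many edges to its complement. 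Even setting this aside, clauses (ii) and (iii) are indexed by all elements (respectively all finite subsets) of $M \setminus A$, so $p$ is a type over a parameter set of cardinality $|M|$, and saturation only realizes types over sets of size strictly less than $|M|$ --- one cannot ``realize $p$ in $M$ by saturation.'' Finally, the finite-consistency step ``realize $\operatorname{tp}^N(N\setminus M_0/M_0)$ in $M$'' presupposes $N \equiv_{M_0} M$, i.e.\ that isomorphic strong finite subsets of models of $T$ have the same elementary type; but in the paper that homogeneity statement is derived \emph{from} this Fact, so the step is circular.

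The salvageable part is your opening reduction. For $B \setminus A$ finite, $A \leq B$ forces some $b \in B\setminus A$ to have at most $n$ neighbours in $B$ (otherwise $(B\setminus A)$ together with its neighbours in $A$ --- a finite set, since each new point has at most $n$ neighbours in $A$ --- would be closed over $A$), so $B$ decomposes into a chain of one-point strong extensions. The actual content of the Fact is then the one-point step: given $A \leq M$ and a new point $b$ with prescribed neighbours $a_1,\dots,a_m \in A$ ($m \leq n$), one must select, \emph{among the elements already present in} $M$, a common neighbour $c$ of $a_1,\dots,a_m$ with no further neighbours in $A$ and with $A \cup \{c\} \leq M$. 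This is done with the axioms plus openness plus saturation over the small set $A$ only: for instance, openness shows that no $n{+}1$ common neighbours of an $n$-tuple can share a further common neighbour, so only boundedly many candidates acquire extra adjacencies to a fixed vertex, and $A \leq M$ is what rules out new finite closed sets over $A \cup \{c\}$ for a generic choice of $c$. That combinatorial selection is exactly what the type-realization shortcut skips, and it is where a correct write-up of this Fact has to do its work.
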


By a standard back-and-forth argument, we then see that:
\begin{cor} Each $T_n$ is complete. Moreover, for $\mathbb{M} \models T_{n}$ a sufficiently saturated model of $T_{n}$, two isomorphic sets $A \leq \mathbb{M}$, $B \leq \mathbb{M}$ have the same type.
\end{cor}

In order to show each $T_n$ is stable we need further information.

\begin{lemma}\label{aclopen} If $\mathbb{M}$ is a sufficiently saturated model of $T_n$
then an open graph $A$ with $A \subseteq \mathbb{M}$  is algebraically closed
if and only if $A \leq \mathbb{M}$.
\end{lemma}
 \begin{proof} If  $A \leq\mathbb{M}$ then $A\leq B$ for any $B$, $A\subseteq B \subseteq
 \mathbb{M}$.
 As noted before Notation~\ref{Tndef}, we can then iterate  free
 amalgamations over $A$ to get infinitely many copies of $B$ in $\mathbb{M}$.
For the converse, it suffices to show that if $A$ is an open undirected
graph, and $A \subsetneq B$ with $B\backslash A$ a finite set such that $B$ is closed over
$A$, then there cannot %be an open graph $C \supset A$ containing infinitely
be infinitely many distinct $B_{i}$  isomorphic to $B$ over $A$. For this note that by
compactness we can go from infinitely many to uncountably many and then by
the $\Delta$-system lemma (taking sufficiently as $\aleph_2$) assume that $\{B_{i}\}_{i < \omega}$ forms a countable
sequence of sets that are disjoint and isomorphic over a common $D \supseteq A$. Since $B$ is closed over $A \subset D \subset B$, we can find a finite set $D_{0} \subseteq D$ such that for all $i$ and $b \in B_{i} \backslash D$, $b$ has at least $n + 1$ neighbors in $B_{i} \cup D_{0}$. Then we see that $F = D_{0} \cup \cup^{n}_{i = 0} (B_{i} \backslash D)$ is a finite subset of $\mathbb{M}$ such that each point of $F$ has at least $n+1$ neighbors in $F$, contradicting that $\mathbb{M}$ is an open graph.

\end{proof}

\begin{notation}\label{indnot}
Let $A, B, C \subseteq \mathbb{M}$ be sets, where $C \leq \mathbb{M}$. Let
$\ind$ denote forking independence and $A \ind_{C}^{\otimes} B$ denote
$\mathrm{acl}(ABC) = \mathrm{acl}(AC) \cup \mathrm{acl}(BC)$ and
$\mathrm{acl}(AC) \cup \mathrm{acl}(BC)$ is the free amalgamation of
$\mathrm{acl}(AC) $ and $\mathrm{acl}(BC)$ over $C$ (so in particular, $\mathrm{acl}(AC) \cap \mathrm{acl}(BC) = C$.
\end{notation}

Since forking independence is  stationary over models, Lemma~\ref{indeq}
implies  $T$ is stable. This is similar to the characterization of forking in
free projective planes in \cite{HP21}, but we include it to illustrate a
property of forking that may not be familiar from other examples of stable
theories.

\begin{lemma}\label{indeq} $A \ind_{C} B$  if and only if $A
\ind_{C}^{\otimes} B$.
\end{lemma}

%\sidebar{Now let $A, B, C \subseteq \mathbb{M}$ be sets, where $C \leq
%\mathbb{M}$. We show that $A \ind_{C} B$ (where $\ind$ denotes forking
%independence) if and only if $\mathrm{acl}(ABC) = \mathrm{acl}(AC) \cup
%\mathrm{acl}(BC)$ and $\mathrm{acl}(AC) \cup \mathrm{acl}(BC)$ is the free
%amalgamation of $\mathrm{acl}(AC) $ and $\mathrm{acl}(BC)$ over $C$; denote
%the latter condition $A \ind_{C}^{\otimes} B$. Since forking independence is
%then stationary over models, $T$ is stable.  This is similar to the
%characterization of forking in free projective planes in \cite{HP21}, but we
%include it to illustrate a property of forking that may not be familiar from
%other examples of stable theories. }

\begin{proof}
First, if $A \ind^{\otimes}_{C} B$ with $C \leq M$, then $A \ind_{C} B$
because clearly, $\mathrm{tp}(A/ CB)$ extends to a $C$-invariant global type.
Now suppose that $A \ind_{C} B$. Then, $A \ind_{C} \mathrm{acl}(BC)$, so we
may assume $C \leq B$ and $B$ is algebraically closed. Now, let $A' =
\mathrm{acl}(AC)$ and suppose that $A \nind_{C}^{\otimes} B$. We show that $A
\nind_{C} B$. We may produce a $C$-indiscernible sequence $\{B_{i}\}_{i <
\omega}$ with $B_{0}= B$ so that, for $i < \omega$, $B_{i} \ind_{C}^{\otimes}
\{B_{j}\}_{j < i}$; for $p(X, B) = \mathrm{tp}(A/B)$, it suffices to show
that $\cup \{p(X, B_{i})\}_{i < \omega}$ is inconsistent. Suppose otherwise;
then we may in fact assume that $\{B_{i}\}_{i < \omega}$ is indiscernible
over $A'$. We then get a contradiction from the following claim: \end{proof}

\begin{claim}\label{forcontra} (Here we still assume $A \nind_{C}^{\otimes} B$ with $C \leq B$, $B$
algebraically closed.) Let $\{B_{i}\}_{i < \omega}$ be a
$A'=\mathrm{acl}(AC)$-indiscernible sequence with $B_{0} = B$.
Then $B_{n} \nind_{C}^{\otimes} \{B_{i}\}_{i < n}$.
\end{claim}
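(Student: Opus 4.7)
The plan is to trace through what failure of $\otimes$-independence means concretely in the graph setting, and then to exploit $A'$-indiscernibility (with $A' = \mathrm{acl}(AC)$) in order to replicate the $B$-side of a finite witness across $B_0, \ldots, B_n$. Since $B = \mathrm{acl}(BC)$, the negation $A \nind_{C}^{\otimes} B$ decomposes into two (possibly overlapping) possibilities: (a) there is a shared algebraic point $d \in (A' \cap B) \setminus C$, or (b) $A' \cap B = C$ but there is some finite witness to non-free-amalgamation, for instance a direct edge from some $a \in A' \setminus C$ to some $b \in B \setminus C$, or more generally a finite set $D \subseteq \mathrm{acl}(A' \cup B)$ whose new vertices have their $n+1$ required neighbors split between $A' \setminus C$ and $B \setminus C$.

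In case (a), any such $d$ is fixed by every automorphism of $\mathbb{M}$ over $A'$, so $A'$-indiscernibility forces $d \in B_i$ for every $i$. Then $d \in B_0 \cap B_n \setminus C$, which already violates the free-amalgamation condition, so $B_n \nind_{C}^{\otimes} \{B_i\}_{i<n}$.

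Case (b) is the heart of the argument. Consider first the edge case: fix an edge $a - b$ with $a \in A' \setminus C$ and $b \in B \setminus C$. By $A'$-indiscernibility, choose $b_i \in B_i$ as the image of $b$ inside $B_i$, so each $b_i$ is a neighbor of $a$ and $b_0 = b$. If any two of the $b_i$ coincide, then $A'$-indiscernibility implies $b_0 \in B_0 \cap B_k \setminus C$ for all $k \geq 1$, reducing to case (a). Otherwise $b_0, \ldots, b_n$ are $n+1$ distinct neighbors of $a$ inside $B_0 \cup \cdots \cup B_n$, so the finite set $\{a, b_0, \ldots, b_n\}$ is closed over $\{b_0, \ldots, b_n\}$ in the sense used to define $\mathrm{acl}$, giving $a \in \mathrm{acl}(B_0 \cup \cdots \cup B_n)$. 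But $a \in A' \setminus B$ (case (a) having failed), and $A'$-indiscernibility then yields $a \notin B_i$ for every $i$. Thus $a$ lies in $\mathrm{acl}(B_n \cup \bigcup_{i<n} B_i)$ but in none of the individual $B_i$, contradicting clause (i) of $\otimes$-independence of $B_n$ over $\{B_i\}_{i<n}$.

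The main obstacle is the more general form of case (b), in which the non-$\otimes$-independence is witnessed not by a single edge but by a closed set $D$ in $\mathrm{acl}(A' \cup B)$ whose extra vertices have their $n+1$ neighbors split between $A' \setminus C$ and $B \setminus C$. The strategy is the same: fix the $A'$-part of such a witness, replicate its $B$-part across successive $B_i$ using $A'$-indiscernibility, and assemble enough copies---again via the minimum-degree-$n+1$ characterization of closure---to conclude that $\bigcup_{i \leq n} B_i$ acquires a new algebraic point lying in none of the individual $B_i$. The bookkeeping is more delicate than in the edge case, since one must simultaneously rule out accidental collapses among different copies of $D$, but each such collapse reduces to the easier case (a).
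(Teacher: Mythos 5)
Your reduction of the failure of $\ind^{\otimes}$ to the three shapes (shared algebraic point, direct edge, genuinely new algebraic points) matches the paper's decomposition, and your treatment of the first two is essentially the paper's first case: the paper first normalizes to $B_i \cap B_j = C$ (which kills your case (a) and the ``coincidence'' collapse in your edge case at the outset), and then runs exactly your argument that $a \in A'\setminus C$ acquires $n+1$ distinct neighbors $b_0,\dots,b_n$ spread over the $B_i$, hence lies in $\mathrm{acl}(\bigcup_{i\le n}B_i)\setminus\bigcup_{i\le n}B_i$. One small point you elide: you need $a \notin \mathrm{acl}(\{B_i\}_{i<n})$, not merely $a \notin \bigcup_{i<n}B_i$, to contradict clause (i); but if $a$ did lie in $\mathrm{acl}(\{B_i\}_{i<n})$ then the edge $a$--$b_n$ already violates free amalgamation of $B_n$ with $\mathrm{acl}(\{B_i\}_{i<n})$ over $C$, so this is repairable in a line.

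The genuine gap is the case you defer as ``bookkeeping,'' namely $\mathrm{acl}(A'B)\supsetneq A'B$, and your sketch of it would not go through as stated. The witness here is a finite set $F\subseteq \mathrm{acl}(A'B)\setminus A'B$ closed over $A'B$: its points lie outside \emph{both} $A'$ and $B$, and their required $n+1$ neighbors may include each other, not just points of $A'\setminus C$ and $B\setminus C$. So there is no ``$A'$-part'' to fix and ``$B$-part'' to replicate; applying $A'$-indiscernibility moves the entire copy, producing sets $F_i\subseteq \mathrm{acl}(A'B_i)\setminus A'B_i$ that can overlap in ways not controlled by $A'$. The paper tames this by first using indiscernibility to extract a single set $D\supseteq A'$ with $\mathrm{acl}(A'B_i)\cap\mathrm{acl}(A'B_j)=D$ for all $i\neq j$, over which the sequence remains indiscernible. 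The collapse case $F_i\subseteq D$ then reduces to the \emph{edge} case over the enlarged base $D$ (because $F_i$ must have neighbors in $B_i$, as $A'$ is already algebraically closed) --- not to your case (a), as you assert. In the non-collapse case the assembled witness is $F=D_0\cup\bigcup_{i\le n}(F_i\setminus D)$, where $D_0$ is the finite set of $D$-neighbors of $F_i\setminus D$; the minimum-degree-$(n+1)$ check for the points of $D_0$ is met by taking one neighbor from \emph{each} of the $n+1$ copies $F_i\setminus D$. That count is precisely where the threshold $n$ in the claim comes from, so this case carries the mathematical content of the statement and cannot be left as a routine extension of the edge argument.
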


\begin{proof} We may assume that $B_{i} \cap B_{j} = C$ for $i \neq j < \omega$;
then clearly, $A' \cap B = C$. It will suffice to show that
$\mathrm{acl}(\{B_{i}\}_{i < n+1}) \supsetneq \cup_{i < n+1} B_{i}$. If $A
\nind_{C}^{\otimes}B$, either $\mathrm{acl}(A'B) = A'B$ and $A'$ and $B$ are
not freely amalgamated over $C$, or  $\mathrm{acl}(A'B) \supsetneq A'B$. In
the first case, there is some $a \in A' \backslash C$ with a neighbor $b$ in
$B \backslash C$. Then $\{a\}$ will have $n+1$ neighbors in $\{B_{i}\}_{i <
n+1}$, so will belong to  $\mathrm{acl}(\{B_{i}\}_{i < n+1})$ but will not
belong to $\cup_{i < n+1} B_{i}$. Therefore, $B_{n} \nind_{C}^{\otimes} \{B_{i}\}_{i < n}$. In the second case, since $\{B_{i}\}_{i <
\omega}$ and thus $\{\mathrm{acl}(A'B_{i})\}_{i < \omega}$ is indiscernible
over $A'$, there is some fixed $D \supseteq A'$ so that
$\mathrm{acl}(A'B_{i}) \cap \mathrm{acl}(A'B_{j}) = D$ for $i \neq j \leq
\omega$, and so that $\{B_{i}\}_{i < \omega}$ is indiscernible over $D$. As
we are in the second case, there are also $\{F_{i}\}_{i < \omega}$, so that
$F_{i} \subseteq \mathrm{acl}(A'B_{i}) \backslash A'B_{i}$ is finite and
closed over $A'B_{i}$ for $i < \omega$, and so that
$\mathrm{acl}(A'B_{i})F_{i} \equiv \mathrm{acl}(A'B_{j})F_{j}$ for $i \neq j
< \omega$. Suppose first that for some (every) $i < \omega$, $F_{i} \subseteq
D$. Each of the $F_{i}$ must have neighbors in $B_{i}$, since $F_i$ is closed
over $A'B_{i}$ and $A'$ is algebraically closed {\bf and so $A'\leq \mathbb{M}$ by
Lemma~\ref{aclopen}}. So if $F_{i} \subseteq D$, then $D$ and $B$ are not
freely amalgamated over $C$, and $B_{n} \nind_{C}^{\otimes} \{B_{i}\}_{i <
n}$ by the first case. Otherwise, for some (every) $i < \omega$, $F_{i}
\not\subseteq D$. There is some finite $D_{0} \subseteq D$ so that, for all
$i < \omega$, $D_{0}$ consists of all neighbors of $F_{i} \backslash D$ in
$D$. Then $F = D_{0} \cup^{n}_{i = 0} (F_{i} \backslash D)$ is a finite set,
not contained in $\cup_{i < n+1} B_{i}$, so that every point of $F$ has at
least $n+1$ neighbors in $F \cup (\cup_{i < n+1} B_{i})$: each point of
$F_{i}$ has $n+1$ neighbors that are either in $D \supseteq A$ (so in $D_{0}
\subseteq F$), in $F_{i} \backslash D$, or in $B_{i}$, while each point of
$D_{0}$ has a neighbor in each of the $F_{i} \backslash D \subseteq F$, and
therefore has at least $n+1$ neighbors in $F$. So $\mathrm{acl}(\{B_{i}\}_{i
< n+1}) \supsetneq \cup_{i < n+1} B_{i}$, because $F \subseteq
\mathrm{acl}(\{B_{i}\}_{i < n+1})$, and again $B_{n} \nind_{C}^{\otimes} \{B_{i}\}_{i < n}$.

\end{proof}
This completes the proof of Claim~\ref{indeq}. We have proven that each
$T_n$ is stable, with $A \nind_{C}^{\otimes} B$ characterizing forking over
algebraically closed sets. We now complete the proof of Theorem~\ref{Mbn}.

\begin{proof}
We may assume that $A$ is algebraically closed and that $p=\mathrm{tp}(B/A)$
for $A \leq B$ and $B$ algebraically closed. Let $\{B_{i}\}_{i < \omega}$ be
an $A$-indiscernible sequence, with $B_{i} \models p$ for $i < \omega$, and
suppose it is not a Morley sequence over $A$. We must show that $B_{n}
\nind_{A} \{B_{i}\}_{i < n}$. But there is some $N$ so that  $B_{j}
\nind_{A}^{\otimes} \{B_{i}\}_{i < N}$ for all $j \geq N$, because
$\{B_{i}\}_{i < \omega}$ is not a Morley sequence over $A$. Since $\{B_{i +
N}\}_{i < \omega}$ is indiscernible over $\mathrm{acl}(\{B_{i}\}_{i < N})$,
by Claim~\ref{forcontra}, $B_{n+N} \nind_{A}^{\otimes} \{B_{i+N}\}_{i < n}$,
so $B_{n} \nind_{A} \{B_{i}\}_{i < n}$, as desired.

Finally, we find a type $p \in S(\emptyset)$ with $F_{Mb}(p) \geq n$, so
$F_{Mb}(p) = n$. Let $A= \{a_{i}\} \cup \{*\}$ be an undirected graph with
the $a_{i}$, $*$ distinct, where there is an edge between each of the $a_{i}$
and $*$ but no edges between the $a_{i}$. Then there is an embedding $\iota:
A \to \mathbb{M}$ with algebraically closed image; for $i < \omega$, let
$b_{i} = \iota(a_{i})$. Then $\{b_{i}\}_{i<\omega}$ is an
$\emptyset$-indiscernible sequence, so that $b_{n-1} \ind \{b_{i}\}_{i <
n-1}$ but $b_{n} \nind \{b_{i}\}_{i < n}$. Therefore, $F_{Mb}(p) \geq n$, as
desired, proving Theorem~\ref{Mbn}.
\end{proof}

\section{$\nmdeg$ and $F_{ind}$ in stable and simple theories}\label{nm>}
%Categorical theories $T_n$ with types $p$ with  $\nmdeg(p) =n $ for
%$n<\omega$}\label{nmsimp}
\numberwithin{theorem}{subsection} \setcounter{theorem}{0}

In this section, we give  examples showing that the degree of nonminimality
of types can be any $n \in \m N$ for $\omega$-categorical theories that
 are supersimple or strictly stable,   and for almost strongly minimal theories.
This solves the analog of a problem of the second author and Moosa in the
context of simple theories. Examples of Freitag and Moosa (\cite[discussion
following Theorem C, page 5]{freitag2023bounding}, \cite{freitag2023degree}
and \cite{freitag2022any}) previously showed that the degree of nonminimality
can be as large as two in a stable theory.%in this section, in a simple
%theory, we show that the degree of nonminimality can be any $n \in \m N$. In
%Section \ref{nm1cat}, we resolve the question in the original context
%of stable theories with a more involved construction.

%In this section, we resolve a question of the second author and Moosa
%\cite[discussion following Theorem C, page 5]{freitag2023bounding} by
%building examples of types $p$ in stable theories, indeed almost strongly
%minimal and so $\aleph_1$-categorical with arbitrarily large $\nmdeg (p) \in
%\m N$.
Before giving the constructions, we explain the results of
\cite{freitag2023degree} and its implications for constructing such examples.
In \cite{freitag2023degree}, it is shown that for finite rank types in a
theory with the property that \emph{any non-locally modular minimal type is
non-orthogonal to a minimal type over the emptyset}\footnote{Such as the
theory of differentially closed fields or compact complex manifolds.},
$\nmdeg (p)\geq d$ implies that the type is internal to a non-locally modular
minimal type and has a binding group which is generically $d$-transitive
\cite[Section 2]{freitag2022any}. In the case that the theory satisfies the
Zilber trichotomy, this binding group and its action on the type is
isomorphic to the action of an algebraic group acting regularly on an
algebraic variety; the arguments of \cite{freitag2023degree} can then be used
to show that {\em the degree of nonminimality is at most two}. Even without
the trichotomy assumption, to a great extent, by the O'Nan-Scott theorem in
this setting \cite{macpherson1995primitive}, the binding group actions can be
(to a large extent) reduced to actions of simple groups on the left cosets of
definable subgroups (see e.g. Theorem 2.3 of \cite{freitag2023differential}).

So, any finite rank types with $\nmdeg(p)\geq n >2 $ involve either:
\begin{enumerate}
    \item \label{first} A theory in which there are nonorthogonality
        classes of minimal types $p$ such that for any type in the class,
        $\fcb{p}>2$.

%\sidebar{Need a reference to earlier section where this is established}

    \item \label{second} A definable simple group serving as a binding
        group which has transitivity characteristics not possessed by
        algebraic group actions.\footnote{Specifically, analyzing the
        arguments of \cite{freitag2023degree}, a high degree of generic
        transitivity and the property that for large values of $n$, every
        non-generic orbit of the binding group on $p^n$ is finite.}
\end{enumerate}
A (finite rank) $\omega$-stable example along the lines of \ref{second} would
involve a counterexample to the Cherlin-Zilber conjecture, so we pursue an
example along the lines of \ref{first} via a Hrushovski construction of
non-locally modular regular types. Indeed the most natural method to get
large degree of nonminimality along the lines of \ref{first} involves
ensuring that we have minimal types $p$ such that for any minimal type $q$ in
the nonorthogonality class of $p$, $\fcb{q}=n$ to build a type with
degree of nonminimality $n$.

Lemma~\ref{transimpnmfin} shows there are simple theories of $r$-spaces where
$k-1$-transitivity implies there exist types $p$ with $\nmdeg(p)$ arbitrarily
large (depending on the theory). We use quite different variants of the
Hrushovski construction  to build such examples that are $\aleph_1$-  but not
$\aleph_0$-categorical, $\aleph_0$-categorical simple, or strictly stable
$\aleph_0$-categorical.

\subsection{Hrushovski constructions}

We outline the properties of two constructions  Hrushovski gave in the late
1980's to refute a) Zilber's trichotomy conjecture for strongly minimal sets
and b) Lachlan's conjecture that an $\aleph_0$-categorical stable theory is
$\aleph_0$-stable. The key to both examples is to define a `pre-dimension'
that describes a family of matroids. These examples will all be {\em
$r$-hypergraphs}. The unique relation symbol $R$ in the vocabulary holds only
of distinct $r$-tuples and
    in any order.

The constructions are all from amalgamation classes of finite structures.

\begin{definition}[$\prec$-amalgamation classes]\label{amalclassdef}
A $\prec$-amalgamation class $(\bK_0,\prec)$ is a collection of finite
structures
 for a vocabulary $\sigma$ (which may have function and relation symbols) satisfying:
\begin{enumerate}

\item $\prec$ is a partial order refining $\subseteq$.
\item $\prec$ satisfies joint embedding and amalgamation.
\item $A,B,C\in \bK_0$, $A \prec B$, and $C\subseteq B$ then $A\cap C \prec
    C$.
\item  $\bK_0$ is countable
\end{enumerate}
\end{definition}

Condition Definition~\ref{amalclassdef}.3 is not essential for the next
result.  However it follows when the amalgamation class arises from a
dimension function as in Definition~\ref{predim} and is used for the later
analysis.

\begin{theorem}\label{genexist}
For a $\prec$-amalgamation class, there is a  countable structure $M$,  the
{\em $\prec$-generic model}, which is a union of members of $\bK_0$, each
member of $\bK_0$ embeds in $M$, and $M$ is $\prec$-homogeneous.
\end{theorem}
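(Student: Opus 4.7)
The plan is to mimic the classical Fraïssé construction, but with the relation $\prec$ replacing ordinary substructure. First I would extend $\prec$ to the situation where the larger structure is infinite: for $A \in \bK_0$ and $M$ a countable union of members of $\bK_0$, declare $A \prec M$ to mean $A \subseteq M$ and $A \prec B$ whenever $A \subseteq B \subseteq M$ with $B \in \bK_0$. Condition (3) of Definition \ref{amalclassdef} is exactly what is needed to make this notion well-behaved: if we build $M$ as a $\prec$-chain $M_0 \prec M_1 \prec \cdots$ of members of $\bK_0$, then condition (3) applied inside each $M_n$ ensures that $A \prec M_n$ (in the extended sense) whenever $A \subseteq M_n$ and $A \prec B \subseteq M_n$ for some witnessing $B$; this is the standard verification that the $\prec$-closure of a finite set inside $M$ is finite and lies in $\bK_0$.

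Next I would carry out the construction. Enumerate, using the countability hypothesis (4), a list of pairs $(A_i, B_i)$ with $A_i \prec B_i$ in $\bK_0$, arranging the bookkeeping so that every such pair appears cofinally often. Start with any $M_0 \in \bK_0$ (using joint embedding). At stage $i+1$, given $M_i$ and a $\prec$-embedding $f : A_i \to M_i$ with $f(A_i) \prec M_i$, apply the amalgamation property (2) to amalgamate $M_i$ and $B_i$ over $A_i$ along $f$; the result lies in $\bK_0$ and is a $\prec$-extension of $M_i$, which we take to be $M_{i+1}$. If no such $f$ exists at stage $i$, set $M_{i+1} = M_i$. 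Let $M = \bigcup_i M_i$.

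It remains to verify the three conclusions. That $M$ is a union of members of $\bK_0$ is immediate. Embedding of an arbitrary $C \in \bK_0$ into $M$ follows by applying joint embedding to $M_0$ and $C$ and then performing one round of amalgamation as above (equivalently, joint embedding is a special case handled by the bookkeeping when $A_i = \varnothing$, provided $\varnothing \in \bK_0$; otherwise invoke (2) directly). For $\prec$-homogeneity — meaning: whenever $A \prec M$ with $A \in \bK_0$ and $A \prec B$ in $\bK_0$, there is a $\prec$-embedding $B \to M$ extending the identity on $A$ — observe that any such $(A,B)$ appears as some $(A_i, B_i)$ in the enumeration at a stage where $A \subseteq M_i$, and the finite $\prec$-closure of $A$ in $M_i$ lies in $\bK_0$ by condition (3); amalgamating at that stage produces the desired embedding into $M_{i+1} \subseteq M$.

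The only genuine subtlety — and really the main point to check carefully — is that condition (3) transfers nicely to the infinite limit, so that ``$A \prec M$'' is preserved under the chain and the $\prec$-closure of a finite set stabilizes at some finite stage. Once this is in hand, joint embedding supplies the seed $M_0$, amalgamation drives the inductive step, and the bookkeeping ensures both universality and $\prec$-homogeneity; everything else is routine.
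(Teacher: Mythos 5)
Your proposal is correct: it is the standard Fra\"iss\'e--Hrushovski generic-model construction, and the paper in fact states Theorem~\ref{genexist} without proof, treating it as a known result, so there is no alternative argument in the paper to compare against. The one point worth tightening is the bookkeeping: to obtain $\prec$-homogeneity you must eventually handle every $\prec$-embedding $f:A\to M$ with $f(A)\prec M$, not merely every isomorphism type of pair $(A,B)$; since each $M_i$ is finite and each pair recurs cofinally, a routine dovetailing over the countably many such embeddings fixes this. Your identification of the real content --- that condition (3) of Definition~\ref{amalclassdef} makes ``$A\prec M$'' coherent in the limit and forces the $\prec$-closure of a finite set to stabilize at a finite stage --- is exactly right.
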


For Fra{\" \i}ss\'{e}, the language is finite relational, the class is closed
under substructure, and $\prec$ is $\subseteq$. We describe below two
instances of $\prec$ used in this paper.

\medspace

The notion of pre-dimension and dimension establish a matroid structure on
the generic model.
\begin{definition}\label{predim} Work in a relational vocabulary $\{R\}$.
Let $\delta: {\bf K}_0 \rightarrow \NN$ be a function with $\delta(\emptyset)
= 0$ that give the properties of a matroid. Extend $\delta$ to $d: {\hat \bK}
\times {\bK}_0
 \rightarrow \Re^+$ by for each $N \in {\hat \bK}$,
$d_N(A)= \inf \{\delta(B): A \subseteq B \subseteq_{\omega} N\}$.

Require for  $A,B,C \in \bK_0$  that are  subsets of $N\in \hat \bK$.
\begin{enumerate}
\item $d_N(A/C) \geq 0$.
\item $d_N(AB/C) = d_N(A/BC) + d_N(B/C)$.
\item If $A \subseteq A'$ then $d_N(A'/C) \geq d_N(A/C)$ (equivalent to
    submodularity).
\item For every $A$ and there is a finite $A_0$ with $A\subseteq
    A_0\subseteq  N$ such that $\delta(A_0) = d_N(A)$.
\end{enumerate}
\end{definition}

The original examples were of the form $\epsilon(A) = m |A| - n |R(A)|$,
where $|R(A)|$ is the number of instances of tuples satisfying $\tau$
relations in $A$.  A necessary extension for the counterexample to Lachlan's
conjecture is to allow a
carefully chosen irrational coefficient. %In that case, condition iv can be replaced by
%\cite{BaldwinShiJapan}. For every $A$ and every $\epsilon
%> 0$ there is a finite $A_0$ with
%    $A\subseteq   A_0\subseteq  N$ such that $\delta(A_0) < d_N(A) +
%    \epsilon$.

%with significantly different properties.

\medspace

   Each conjecture requires a different interpretation of $\prec$ on
$\bK_0$.

\begin{definition}\label{defsstr} \begin{enumerate}

     \item For Zilber conjecture: $A\leq B $ (strong substructure) if $\forall B'$ with $A\subseteq B' \subseteq B$, $
         \delta ( B'/ A)\geq 0$.

%$\omega$-stable, possibly  strongly minimal

\item For Lachlan conjecture: $A \leq_* B$ (*-strong substructure) if
    $\forall
    B'$
    with
    $A\subsetneq B' \subseteq
    B$, $\delta(A) < \delta(B')$:

    \item   $\icl(A)$ is the
minimal extension $B$ of $A$ with $B \leq N$ for any $N\in \hat \bK$. It can
be obtained as union of a chain $B_i$ such that $\delta(B_{i+1}/B_i) < 0$, is
contained in $\acl(A)$ and is $\leq $ any extension in $\bK_0$.  Without the
control functions discussed below $|\icl(A)|$ may not be bounded in terms of
$|A|$ \cite{BaldwinShiJapan,BaldwinShelahran, Laskowax}.

 %   $\delta
%    (
%    B'/B'\cap A)> 0$. %These may or may not have a control function
\end{enumerate}
\end{definition}

\cite{BaldwinShiJapan} and \cite[p 159]{Ev02} call the first interpretation $\leq$ and the second
$\leq^*$; \cite{Evansmini} calls  the first $\leq_s$ and the second $\leq_d$.

\begin{definition}\label{deffc} $M\in \hat \bK_0$ {\em has finite closures} with respect to $(\bK_0,\prec)$  if
for every finite $A \subseteq M$ there is a finite $B \prec M$ with $A
\subseteq B$.
\end{definition}

For rational coefficients both $\leq$ and $\leq_*$ have finite extensions
that are ``strong" -- i.e. ``closed in the universe", but for $\leq$ such
extensions are not bounded in size, while in practice (see the next section), they are bounded in size for $\leq_*$. For
irrational coefficients $\leq$ need not have finite closures.

The crucial distinction between the two is that while for rational
coefficients both satisfy Definition~\ref{predim}.iv), only $\leq_*$  has a
unique minimal $*$-strong extension which can then be thought of as a
$*$-closure\footnote{This is often,  somewhat misleadingly, called a
$d$-closure, since it is the choice of strong, not of $d$ which determines
the precise closure.}.

\medspace

The position of the theory of the generic in the stability classification
depends on rational vs irrational coefficients of $\delta$ and on the choice
between $\leq$ and $\leq_*$. Here $\icl(A)$ is the minimal extension $B$ of
$A$ with $B \leq N$ for any $N\in \hat \bK$. We will write {\em generic} for
$\leq$-generic and {\em $*$-generic} for $\leq_*$-generic.

\begin{definition}

[d-independence] \label{dinddef}
$c$ is \textit{$d$-independent} from $B$ over $A \subseteq B$ if %\footnote{
\begin{enumerate}
\item \cite[\S 3]{BaldwinShiJapan} For $\leq$:
 $d(c/B) = d(c/A)$ and $\icl{c A} \cap \icl{B} \subseteq \icl{A}$.

\item \cite[Cor. 2.20]{Ev02}  $\leq_*$: $d(c/B) = d(c/A)$ and $\acl(cA)
    \cap B = A$.
\end{enumerate}

The minimal $B\supseteq A$
with $B\leq M$ ($\icl (B)$)   can be strictly smaller than the minimal
$B\supseteq A$ with $B\leq_* M$. The second  $B$ is $\acl(A)$ in the
$\aleph_0$-categorical (rational coefficients)     $\leq_*$-case.

When the choice of $\prec$ is clear we write $c \ind_C^d B$ in either case.
\end{definition}

\begin{lemma}\label{d-nf} Let $R(A)$ be the set of of instances of the relation of $R$ in $A$.
\begin{enumerate} %For a stable theory }
\item $\leq$ \cite[3.39]{BaldwinShiJapan}:  The theory of an  generic for  $d(A) = \alpha |A|-|R(A)|$ is stable if
    $\alpha$ is irrational and $\omega$-stable if rational.
\item $\leq$ \cite[2.20,2.24,3.9]{Ev02}:  The theory of a *-generic model
    of a $\aleph_0$-categorical theory  for $d(A) = \alpha |A|-|R(A)|$
    is stable if $\alpha$ is irrational.  %%%and supersimple of finite rank  if
    %%rational. (This is false! For example, for some control functions in Evans, the generic exists, but is $\mathrm{SOP}_{3}$!

\end{enumerate}
\end{lemma}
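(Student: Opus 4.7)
The plan is to leverage the standard Hrushovski construction machinery in both cases. The key object throughout is the predimension $\delta(A) = \alpha|A| - \mathbf{R}(A)$ and its associated dimension $d_N$ on finite subsets. In each of the four cases, I would proceed by (a) verifying that the relation $c \ind^d_A B$ of Definition \ref{dinddef} has enough of the standard abstract-independence properties (symmetry, transitivity, extension, local character) to be identified with nonforking; and (b) extracting the required classification-theoretic conclusion by counting types.

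First I would verify the basic geometric properties of $d$. Submodularity and monotonicity of $\delta$ give that $d_N$ is a matroid-valued function, and the amalgamation property of $(\bK_0,\prec)$ combined with the fact that free amalgamation preserves $\delta$ shows that $d$-independent extensions exist and can be amalgamated over $\acl$ or $\icl$. For $\leq$ this is the content of \cite{BaldwinShiJapan}; for $\leq_*$ one must use *-strong amalgamation, and the Evans modification of Hrushovski's argument uses the key fact that $\leq_*$ admits a \emph{unique} minimal *-strong extension of any finite set, which gives a well-defined $\acl$-like closure.

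Second, I would derive the stability-theoretic conclusion by a type-counting argument. Over a set $A$ inside the generic $M$, a complete type is determined by the isomorphism type of the (i)cl-closure of its realization together with the value $d(c/A)$ and the isomorphism type of $\text{cl}(cA)$ over $\text{cl}(A)$. In the \emph{rational} case, $\delta$ takes values in a discrete subset of $\mathbb{R}$, and for $\leq_*$ the *-closure is finite, which via Ryll-Nardzewski delivers $\omega$-categoricity plus supersimplicity of finite rank ($d$ itself provides the rank). For $\leq$ with rational $\alpha$, the closure is finite but not uniformly bounded, which still yields $\omega$-stability via type-counting over countable sets. In the \emph{irrational} case, $\delta$ takes values in an $\mathbb{R}$-dense set, so one cannot bound the number of types by $\aleph_0$, but one can still bound them by $|A|+\aleph_0$, giving stability but not $\omega$-stability.

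The main obstacle is the irrational $\leq$ case, where finite closures (Definition \ref{deffc}) may fail, so the intrinsic closure of a finite set need not be finite. Here one cannot simply count types by counting finite configurations. The Baldwin–Shi argument handles this by showing that the infinite part of $\icl(A)$ is controlled by a countable "limit" structure attached to $A$ whose isomorphism type over $A$ contributes only boundedly to the type count, so that for $|A|=\lambda\geq\aleph_0$ one still obtains $|S(A)|\leq \lambda^{\aleph_0}$, yielding stability. The parallel issue for $\leq_*$ irrational is handled analogously in \cite[\S 2-3]{Ev02}, with the extra advantage that the *-closure remains tractable.
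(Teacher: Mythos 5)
The paper does not actually prove this lemma: it is stated purely as a citation to \cite[3.39]{BaldwinShiJapan} and \cite[2.20,2.24,3.9]{Ev02}, with the only added observation being that items (1) and (2) coincide for irrational $\alpha$ because then $\leq\,=\,\leq_*$. So there is no in-paper argument to compare against; your proposal is a reconstruction of the arguments in the cited sources. As an outline of those arguments it is broadly on the right track (predimension, free amalgamation, characterization of independence via $d$, type-counting), but it has one genuine gap.

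The gap concerns item (2) in the rational case. The $\leq_*$-generic with rational $\alpha$ is supersimple but in general \emph{unstable}, and no amount of type-counting can establish simplicity -- type-counting bounds are a criterion for stability, not for simplicity. To show the theory is simple and that $\ind^d$ is nonforking, you must verify the Kim--Pillay axioms, and the decisive one for a simple unstable theory is the \emph{independence theorem over models} (type amalgamation), which your list ``symmetry, transitivity, extension, local character'' omits. In Evans's setting this is exactly where the work lies: one amalgamates two $d$-independent extensions of a common $\leq_*$-closed base using free amalgamation and checks that the result embeds strongly into the generic, which is possible precisely because $\leq_*$-closures are uniformly finite (controlled), whereas stationarity fails. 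Conversely, in the stable cases (items (1) and the irrational case of (2)) the relevant extra axiom is stationarity over algebraically (or intrinsically) closed sets, which is what your type-counting implicitly uses. A secondary, smaller slip: you first assert a bound of $|A|+\aleph_0$ on the number of types in the irrational case ``giving stability but not $\omega$-stability'' -- such a bound would in fact give $\omega$-stability; the correct bound, which you state later, is $|S(A)|\leq\lambda^{\aleph_0}$ for $|A|=\lambda$, and the reason $\omega$-stability fails for irrational $\alpha$ is the infinite descending behaviour of $\delta$-values (no well-founded rank), not the cardinality of $S(A)$ over countable sets per se.
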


Lemma~\ref{d-nf} 1) and 2) agree for irrational $\alpha$ since then $\leq$ coincides with $\leq_{*}$.

Evans works with expansions of an underlying $\aleph_0$-categorical theory
but we restrict to the {\em ab initio} case, beginning with a collection of
finite structures.

\subsection{Supersimple and strictly stable $\aleph_0$-categorical theories}\label{ocat}

In this section we study Hrushovski constructions on various classes
$\bK_0 = \mathcal{C}_{f}$ of $k$-hypergraphs, where $f$ is a control function, to obtain $\aleph_0$-categorical theories of various
stability classes with maximal $\nmdeg(p) =k$. The distinction will arise
from different choices of the pre-dimension $d_0$, control function, and
notion of strong substructure.

In \cite{EP02}, Evans and Pantano construct $\omega$-categorical simple
theories which they show are $n$-transitive. In an $n$-transitive simple
theory, if $a_{1}, \ldots a_{n-1} \models p$ for $p \in S(\emptyset)$ a type
with $\mathrm{SU}(p) > 1$, then there must be exactly one nonalgebraic $1$-type
over $a_{1} \ldots a_{n-1}$, because the types of any $n$ distinct
realizations of $p$ are all the same. So this nonalgebraic $q \in S(a_{1}
\ldots a_{n-1})$ must be a nonforking extension of $p$, so $\mathrm{nmdeg}(p)
\geq n$. We expect that the construction of \cite{EP02} has finite
$\mathrm{SU}$-rank and $\mathrm{SU}$-rank greater than $1$, giving us an
example of an $\omega$-categorical supersimple theory of finite rank with a
type $p$ with $\mathrm{nmdeg}(p) \geq n$. However, their construction is more
complicated than necessary to get $n$-transitivity (because they also have
the goal of making the algebraic closure grow arbitrarily fast), so we show
that we can also get $\mathrm{nmdeg}(p) \geq n$ in the less complicated
constructions of  \cite{Ev02}.

In \cite{GFA}, there is an overview of the constructions of Evans, which we
reproduce here in modified form. Let $\mathcal{L}$ be a language with
finitely many relations, and for each relation symbol $R_{i}$, let
$\alpha_{i}$ be such that $\alpha_{i} \cdot n_{i}!$ is a non-negative integer
associated to $R_{i}$, where $n_{i}$ is the arity of $R_{i}$.  For $A$ a
finite $\mathcal{L}$-structure, define a \textit{predimension} $d_{0}(A)=|A|
- \sum_{i} \alpha_{i}|R_{i}(A)|$, with $R_{i}(A)$ the set of tuples of
$R_{i}$ with elements of $A$ as coordinates. As usual in Hrushovski constructions of
hypergraphs, we write $|R(A)|$ for the number of tuples realizing the relation
$R$. As in Definition \ref{defsstr}, define the relation $A \leq_{*} B$,  for
$B$  any $\mathcal{L}$ -structure and $A$ a finite substructure of $B$, to
mean that every finite superstructure of $A$ within $B$ has predimension
greater than $A$. Let $f$ be an increasing continuous positive real-valued
function and let $\mathcal{C}_{f}$ be the class of finite
$\mathcal{L}$-structures $A'$, so that the $R_{i}$ are irreflexive (i.e.,
only contain tuples of distinct elements) and symmetric (invariant under
permutation of coordinates) on $A'$, and any substructure $A\subseteq A'$
satisfies $d_{0}(A) \geq f(|A|)$. (Note that symmetry is not required by
\cite{Ev02}, but is useful for the presentation here.)

Then Evans (\cite{Ev02}) shows that when $f'(x)$ exists and $0 < f'(x) <
\frac{1}{x}$ for all $x \geq 0$, $\mathcal{C}_{f}$ is a $\leq_{*}$-amalgamation class as in Definition \ref{amalclassdef}, so the $*$-generic of the class $\mathcal{C}_{f}$ exists. That is, there is a $\mathcal{L}$-structure $M$ such that every finite substructure of $M$ belongs to $\mathcal{C}_{f}$, and such that for all finite $A$, $B$ with $A \leq_{*} M$ and $A \leq_{*} B$, there is an embedding $\iota: B \to M$ with $\iota|_{A} = \mathrm{id}_{A}$ and $\iota(B) \leq_{*} M$. Let $T$ be the theory of $M$, which will be $\omega$-categorical. Then for $\mathbb{M} \models T$, the algebraic closure (or just \textit{closure}) of any finite set $A \subset M$ is the $*$-closure of $A$, or the minimal $B \leq_{*} M$ containing $A$. If additionally $f(3x) \leq f(x) + 1$, Evans (\cite{Ev02}) shows that $T$ is simple. For $A, B$ finite subsets of an ambient (sufficiently saturated) model $\mathbb{M} \models T$, define $d(A) = d_{0}(\mathrm{acl}(A))$ and $d(A/B)= d(\mathrm{acl}(AB))-d(\mathrm{acl}(B))$, as in Definition \ref{predim}. Then for $A, B, C $ finite subsets of $\mathbb{M}$, Evans characterizes forking in $T$, exactly as in Definition \ref{dinddef}. That is, for all finite sets $A$, $B$ and $C$, $A \ind _{C} B$ if, and only if, $d(A/C) = d(A/BC)$.

%\begin{theorem}\label{simpnm} If the $T_n$ is $\aleph_0$-categorical,
%supersimple of finite rank with a type of $\nmdeg(p) = n$ and $F_{ind}(p)
%\geq n$.
%\end{theorem}

We first show that the degree of nonminimality can be arbitrarily large in the theories defined by Evans. We will then improve this, but will show this weaker statement first to give intuition for the proof.

\begin{fact}
    For any $n < \omega$, there is an $\omega$-categorical supersimple theory $T_n$
    admitting a type $p$ with $\mathrm{nmdeg}(p) \geq n$.
\label{simpnm}
\end{fact}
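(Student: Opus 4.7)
The plan is to produce $T_n$ via an Evans-style $*$-generic construction \cite{Ev02} in a language with a single relation of arity $n+1$, and then exploit $n$-transitivity of distinct tuples together with the general argument that was outlined in the paragraph preceding the fact. This lets us bypass the more intricate expansion-based construction of \cite{EP02} by noting that high arity of the unique relation forces $n$-transitivity automatically.

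Concretely, I would take $\mathcal{L}=\{R\}$ with $R$ an $(n+1)$-ary, symmetric, irreflexive relation, and set the predimension $d_0(A)=|A|-\alpha|R(A)|$ for a rational $\alpha\in(0,1)$ with $\alpha(n+1)\in\mathbb{Z}$ (for instance $\alpha=1/(n+1)$). Choose an increasing concave control function $f$ as in \cite{Ev02} so that the class $\mathcal{C}_f$ admits $\leq_*$-amalgamation and has uniformly bounded $*$-closures, and let $M$ be its $*$-generic model. By Lemma~\ref{d-nf}(2), $T_n=\mathrm{Th}(M)$ is then $\omega$-categorical and supersimple of finite rank. The key observation is that any subset $A\subseteq M$ with $|A|\leq n$ carries no instance of $R$, so $d_0(A)=|A|$ is maximal and consequently $A\leq_* M$ is automatic. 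By $\prec$-homogeneity of the generic (Theorem~\ref{genexist}), any two $n$-tuples of pairwise distinct elements of $M$ are therefore conjugate under $\mathrm{Aut}(M)$, so $T_n$ is $n$-transitive on distinct tuples.

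Let $p\in S(\emptyset)$ be the type of a generic single element. After choosing $f$ permissively enough that $\mathcal{C}_f$ contains a configuration witnessing nontrivial algebraicity through $R$ (a standard feature of these constructions once $\alpha<1$), we arrange $\mathrm{SU}(p)>1$. Then the argument preceding the statement applies: given any $a_1,\ldots,a_{n-1}\models p$, $n$-transitivity on distinct tuples implies that every element of $M$ outside $\{a_1,\ldots,a_{n-1}\}$ realizes a single type over $a_1\ldots a_{n-1}$. This unique nonalgebraic extension of $p$ must coincide with the nonforking extension, since the nonforking extension of the nonalgebraic type $p$ is itself nonalgebraic. Hence $p$ admits no nonalgebraic forking extension over fewer than $n$ realizations, yielding $\mathrm{nmdeg}(p)\geq n$.

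The main obstacle is calibrating the control function $f$ so that three competing demands hold simultaneously: $\leq_*$-amalgamation on $\mathcal{C}_f$, uniformly bounded $*$-closures (needed for $\omega$-categoricity via the Evans setup), and enough room inside $\mathcal{C}_f$ to realize a dependent configuration witnessing $\mathrm{SU}(p)>1$. This is the standard trade-off in Hrushovski-Evans constructions, handled by taking $f$ concave with sufficiently slow growth following the recipes in \cite{Ev02}; once $f$ is fixed, the $n$-transitivity and $\mathrm{SU}$-rank lower bound are both immediate and the degree-of-nonminimality computation reduces to the general observation already recorded in the text.
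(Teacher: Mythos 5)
Your outline follows the same route as the paper: an Evans-style $*$-generic for a single $(n+1)$-ary relation, $n$-transitivity on distinct tuples, and then the observation (recorded just before the statement) that $n$-transitivity plus $\mathrm{SU}(p)>1$ forces $\mathrm{nmdeg}(p)\geq n$. That last reduction is fine. The problem is in the middle step.

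The claim that ``any $A\subseteq M$ with $|A|\leq n$ carries no instance of $R$, so $d_0(A)=|A|$ is maximal and consequently $A\leq_* M$ is automatic'' is not correct, and it is exactly the point where all the work lies. Having maximal predimension for its size does not make $A$ $*$-strong: $A\leq_* M$ requires that \emph{no} finite $B'\supsetneq A$ in $M$ satisfies $\delta(B')\leq\delta(A)$, and with $d_0(B)=|B|-\alpha|R(B)|$ a superstructure with $m$ new points and at least $m/\alpha$ relation instances drops the predimension back down to $|A|$. Whether such configurations live in the generic is governed entirely by the control function: they are excluded only if $f(|A|+m)>|A|$ for all $m\geq 1$. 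But your prescription of ``$f$ concave with sufficiently slow growth,'' forced by the standard amalgamation criterion $0<f'(x)<1/x$, gives $f(x)\leq f(1)+\ln x$ with $f(1)\leq 1$, so for $n\geq 2$ dense configurations over an $n$-point set are permitted and $\{a_1,\ldots,a_n\}$ need not be closed; $n$-transitivity then fails (or at least does not follow). This is why the paper's proof of Fact~\ref{simpnm} uses the rescaled predimension $\delta(A)=2|A|-|R(A)|/(n+1)$ together with the \emph{piecewise} control function $f(x)=2x-2$ for $x\leq n+2$ (steep, essentially maximal on small sets) before letting it grow logarithmically: steepness on small sets is what bounds $|\mathrm{acl}(a_1\ldots a_n)|$ by $n+1$, and the arity count (a set of $n+1$ points supports at most one $R$-instance, so $\delta\geq 2n+1>2n$) then rules out size $n+1$. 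The price is that the standard well-definedness and simplicity criteria fail for $x<n+2$ and must be re-verified by hand for the finitely many small amalgamation instances (Remark 3.8 of \cite{Ev02}); your ``standard recipe'' framing hides both the need for the steep initial segment and this extra verification. The same vagueness affects your assertion that $\mathrm{SU}(p)>1$ can be arranged by making $f$ ``permissive enough,'' which pulls in the opposite direction from the closure requirement; in the paper's setup $\mathrm{SU}(p)=2$ is read off from the specific $\delta$ and $f$.
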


\begin{proof} Let $\mathcal{L} = \{R\}$ for $R$ a
$k$-ary relation  with $k = 3^{2n+1}$, and let $\delta(A) = 2|A| - \frac{|R(A)|}{k!}$. (Note that the factor $\frac{1}{k!}$ accounts for ths symmetry, so $\frac{|R(A)|}{k!}$ is equal to the number of tuples with coordinates in $A$, up to permutation, that realize $R(A)$).  Then let $T_n$ be the theory of the *-generic $M$ for the class $\mathcal{C}_{f}$,
where $f(x) = \frac{\mathrm{log}(x)}{\mathrm{log}(3)}$.

Now the criteria for $T$ being well-defined and simple, $0 < f'(x) <
\frac{1}{x}$ and $f(3x) \leq f(x) + 1$, will hold for all $x \geq 0$. By Evans's characterization of forking ($A \ind _{C} B$ if and only if $d(A/C) = d(A/BC)$ for $A$, $B$, $C$ finite sets), and the choice of $d_{0}$, $T_{n}$ is supersimple of $\mathrm{SU}$-rank at most $2$. It remains to show that there is a unique type in $S^{n}(\emptyset)$ realized by an $n$-tuple of distinct elements (i.e. $T_{n}$ is $n$-transitive), and that for $p \in S^{1}(\emptyset)$ the unique $1$-type, $\mathrm{SU}(p) = 2$. To show the first claim, let $a_{1}, \ldots, a_{n} \in \mathbb{M}$ be distinct; it suffices to show that $\{a_{1}, \ldots, a_{n}\}$ is algebraically closed, because isomorphic algebraically closed sets have the same type. But $A=  \mathrm{acl}(a_{1}, \ldots, a_{n})$ must have predimension at most $d_{0}(\{a_{1}, \ldots a_{n}\}) = 2n$. So if $\{a_{1}, \ldots, a_{n}\}$ is not algebraically closed, so $|A| > n$, it must then be the case that $|A| \geq k$, because otherwise $A$ would have no instances of $R$ and $d_{0}(A)= 2|A| > 2n$. But then by choice of $f$ and $k$, $f(|A|) > 2n$, contradicting $d_{0}(A) \leq 2n$.

To show $SU(p) \geq 2$, so $\mathrm{SU}(p) = 2$, because $f(x) \leq 2x -1$ for $x \leq k$, one can see there is a structure in  $\mathcal{C}_{f}$ of size $k$ with exactly one instance (up to permutation) of the relation $R$. Since $T_{n}$ is the theory of the $*$-generic of $\mathcal{C}_{f}$, there is an algebraically closed set $\{a_{1}, \ldots, a_{k}\} \subseteq \mathbb{M}$ isomorphic to this structure. Since $\{a_{1}, \ldots, a_{k}\}$ and therefore $\{a_{1}\}$, $\{a_{2} \ldots, a_{k}\}$ and $\emptyset$ are algebraically closed, $d(a_{1}/a_{2} \ldots a_{k}) = d_{0}(a_{1} \ldots a_{k}) - d_{0}(a_{2} \ldots a_{k})= 1$ and $a_{1} \notin \mathrm{acl}(a_{2}, \ldots a_{k})$, while $d(a_{1}/\emptyset)=d_{0}(a_{1}) - d_{0}(\emptyset) = 2$, so $a_{1} \nind_{\emptyset} a_{2} \ldots a_{k}$. Since $a_{1} \models p$, $\mathrm{SU}(p) \geq 2$, so $\mathrm{nmdeg}(p) \geq n$ by the $n$-transitivity shown above.

\end{proof}

We modify this example to show the strict inequality $n=\mathrm{nmdeg}(p) <
F_{ind}(p)$ for arbitrary $n$.

\begin{theorem}\label{simpfind}
Let $n < \omega$. There is an $\omega$-categorical supersimple theory $S_n$
admitting a type $p$ with $\mathrm{nmdeg}(p) = n$ and $F_{ind}(p) >
\mathrm{nmdeg}(p)$.

\end{theorem}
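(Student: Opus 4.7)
The plan is to reuse the Hrushovski--Evans construction already laid out in Fact \ref{simpnm}, verifying that the very same theory $T_{n}$ produced there in fact satisfies the stronger conclusion claimed here; we set $S_{n}:=T_{n}$. Recall that this is the $\ast$-generic of the amalgamation class $\mathcal{C}_{f}$ for $\mathcal{L}=\{R\}$ a single $(n+1)$-ary symmetric irreflexive relation, predimension $d_{0}(A)=2|A|-\frac{|R(A)|}{n+1}$, and the piecewise control function $f$ defined in the proof of Fact \ref{simpnm} (linear on $[0,n+2]$, and $2x-2+\bigl(\log_{3}(x)-\log_{3}(n+2)\bigr)$ on $[n+2,\infty)$). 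First I would verify that $S_{n}$ is $\omega$-categorical and supersimple: the growth bound $0<f'(x)<1/x$ and the subadditivity $f(3x)\leq f(x)+1$ hold on $[n+2,\infty)$, and the finitely many amalgamation instances involving structures of size $\leq n+1$ can be checked by hand as in Remark 3.8 of \cite{Ev02}; then Lemma \ref{d-nf}(2) gives supersimplicity of finite rank and $\omega$-categoricity.

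The next step is to identify the relevant type. Since $f(x)\geq 2n$ for every $x\geq n+1$ (by the choice of $f$ on $[0,n+2]$) and $d_{0}(a)=2$ for every singleton, there is a unique $1$-type $p\in S_{1}(\emptyset)$ of a non-algebraic element, and it has $\mathrm{SU}(p)=2$. I would then bound $\nmdeg(p)$. For the \emph{lower bound} $\nmdeg(p)\geq n$, observe that for any $n$ distinct realisations $a_{1},\ldots,a_{n}$ of $p$ the set is $\ast$-closed (since $d_{0}(\{a_{1},\ldots,a_{n}\})=2n$ and $f(x)>2n$ for $x>n+2$, while $|R|=0$ forces $|\mathrm{acl}(a_{1}\ldots a_{n})|\leq n$ because adding any element would require an $R$-edge, which in turn needs at least $n+1$ vertices). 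Hence all such $n$-tuples realise the unique generic type in $S_{n}(\emptyset)$, so over any $n-1$ realisations of $p$ there is a unique non-algebraic extension, which must be the nonforking one; this forces $\nmdeg(p)\geq n$. For the \emph{upper bound} $\nmdeg(p)\leq n$, amalgamate $n$ independent realisations $a_{1},\ldots,a_{n}$ with a new element $b$ satisfying $R(a_{1},\ldots,a_{n},b)$: this is allowed since the resulting $(n+1)$-set has predimension $2(n+1)-1=2n+1\geq f(n+1)$, and $\tp(b/a_{1}\ldots a_{n})$ forks over $\emptyset$ because $d(b/a_{1}\ldots a_{n})=1<2=d(b/\emptyset)$.

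For the strict inequality $F_{ind}(p)>n$ I would argue as in the last paragraph of the proof of Fact \ref{simpnm}. Take any nonconstant $\emptyset$-indiscernible sequence $\langle a_{i}\rangle_{i<\omega}$ in $p$. By symmetry of $R$ and indiscernibility, either every $(n+1)$-subset of $\{a_{i}\}$ satisfies $R$ or none does. The former is impossible: if $R$ held on every $(n+1)$-subset of $\{a_{1},\ldots,a_{N}\}$, then $d_{0}(a_{1}\ldots a_{N})=2N-\binom{N}{n+1}/(n+1)$ would become negative for $N$ large, violating membership in $\mathcal{C}_{f}$. Hence no $(n+1)$-subset carries an $R$-edge, so $d_{0}(a_{1}\ldots a_{n+1})=2(n+1)$; combined with $f(x)>2n+2$ for $x>n+2$, a standard $\ast$-closure calculation shows $\{a_{1},\ldots,a_{n+1}\}$ is $\ast$-closed (or extends to a closed set of predimension still $\geq 2n+2$), which together with the characterisation of $d$-nonforking in Definition~\ref{dinddef}(2) yields $a_{n+1}\ind a_{1}\ldots a_{n}$. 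Thus no indiscernible witness to dependence appears at stage $n+1$, giving $F_{ind}(p)>n$.

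The main obstacle, as in Fact \ref{simpnm}, is the geometric control needed in Step 3: ruling out ``hidden'' algebraic elements in the $\ast$-closure of $\{a_{1},\ldots,a_{n+1}\}$ when that set carries no $R$-edges. The rest is bookkeeping against the Evans framework. Once that combinatorial verification is in place, the equality $\nmdeg(p)=n<F_{ind}(p)$ is immediate from the preceding two bounds.
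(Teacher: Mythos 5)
Your proposal is correct and follows essentially the same route as the paper: the paper's $S_n$ is literally the same $\ast$-generic construction (arity $k=n+1$, predimension $2|A|-|R(A)|/(n+1)$, the same piecewise control function $f$) used for Fact \ref{simpnm}, and the paper's verification of $\mathrm{nmdeg}(p)=n$ via uniqueness of the type of $n$ distinct realizations and of $F_{ind}(p)>n$ via the $R$/no-$R$ dichotomy on indiscernible sequences matches your steps. The only cosmetic difference is in the closure computation for $n$ distinct points: the paper bounds $|\mathrm{acl}(a_1\ldots a_n)|\leq n+1$ and then excludes size $n+1$ by noting a single $R$-edge on $n+1$ vertices gives $\delta \geq 2n+1 > 2n$, which is slightly more careful than your parenthetical but amounts to the same calculation.
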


\begin{proof} 
Let $\mathcal{L} = \{R\}$ for $R$ an
$k$-ary relation with $k = n+1$, and $\delta(A) = 2|A| - \frac{|R(A)|}{k!}$. Then let $S_n$ be the theory of the *-generic $M$ for the class $\mathcal{C}_f$ where $f(x) = 2x -2$ for $x \leq
n+2$, and $f(x) = 2(n+2) -2 + (\frac{\mathrm{log}(x)}{\mathrm{log}(3)}-
\frac{\mathrm{log}(n+2)}{\mathrm{log}(3)})$ for $x \geq n + 2$.

Now the criteria for simplicity and well-definedness of the $*$-generic of $\mathcal{C}_f$, $0 < f'(x) < \frac{1}{x}$ and $f(3x)
\leq f(x) + 1$, will hold for all $x \geq n+2$, but not for all $x \geq 0$.
However the $*$-generic of $\mathcal{C}_f$ is still well-defined and simple. As in Remark 3.8 of \cite{Ev02}, we may
individually check the instances of amalgamation where some of the structures
being amalgamated have size at most $n+1$, since these structures either have
no relations, or have size $n+1$ and one relation. Moreover, because $f(x) \geq 2$ for $x \geq k$, we again
see that there is only one type $p \in S^{1}(\emptyset)$. And because $f(x) \leq 2x -1$ for $x \leq k$, so there is again a structure in $\mathcal{C}_{f}$ of size $k$ with exactly one instance of $R$ up to permutation, $\mathrm{SU}(p) = 2$ with
$\mathrm{nmdeg}(p) \leq k-1$, so here $\mathrm{nmdeg}(p) \leq n+1-1 = n$.

To see  $\mathrm{nmdeg}(p) \geq n$, so $\mathrm{nmdeg}(p) = n$, we show there
is   a unique type in $S^{n}(\emptyset)$ realized by distinct $a_{1}, \ldots,
a_{n}$.
%  We
%also see that, even though it is no longer the case that $f(k) > 2n$,
Because $f(x) > 2n \geq d_{0}(a_{1}, \ldots,
a_{n})$ for $x >n + 1$
$|\mathrm{acl}(a_{1}, \ldots, a_{n})| \leq n+1$. But $|\mathrm{acl}(a_{1} \ldots a_{n})| = n+1$ is
impossible, because the least predimension of a set of size $n+1$ is $2n+1$, which is larger than $d_{0}(a_{1} \ldots a_{n}) = 2n$.

%\sidebar{Why?because  the least predimension of a set of size $n+1$ is $2n +
%1
%> d_{0}(a_{1} \ldots a_{n})$.
% So $a_{1} \ldots a_{n}$ is closed. Thus, $\nmdeg(p) \geq n$ and  $\nmdeg(p) =n$.}

%\sidebar{ likely false start apr 23.
%  Now to show
%that $n=\mathrm{nmdeg}(p) < F_{ind}(p)$, we show the only indiscernible
%sequence $\langle a_{1}: i\in \omega\rangle$ in $p$ is the  infinite
%independent sequence of unrelated points. If $\neg R(a_{1} \ldots a_{n+1})$,
%$d_{0}(a_{1} \ldots a_{n+1}) = 2n + 2$.
%%Suppose $a_{n+1} \nind a_{1},
%%\ldots a_{n}$.
%Since $f(x) > 2n + 2$ for $x > n+2$, $|\mathrm{acl}(a_{1} \ldots a_{n+1})|
%\leq n+2$, so either $a_{1}, \ldots a_{n+1}$ is closed or
%$|\mathrm{acl}(a_{1} \ldots a_{n+1})| = n+2$ and
%$d_{0}(\mathrm{acl}(a_{1}\ldots a_{n+1})) \geq f(n+2) = 2n + 2$.}

  Now to show
that $n=\mathrm{nmdeg}(p) < F_{ind}(p)$, note that if $ \{a_{i}\}_{i < \omega}$ is a nonconstant indiscernible sequence, either every
$n+1$-tuple satisfies $R$ or none does. The first is clearly impossible since
long enough such sequences have negative predimension. In the second case,
$d_{0}(a_{1} \ldots a_{n+1}) = 2n + 2$. Because $f(x) > 2n + 2$ for $x >
n+2$, $|\mathrm{acl}(a_{1} \ldots a_{n+1})| \leq n+2$, so either $a_{1},
\ldots a_{n+1}$ is closed or $|\mathrm{acl}(a_{1} \ldots a_{n+1})| = n+2$ and
$d_{0}(\mathrm{acl}(a_{1} \ldots a_{n+1})) \geq f(n+2) = 2n + 2$. For either
option $a_{n+1} \ind a_{1}, \ldots a_{n}$ and $F_{ind}(p) > n$.
\end{proof}

%\sidebar{
% We could equally as well have done this proof
%defining $\delta(A)= 2|A| - \alpha |R(A)| $, where $\alpha < \frac{1}{k!}$ is
%some positive irrational number, and modifying $f$ accordingly.
%
%This $k!$ seems unnecessary; just, as usual, set $|R(A)|$ as the number of
%sets satisfying the hypergraph $R$.}

We apply Evans's (\cite{Ev02}) modification of the construction  used by
\cite{Herwig}, \cite{Hrustableplane} to obtain the original non-superstable
examples of $\omega$-categorical stable theories. He avoids the requirement
that $\alpha$ has infinite index.
%
%which was then generalized by Evans in \cite{Ev02}. When $k! \cdot \alpha$ is
%no longer an integer,
Evans's more general criteria for $T$ to be a well-defined simple theory are
as follows: define $\kappa(x)$, for $x > 0$, to be the minimum of $1$ and the
smallest possible positive value of $\delta(D)-\delta(A)$ where $A \leq_{*}
D$ and $|D| \leq x$ (Definition 3.1 of \cite{Ev02}.) Using the more general
statement of  Lemma 3.3 and Theorem 3.6(ii) of \cite{Ev02}, $T$ is
well-defined and simple whenever $0 < f'(x) < \frac{\kappa(x)}{x}$ and $f(3x)
\leq f(x) + \kappa(x)$.  Again, we may apply Remark 3.8 of \cite{Ev02}: we
will be able to show that $T$ is a well-defined simple theory when $f(x) = 2x
-2$ for $x \leq n+2$, and on $x \geq n+2$, $f$ is any continuous function
satisfying the criteria just listed.  Then the proof that $T$ has a type $p$
with $\mathrm{nmdeg}(p) = n$ and $F_{ind}(p) > \mathrm{nmdeg}(p)$ is as
above; the only difference is that the $\mathrm{SU}$-rank of $T$ will no
longer be $2$, because it is no longer the case that $d(a/\emptyset)$ can
only strictly decrease twice when enlarging the base set. However, in the
case where $\alpha$ is irrational, one sees that the following condition
holds: if $A \supsetneq B$, then $\delta(A) \neq \delta(B)$. By Lemma 3.9 of
\cite{Ev02} (essentially reproduced as Lemma \ref{d-nf}(2) here), whenever $\delta$ satisfies this condition, $T$ (if
well-defined) is stable, regardless of $f$. So the proof of
Theorem~\ref{simpfind} also gives us:

\begin{theorem} For every $n< \omega$ there is an $\omega$-categorical strictly stable theory,
 admitting a type $p$ with $\mathrm{nmdeg}(p) = n$ and $F_{ind}(p) > \mathrm{nmdeg}(p)$.
\end{theorem}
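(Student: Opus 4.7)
The plan is to repeat the construction of Theorem~\ref{simpfind} with the single modification that the rational coefficient in the predimension is replaced by an irrational one, thereby swapping supersimplicity for strict stability while leaving the combinatorial core of the previous argument untouched. Concretely, fix $n < \omega$, take $R$ to be a symmetric, irreflexive $(n+1)$-ary relation, and set $\delta(A) = \alpha|A| - |R(A)|$ for some irrational $\alpha > 0$ chosen close enough to $2$ that the numerical thresholds driving the combinatorics of the proof of Theorem~\ref{simpfind} (which tuples are closed, when the predimension would go negative along an $R$-filled indiscernible sequence) remain in force. The control function $f$ would be modified accordingly: $f(x) = \alpha x - c$ for $x \leq n+2$ with a suitable constant $c$, and on $[n+2,\infty)$ a continuous function satisfying Evans's generalized amalgamation criteria $0 < f'(x) < \kappa(x)/x$ and $f(3x) \leq f(x) + \kappa(x)$ from Lemma 3.3 and Theorem 3.6(ii) of~\cite{Ev02}, with $\kappa$ the lower-bound function of Definition 3.1 of~\cite{Ev02}. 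The small-size amalgamation instances (size at most $n+1$) would be checked individually via Remark 3.8 of~\cite{Ev02}, exactly as in the proofs of Theorem~\ref{simpnm} and Theorem~\ref{simpfind}.

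Once the resulting $*$-generic theory $T$ is shown to be $\omega$-categorical and simple, stability would follow from Lemma 3.9 of~\cite{Ev02}: irrationality of $\alpha$ ensures that $\delta(A) - \delta(B) = \alpha(|A|-|B|) - (|R(A)|-|R(B)|)$ is nonzero whenever $B \subsetneq A$, which is the injectivity hypothesis of that lemma. Strict stability would then be automatic, since any $\omega$-categorical superstable theory is $\omega$-stable by Lachlan, while an $\omega$-categorical Hrushovski $*$-generic with irrational predimension coefficient is never $\omega$-stable. The combinatorial arguments for $\mathrm{nmdeg}(p) = n$ and $F_{ind}(p) > n$ would transfer essentially verbatim from the proof of Theorem~\ref{simpfind}: the lower bound on $f$ forces the algebraic closure of any $n$-tuple of distinct realizations of the unique $1$-type $p$ to consist of those $n$ elements alone, yielding a unique nonalgebraic type over such a tuple and hence $\mathrm{nmdeg}(p) = n$; meanwhile, any nonconstant $\emptyset$-indiscernible sequence in $p$ must omit $R$ on $(n+1)$-tuples (else the predimension would eventually go negative), so $d_0(a_1 \ldots a_{n+1}) = (n+1)\alpha$, exceeding $f$ on the relevant range and giving $a_{n+1} \ind a_1 \ldots a_n$.

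The main obstacle is exhibiting a concrete $f$ on $[n+2,\infty)$ satisfying Evans's $\kappa$-dependent criteria while still producing the strict inequality $f(x) > (n+1)\alpha$ for $x > n+2$ needed for the $F_{ind}$ calculation. For irrational $\alpha$, the positive values of $\delta(D) - \delta(A)$ over $A \leq_{*} D$ with $|D|$ bounded are themselves bounded below by a strictly positive real (by irrationality of $\alpha$ and the finiteness of the relevant $\mathbb{Z}$-linear combinations), so $\kappa(x)$ remains well-controlled from below on every finite range, and a slowly growing (logarithmic) $f$ of the shape used in Theorem~\ref{simpfind}, rescaled to respect $\kappa$, should suffice. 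This is precisely the point flagged by the remark just before the theorem statement, which asserts that ``the proof of Theorem~\ref{simpfind} also gives us'' the strictly stable version once Evans's generalized criteria are invoked.
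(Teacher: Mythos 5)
Your proposal is correct and follows essentially the same route as the paper: both take the construction of Theorem~\ref{simpfind}, make the predimension coefficient irrational (placing it on $|A|$ rather than on $|R(A)|$ is only a rescaling), verify well-definedness and simplicity via Evans's $\kappa$-dependent criteria together with Remark 3.8 for the small amalgamation instances, deduce stability from Lemma 3.9 of \cite{Ev02} via injectivity of $\delta$, rule out superstability by $\omega$-categoricity, and transfer the $\nmdeg$ and $F_{ind}$ computations unchanged. No substantive difference from the paper's argument.
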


The theory is not superstable as an $\aleph_0$-categorical superstable theory
is $\omega$-stable and then must be locally modular. But the Hrushovski
examples are CM-trivial.

%\sidebar{We need to define a control function which guarantees stability,
%$\aleph_0$-categoricity, the amalgamation property, and the specific
%requirements on $f$ in Theorems~\ref{simpnm} and \ref{simpfind} to calculate
%$\nmdeg$ and $F_ind$. The first three are in \cite{Herwig,Hrustableplane}.
%The difficulty is to modify the control function defined immediately above to
%to connect  the growth rate of the sequence $b_n$ with that specified in
%Theorems~\ref{simpnm} and \ref{simpfind}. }

%\begin{proof} NEEDED: Why ap still holds,
%
%
%\end{proof}

\subsection{$\aleph_1$-categorical theories} \label{nm1cat}

To construct $\aleph_1$-categorical theories with types of arbitrary degree of nonminimality, we consider
a specific type of design reflecting the intersection theory of curves in algebraic geometry.

\begin{definition}\label{design}  A $t-(v, k,\lambda)$ design is a set $P$ of $v$ points,
together with a set $\BB$ of blocks each of which is a $k$-subset of $P$, and
which has the property that any set of $t$ points is a subset of exactly
$\lambda$ blocks.
\end{definition}

\begin{theorem}\label{asmthm} For each $r,k< \omega$ there is an
$\aleph_1$-categorical (almost strongly minimal) theory $T_{r,k}$
that has a  ($1$)-type $p\in S(\emptyset)$ with $\nmdeg(p) =r$; the
nonforking extension of $p$ over $r$ elements is strongly minimal. Each model
of cardinality $\kappa$ is an $(r-1) -(\kappa,\kappa,1)$ design and the
restriction to a strongly minimal subset $P$, is an $(r-1) - (\kappa,k,1)$ design
($k$-Steiner system).
\end{theorem}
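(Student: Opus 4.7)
The approach is a Hrushovski $\le$-construction, extending to arbitrary $r$ the known construction of $\aleph_1$-categorical almost strongly minimal $(2,k)$-Steiner systems. Take a language containing one symmetric $k$-ary relation $B$ (interpreted only on $k$-tuples of distinct elements), together with whatever additional relation symbols are needed to code the $\kappa$-sized block fibres that sit outside the strongly minimal point set $P$. Let $\bK_0$ be the class of finite structures in which no two distinct blocks share $r$ or more elements of $P$. Fix a predimension of the form $\delta(A)=\alpha|A|-\beta|B(A)|$ with positive rational $\alpha,\beta$, chosen so that placing a single block on $k$ fresh points of $P$ produces positive predimension and the matroid axioms of Definition \ref{predim} hold, and work with the strong substructure relation $\le$ of Definition \ref{defsstr}. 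Since $\alpha,\beta\in\QQ$, Lemma \ref{d-nf}(1) then gives an $\omega$-stable theory of the generic.

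To force almost strong minimality and, ultimately, $\aleph_1$-categoricity, impose a Hrushovski control function so that the $\delta$-closure of any $r$ generic points of $P$ consists of exactly those $r$ points together with the unique $k$-block through them, while the $\delta$-closure of any $r-1$ generic points is trivial. Over each block, build in a strongly minimal fibre of ``non-point'' elements whose presence produces the full $r$-$(\kappa,\kappa,1)$ design on the ambient model, while the restriction to $P$ remains the $r$-$(\kappa,k,1)$ Steiner system of the statement.

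Let $p\in S(\emptyset)$ be the generic $1$-type in the ambient fibre attached to a block. The upper bound $\nmdeg(p)\le r$ goes as follows: any $r$ $d$-independent realizations of $p$ project to $r$ generic points of $P$, the design axiom gives a unique block through these points, and the generic type of the corresponding fibre is a nonalgebraic forking extension of $p$ that is strongly minimal by construction of the fibre. For the lower bound $\nmdeg(p)\ge r$: by the control function, any $r-1$ $d$-independent realizations of $p$ project to $r-1$ points of $P$ that share no block, so $\icl$ of these $r-1$ parameters is trivial, and the characterization of $d$-independence in Definition \ref{dinddef}(1) forces every nonalgebraic extension of $p$ over them to be nonforking.

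The main difficulty lies in tuning $\alpha,\beta$ and the control function so that three properties hold simultaneously: (i) $\omega$-stability plus finite Morley rank, giving $\aleph_1$-categoricity; (ii) the design axiom is the \emph{only} source of nontrivial $\delta$-closure on $P$, pinning $\nmdeg(p)$ to exactly $r$ rather than something smaller; and (iii) each block carries a strongly minimal $\kappa$-sized fibre in uncountable models. The first is standard from \cite{Ev02} once the control function is in place; the real work is simultaneously enforcing (ii) and (iii), that is, forbidding any $\delta$-dropping configuration over fewer than $r$ points of $P$ while still allowing a free strongly minimal fibre over each full $r$-tuple. This is the delicate part of the construction, and it is what forces the precise combinatorial balance between $\alpha,\beta$, the arity $k$, and the control function $\mu$.
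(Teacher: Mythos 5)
You have correctly identified the shape of the argument the paper uses: a Hrushovski construction of a design with a strongly minimal point set $P$, infinite blocks to make the forking extension over a full block-determining tuple nonalgebraic, transitivity below that tuple length for the lower bound on $\nmdeg$, and a collapse function that must act on $P$ but not on the blocks. However, the construction itself is not actually carried out, and the specific architecture you propose diverges from the one that is verified in the paper in ways that leave real gaps. First, the vocabulary and predimension: the paper does not use a $k$-ary block relation plus auxiliary ``fibre'' symbols; it uses a single symmetric $r$-ary relation $R$ together with a unary predicate $P$, takes $\bK^*_0$ to be the class of finite $r$-spaces (every $(r-1)$-tuple lies in a unique maximal $R$-clique), and uses the nullity-based predimension $\delta(A) = 2|A| - |P(A)| - \sum_{\ell} \mathbf{n}_A(\ell)$ of Definition~\ref{defdelrank}, not a raw count $\alpha|A|-\beta|B(A)|$. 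The nullity count is what makes long lines have controlled predimension and what makes the amalgamation class of Definition~\ref{defdelrank}(5) work; with a raw block count you would still have to prove amalgamation, verify that the closure of $r$ generic points is exactly the block through them, and rule out other $\delta$-dropping configurations, none of which you do. Your phrase ``whatever additional relation symbols are needed to code the $\kappa$-sized block fibres'' is precisely the part of the construction that cannot be left unspecified: the paper needs no extra sorts at all, because the lines of the generic $r$-space are already infinite and $P$ simply carves out $k$ points on each one.

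Second, the mechanism that simultaneously collapses $P$ to a strongly minimal set while leaving the lines infinite --- which you correctly flag as ``the delicate part'' but do not resolve --- is handled in the paper by restricting the algebrizing function $\mu$ of Definition~\ref{Kmu} to the class $\bU_t$ of Definition~\ref{kdep}: $\mu$ is pinned down only on the good pairs of Observation~\ref{shline}(3), namely those extending an $r$-tuple by a new point \emph{in} $P$, which forces each line to meet $P$ in exactly $k=r+t-1$ points while leaving its complement in $\neg P$ unbounded. Almost strong minimality then comes from an explicit coordinatization (Theorem~\ref{asm}): every $c\in\neg P$ is interalgebraic over two fixed reference tuples $\abar,\bbar$ with a pair of points of $P(M)$, which is what actually yields $\aleph_1$-categoricity --- note that $\omega$-stability plus finite Morley rank, which you invoke, does not by itself give $\aleph_1$-categoricity, and \cite{Ev02} is the wrong machinery here (it produces $\omega$-categorical supersimple or strictly stable generics, not collapsed strongly minimal ones; the relevant sources are \cite{Hrustrongmin} and \cite{BaldwinPao}). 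Finally, the type whose degree of nonminimality is computed is the type $\neg P(x)$ itself, not the generic of a fibre sort, and the two bounds are proved as in Lemma~\ref{transimpnmfin} and Theorem~\ref{2,2}: the curve formula $R(a_1,\dots,a_{r-1},x)$ is an infinite forking formula over $r-1$ realizations, and $i$-transitivity of the automorphism group on $\neg P$ for $i< r$ (a consequence of every set of size at most $r-1$ being strong) kills all nonalgebraic forking extensions over fewer parameters. Your lower-bound sketch via triviality of $\icl$ addresses only the intersection clause of $d$-independence and not the possible drop in $d$, so it is incomplete even on its own terms.
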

%\sidebar{jb 1/20/24) Differing from Section~\ref{nmsimp}, stemming from \cite{Ev02},
%%%%there is no assumption that we work in the eq.  Since the theory is very
%%%unlikely to admit elimination of imaginaries (\cite{BaldwinVer}) this may
%%present complications.  E.g., What is known about canonical bases in the
%basic (sm set) Hrushovski construction?}

\begin{definition}[$r$-spaces]\label{defls}
A $\tau$-structure $(M,R)$ is

\begin{enumerate}
\item An {\em $r$-space} if $R$ defines an $r$-hypergraph in which each
    $r-1$ -tuple of distinct points is contained in a unique maximal
    $R$-clique (curve).  

   % (Equivalently, $A$ is a $t-(v, \lambda,1)$ design is a set $Q$ of
%    points, of cardinality v, together with a set $\BB$ of blocks (curves)
%    each of which has cardinality $\lambda$, and which has the property
%that any set of $t$ points is a subset of only $1$ block.

   This is enforced by requiring that for any $r$-space $A$, if $a$ and $b$
    are each on a line through  $\cbar =\langle c_0,c_1, \ldots
    c_{r-1}\rangle$ then $R(a,b,\cbar')$ for any $\cbar' \subset \cbar$
    with $\cbar' = r-2$.

\item $\bK^*_0 = \bK^*_0(r)$ denotes the collection of all finite
    $r$-spaces.

\item An $r$-space is a {\em $(r,k)$-Steiner system} if all curves have the
    same length $k$.

 %   \item For any class $\bL$ of finite structures, $\hat \bL$ denotes the
%        class of direct limits of structures in $\bL$.
        \end{enumerate}
\end{definition}

For infinite $\lambda$ we built $(r-1)-(\lambda, \lambda,\lambda)$ block
designs in the $\aleph_0$-categorical case and now build  both $(r-1)-(\lambda,
\lambda,\lambda)$ and $(r-1)-(\lambda, k,\lambda)$ (for fixed $k$) in the
$\aleph_1$-categorical case.

Recall Definition~\ref{RahimJim}.

\begin{definition} (Degree of nonminimality)  \label{nmdegdef} Suppose $p \in S(A)$ is a
nonalgebraic and nonminimal stationary type. By the degree of nonminimality
of $p$, denoted by $\nmdeg(p)$, we mean the least k such that $p$ has a
nonalgebraic forking extension over $A \cup \{a_1, \ldots , a_k\}$, for some
$a_1, \ldots , a_k$ realising $p$.

So for $m < \nmdeg(p)$, if
$\abar$ is a  sequence of $m$ distinct  realizations of
$p$ and $q$ with $p \subseteq q\in S(A\abar)$ forks over $A$, $q$ has only
finitely many realizations.
\end{definition}

%By a $t-(v, k,\lambda)$ design we mean a set $Q$
% of points, of cardinality v,
%together with a set $\BB$ of blocks each of which is a  subset with
%cardinality $k$ of $Q$, and which has the property that any set of t points
%is a subset of exactly $\lambda$ blocks.

The Hrushovski construction \cite{Hrustrongmin} created examples of strongly
minimal sets whose geometries were neither discrete, group-like nor
field-like. The modifications in \cite{Baldwinasmpp,  BaldwinPao,
BaldwinHolland} to obtain $\aleph_1$-categorical theories of non-Desarguesian
geometries, Steiner systems, and Morley rank 2 fields with definable infinite
subsets, respectively, are adapted here.

% A   {\emph Hrushovski sm-class} depends on
%the choice of a quintuple $(\sigma, \bK_0, \epsilon, \bL_0, {\bf U})$ of
%parameters. The role of the parameters are expounded in \cite[\S
%1.1]{BaldwinVer}; here $\sigma, \bL, \epsilon, \bU$ are replaced by $\tau,
%\bK,\delta,\UU$ chosen for the current problem. Each parameter is different
%from any earlier argument.

Now we define our geometrically based pre-dimension function by modifying
\cite{BaldwinPao}. Although the following definitions each depend on $r$,
since we will fix $r$ for each example, after describing the vocabulary, we
have suppressed the $r$.
 The unary predicate $P$ is used to guarantee that the theory is almost strongly minimal
 and so $\aleph_1$-categorical. This allows a $k$-Steiner system on $P$ while
 the lines are infinite in the entire structure.
 
 \begin{definition}\label{lines}
 \begin{enumerate}
 \item  We denote the cardinality of an $R$-clique $\ell \subseteq A \in
     \bK^*_0 $ by $|\ell|$, and, for $B \subseteq A$, we denote by
     $|\ell|_B$ the cardinality of $\ell \cap B$.
   \item We say that a non-trivial curve $\ell$ contained in $A$ is {\em
       based in} $B \subseteq A$ if $|\ell\cap B| \geq r-1$, in this case we
       write $\ell \in L(B)$.
\item	\label{nullity} The {\em nullity of a line} $\ell$ contained in a
    structure $A \in \mathbf{K}^*_0$ is:
	$$\mathbf{n}_A(\ell) = |\ell| - (r-1).$$

\end{enumerate}
\end{definition}

Each $\aleph_1$-categorical Hrushovski  construction depends on a number of parameters: the vocabulary, the choice of a pre-dimension, and an algebrizing function.
Succesive classes  $\bK^*$ of finite structures defined in terms of the pre-dimension and the  algebrizing function lead to the definition of an amalgamation class of finite structures with respect to a notion of strong embedding. The generic for this class is the prototypical model. A preliminary classification of such constructions appears in \cite{BaldwinVer}.

\begin{definition}\label{defdelrank} We define the appropriate $\tau, \bK^*$ and $\bK_0$.
\begin{enumerate}
\item %$\sigma \rightarrow \tau_r$:
$\tau$ has an $r$-ary relation $R$ and a unary predicate $P$.
 \item  $\bK^*$ is the class of  $r$-spaces
% Every $(A,R,P) \in 
 and  $\bK^*_0$ is the class of  finite $r$-spaces.

 \item %$\epsilon \rightarrow \delta =\delta^{ls}$:
 For $(A,R) \in  \bK^*_0$ let:
	$$\delta(A) = 2(|A|) -|P(A)| - \sum_{\ell \subseteq A} \mathbf{n}_A(\ell).$$

\item  %$\bL_0 \rightarrow \bK'_0$:
$(A,R) \in  \bK'_0 \subseteq \bK^*_0$ if for any $A' \subseteq A, \delta(A') \geq 0$.

\item $(A,R) \in  \bK_0$ if $A\in  \bK'_0$ and there is no $B\subseteq A$
    containing a subset $C$ of a single realization of $R$ with
    $\delta(B/C) < \delta(C)$.

%satisfies the conditions on $\epsilon$  given in  Axiom~\ref{yax} and
%$A\leq B$ is derived as in Definition~\ref{stext}.
\end{enumerate}
\end{definition}

 The switch from $\bK'_0$ to
$\bK_0$ in Definition~\ref{defdelrank} is patterned on
\cite[Proposition 18]{Hrustrongmin} and \cite{Baldwinautpp}; $\bK_0$ has amalgamation just
 as $\bK'_0$ does,
 as the amalgamation procedure
introduces none of the forbidden structures. The result is $<r-1$-transitivity.

\cite[page 6]{Kaveh} gives an
example of a $3-(8,4,1)$ design, $A$.  Note that there is a copy of $A
\subseteq \neg P$ in $\bK'_0$ as for such an $A$, no subset of $A$ has
negative rank,  and $\delta(A) =2 $ while the $4$ element curve has rank $7$
and so is not strong in $A$. Thus, the restriction to $\bK_0$ is essential
for the transitivity.   On page 8 of the same chapter is
a $3-(10,4,1)$-design  with 30 blocks; thus it is forbidden in $\bK'_0$, even in
$\neg P$.

%Note that since $R$ is symmetric, we can picture a model as a plane with  a
%subset $X$ such that for each $a \in X$ and each $b_1, \ldots b_{r-1}$,
%$R(a,b_1, \ldots b_{r-1})$

\begin{definition}[Strong Extensions]\label{stext}
\begin{enumerate}
\item For any class $\bK$ of finite structures, $\hat \bK$ denotes the
    collection of structures of arbitrary cardinality that are direct
    limits of models in $\bK$.

    \item Extend $\delta$ to $d: { \hat \bK}_0 \times {\bK_0} \rightarrow
        \omega$ by for each $N \in {\hat \bK}_0$ and $A \subset_{<\omega}
N$, $d(N,A) = \inf \{\delta(B): A \subseteq B \subseteq_{\omega} N\}$,
$d_N(A/B) = d_M(A \cup B) -d_M(B)$. We usually write $d(N,A)$ as $d_N(A)$
and omit the subscript $N$ when clear.

%\item In any $N \in \hat\bL_0$, $$d_N(A/B) = \min\{\epsilon(A'/B): B
%    \subseteq A \subseteq A' \subseteq N\}.$$
%
%     We often write $d_N(A-B/B)$ for $d_n(A/B)$.

    \[d_N(A) = d_N(A/\emptyset).\]
    \item For any $N \in \hat\bK_0$ and finite  $B \subseteq N$, we write
        $B \leq N$ (read $N$ is a \emph{ strong extension} of $B$) when  $B
        \subseteq A\subseteq N$ implies $d_N(A) \geq d_N(B)$.

\item We write $B < A$ to mean that $B \leq A$ and $B$ is a proper subset
    of $A$.

  \item A structure $A$ is an $(\bK_0, \leq)$-{\em generic} or $\leq$-{\em
      homogeneous} if $A$ is a $(\bK_0, \leq)$-union and for any $B\leq C$
      each in $\bK_0$ and $B\leq A$ there is a $\leq$-embedding of $C$ into
      $A$.
\end{enumerate}
\end{definition}

In the situation here, if $A
    \subseteq M$ and $|A| \leq r-1$ then for any $B \supseteq A$, $A\leq
    B$.  In other situations, we can restrict the number of  finite structures by adding
    such a
    condition.

\begin{observation} Counting automorphism types: {\rm If $|X| = s \leq r-1$, $X$ is strong
 in any extension. Thus,
in the generic model the quantifier free type  of $X$  determines  the
automorphism type of $X$; {\em a priori} there are $2^s$ such types depending
which elements of $X$ are in $P$. If $|X| =r$, the other quantifier free
question asks whether $R$ or $\neg R$ holds of the tuple, yielding $2^{r+1}$
$r$-types. Note however that since the truth of $R(\abar)$ does not depend on
the order of $\abar$, there are actually only $r+1$ automorphism types of
$r$-tuples depending on the number of $a_i$ in $P$.}
\end{observation}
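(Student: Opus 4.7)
The observation comprises three assertions: the strength of subsets of size at most $r-1$, that this strength forces the quantifier-free type to determine the automorphism type in the generic, and finally the explicit counts of types. I would prove them in this order.

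The central step is the strength claim. Fix $X$ with $|X| = s \leq r-1$. Since every curve of an $r$-space has at least $r$ points (Definition~\ref{defls}), no curve can be contained in $X$, so $\delta(X) = 2|X| - |P(X)|$ with an empty line-sum. For any finite $A$ with $X \subseteq A \subseteq N \in \hat{\bK}_0$, one computes
\[
\delta(A) - \delta(X) = 2|A \setminus X| - |P(A \setminus X)| - \sum_{\ell \subseteq A} \mathbf{n}_A(\ell),
\]
since no curve lies in $X$. The desired inequality $\delta(A) \geq \delta(X)$ would follow by bounding each $\mathbf{n}_A(\ell) = |\ell| - (r-1)$ against the number of new points $|\ell \setminus X|$ (using $|\ell \cap X| \leq |X| \leq r-1$), and then invoking the restriction that $A \in \bK_0$: the prohibition in item (5) of Definition~\ref{defdelrank} against configurations $C \subseteq B$ with $\delta(B/C) < \delta(C)$ (for $C$ contained in a single realization of $R$) rules out exactly the situations in which overlapping curves could create a deficit.

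With strength in hand, the second assertion is immediate from the $\leq$-homogeneity of the generic model given by Theorem~\ref{genexist}: isomorphic strong substructures are conjugate by an automorphism of $M$, and the quantifier-free type of $X$ reduces, for $|X| \leq r-1$, to the characteristic function of $P$ on $X$, giving $2^s$ types. For the third assertion, when $|X| = r$ the quantifier-free type additionally records the truth value of $R$ on $X$, yielding $2 \cdot 2^r = 2^{r+1}$ ordered types; passing to automorphism types, the symmetry of $R$ collapses the $P$-patterns into their cardinality $|X \cap P| \in \{0,1,\dots,r\}$, giving $r+1$ orbits (for each fixed $R$-value).

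The main obstacle is the relative-predimension estimate in the strength step. While the intuition is clear---curves require $r$ points, which $X$ does not have, so they must push ``outside'' $X$---making the bookkeeping rigorous requires handling the case of multiple curves sharing points in $A \setminus X$, and verifying that the supplementary $\bK_0$-condition blocks the would-be bad configurations. The remaining two assertions are then a routine appeal to homogeneity and a combinatorial count.
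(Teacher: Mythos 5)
Your overall architecture matches what the paper intends: the Observation is stated without proof, resting entirely on the sentence immediately preceding it ("if $A \subseteq M$ and $|A| \leq r-1$ then for any $B \supseteq A$, $A \leq B$"), which the paper also does not prove; the counting is then treated as immediate from $\leq$-homogeneity of the generic. So you are attempting strictly more than the paper does, and your decomposition (strength $\Rightarrow$ conjugacy of qf-isomorphic small sets $\Rightarrow$ count orbits) is the right one.

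The genuine gap is exactly where you flag it, and flagging it does not close it. The strength claim reduces to $\sum_{\ell \subseteq A} \mathbf{n}_A(\ell) \leq 2|A\setminus X| - |P(A\setminus X)|$, and the per-line bound $\mathbf{n}_A(\ell) = |\ell|-(r-1) \leq |\ell \setminus X|$ is nowhere near sufficient once curves share points of $A \setminus X$: the paper's own $3$-$(8,4,1)$ design example (all points outside $P$, $14$ blocks on $8$ points) is a member of $\bK'_0$ in which a $2$-element subset is \emph{not} strong, so the claim is genuinely false without the extra clause in Definition~\ref{defdelrank}(5), and the entire content of the assertion is that this clause (no $B \supseteq C$ with $C$ inside a single $R$-realization and $\delta(B/C) < \delta(C)$) rules out every such overlapping-curve configuration. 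That implication is not a routine bookkeeping exercise and you have not supplied it; as written, the "main obstacle" paragraph is a statement of the problem rather than a proof. Two smaller points: (i) your final count of $r+1$ orbits \emph{per} $R$-value, i.e.\ $2(r+1)$ automorphism types of $r$-sets, is the careful reading (the paper's literal "$r+1$" conflates the two cases, since the $R/\neg R$ distinction survives reordering); (ii) for $|X|=r$ the conjugacy step needs $r$-element sets to be strong in the generic (as the paper asserts just before Lemma~\ref{lowtrans}), which your strength argument, covering only $s \leq r-1$, does not deliver.
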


\begin{definition}{\bf Canonical Amalgamation}\label{canamdef} For any class $(\bK_0,\delta)$,
    if $A\cap B = C$, $C\leq A$ and $A,B, C
    \in \bK^*$,  $G$ is a {\em free (or canonical) amalgamation},  $G = B \oplus_C
    A$,
    if
      $ G \in
    \bK^*$, $\delta(A/BC) = \delta(A/C)$ and $\delta(B/AC) = \delta(B/C)$

\end{definition}

It follows \cite[Lemma 3.10]{BaldwinPao} that the amalgam satisfies: $\delta(
A \oplus_{C} B)= \delta(A) +\delta(B) -
    \delta(C)$ and any $D$ with $C \subseteq D \subseteq A \oplus_{C} B$ is
    also free.  Thus, $B \leq G$.
    In addition to submodularity, \cite{BaldwinPao} proves that the
    constructed strongly minimal set is `flat' (\cite{Hrustrongmin}).

    The crux of $\aleph_1$-categoricity is the `algebrizing' function $\mu$ which makes
    all non-generic types in $P$ algebraic.

 \begin{definition}\label{prealgebraic} Let $A, B \in {\bK}_0$ with
 $B \subseteq A$ and\footnote{For ease of exposition, we insist primitive extensions
 are proper.} $B-A \neq \emptyset$.
	\begin{enumerate}
\item $A$ is a \emph{primitive extension} of $B$ if $B \leq A$ and there is
    no $B \subsetneq A_0 \subsetneq A$ such that $B\leq A_0 \leq A$. $A$ is
a {\em  $k$-primitive extension} if, in addition, $\delta(A/B) =k$.

	\item  We say that the $0$-primitive pair $A/B$ is {\em good} if there
is no $B' \subsetneq B$ such that  $(A/B')$ is $0$-primitive.
(\cite{Hrustrongmin} called this a minimal simply algebraic or m.s.a.
extension.)
	\item If $A$ is $0$-primitive over $B$ and $B' \subseteq B$ is such
 that we have that $A/B'$ is good, then we say that $B'$ is a {\em base}
 for $A$ (or sometimes for $A-B$).
	\item If the pair $A/B$ is good, then we also write $(B,A)$ is a {\em good pair}.
\end{enumerate}
\end{definition}

	\begin{definition}     \label{Kmu} [The algebrizing function]	
	\begin{enumerate}
	\item  Let $\Uscr$ be the collection of functions $\mu$
 assigning to every isomorphism type ${\beta}$ of a good pair $C/B$ in
$\bK_0$ a natural number
$\mu({\beta}) = \mu(B,C) \geq \delta(B)$. %, if $|C-B|\geq 2$.

\item For any good pair $(B,C)$ with $B \subseteq M$ and $M \in \hat
    \bK_0$, $\chi_M(B,C)$ denotes the number of disjoint copies of $C$ over
 $B$ in $M$. A priori,  $\chi_M(B,C)$ may be $0$.

	\item\label{Kmuitem} Let $\bK_\mu$ be the class of structures $M$ in
$\bK_0$ such that  if $(B,C)$ is a good pair then $\chi_M(B,C) \leq
\mu((B,C))$

\end{enumerate}	
\end{definition}

Minor modifications of the proof in \cite{BaldwinPao} yield.
\begin{theorem} Let $\mu \in \Uscr$;
the class $\bK_{\mu}$ has amalgamation and thus a generic model $M$.
\end{theorem}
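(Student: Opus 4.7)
The plan is to apply Theorem \ref{genexist} to $(\bK_\mu, \leq)$, which amounts to verifying the four conditions of Definition \ref{amalclassdef}. Conditions (1), (3), and (4) --- that $\leq$ is a partial order refining $\subseteq$, the hereditary behavior under intersections, and countability --- are inherited from $(\bK_0, \leq)$, since $\mu$ imposes a pointwise cardinality constraint that does not interact with them. Joint embedding is a special case of amalgamation over $\emptyset$, so the content of the argument lies entirely in establishing amalgamation.

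Given $A, B \in \bK_\mu$ with $C \leq A$ and $C \leq B$, I would form the canonical amalgam $G = A \oplus_C B$ of Definition \ref{canamdef}. The first step is to check $G \in \bK_0$. Using $\delta(G) = \delta(A) + \delta(B) - \delta(C)$ together with submodularity of $d$, and the fact that free amalgamation introduces no new relations between $A \setminus C$ and $B \setminus C$, one verifies that no subset of $G$ acquires negative rank and that no forbidden configuration from Definition \ref{defdelrank}(5) is created; the $r$-space axioms are preserved because every new $r$-tuple must lie in $A$ or in $B$. This is essentially verbatim the argument of \cite{BaldwinPao}.

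The main obstacle, and the only place where $\mu$ intervenes, is showing that $G \in \bK_\mu$, i.e.\ that $\chi_G(B_0, C_0) \leq \mu(B_0, C_0)$ for every good pair $(B_0, C_0)$. The key step I would invoke is the classical \emph{good pairs do not split across a free amalgam} principle: if $(B_0, C_0)$ is good and $C_0 \subseteq G$, then $C_0 \subseteq A$ or $C_0 \subseteq B$. If instead $C_0$ met both $A \setminus C$ and $B \setminus C$ nontrivially, freeness would give $\delta(C_0) = \delta(C_0 \cap A) + \delta(C_0 \cap B) - \delta(C_0 \cap C)$; combining with $\delta(C_0/B_0) = 0$ and submodularity, one would extract a proper intermediate set $B_0 \subsetneq B_0' \subsetneq C_0$ with $\delta(C_0/B_0') \leq 0$, contradicting the goodness (0-primitivity without a smaller base) of $(B_0, C_0)$.

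Once non-splitting is in hand, every disjoint copy of $C_0$ over $B_0$ in $G$ is a copy either in $A$ or in $B$; copies lying entirely in $C$ are already counted on both sides, but those are independently bounded by $\mu(B_0, C_0)$ because $C \subseteq A$ and $A \in \bK_\mu$. A short inclusion--exclusion then gives $\chi_G(B_0, C_0) \leq \mu(B_0, C_0)$, exactly as in \cite{BaldwinPao}; the only ``minor modification'' needed for the present setting is to track the unary predicate $P$ throughout, which affects $\delta$ but not the combinatorial structure of good pairs. With amalgamation established, Theorem \ref{genexist} produces the desired $(\bK_\mu, \leq)$-generic $M$, and the real content of the proof is the non-splitting lemma in the third paragraph.
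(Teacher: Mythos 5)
Your scaffolding (reduce to amalgamation, check $G\in\bK_0$, use the fact that a good pair cannot split across a free amalgam) is the right shape, and it matches what the paper intends --- the paper in fact gives no argument at all beyond the sentence ``minor modifications of the proof in \cite{BaldwinPao} yield,'' so the burden of the proof is exactly the amalgamation lemma you are sketching. The non-splitting observation in your third paragraph is correct and is indeed a necessary ingredient.

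However, the final counting step is a genuine gap: the free amalgam $G=A\oplus_C B$ need \emph{not} lie in $\bK_\mu$, and inclusion--exclusion does not save you. Non-splitting gives $\chi_G(B_0,C_0)=\chi_A(B_0,C_0)+\chi_B(B_0,C_0)-\chi_C(B_0,C_0)$, and each term on the right is only bounded by $\mu(B_0,C_0)$ from above (and $\chi_C$ by $0$ from below), so all you get is $\chi_G\leq 2\mu(B_0,C_0)$. This failure is concrete in the present setting: take $C$ to be an $(r-1)$-tuple $\cbar$ in $\neg P$, and let each of $A$ and $B$ consist of $C$ together with $t=\mu(\beta/\beta^-)$ points of $P$ lying on the curve through $\cbar$ (disjoint sets of such points). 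One checks $\delta(A)=\delta(B)=\delta(C)$, so $C\leq A$ and $C\leq B$ and both are in $\bK_\mu$, yet the free amalgam puts $2t$ points of $P$ on that curve, violating the bound that forces curves to meet $P$ in $k=r+t-1$ points. The actual proof in \cite{BaldwinPao} (following Hrushovski) handles exactly this case: one inducts on $|B\setminus C|$, reduces to $B$ primitive over $C$, and when a good pair based inside $C$ overflows, one does \emph{not} amalgamate freely but instead identifies a copy of $C_0$ over $B_0$ in $B\setminus C$ with one in $A\setminus C$; the hypothesis $\mu(B_0,C_0)\geq\delta(B_0)$ built into Definition \ref{Kmu} is what guarantees that such an identification exists and is compatible with $C\leq A$. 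That identification argument is the real content of the theorem and is missing from your proposal.
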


%sept 15 duplicates observation on counting automorphims
%Since each set of size $\leq r$ is strong in the generic model,
%\begin{lemma}\label{lowtrans}
%The isomorphism type of an $s$-element set $\abar$ for $s \leq  r$ is
%determined by the number of elements of $P$ among the $a_i$.
%\end{lemma}

We now describe an additional parameter $k$ for $\mu$ which guarantees that
each line in $M$ intersects $P(M)$ in $(r+k)$-points.

\begin{notation} \label{notbeta} Let $\beta_s$ be the isomorphism type of $s \leq r$ elements
 and let $\beta^{-}_s$ denote the isomorphism type of the structure obtained by dropping any single point.
\end{notation}

Since $R$ is symmetric it is harmless to regard the dropped point as the last
one.
%The $\tau$ isomorphism type includes which points are in $P$; but
%Observation~\ref{shline} depends only on how may

\begin{observation}\label{shline}
\begin{enumerate}
\item If $s < r$, $\delta(\beta/\beta^-) > 0$
\item If $s=r$ and the last element is not in $P$, $\delta(\beta/\beta^-) >
    0$.
    \item    If $s=r$ and the last element is  in $P$, then
        $\delta(\beta/\beta^-) = 0$ and  $\beta/\beta^-$  is a good
pair.
\end{enumerate}
\end{observation}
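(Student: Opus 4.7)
The plan is to prove the three clauses of Observation \ref{shline} by a direct unpacking of the predimension formula, exploiting the fact that the relation $R$ is $r$-ary, so any nontrivial $R$-clique (line) requires at least $r$ points, and a single set of size exactly $r$ can contain at most one line, of nullity $1$. Write $a$ for the unique point in $\beta \setminus \beta^-$; by symmetry of $R$ the choice of $a$ is irrelevant, so we may treat $a$ as the ``last'' point as in Notation \ref{notbeta}.

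First I would compute the contribution to $\delta(\beta/\beta^-) = \delta(\beta) - \delta(\beta^-)$ term by term. Adding $a$ contributes $+2$ from the $2|A|$ term; it contributes $-1$ or $0$ from $-|P(A)|$ according as $a\in P$ or $a\notin P$; and the nullity term $\sum_\ell \mathbf{n}_A(\ell)$ can only change if a new line appears, which requires reaching size $r$, so contributes $0$ when $s<r$, $-1$ when $s=r$ and $R$ holds on $\beta$, and $0$ otherwise.

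Clause (1) is then immediate: for $s<r$ no new line appears, so $\delta(\beta/\beta^-) = 2-\varepsilon$ with $\varepsilon\in\{0,1\}$, hence at least $1$. Clause (2) is the same: for $s=r$ and $a\notin P$, the $P$-term contributes $0$, so $\delta(\beta/\beta^-) = 2-\eta$ with $\eta\in\{0,1\}$ the line contribution, again at least $1$. For clause (3), with $s=r$ and $a\in P$, the $P$-term contributes $-1$ and the assertion concerns the configuration where $R$ holds on $\beta$ (the only one yielding vanishing $\delta$), giving $\delta(\beta/\beta^-) = 2 - 1 - 1 = 0$.

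It remains to verify that in this last case the pair $\beta/\beta^-$ is good. The condition $\beta^- \leq \beta$ is trivial: the only nontrivial intermediate subset is $\beta$ itself, and $\delta(\beta)-\delta(\beta^-)=0 \geq 0$; since $|\beta|-|\beta^-|=1$ there is no proper intermediate $A_0$, so $\beta/\beta^-$ is $0$-primitive. To rule out any alternative base $B'\subsetneq \beta^-$ for which $\beta/B'$ is $0$-primitive, note $|B'|\leq r-2$, so the $r-|B'|$ points added can create at most one new line (through all of $\beta$, contributing nullity $1$), while the $P$-term contributes at most $-(r-|B'|)$. Hence
$$\delta(\beta/B') \;\geq\; 2(r-|B'|) - (r-|B'|) - 1 \;=\; (r-|B'|) - 1 \;\geq\; 1,$$
so $\beta/B'$ is not $0$-primitive, and $\beta/\beta^-$ is good. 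No serious obstacle is expected; the only minor nuisance is the bookkeeping of the good-pair verification, which reduces to the inequality above.
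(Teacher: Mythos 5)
The paper states Observation~\ref{shline} without any proof, so there is nothing to compare against; your term-by-term computation of $\delta(\beta/\beta^-)$ is correct and is evidently the intended argument. Two interpretive points that you handle correctly are worth making explicit: clause (3) as literally written omits the hypothesis that $R$ holds of the $r$-tuple (without it one gets $\delta(\beta/\beta^-)=2-1-0=1$), and your reading --- that the clause concerns the configuration in which the new $P$-point completes a line --- is the one forced by Definition~\ref{Kmu}, Definition~\ref{kdep} and Observation~\ref{linelength}. Your goodness verification also goes through under either reading of Definition~\ref{prealgebraic}(2): whether ``$A/B'$ is $0$-primitive'' means adjoining all of $\beta\setminus B'$ to $B'$, as you compute (giving $\delta(\beta/B')\geq (r-|B'|)-1\geq 1$), or adjoining only the new point $a$ over $B'$, in which case $\delta(a/B')=2-1=1$ immediately since $|B'\cup\{a\}|\leq r-1$ precludes any nontrivial line.
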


\begin{definition}\label{kdep} For $t\geq r$ in $\NN$, let $\bU_t$ be the subset of $\Uscr$ such that if $\beta$ is
one of the $2^{r-1}$ isomorphism types of a sequence satisfying
Observation~\ref{shline}.3, then $\mu(\beta/\beta^-) = t$.
\end{definition}

\begin{observation}\label{linelength} If $\mu \in \bU_t$ then each line in the generic model
$M$ for $\bK_\mu$ has $k=r+t-1$ elements in $P$.
\end{observation}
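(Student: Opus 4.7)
The plan is to fix a line $\ell$ in the generic model $M$ and count $|\ell \cap P|$ via the $\mu$-constraint. For any $(r-1)$-subset $B \subseteq \ell$, the pair $(B, B \cup \{c\})$ with $c \in P$ and $R(Bc)$ is a good pair of predimension drop $0$ by Observation~\ref{shline}(3). Since $\mu \in \bU_t$, Definition~\ref{kdep} assigns $\mu((B, B\cup\{c\})) = t$ to each of the $2^{r-1}$ isomorphism types arising from $P$-markings of $B$. As $(r-1)$-tuples are automatically $\leq$-strong (see the remark after Definition~\ref{stext}), the richness of the generic combined with the ceiling $\chi_M \leq \mu$ forces the equality $\chi_M((B, B\cup\{c\})) = t$.

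Because $\ell$ is the unique maximal $R$-clique through $B$, a point $c \notin B$ satisfies $R(Bc)$ iff $c \in \ell \setminus B$; hence the $t$ copies of the good pair over $B$ are exactly the $P$-points of $\ell$ lying outside $B$. This yields the identity
\[|\ell \cap P| - |B \cap P| = t\]
for every $(r-1)$-subset $B$ of $\ell$. Applying the identity to two subsets $B, B' \subseteq \ell$ differing in a single element forces $|B \cap P| = |B' \cap P|$; varying the swap shows that all elements of $\ell$ have the same $P$-status. Since $t \geq 1$ gives $|\ell \cap P| \geq t \geq 1$ by the identity, we cannot have $\ell \cap P = \emptyset$, so $\ell \subseteq P$. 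Then $|B \cap P| = r-1$ for every $(r-1)$-subset $B$ of $\ell$, and the identity gives $|\ell \cap P| = r + t - 1 = k$, as claimed.

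The main obstacle is justifying the equality $\chi_M = \mu$ (rather than just $\chi_M \leq \mu$) in the generic, uniformly across all $(r-1)$-subsets $B$ of $\ell$ with their various $P$-markings. This should follow from the standard richness arguments for $\prec$-generics: every embedding of the good pair $B \cup \{c\}$ over a strong base $B \leq M$ that is still consistent with the $\bK_\mu$ ceiling must be realized in $M$, so new $P$-points $c$ are added until the count $t$ is met. Once this is in hand, the apparent over-determination of $|B \cap P|$ from varying $B$ is precisely what excludes mixed-$P$ lines and pins the $P$-length of $\ell$ to $k$.
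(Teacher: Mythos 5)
The paper states this Observation without proof, so there is no official argument to compare against; judged on its own terms, your proposal has a fatal flaw: it proves too much. Your identity $|\ell\cap P|-|B\cap P|=t$, asserted for \emph{every} $(r-1)$-subset $B$ of $\ell$, forces all points of $\ell$ to have the same $P$-status, and since $t\geq 1$ you conclude $\ell\subseteq P$ and $|\ell|=r+t-1$, i.e.\ that lines are finite and contained in $P$. That is false of the generic: by Observation~\ref{shline}(2), adjoining to a line a new point \emph{outside} $P$ is a predimension-\emph{positive} extension, hence is not a good pair, is not capped by $\mu$, and richness of the generic puts infinitely many such points on every line. The sentence immediately following the Observation says exactly this ("lines in models of $T_\mu$ are infinite"), and the proofs of Theorem~\ref{asm} and Lemma~\ref{smcurves} depend on points of $\neg P$ lying on lines. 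So some step of your argument must fail.

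The step that fails is the uniform claim $\chi_M(B,B\cup\{c\})=t$ for all $(r-1)$-subsets $B$ of $\ell$ regardless of their $P$-marking. Since $\ell$ contains infinitely many non-$P$ points, you may take $B\subseteq\ell\setminus P$, and your identity then yields $|\ell\cap P|=t$, while $B\subseteq\ell\cap P$ yields $|\ell\cap P|=r+t-1$; these are incompatible, so the richness floor ($\chi_M\geq t$) and the $\mu$-ceiling ($\chi_M\leq t$) cannot both be invoked over every base at once. (Definition~\ref{kdep} as written does assign the value $t$ to all $2^{r-1}$ markings, which makes your reading tempting, but applying the ceiling over a base disjoint from $P$ is already incompatible with the stated value $k=r+t-1$; the intended count must privilege the good pair whose base lies in $P$.) The content of the Observation is the count over an $(r-1)$-tuple of $\ell\cap P$ generating the line: the $P$-points of $\ell$ off such a base are exactly the disjoint copies of the good pair of Observation~\ref{shline}(3), there are $t$ of them, and together with the $r-1$ base points this gives $k=r+t-1$. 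A correct writeup should fix such a base (and justify that $\ell\cap P$ contains one) rather than quantifying the identity over all $(r-1)$-subsets of $\ell$; the "over-determination" you present as the mechanism of your proof is in fact a contradiction with the structure of the generic model.
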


 While lines in models of  $T_\mu$ are infinite,
 each line intersects $P(M)$ in $k$ points, for $k$ as in Notation~\ref{linelength}; this motivates the following notation.

\begin{notation} If we fix the arity of the relation $R$ as $r$ and $\mu$ is
in $U_t$ with $k = r+t$, we write $T_{r,k,\mu}$ for the theory of the generic
model for $\bK_\mu$.
\end{notation}

\begin{observation}\label{ansm} The restriction of $T_{r,k,\mu}$ to $P$ is the
analog (with larger $r$ and line length) to the strongly minimal Steiner
system in a vocabulary with a single ternary relation of \cite{BaldwinPao}.
So $T\upharpoonright P$ is strongly minimal.
\end{observation}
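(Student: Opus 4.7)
The plan is to identify the induced structure on $P$ with a Hrushovski $(r,k)$-Steiner system of the form constructed in \cite{BaldwinPao}, and then invoke their strong minimality theorem. First I would restrict the predimension $\delta$ to finite $A \subseteq P$. Since $|P(A)| = |A|$ on such sets,
\[
\delta(A) \;=\; |A| \;-\; \sum_{\ell \subseteq A} \mathbf{n}_A(\ell),
\]
which is exactly the predimension used in \cite{BaldwinPao} to build a strongly minimal $r$-ary Steiner system, the only difference from their setting being the passage from ternary $R$ to $r$-ary $R$ (as already announced in the statement). The Steiner axioms enforced in $\bK^*_0$ ensure that every $(r-1)$-tuple from $P(M)$ lies on a unique curve, and Observation~\ref{linelength} guarantees that the choice $\mu \in \bU_t$ forces each such curve to meet $P(M)$ in exactly $k = r + t - 1$ points; hence $(P(M), R\myreduct P)$ is genuinely an $(r,k)$-Steiner system.

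Next I would verify that the amalgamation class $\bK_\mu$, restricted to substructures contained in $P$, matches the class of finite $(r,k)$-Steiner systems used in \cite{BaldwinPao}. The forbidden configurations are parametrized by good pairs $(B,C)$, and on good pairs with $B, C \subseteq P$ the values $\mu(B,C)$ from Definition~\ref{Kmu} play precisely the role of the BaldwinPao algebrizing function. Moreover, any finite $A \subseteq P(M)$ with $A \leq M$ is automatically $\leq$-closed inside $P(M)$: adding a point of $M \setminus P$ to a $P$-subset strictly raises the predimension (its contribution to $2|A| - |P(A)|$ is $+2$, while curves based in $A$ contribute nonnegative nullities), so no $\leq$-closure of a $P$-subset picks up non-$P$ points. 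This gives an identification between the generic of $\bK_\mu \myreduct P$ and the BaldwinPao generic, and hence between the theory $T_{r,k,\mu} \myreduct P$ and the BaldwinPao Steiner-system theory.

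Strong minimality of $T \myreduct P$ then transfers from \cite{BaldwinPao}. The point that must still be addressed is that parameters from $M \setminus P$ do not produce new infinite, co-infinite definable subsets of $P(M)$. This reduces to showing that for $\bar b \in M$, $\acl^M(\bar b) \cap P(M)$ is controlled by the Steiner-system closure of $\bar b \cap P$ together with the finitely many $P$-points on curves based in the non-$P$ part of $\bar b$; each non-$P$ parameter contributes only finitely many new algebraic $P$-points, so the rank-one pregeometry on $P$ established by \cite{BaldwinPao} survives the ambient expansion. This last verification — tying the ambient $\acl$ to the BaldwinPao closure, which is where the bounds from $\mu$ do the real work — is the main obstacle to turn into a formal argument.
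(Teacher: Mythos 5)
There is a genuine gap, and in fact two distinct problems. First, your intermediate claim that the $\leq$-closure of a subset of $P$ never picks up non-$P$ points is wrong as justified. For $A \subseteq P$ and $c \notin P$, the predimension change is $\delta(Ac) - \delta(A) = 2 - k$, where $k$ is the number of curves through $c$ that are based in $A$ (each such curve's nullity is \emph{subtracted} and goes up by $1$ when $c$ is added). Since two distinct curves meet in $r-2$ points and $r-1$ points determine a curve, a non-$P$ point lying on two curves each with $r-1$ points in $A$ is a legal configuration (it needs only $|A| \geq r+1$), and by genericity it occurs; such a $c$ has $\delta(c/A) \leq 0$ and so does enter the intrinsic closure, and with the $\mu$-bounds the algebraic closure, of a pure $P$-set. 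Indeed the proof of Theorem~\ref{asm} is built on exactly this phenomenon: non-$P$ points are coded by $P$-points on curves through fixed parameters. So the clean identification of the $\bK_\mu$-generic restricted to $P$ with the BaldwinPao generic cannot be obtained the way you propose.

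Second, the step you yourself flag as "the main obstacle" is the actual content of the observation. Strong minimality of $T \upharpoonright P$ requires that every subset of $P(M)$ definable with parameters from all of $M$ be finite or cofinite, and bounding $\acl^M(\bar b) \cap P(M)$ does not by itself control definable sets; one needs the standard Hrushovski-construction machinery (back-and-forth between $d$-closed sets, showing types over $\icl$-closed sets are determined by quantifier-free data, and that the $\mu$-bounds make every $0$-primitive extension algebraic) run for the two-sorted predimension $2|A| - |P(A)| - \sum_\ell \mathbf{n}_A(\ell)$, not just for its restriction to $P$. The paper itself offers no proof here — the observation is asserted as a direct analogue of \cite{BaldwinPao}, with the understanding that the amalgamation and homogeneity lemmas of that paper carry over with "minor modifications" — so your instinct to reduce to that template is the intended one, but as written the reduction both rests on a false closure claim and stops short of the verification that actually yields strong minimality.
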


%\begin{notation}[Line length]\label{linelength}
%Let $\beta$ be the isomorphism type of $s \leq r$ elements in $P$ that are
%$R$-related  and let $\beta^-$ denote  dropping any single point.
%
%\end{notation}

%Note that   $\mu((\beta, \beta \setminus \beta^-) )+ r-1 =k$ is the maximum
%length of a line in $P$. If any of a sequence $\abar$ satisfying $R$ is not
%in $P$,

$T_{r,k,\mu}$ is the theory of generic for  $\bK_\mu$ with this choice of
$\mu$.

\begin{lemma}\label{curvesintersect} Any two distinct curves intersect in  $r-2$ points.
\end{lemma}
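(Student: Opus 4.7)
The plan is to split the claim $|\ell_1\cap\ell_2|=r-2$ into the matching inequalities $|\ell_1\cap\ell_2|\leq r-2$ and $|\ell_1\cap\ell_2|\geq r-2$, which I would treat with quite different tools: the upper bound from the $r$-space axiom alone, and the lower bound by exploiting the $\delta$-predimension together with the $\mu$-amalgamation constraints of $\bK_\mu$.

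For the upper bound, I would argue by contradiction. If $\ell_1\neq\ell_2$ were distinct curves with $|\ell_1\cap\ell_2|\geq r-1$, I choose any $(r-1)$-subset $\bar c\subseteq \ell_1\cap\ell_2$; then $\bar c$ lies in the two distinct maximal $R$-cliques $\ell_1$ and $\ell_2$, contradicting the uniqueness requirement in Definition~\ref{defls}. This gives $|\ell_1\cap\ell_2|\leq r-2$ with essentially no work.

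For the lower bound, I would combine the $\delta$-computation for curves with the $\mu$-restriction on good pairs. Each curve $\ell\subseteq P$ of length $k$ has $\delta(\ell)=2k-k-(k-(r-1))=r-1$, so if $|\ell_1\cap\ell_2|=s$ and no additional curves are induced inside $\ell_1\cup\ell_2$, then $\delta(\ell_1\cup\ell_2)=2(r-1)-s$. The strategy is, for $s<r-2$, to exhibit inside the intrinsic closure of $\ell_1\cup\ell_2$ either a subset with negative $\delta$ (forbidding it in $\bK_0$) or a good pair appearing with multiplicity above its $\mu$-bound (forbidding it in $\bK_\mu$), by tracking the extra good pairs arising from $(r-1)$-tuples that straddle $\ell_1\setminus\ell_2$ and $\ell_2\setminus\ell_1$ together with their closures in the generic model, and invoking the definition of an $r$-space to locate the corresponding curves.

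The main obstacle is the lower bound: one must show that the constraints of $\bK_\mu$ actually \emph{force} $r-2$ common points for every pair of distinct curves, as opposed to merely permitting this. The most delicate case is $s=0$, where $\ell_1$ and $\ell_2$ are a priori freely amalgamated over $\emptyset$; here I expect that the algebrizing function $\mu$ from Definition~\ref{Kmu} and the specific choice made in Theorem~\ref{asmthm} must enter essentially, and that one should identify a particular good pair whose $\mu$-bound would be exceeded were two such curves to coexist in the generic, thereby ruling out the low-intersection configurations one by one down to the stated equality.
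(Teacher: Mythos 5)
Your upper bound is exactly the paper's: two distinct curves share at most $r-2$ points because any $r-1$ points lie on a unique curve. That half is fine.

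The lower bound is where your plan goes wrong, and the gap is one of mechanism, not of detail. You propose to show that a pair of curves with $|\ell_1\cap\ell_2|<r-2$ is \emph{forbidden} -- that its intrinsic closure contains a subset of negative $\delta$ or a good pair exceeding its $\mu$-bound. No such contradiction exists: two disjoint curves form a perfectly admissible finite structure in $\bK_0$ (and in $\bK_\mu$), and indeed the paper's own proof \emph{starts} from a set $A$ of $2r-4$ points lying on two non-intersecting lines and observes that $A\in\bK_0$. Nothing in the predimension or in the algebrizing function $\mu$ rules such configurations out, so hunting for a forbidden substructure, and in particular your ``delicate case $s=0$'' analysis of $\mu$-bounds, cannot close the argument. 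What actually forces the intersection is the genericity of $M$ (``what can happen does''): the extension $A\subseteq Ab$ obtained by adjoining a single new point $b$ incident to both lines does not drop $\delta$ and introduces no forbidden good pairs, so it is a permissible strong extension and is therefore realized in the generic model over (the closure of) $A$. Since the curves of $M$ are \emph{maximal} $R$-cliques, each such realized common point must already lie on both curves, and iterating up to the ceiling given by the upper bound yields exactly $r-2$ common points. So the correct proof uses the richness of the generic model to \emph{create} intersection points, rather than the combinatorial constraints of $\bK_\mu$ to \emph{exclude} low-intersection configurations; your proposal has the logic of the lower bound inverted.
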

\begin{proof} In the class $\bK_0$, given $A$, a set of $2r-4$ distinct points, with $\abar, \bbar$ on infinite lines
that do not intersect in  $A$; if $B =Aa$ where $a$ is on each line then $B$ is in  $\bK_0$ since $\delta(Aa) > \delta(A) \geq 2r-4$. Thus in the
generic (by `what can happen does') the two infinite lines intersect. But that
intersection has at most $r-2$ points since the lines are distinct and $r-1$
points determine a line.\end{proof}

Recall that a theory $T$ is {\em almost strongly minimal} \cite{Baldwinasmii}
if there is a finite set of constants $\abar$ realizing a principal type and
a strongly minimal formula $\phi$ such that for every $N\models T$, $N
=\acl(\abar \cup \phi(M))$.

%\sidebar{jb 3/29/24: JIM should be able to make two curves inter algebraic
%with parameters in the lines weak orthog = orthog.  Otherwise there is a
%definable group by standardng linking group. Can the constants for the asm be
%taken inside the curve? See argument in notes 3/29/24 weak orthogonality.}

\begin{theorem}\label{asm} The theory $T_{r,k,\mu}$ is almost strongly minimal and for $M \models
T_{r,k,\mu}$ with $|M|=\kappa$. $P(M)$ is a strongly minimal $(r-1,k)$-Steiner
system (i.e., an $(r-1)-(\kappa, k,1)$ design), and $(M,R)$ is an
$(r-1)-(\kappa,\kappa,1)$-design.
\end{theorem}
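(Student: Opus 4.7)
The statement comprises three assertions that I would address in order: strong minimality of $P(M)$ with its design structure, the design structure on $(M,R)$, and the almost strong minimality of $T_{r,k,\mu}$. The first two are essentially consequences of the construction; the third is the delicate part.

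First, I would invoke Observation \ref{ansm}: the reduct of the generic $M$ to the sort $P$ is (up to arity $r$ and the enforced line length $k$) the strongly minimal higher-arity Steiner system of \cite{BaldwinPao}. The predimension $\delta$, the strong substructure relation $\leq$, the notion of good pair and the algebrizing function $\mu \in \bU_t$ all restrict to $P$-configurations with the same structural properties as in \cite{BaldwinPao}, so the flatness and rank-one arguments of that paper carry over verbatim to yield strong minimality of $T\upharpoonright P$. The $r$-space axiom (Definition \ref{defls}) gives uniqueness of the maximal $R$-clique through any $r-1$-tuple of $P$-points; existence of a non-trivial such clique follows from $\mu\in \bU_t$ (Definition \ref{kdep}, Observation \ref{shline}(3)), which forces $t$ realizations of the good-pair adding an $r$-th $P$-point completing the $R$-relation. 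Together with Observation \ref{linelength} (every $P$-curve has exactly $k$ points), this gives the $(r-1)$-$(\kappa,k,1)$-design structure on $P(M)$.

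Second, for the $(r-1)$-$(\kappa,\kappa,1)$-design structure on $(M,R)$, uniqueness is again from $r$-space and existence of non-trivial curves through any $r-1$-tuple of $M$ follows from genericity: adding an $r$-th point $R$-related to the tuple is consistent in $\bK_0$, so realized in the $\leq$-homogeneous generic. Each such curve has $k$ $P$-points (Observation \ref{linelength}) and $\kappa$ non-$P$ points by the following predimension calculation: adding a non-$P$ point $a$ to an existing curve contributes $2-1=+1$ to $\delta$ (two for the new non-$P$ element, minus one for the added curve nullity), and this positive generic extension is consistent in $\bK_0$, hence realized $\kappa$ times by $\leq$-homogeneity in a model of cardinality $\kappa$.

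Third and last, for almost strong minimality I would show $M = \acl(\bar c \cup P(M))$ for some finite constant tuple $\bar c$. The key computation is that adding a non-$P$ point $a$ to a base $B$ contributes $2 - m$ to $\delta(Ba/B)$, where $m$ counts the maximal $R$-cliques through $r-1$-tuples of $B$ newly containing $a$. So a good-pair extension ($\delta(Ba/B)=0$) forces $m\geq 2$: any non-$P$ element $a$ realized as a good-pair over $B$ lies on at least two distinct curves, each determined by its $r-1$ $P$-points by the $r$-space axiom, so $a$ is pinned as the unique non-$P$ intersection of these two curves and is algebraic over the $2(r-1)$-many $P$-points supporting them. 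The main obstacle is verifying rigorously that every non-$P$ element of the generic $M$ arises through this two-curve good-pair mechanism rather than as a naive single-curve $\delta=+1$ generic extension; this is precisely the role of the flatness clause in Definition \ref{defdelrank}(5), which rules out the offending single-$R$-realization configurations. The corresponding verification used in \cite{Baldwinasmpp, BaldwinHolland} to graft an infinite definable subset onto a Hrushovski-style strongly minimal construction can be adapted with the modifications appropriate for the present two-sorted $r$-ary setting.
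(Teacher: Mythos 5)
Your first two parts are fine and agree with the paper: strong minimality of $P(M)$ is quoted from Observation \ref{ansm}, and the design statements are exactly the $r$-space axiom plus Observation \ref{linelength} and Lemma \ref{curvesintersect}, together with a genericity/positivity argument for existence of curves.

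The gap is in your third part, and it is the part that matters. You propose to show that every non-$P$ element of $M$ is a $0$-primitive (good-pair) extension over $P$-points, i.e.\ that it lies on at least two curves already based in $P(M)$, and you defer the crucial verification --- that no element of $M$ arises as a ``single-curve $\delta=+1$ generic extension'' --- to the flatness clause of Definition \ref{defdelrank}(5). That verification cannot succeed as stated. The generic model contains $d$-generic points $c$ with $d(c)=2$ over the prime model; such a $c$ is not $0$-primitive over any subset of $P(M)\cup\acl(\emptyset)$, and the flatness restriction on $\bK_0$ (which only forbids certain configurations $B$ with $\delta(B/C)<\delta(C)$ for $C$ inside a single $R$-realization) does nothing to force such a $c$ onto two curves based in $P(M)$. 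In other words, your predimension bookkeeping correctly describes which elements are \emph{already} algebraic over a given base, but it gives no mechanism for showing that an arbitrary element of $M$ becomes algebraic over $P(M)$ plus constants.

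The paper's proof supplies exactly the missing mechanism, and it is an incidence-geometry argument rather than a predimension one: fix two disjoint $(r-2)$-tuples $\abar,\bbar$ of non-$P$ points from the prime model. For \emph{any} $c\in M$, the $(r-1)$-tuples $\abar c$ and $\bbar c$ each determine a unique curve (the $r$-space axiom), and each of these curves meets $P(M)$ in $k>r-2$ points by Observation \ref{linelength}; choose $c^1\neq c^2$ in $P(M)$ on the respective curves. Then the curves through $\abar c^1$ and through $\bbar c^2$ are the same two curves, both contain $c$, and by Lemma \ref{curvesintersect} two distinct curves meet in at most $r-2$ points, so $c\in\acl(\abar,\bbar,c^1,c^2)\subseteq\acl(\abar\bbar\cup P(M))$. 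No case analysis on whether $c$ is good-pair, primitive, or generic is needed. If you want to salvage your outline, replace the ``two-curve good-pair mechanism'' by this explicit construction of two curves through $c$ using the constants $\abar,\bbar$; the flatness clause plays no role in this step.
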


\begin{proof} Let $M$ be the countable generic model and  fix two equal length
sequences $\abar, \bbar$ from the prime model that make up $2r-4$ distinct points not in $P(M)$.   Note that for each $c \in M$, the lines $\abar c$
and ${\bbar} c$ intersect $P$ in $c^1$ and $c^2$ respectively,
(where each $c^i$ is one of the
$k$-points in $P(M)$ on $\abar c^1$ ($\bbar c^2$)  and $c$ is one of the $r-1$ points on
the intersection of $\abar c $ and ${\bbar} c $). (Without loss, $c^1 \neq
c^2$ since the number of points in $P$ on either line is $k>r-2$.)  Thus,
$c\in \acl(\abar,{\bbar},c^1,c^2)$ and $T$ is almost strongly minimal.

\end{proof}

%There are in fact finitely many codes for $c$ (e.g. $|\{\abar,c \cap P\}| =k
%+ r -a$), but this doesn't matter for interalgebraicity.  More precisely,
We now show that each of the curves is strongly minimal.
\begin{definition}
    An  $(n,m)$-relation  $T$ on $X \times Y$ is a definable binary relation such that
    each $x$ is related to $m$ elements of $Y$  and each $y$ is related to $n$ elements of $X$.
\end{definition}

\begin{lemma}\label{smcurves} Each curve defined by a formula $R(\abar,x)$
 ($|\abar| = r-1$) is strongly minimal.

\end{lemma}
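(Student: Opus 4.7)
The plan is to show that the definable set $\ell := R(\bar a, M)$ is strongly minimal by verifying that it has Morley rank $1$ and Morley degree $1$. Since $T_{r,k,\mu}$ is almost strongly minimal (Theorem \ref{asm}) and in particular has finite Morley rank, this is enough.

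First I would compute Morley rank. Note that $\bar a$ is a fixed $(r{-}1)$-tuple, so $\delta(\bar a) = 2(r-1)$, and for any $c \in \ell \setminus P$, the only line based in $\bar a c$ is the line through $\bar a$, whose nullity goes from $0$ to $1$ upon adding $c$; hence $\delta(\bar a c) = 2r - 1$ and $\delta(c/\bar a) = 1$. This bounds the Morley rank of the generic of $\ell$ over $\bar a$ by $1$. To see that $\ell$ is infinite (so MR is at least $1$), observe that for any finite $M_0 \ni \bar a$ in $\bK_{\mu}$, adjoining a fresh point $c$ and the single new relation $R(\bar a, c)$ yields $\delta(M_0 c/M_0) = 1 > 0$ and creates no good pair, so $M_0 c \in \bK_{\mu}$ and is a strong extension; by $\leq$-homogeneity of the generic, $\ell$ contains arbitrarily many (hence infinitely many) points.

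Next I would establish Morley degree $1$. The $k$ elements of $P \cap \ell$ lie in $\acl(\bar a)$ (they form the unique block of the $(r{-}1)$-$(\kappa,k,1)$ Steiner system on $P$ through $\bar a \cap P$ extended by $\bar a$, cf.\ Theorem \ref{asm}); conversely, any $c \in \ell \cap \acl(\bar a)$ must have $\delta(c/\bar a) = 0$, forcing $c \in P$ by the predimension computation above. So $\ell \cap \acl(\bar a) = \ell \cap P$ is finite, and every $c \in \ell \setminus \acl(\bar a)$ lies outside $P$. For any two such $c, c'$, the $r$-element sets $\bar a c$ and $\bar a c'$ have the same quantifier-free type by Lemma \ref{lowtrans}, since both satisfy $R$ and contain the same number of $P$-points. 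Moreover, the intrinsic closures $\icl(\bar a c)$ and $\icl(\bar a c')$ both equal $\bar a \cup (P \cap \ell) \cup \{c\}$, respectively $\cup \{c'\}$, and the quantifier-free isomorphism fixing $\bar a \cup (P \cap \ell)$ and sending $c \mapsto c'$ extends to an automorphism of $M$ by the $\leq$-homogeneity of the Hrushovski generic. Hence $c \equiv_{\bar a} c'$, so $\ell$ has a unique non-algebraic type over $\bar a$, giving Morley degree $1$.

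The main obstacle is the verification that $\icl(\bar a c)$ really is as described, i.e.\ that no further $\mu$-primitive extensions sit inside $\acl(\bar a c) \setminus (\bar a \cup (P \cap \ell) \cup \{c\})$; this reduces to analyzing which good pairs based on subsets of $\bar a c$ can occur, which is controlled by the choice of $\mu \in \bU_t$ restricting only the $P$-on-a-line configurations. Should this path become technical, a shorter alternative gives Morley rank $1$ by interalgebraicity with $P$: fixing an auxiliary $(r{-}2)$-tuple $\bar d$ generic over $\bar a$ outside $P$, the relation $\Phi(x,y) := R(\bar a, x) \wedge P(y) \wedge R(\bar d, x, y)$ is finite-to-finite between $\ell \setminus \acl(\bar a\bar d)$ and a cofinite subset of $P$ — for generic $c$ the line through $\bar d c$ meets $P$ in exactly $k$ points, and for generic $y \in P$ the distinct curves through $\bar a$ and through $\bar d y$ share at most $r-2$ points by Lemma \ref{curvesintersect} — so the strong minimality of $P$ (Observation \ref{ansm}) transfers to $\ell$ once combined with the uniqueness of the non-algebraic type established above.
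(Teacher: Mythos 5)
Your ``shorter alternative'' at the end is, almost verbatim, the paper's own proof: fix an auxiliary $(r-2)$-tuple $\cbar$ outside $P$, use the formula $R(\abar,x)\wedge R(\cbar,y,x)\wedge P(y)$, bound the fibers over the curve by the $k$ points of $P$ on the line through $\cbar x$ (Observation~\ref{linelength}) and the fibers over $P$ by Lemma~\ref{curvesintersect}, and transfer from the strong minimality of $P$ (Observation~\ref{ansm}). So one branch of your proposal coincides with the paper's route. Your main branch --- computing $\delta(c/\abar)=1$ to cap the rank, exhibiting infinitely many points on the curve by a free strong extension, and then arguing Morley degree $1$ via $\leq$-homogeneity of the generic --- is a genuinely different and more hands-on argument, and it has the virtue of confronting the degree explicitly rather than leaving it to the correspondence with $P$.

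The problem is that the one step you flag as ``the main obstacle'' is exactly the step that carries the content of degree $1$, and you do not close it in either branch. To run the homogeneity argument you must (i) verify that $\icl(\abar c)$ is $\abar\cup(P\cap\ell)\cup\{c\}$ and that this set is strong in $M$, which requires ruling out other good pairs based in subsets of $\abar c$ (recall $\mu\in\bU_t$ only pins down the values of $\mu$ on the line-type good pairs of Observation~\ref{shline}.3; on all other good pairs $\mu$ is an arbitrary function with $\mu(\beta)\geq\delta(B)$, so those configurations still contribute to $\acl$ and must be shown not to separate generic points of $\ell$), and (ii) invoke the standard but unproved-here fact that in the generic model types are determined by the isomorphism type of closures. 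Deferring to the ``uniqueness of the non-algebraic type established above'' in the fallback branch is circular, since that uniqueness is precisely what was left open. Note also that a $(k,r-2)$ finite-to-finite correspondence with a strongly minimal set by itself yields only Morley rank $1$ and \emph{finite} Morley degree (the projection $P\times\{1,2\}\to P$ is a finite-to-finite correspondence with a set of degree $2$), so your instinct that the degree-$1$ verification is genuinely needed on top of the correspondence is correct --- but the proposal as written does not supply it.
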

\begin{proof}
 Fix $r-2$ points   $\cbar$ in $\neg P$.
Now,
 as $b$ ranges through $R(\abar,x)$, there must be a point $d\in  P(M) $ ($d \neq b$ since $k>1$) with $d,b$ both on the intersection of the curves determined by $\abar$ and $\cbar d$.
%% $R(\abar,b) \wedge R(\cbar,d)$.
 By Lemma~\ref{curvesintersect} the two curves  intersect in $r-3$ further points and any of the $k$ points on $R(\mathbf{c},y,b) \wedge P(y)$ determines (with $\cbar$ the curve through $\cbar d$.
 Thus, the formula
 $\phi(x,y)=(R(\mathbf{a},x) \wedge R(\mathbf{c},y,x) \wedge P(y)$ defines a  $(k,r-2)$ relation between    $P(y)$ and $R(\abar,x)$;
so $R(\abar,x)$ is also strongly minimal.
 \end{proof}

We can choose $\mathbf{c} \in P(M)$, guaranteeing that the generic types of $P(M)$
and $R(\mathbf{c},M)$ are not weakly orthogonal.\

%$F_{ind}$ deleted; the sequence of unrelated points is not indiscernible.
\begin{theorem}\label{2,2} For $T_{r,k,\mu}$ the  complete type over $\emptyset$, $p$,
  given by $\neg P(x)$ has $\nmdeg(p) =r-1$.
  % and $F_{ind}(p)  > \nmdeg(p)$.
  % The theory $T$ of $M$
%%Satisfies The Conditions Of
%%Construction~\Ref{Const} And So
%has nmdegree $r$.
 \end{theorem}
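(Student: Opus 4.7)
The plan is to follow the template of the proofs of Theorems~\ref{simpnm} and~\ref{simpfind}, modified for the almost strongly minimal setting with the predicate $P$. For $\nmdeg(p) \leq r-1$: take independent $a_1, \ldots, a_{r-1}$ realizing $p$; the $r$-space axiom gives a unique curve $\ell$ through them, and the formula $R(a_1,\ldots,a_{r-1},x) \wedge \neg P(x)$ defines a subset of this curve that is strongly minimal by Lemma~\ref{smcurves}, hence an infinite (nonalgebraic) forking extension of $p$. For the reverse inequality $\nmdeg(p) \geq r-1$: by Lemma~\ref{lowtrans}, for $m \leq r-2$ realizations of $p$ and any $b \models p$, the set $\{a_1, \ldots, a_m, b\}$ has at most $r-1$ elements, all in $\neg P$, hence a single isomorphism type; so the only nonalgebraic extension of $p$ over $\{a_1, \ldots, a_m\}$ is the nonforking one.

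For $F_{ind}(p) > r-1$, let $(a_i)_{i<\omega}$ be a nonconstant indiscernible sequence of realizations of $p$. As in Theorems~\ref{simpnm} and~\ref{simpfind}, indiscernibility plus the $r$-space axiom yields a dichotomy on $\{a_0,\dots,a_{r-1}\}$: either every $r$-subset satisfies $R$ (the common-curve case), or none does (the generic case). In the generic case, a direct computation gives $\delta(a_0, \ldots, a_{r-1}) = 2r$ with no curve contributing to the nullity sum, and a check of the intrinsic closures via Definition~\ref{dinddef} yields $a_{r-1} \ind a_0 \cdots a_{r-2}$, so $\tp(a_{r-1}/a_0 \cdots a_{r-2})$ is a nonforking extension of $p$, as required.

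The main obstacle is ruling out the common-curve case. Unlike in Theorems~\ref{simpnm} and~\ref{simpfind}, where long on-relation sequences are eliminated because their predimension becomes negative, here the predimension of $m$ on-curve $\neg P$-points is $m + r - 1 > 0$ and does not directly help. The argument must instead exploit the specific structure of $T_{r,k,\mu}$: the condition $\mu \in \bU_t$ forces each curve to contain exactly $k = r+t-1$ algebraic $P$-points, all of which lie in the intrinsic closure of any $r-1$ on-curve points. Tracking these forced algebraic points together with the $\mu$-bound on good pairs across the candidate indiscernible sequence, one aims to contradict either the nonconstancy of the sequence or its indiscernibility over $\emptyset$, showing that no $\omega$-long nonconstant indiscernible sequence of realizations of $p$ can lie on a common curve in $T_{r,k,\mu}$. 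This delicate combinatorial step is the heart of the argument and parallels the case-eliminations carried out elsewhere in Section~\ref{nm>}.
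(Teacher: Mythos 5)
Your computation of $\nmdeg(p)$ is essentially the paper's: the lower bound comes from $(r-1)$-transitivity on $\neg P$ (Lemma~\ref{lowtrans}), the upper bound from the curve $R(a_1,\dots,a_{r-1},x)$ through $r-1$ realizations of $p$, with the forking seen either via strong minimality (Lemma~\ref{smcurves}), as you do, or via the infinitely many pairwise almost-disjoint curves, as the paper does. That half is sound.

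The $F_{ind}$ half has a genuine gap, and it is not merely that you defer the ``delicate combinatorial step'': the strategy you propose for the common-curve case cannot succeed. You aim to show that no nonconstant $\emptyset$-indiscernible sequence of realizations of $p$ lies on a common curve. But in $T_{r,k,\mu}$ every curve is infinite and meets $P$ in only $k$ points, so it carries infinitely many points of $\neg P$; extracting an indiscernible sequence from that infinite set (Ramsey plus compactness in the monster model) produces a nonconstant $\emptyset$-indiscernible sequence whose EM-type contains ``$R$ holds of every increasing $r$-tuple,'' and the $r$-space axiom then places all of its terms on the unique curve through the first $r-1$ of them. So the common-curve case is realized and cannot be excluded. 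Worse, for such a sequence $\tp(a_{r-1}/a_0\cdots a_{r-2})$ contains the forking formula $R(a_0,\dots,a_{r-2},x)$, which by Definition~\ref{finddef} bounds $F_{ind}(p)$ from above at exactly the value you are trying to exceed; so your plan, if carried out, would be proving the wrong inequality at this step. The paper's own proof takes the opposite tack: it does not try to eliminate the collinear sequences but uses one as the witness in the $F_{ind}$ computation (its write-up of that step is admittedly terse). Any repair of your argument must therefore engage directly with the collinear indiscernible sequence and the precise indexing in Definition~\ref{finddef}, rather than hoping to rule the configuration out; the analogy with Theorems~\ref{simpnm} and~\ref{simpfind}, where the ``all relations hold'' case dies of negative predimension, genuinely breaks down here, as you yourself computed ($m$ collinear $\neg P$-points have predimension $m+r-1>0$).
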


\begin{proof} Choose distinct $a_1, a_2, a_3,\ldots a_r$ realizing $p(v)$ and such that $R(a_1, a_2,
a_3, \ldots a_r)$.
%Note that each curve has infinitely many points in $\neg
%P(M)$.
%(Note that for every finite $s$ and sequence of free
%variable ${\bf w}_s$ , the formula $\exists {\bf w}_s R(v,\bar)$ in $p$.)
Let $p_i = \tp(a_i/A_{<i})$. Each $p_i$ for $i < r$ does not fork over the
empty set because  $\mathrm{Aut}(M)$ acts $i$-transitively on $\neg P$ guaranteeing
each $p_i$ is the unique non-forking extension of $p$ to its domain $A_{<i}$.
%the forking extension is $x = a_{i-1}$.
The type $p_r$ forks over $\emptyset$ because there are infinitely many
distinct choices of $a_{r-1}^j$ giving different lines. Any pair of the lines
$\ell_i = \langle a_1, a_2 \dots a_{r-1}, a^j_r \rangle$ are infinite and intersect only in
$a_1, a_2 \dots a_{r-1}$ since any further point of intersection would imply
the lines are identical.
 %But then the formula
%
%For $F_{ind}$, let $\langle a_i:i<\omega$ be indiscernible sequence over the
%empty set realizing $p$. Then either every $r$-tuple realizes $R$ or none
%does. In the first case, we have an indiscernible sequence such that
%$\tp(a_n/a_{<n}$ witnesses $F_{ind}(p) > \nmdeg(p)$.
\end{proof}
%
%Note that by Corollary~\ref{infMb}, the $p$ in Theorem~\ref{2,2} satisfies
%$F_{Mb}(p) =\infty$. But by Corollary~\ref{eqnmfin} $F_{ind}(p) \leq 2(r-1)$.

For r-spaces we have a general argument from r-transitivity to $\nmdeg =
(r-1)$.

\begin{lemma}\label{transimpnmfin} Suppose $T$ is a simple theory of an $r$-space, $p\in S(\emptyset)$
 with  $\mathrm{SU}(p) = n < \omega $, and for any model $M$ of $T$,
$\aut(M)$ acts $r-1$ transitively on $p(M)$.  Then $\nmdeg(p) =r-1$.
\end{lemma}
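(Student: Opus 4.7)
The plan is to establish $\nmdeg(p) \ge r-1$ and $\nmdeg(p) \le r-1$ separately, leveraging $(r-1)$-transitivity for the lower bound and the $r$-space structure for the upper bound.

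For the lower bound, let $m < r-1$ and fix distinct $a_1,\dots,a_m \in p(M)$. Since $(r-1)$-transitivity entails $(m+1)$-transitivity, every $b \in p(M) \setminus \{a_1,\dots,a_m\}$ realizes the same type over $a_1,\dots,a_m$. Hence there is a unique nonalgebraic $1$-type $q_0 \in S(a_1,\dots,a_m)$ extending $p$. By simplicity, $p$ admits a nonforking extension to $\{a_1,\dots,a_m\}$, and since $p$ itself is nonalgebraic, this nonforking extension is also nonalgebraic: any realization $c$ with $c \ind a_1,\dots,a_m$ lying in $\acl(a_1,\dots,a_m)$ would force $c \in \acl(\emptyset)$. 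Thus $q_0$ is the nonforking extension, so no nonalgebraic extension of $p$ over $a_1,\dots,a_m$ forks over $\emptyset$, giving $\nmdeg(p) > m$.

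For the upper bound, I would pick distinct $a_1,\dots,a_{r-1} \in p(M)$ and a realization $a_r \in p(M)\setminus\{a_1,\dots,a_{r-1}\}$ on the curve through them, i.e., with $R(a_1,\dots,a_r)$ holding. Such a configuration must exist: the hypothesis of exactly $(r-1)$-transitivity (any genuinely richer transitivity combined with the $r$-space axioms would collapse the structure, contradicting the existence of nontrivial forking promised by $\mathrm{SU}(p) = n \geq 2$), together with the fact that the curve through an $(r-1)$-tuple is a $\{a_1,\dots,a_{r-1}\}$-definable set that must by $(r-1)$-transitivity uniformly contain further realizations of $p$. The type $q := \tp(a_r / a_1,\dots,a_{r-1})$ is then nonalgebraic. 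To see it forks over $\emptyset$, apply Kim's lemma to a Morley sequence $((a_1^i,\dots,a_{r-1}^i))_{i<\omega}$ in $\tp(a_1,\dots,a_{r-1})$: each tuple determines a distinct curve $\ell^i$ (distinctness follows since the tuples are mutually $\ind$-independent, so no one tuple is algebraic over another curve), and any two curves share at most $r-2$ points by the defining property of an $r$-space in Definition~\ref{defls}. Iterating this intersection bound along the Morley sequence yields $k$-inconsistency of $\{R(a_1^i,\dots,a_{r-1}^i,x)\}_i$, so $\phi(x;a_1,\dots,a_{r-1}) := R(a_1,\dots,a_{r-1},x)$ divides, and $q$ forks over $\emptyset$.

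The main obstacle is formally pinning down the dividing argument: one needs to show that a common realization $c$ of all $R(a_1^i,\dots,a_{r-1}^i,x)$ along a Morley sequence cannot exist in a fashion compatible with $\mathrm{SU}(c) = n$, given only that pairwise intersections of distinct curves have size $\le r-2$. Equivalently, one must verify that the curve through a generic $(r-1)$-tuple of realizations of $p$ has rank strictly less than $\mathrm{SU}(p)$; this is the point at which exact $(r-1)$-transitivity (as opposed to $r$-transitivity) must be used to guarantee that $R$ fails on the generic $r$-tuple and hence that constraining $a_r$ to the curve is a proper rank drop, yielding $a_r \nind a_1,\dots,a_{r-1}$ and completing $\nmdeg(p) \le r-1$.
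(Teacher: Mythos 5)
Your lower bound is exactly the paper's argument: for $m<r-1$, $(m+1)$-transitivity forces all points of $p(M)$ outside $\{a_1,\dots,a_m\}$ to realize one type over $a_1,\dots,a_m$, which must therefore be the (nonalgebraic) nonforking extension of $p$, so every forking extension over $m$ realizations is algebraic. Your upper bound also follows the paper's strategy --- exhibit $\tp(a_r/a_1\dots a_{r-1})$, for $a_r$ on the curve through $a_1,\dots,a_{r-1}$, as a nonalgebraic forking extension using the fact that two distinct curves share at most $r-2$ points --- but the step where you conclude dividing contains a genuine gap.

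The gap is this: bounded pairwise intersection of the curves $\ell^i$ along a Morley sequence of $(r-1)$-tuples does not yield $k$-inconsistency of $\{R(\bar a^i,x)\}_{i<\omega}$. Inconsistency requires the relevant intersections to be \emph{empty}, not merely of size at most $r-2$, and nothing you say rules out a point common to all of the $\ell^i$ (in the paper's own phrasing the curves considered all pass through $a_1,\dots,a_{r-1}$, so they certainly have common points). You flag this yourself in your final paragraph, so the dividing argument is not actually completed. The standard way to close it --- and what the paper's terse proof is gesturing at --- is not inconsistency along an indiscernible sequence but an algebraicity argument: suppose toward a contradiction that $a_r\ind \bar a$ where $\bar a=(a_1,\dots,a_{r-1})$; choose $\bar a'\models\tp(\bar a/a_r)$ with $\bar a'\ind_{a_r}\bar a$ and $a_r\ind\bar a\bar a'$; then $a_r$ lies on both curves $\ell$ and $\ell'$, which (barring the degenerate case where the curve is $\acl(\emptyset)$-definable) are distinct, so $a_r$ lies in the finite $\bar a\bar a'$-definable set $\ell\cap\ell'$, whence $a_r\in\acl(\bar a\bar a')$ and, by $a_r\ind\bar a\bar a'$, $a_r\in\acl(\emptyset)$, contradicting nonalgebraicity of $p$. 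Two further points are left hanging in your write-up (and are only implicit in the paper): the curve must be infinite for $\tp(a_r/\bar a)$ to be a \emph{nonalgebraic} forking extension (in a Steiner system with finite line length this type is algebraic and witnesses nothing for $\nmdeg$), and the degenerate case in which $p(M)$ is a single $R$-clique must be excluded; both hold in the intended application in Theorem \ref{2,2}, but neither follows from the bare hypotheses as you invoke them.
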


\begin{proof} Choose distinct $a_1, a_2, a_3,\ldots a_r = A_{<r+1}$ realizing $p$ and such that $R(a_1, a_2,
a_3, \ldots a_r)$.  Let $p_i = \tp(a_i/A_{<i})$. Each  $p_i$ for $i < r$ does
not fork over the empty set because  $\aut(M)$ acts $i$-transitively on
$p(M)$ guaranteeing each $p_i$ is the unique non-forking extension of $p$ to
its domain;
the only forking extensions over $A_{<i}$ are realized in   $A_{<i}$. 
The type $p_r$ forks over
$\emptyset$ because there are infinitely many distinct choices of $a_{r }^j$
giving different curves. Any pair of the curves $\ell_j = \langle a_1, a_2
\dots a_{r-1}, a^j_r \rangle$ intersect only in $a_1, a_2 \dots a_{r-1}$
since any further point of intersection would make  the curves identical.
 But then the formula
$R(a_1,a_2,\dots a_{r-1},x)$ forks over $\emptyset$ and $\nmdeg(p) = r-1$.
\end{proof}

\begin{remark}
We compare our construction with similar construction of Evans
\cite{Evansblock} who wrote:

\begin{quote}
 By a $t-(v, k,\lambda)$ design we mean a set $P$ of points, of cardinality
v, together with a set $\BB$ of blocks each of which is a k-subset of $P$,
and which has the property that any set of $t$ points is a subset of exactly
$\lambda$ blocks. An automorphism of the design $(P, \BB)$ is simply a
permutation of $P$ which preserves $\BB$. It is well known 
that if G is a group of automorphisms of the design $(P, \BB)$ and $t \geq 2$
and $v, k, \lambda$ are finite, then the number of $G$-orbits on $P$ is no
greater than the number of $G$-orbits on $\BB$: this is commonly referred to
as `Block’s Lemma'.
\end{quote}
\end{remark}

\cite{Evansblock} proves `For all ‘reasonable’ finite $t$, $k$ and $s$ we construct
a $t-(\aleph_0, k, 1)$ design and a group of automorphisms which is
transitive on blocks and has s orbits on points.' We can rephrase our current
result in this language as:

\begin{cor}

 Each model of cardinality
$\kappa$ is an $r-(\kappa,\kappa,1)$ design and the restriction to strongly
minimal subset $P$, is an $r-(\kappa,k,1)$ design. There are two orbits on
points: $P$ and $\neg P$, the action is transitive on blocks.
\end{cor}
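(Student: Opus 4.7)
The first two clauses are exactly the design-theoretic content of Theorem \ref{asm} (with at most a notational off-by-one in the $t$-parameter), so my plan focuses on the two remaining assertions: that $\aut(M)$ has exactly the orbits $P(M)$ and $\neg P(M)$ on points, and that $\aut(M)$ acts transitively on curves.

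For the orbit description, I would first observe that since $P$ is in the vocabulary, $P(M)$ and $\neg P(M)$ are each invariant under $\aut(M)$, so there are at least two orbits. To show there are no more, note that any singleton has size $1\le r-1$ and is therefore strong in any extension, so Lemma \ref{lowtrans} applies and tells us that the quantifier-free type of a single element is determined solely by whether or not it lies in $P$. Combining this with the $\leq$-homogeneity of the countable generic (Theorem \ref{genexist}) and lifting to the $\kappa$-sized model via the $\aleph_1$-categoricity of $T_{r,k,\mu}$ established in Theorem \ref{asm}, I obtain for any two elements of $P(M)$ (resp.\ of $\neg P(M)$) an automorphism of $M$ carrying one to the other.

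For transitivity on blocks, let $\ell_1,\ell_2$ be two curves in $(M,R)$. By Observation \ref{linelength} each $\ell_i$ meets $P(M)$ in exactly $k$ points, and by Lemma \ref{smcurves} each $\ell_i$ is strongly minimal, hence of cardinality $\kappa$; in particular $\ell_i\cap \neg P(M)$ is infinite (indeed of size $\kappa$). Pick tuples $\abar\subseteq \ell_1\cap\neg P(M)$ and $\bbar\subseteq \ell_2\cap\neg P(M)$ of $r-1$ distinct elements each. Since $R$ is $r$-ary, no $(r-1)$-tuple satisfies $R$, so by Lemma \ref{lowtrans} the tuples $\abar$ and $\bbar$ have the same quantifier-free type. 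Homogeneity produces $\sigma\in\aut(M)$ with $\sigma(\abar)=\bbar$, and because each curve is the unique maximal $R$-clique through any $(r-1)$ of its points, $\sigma$ maps $\ell_1$ to $\ell_2$.

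The main obstacle is the appeal to homogeneity in arbitrary cardinality $\kappa$: Theorem \ref{genexist} supplies it directly only for the countable $\prec$-generic. I would bridge this by using that $T_{r,k,\mu}$ is $\aleph_1$-categorical (Theorem \ref{asm}) together with the fact that the partial maps in question involve only tuples of size $\le r-1$, all of which are strong; the required extension of a partial isomorphism then reduces to a back-and-forth argument over a strongly minimal base (namely the $P$-part), where saturation of the uncountable models of an almost strongly minimal theory furnishes the needed realizations.
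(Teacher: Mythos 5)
Your proposal is correct and follows the route the paper intends: the paper gives no separate proof of this corollary, presenting it as a rephrasing of Theorem \ref{asm} together with the transitivity facts already established (Lemma \ref{lowtrans} for the two point-orbits via strongness of sets of size at most $r-1$, and Observation \ref{linelength} plus Lemma \ref{smcurves} plus uniqueness of the curve through $r-1$ points for block-transitivity). Your reconstruction, including the care taken over homogeneity in uncountable models via $\aleph_1$-categoricity and saturation, supplies exactly the details the paper leaves implicit.
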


Evans' structure  is a $k$-Steiner system so more similar to
\cite{BaldwinPao}. The infinite lines are necessary for the nmdeg result. By
adding additional unary predicates $P_i$  and letting line length in $P_i$
depend on $i$, we would recover Evans' result, but with one class of infinite
lines.

\section{The Koponen conjecture} \label{Kop}
\numberwithin{theorem}{section} \setcounter{theorem}{0}
Koponen asks the following question (\cite{Kop16}, \cite{Kop19}, \cite{Kop16slides}): 

\begin{question}
   Is every simple theory with quantifier elimination in a finite relational language one-based (in particular, supersimple of finite rank)?
\end{question}

Particularly, Koponen conjectures that the answer to the question of whether such a theory is supersimple of finite rank is yes, proposing that it should likely be possible to show this using the stability-theoretic techniques developed for simple theories. In this section, we resolve Koponen's conjecture: 

\begin{theorem}
\label{koponen}
    
    Let $T$ be a simple theory with quantifier elimination in a finite relational language. Then $T$ is one-based, so in particular, supersimple of finite rank.
\end{theorem}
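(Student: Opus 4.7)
\textbf{Proof Plan for Theorem \ref{koponen}.}

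The strategy is to convert the assumption of quantifier elimination in a finite relational language into a rigid combinatorial constraint: such a theory is $\omega$-categorical and $n$-ary for some $n<\omega$, so over every finite set $A$ there are only finitely many types in $\leq n$ variables, and hence only finitely many EM-types of $\omega$-indexed indiscernible sequences with terms of fixed length over $A$. My plan is to show, under the assumption that $T$ is not one-based or not supersimple of finite rank, that some finite set $A$ admits infinitely many EM-distinct indiscernible sequences, yielding a contradiction. The invariants $F_{Mb}$, the forking degrees $F_{\lambda}$, and the $G$-rank introduced earlier are all tailored to detect such infinitude by extracting a numerical invariant of an indiscernible sequence that can take arbitrarily large values.

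I would treat the supersimple case first. If $T$ is $\omega$-categorical and supersimple but of infinite $\mathrm{SU}$-rank, pick any finitary type $p$ with $\mathrm{SU}(p)$ a limit ordinal; Corollary \ref{omegacategoricalsupersimple} (via Proposition \ref{flambda}) gives $F_{Mb}(p)=\infty$, and hence, for every $k$, an $A$-indiscernible sequence in $p$ that is $k$-independent but not a Morley sequence. The least $k$ witnessing non-Moreyness is an $A$-invariant of the EM-type, so letting $k$ vary produces infinitely many EM-classes over the finite set $A$, contradicting $\omega$-categoricity together with $n$-arity. If instead $T$ has finite rank but fails to be one-based, pick a non-linear regular type $p$ over a finite set; by Fact \ref{pseudolinearity} and Corollary \ref{pseudolinearitymb}, a suitable power $p\otimes p$ has $F_{Mb}(p\otimes p)=\infty$, and the same invariant-counting argument applies.

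The heart of the proof is the strictly simple case, where $T$ is assumed simple but not supersimple. Here $\mathrm{SU}$-rank and the $F_{\lambda}$-based argument are unavailable, and one must substitute the $G$-rank of canonical bases together with $n$-resolvability (Definition \ref{resolvable}, Proposition \ref{resolvabilitygrank}). The plan is: use failure of supersimplicity to find a finitary type $p$ whose forking extensions have canonical bases requiring arbitrarily many real parameters to be in their bounded closures, i.e.\ of arbitrarily large finite $G$-rank. Translated via Proposition \ref{resolvabilitygrank}, this yields $A$-indiscernible sequences that are $n$-resolvable but not $(n{-}1)$-resolvable for each $n$; the least such $n$ is an EM-invariant over $A$, producing infinitely many EM-classes over the finite set $A$ and contradicting the $\omega$-categorical, $n$-ary assumption.

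I expect the main obstacle to be the construction step in the strictly simple case: producing canonical bases of arbitrarily large finite $G$-rank without the benefit of ordinal rank induction. The natural approach is to iterate, starting from a type $p$ that forks over each finite subset of its domain, passing to forking extensions in a controlled way so that each step genuinely increases $G$-rank; this requires checking that $G$-rank behaves well under the Lascar-style inequalities and the chain condition in simple theories, which is exactly the point of Proposition \ref{resolvabilitygrank} and the framework of $n$-resolvability. Once the $G$-rank analog of ``unboundedness of $\mathrm{SU}(\mathrm{Cb}(q))$'' is established, the counting-of-EM-types contradiction proceeds in parallel to the supersimple case, and combining the two halves gives Theorem \ref{koponen}.
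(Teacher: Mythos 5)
Your overall architecture — reduce everything to counting EM-types of indiscernible sequences over a finite set, which an $\omega$-categorical $n$-ary theory cannot support in infinite number — is exactly the paper's strategy (Lemma \ref{fewindseq}), and your treatment of the supersimple-infinite-rank step via Corollary \ref{omegacategoricalsupersimple} matches the paper. However, there are two genuine gaps.

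First, in the finite-rank case you write ``pick a non-linear regular type $p$ over a finite set,'' but failure of one-basedness does not directly produce such a type: a priori the theory could fail to be one-based while every regular type is linear (possibly with nontrivial pregeometry). The paper closes this by chaining Fact \ref{onebasedminimal} (in finite rank, one-based $\iff$ forking geometrically trivial), Fact \ref{nontrivialregulartype} (Koponen: nontrivial forking yields a regular type over a finite set with nontrivial pregeometry), and then a dichotomy: if that type is non-linear, Corollary \ref{pseudolinearitymb} gives the $F_{Mb}=\infty$ contradiction as you say, but if it is linear with nontrivial pregeometry, one needs Fact \ref{linearitygroup} (Tomašić--Wagner) to interpret an infinite group and Fact \ref{nogroup} (MacPherson) to forbid it, plus Palacín's result that trivial forking forces finite rank. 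Your plan never rules out the linear-nontrivial case.

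Second, and more seriously, your plan for the strictly simple case rests on directly constructing canonical bases of arbitrarily large finite $G$-rank by iterating forking extensions ``so that each step genuinely increases $G$-rank,'' invoking ``Lascar-style inequalities'' for $G$-rank. No such rank calculus is established anywhere — Proposition \ref{resolvabilitygrank} only gives equivalent reformulations of $\kappa$-resolvability, not subadditivity or monotonicity of $G$-rank under forking — and it is far from clear that $G$-rank increases under the iteration you describe. The paper's actual argument (Theorem \ref{omegacategoricalsimplebutnotsupersimple}) runs in the opposite direction: it first uses Lemma \ref{forkingchains} to fix a forking chain $(B_\gamma)$ with $\mathrm{SU}(\mathrm{tp}(a/B))$ ordinal-valued at the limit, then \emph{assumes for contradiction} a uniform resolvability bound $n$, uses $\omega$-categoricity to make the conditions ``$\mathrm{SU}(\mathrm{tp}(a/c))=\infty$'' and ``$a \ind_{c} B_0$'' definable in an $n$-tuple $c$, and runs a compactness argument to produce a single $c$ with $a \ind_{c} B$ and $\mathrm{SU}(a/c)=\infty$, contradicting the choice of $B$. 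That proof-by-contradiction with definability and compactness is the missing idea; without it, the step you yourself identify as the main obstacle remains unproved.
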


To show this, we prove a result of independent interest on the structure of $\omega$-categorical simple theories.

\begin{theorem}\label{omegacategoricalsimplebutnotsupersimple}
    Let $T$ be $\omega$-categorical and simple, but not supersimple. Then there is a finite tuple $b$ and ($\emptyset$-)indiscernible sequences $I^{i}=(b^{i}_{j})_{j < \omega}$, for $i < \omega$, with $b^{i}_{0} = b$, so that $I^{i} \not\equiv I^{j}$ for $i \neq j < \omega$.
\end{theorem}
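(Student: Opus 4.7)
The plan is to extract the sequences $I^n$ from an infinite forking chain emanating from $b$, and to distinguish them using the $\mathrm{G}$-rank invariant (Definition \ref{grank}) of the canonical bases of their limit types. Since $T$ is $\omega$-categorical and not supersimple, fix a finite tuple $b$ realizing a type $p\in S(\emptyset)$ together with parameters $c_0,c_1,\ldots$ satisfying $b\nind c_0$, $b\nind_{c_0}c_1$, $b\nind_{c_0c_1}c_2$, and so on indefinitely. Writing $p_n=\mathrm{tp}(b/c_0\ldots c_{n-1})$, the canonical bases $C_n=\mathrm{Cb}(p_n)$ form a strictly ascending chain of $\mathrm{bdd}$-closed sets. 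For each $n$, take the nonforking extension $\tilde q_n$ of $p_n$ to some sufficiently saturated $M_n\supseteq c_0\ldots c_{n-1}$, and let $(a^n_j)_{j<\omega}$ be a Morley sequence in $\tilde q_n$ over $M_n$. This is $\emptyset$-indiscernible with every term realizing $p$, so after applying an automorphism of $\mathbb M$ (which preserves EM-type) one may arrange that $a^n_0=b$; set $I^n=(b^n_j)_{j<\omega}:=(a^n_j)_{j<\omega}$.

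Now define the invariant $\kappa_n:=G(\mathrm{Cb}(\mathrm{lim}^{+}(I^n/I^n)))$, which depends only on the EM-type of $I^n$ by Proposition \ref{resolvabilitygrank} and equals $G(C_n)$ since $\mathrm{Cb}(\mathrm{lim}^{+}(I^n/I^n))$ is $\mathrm{Aut}(\mathbb M)$-conjugate to $C_n$. It therefore suffices to show $G(C_n)\to\infty$, since passing to a subsequence on which the $\kappa_n$ are pairwise distinct will produce the required infinite family of EM-distinct indiscernible sequences all starting with $b$.

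For the unboundedness, suppose for contradiction that $G(C_n)\leq k$ for every $n$ in some infinite $S\subseteq\omega$. For each such $n$, pick a real tuple $A_n$ with $|A_n|\leq k$ and $C_n\subseteq\mathrm{bdd}(A_n)$. Since $C_0\subsetneq C_1\subsetneq\cdots\subsetneq C_n$ all lie in $\mathrm{bdd}(A_n)$, this is a strictly ascending chain of $n+1$ distinct $\mathrm{bdd}$-closed subsets of $\mathrm{bdd}(A_n)$. By $\omega$-categoricity, there are only finitely many types of $k$-tuples over $\emptyset$, and for each such type of a tuple $A$ the bounded closure $\mathrm{bdd}(A)$ admits only finitely many $\mathrm{bdd}$-closed subsets (by oligomorphicity of $\mathrm{Aut}(\mathbb M/A)$). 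These two finiteness statements together provide a uniform upper bound $M=M(k)$ on the length of any strictly ascending chain of $\mathrm{bdd}$-closed subsets of $\mathrm{bdd}(A)$ with $|A|\leq k$; hence $n+1\leq M$ for every $n\in S$, contradicting that $S$ is infinite.

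The crux of the proof — and the place where I expect the most care is needed — is precisely the tension exploited above: non-supersimplicity of $T$ supplies an infinite strictly ascending chain of canonical bases, while $\omega$-categoricity forces such a chain to escape from any fixed $\mathrm{bdd}(A)$ with $A$ of bounded size. The delicate points are verifying that $\mathrm{Cb}(\mathrm{lim}^{+}(I^n/I^n))$ really is an $\mathrm{Aut}(\mathbb M)$-conjugate of $C_n$ after the automorphism moving $a^n_0$ to $b$ (so that the $\mathrm{G}$-rank, an $\emptyset$-invariant, is unaffected), and that $\mathrm{bdd}(A)$ in an $\omega$-categorical theory admits finitely many $\mathrm{bdd}$-closed subsets on the relevant hyperimaginary sorts — a standard but subtle consequence of oligomorphic action on finitary hyperimaginary sorts bounded over $A$.
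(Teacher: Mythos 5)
Your overall strategy -- building the $I^n$ as Morley sequences in a forking chain of extensions of $\tp(b/\emptyset)$ and distinguishing them by the $\mathrm{G}$-rank of the canonical base of the limit type (equivalently, by resolvability) -- is exactly the paper's strategy. The problem is that your proof that this invariant is unbounded does not work, and it is precisely this step that carries all the difficulty in the paper.

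First, the assertion that the canonical bases $C_n=\Cb(p_n)$ of a forking chain form a strictly ascending chain of $\mathrm{bdd}$-closed sets is false in general. What forking gives you is only $\Cb(p_{n+1})\not\subseteq\mathrm{bdd}(c_0\ldots c_{n-1})$; it does not give $C_n\subseteq\mathrm{bdd}(C_{n+1})$. (Schematically: if $p_1$ is the generic type of a generic surface through $b$ and $p_2$ the generic type of a generic curve on that surface through $b$, the field of definition of the surface is not algebraic over that of the curve, since the curve lies on a positive-dimensional family of such surfaces.) Without the chain property, your counting argument has nothing to count: knowing $C_n\subseteq\mathrm{bdd}(A_n)$ for some $k$-tuple $A_n$ tells you nothing about $C_0,\ldots,C_{n-1}$ sitting inside $\mathrm{bdd}(A_n)$. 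Second, even granting the chain, the claimed uniform bound $M(k)$ on lengths of ascending chains of $\mathrm{bdd}$-closed subsets of $\mathrm{bdd}(A)$ is unjustified: oligomorphicity bounds orbits on real tuples, but $\mathrm{bdd}(A)$ lives in (hyper)imaginary sorts whose lattice of closed subsets is governed by a profinite Galois group, which can perfectly well contain infinite strictly ascending chains of closed subgroups even when $A$ is finite and $T$ is $\omega$-categorical.

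The paper's route around both obstacles is different in kind. It first proves Lemma \ref{forkingchains}: the forking chain can be arranged so that the type over the \emph{union} $B$ of the chain has ordinal-valued $\mathrm{SU}$-rank, while $\mathrm{SU}(\tp(a/B_0))=\infty$ for every finite $B_0\subseteq B$. Then, assuming resolvability (i.e.\ your $\mathrm{G}$-rank) is bounded by some $n$, it uses $\omega$-categoricity to make the relations ``$\mathrm{SU}(\tp(a/c))=\infty$'' and ``$a\ind_c B_0$'' definable in the finite tuple $c$, and a compactness argument to produce a single $n$-tuple $c$ with $\mathrm{SU}(\tp(a/c))=\infty$ and $a\ind_c B$, forcing $\mathrm{SU}(\tp(a/B))=\infty$ and contradicting the lemma. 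You will need some substitute for this definability-plus-compactness step (and for Lemma \ref{forkingchains}, which your proposal never invokes); a purely lattice-theoretic bound on canonical bases is not available.
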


Note that for a theory to have quantifier elimination in a finite relational language is the same thing as being $\omega$-categorical and $n$-ary for some $n < \omega$.\footnote{Recall that a theory is $n$-ary if it has quantifier elimination in a language with only $n$-ary relation symbols.} Assuming $T$ as in the theorem is not just simple, but \textit{supersimple}, we will see that one proof of the theorem is just a corollary of Fact \ref{pseudolinearity} on $k$-linearity for $\omega$-categorical supersimple theories, due to Tomašić and Wagner (\cite{TW03}). When $\mathrm{SU}(p) = 1$ (i.e., in the case of \cite{TW03} used in the finite-rank case of the Koponen conjecture), that $k$-linearity of $p$ coincides with the maximum value of $F_{Mb}(p \otimes p)$ being equal to $k$ was observed in Corollary \ref{Mbpsquared}.

Another proof uses \cite{TW03} in the case that $T$ is already supersimple of finite rank, but directly uses $F_{Mb}$, in particular Corollary \ref{omegacategoricalsupersimple}, to show that if $T$ is supersimple it must have finite rank. The hardest new part of the proof of this theorem is then proving that a simple theory with quantifier elimination in a finite relational language must be supersimple.

We will need the following lemma on non-supersimple simple theories:

\begin{lemma}\label{forkingchains}
    Let $T$ be simple, but not supersimple. Then there is a limit ordinal $\lambda$, a finite tuple $a$, and sets $B_{\gamma}$, $\gamma < \lambda$, with $B_{\gamma} \subset B_{\gamma'} $ and $a \nind_{B_{\gamma}} B_{\gamma'}$ for $\gamma < \gamma' < \lambda$, so that for $B = \bigcup_{\lambda < \gamma} B_{\gamma}$, $\mathrm{SU}(\mathrm{tp}(a/B)) \in \mathrm{Ord}$.
\end{lemma}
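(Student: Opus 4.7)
The plan is to build a forking chain by transfinite recursion and use local character to ensure the chain terminates at a limit ordinal $\lambda$ at which $\mathrm{SU}(\mathrm{tp}(a/B))$ collapses to an ordinal, where $B = \bigcup_{\gamma < \lambda} B_\gamma$.

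Since $T$ is not supersimple, I fix a finite tuple $a$ and a set $B_0$ with $\mathrm{SU}(\mathrm{tp}(a/B_0)) = \infty$. By recursion I construct an increasing chain $(B_\gamma)$ extending $B_0$, maintaining at every stage $\gamma$ both $a \nind_{B_\gamma} B_{\gamma+1}$ and $\mathrm{SU}(\mathrm{tp}(a/B_\gamma)) = \infty$, stopping at the first limit ordinal $\lambda$ for which $\mathrm{SU}(\mathrm{tp}(a/\bigcup_{\gamma<\lambda} B_\gamma)) \in \mathrm{Ord}$. The ability to choose $B_{\gamma+1}$ preserving the rank $\infty$ at each successor step rests on the following claim: every type $p$ with $\mathrm{SU}(p) = \infty$ admits a forking extension $q$ with $\mathrm{SU}(q) = \infty$. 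This holds because, up to parallel equivalence, the class of forking extensions of $p$ is bounded in a simple theory (canonical bases being hyperimaginaries); so if every forking extension had ordinal $\mathrm{SU}$-rank, their supremum $\alpha$ would itself be an ordinal, forcing $\mathrm{SU}(p) \leq \alpha + 1 \in \mathrm{Ord}$, a contradiction. At limit stages $\mu$, I set $B_\mu := \bigcup_{\gamma < \mu} B_\gamma$ and check whether $\mathrm{SU}(\mathrm{tp}(a/B_\mu))$ is an ordinal; if yes, take $\lambda := \mu$, otherwise continue.

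Local character of forking forces the construction to terminate: if the chain reached length $|T|^+$, local character would give some $C \subseteq B := \bigcup_{\gamma < |T|^+} B_\gamma$ with $|C| \leq |T|$ and $a \ind_C B$. Regularity of $|T|^+$ yields $C \subseteq B_{\gamma_0}$ for some $\gamma_0 < |T|^+$, and then the chain rule gives $a \ind_{B_{\gamma_0}} B_{\gamma_0+1}$, contradicting the construction. So the recursion halts at some $\lambda \leq |T|^+$, and since we maintain $\mathrm{SU}(\mathrm{tp}(a/B_\gamma)) = \infty$ at every successor stage, termination cannot occur at a successor, so $\lambda$ is a limit ordinal. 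The remaining condition $a \nind_{B_\gamma} B_{\gamma'}$ for arbitrary $\gamma < \gamma' < \lambda$ follows from $a \nind_{B_\gamma} B_{\gamma+1}$ and $B_{\gamma+1} \subseteq B_{\gamma'}$ by monotonicity.

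\textbf{Main obstacle.} The technical core is the preservation lemma that a type with $\mathrm{SU} = \infty$ always admits a forking extension of $\mathrm{SU}$-rank $\infty$. Without it, the recursion could halt at a successor stage---the first place where the rank collapses from $\infty$ to an ordinal---and it is not clear how to refine a single such collapsing forking step into a limit of intermediate forking steps, so $\lambda$ would fail to be a limit ordinal.
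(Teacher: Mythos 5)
Your proof is correct and follows essentially the same route as the paper: a transfinite forking chain maintaining $\mathrm{SU}(\mathrm{tp}(a/B_\gamma))=\infty$, with local character of forking bounding its length, so that the rank can only collapse at a limit stage. The paper organizes this as a proof by contradiction (assuming the rank stays $\infty$ at every limit union and deriving an $\mathrm{Ord}$-length chain), whereas you run the recursion directly and stop at the first collapsing limit; your explicit justification of the successor step (that a type of $\mathrm{SU}$-rank $\infty$ has a forking extension of $\mathrm{SU}$-rank $\infty$, via boundedness of forking extensions up to parallelism) is a detail the paper asserts without proof.
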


\begin{proof}
    Suppose otherwise, so for any limit ordinal $\lambda$, finite tuple $a$, and sets $B_{\gamma}$, $\gamma < \lambda$, with $B_{\gamma} \subset B_{\gamma'} $ and $a \nind_{B_{\gamma}} B_{\gamma'}$ for $\gamma < \gamma' < \lambda$, for $B = \bigcup_{\lambda < \gamma} B_{\gamma}$, $\mathrm{SU}(\mathrm{tp}(a/B)) = \infty$. By transfinite induction, we will find a finite tuple $a$ and sets $B_{\gamma}$, $\gamma \in \mathrm{Ord}$, with $B_{\gamma} \subset B_{\gamma'} $, $a \nind_{B_{\gamma}} B_{\gamma'}$ and $\mathrm{SU}(\mathrm{tp}(a/B_{\gamma})) = \infty$ for $\gamma < \gamma' \in \mathrm{Ord}$, contradicting simplicity. The base case is just that $T$ is not supersimple. Suppose we have found $a$, and $B_{\gamma'}$ as desired for $\gamma' \leq \gamma$. Then $\mathrm{SU}(\mathrm{tp}(a/B_{\gamma})) = \infty$, so we can find $B_{\gamma+1}$ with $a \nind_{B_{\gamma}} B_{\gamma + 1}$ and $\mathrm{SU}(\mathrm{tp}(a/B_{\gamma})) = \infty$, completing the successor step. Finally, for the limit step, we must show that for $\lambda$ a limit ordinal, if $B_{\gamma} \subset B_{\gamma'} $ and $a \nind_{B_{\gamma}} B_{\gamma'}$ for $\gamma < \gamma' < \lambda$, and $B_{\lambda} = \bigcup_{\lambda < \gamma} B_{\gamma}$, $\mathrm{SU}(\mathrm{tp}(a/B_{\lambda})) = \infty$; then $B_{\lambda}$ can be chosen for stage $\lambda$ of the induction. But this was just the assumption we made in supposing our conclusion was false.
\end{proof}

We now prove Theorem \ref{omegacategoricalsimplebutnotsupersimple}:

\begin{proof} Assume $T$ is simple and $\omega$-categorical, but not supersimple. Let $a$ and $B$ be as in Lemma \ref{forkingchains}; then $\mathrm{SU}(\mathrm{tp}(a/B)) \in \mathrm{Ord}$, but for any $B_{0} \subseteq B$ with $|B_{0}| < \omega$, $\mathrm{SU}(\mathrm{tp}(a/B))= \infty$. Let $|a|=k$. Recalling Definition \ref{resolvable}, we claim that it suffices to show that, for arbitrarily large $n$, there exists a set $A_{n} \subset \mathbb{M}$ and type $p_{n} \in S^{k}(A_{n})$ that is $(n+1)$-resolvable but not $n$-resolvable.  This suffices because, by the equivalence between (1), (2) and (2') in Proposition \ref{resolvabilitygrank}, a Morley sequence $J^{n}$ in $p_{n}$ over $A_{n}$ will be $(n+1)$-resolvable but not $n$-resolvable (as an indiscernible sequence over $\emptyset$), so $J^{n} \not\equiv J^{m}$ for $n \neq m$, giving us the conclusion of Theorem \ref{omegacategoricalsimplebutnotsupersimple} (by an automorphism, and either $\omega$-categoricity, or the fact that the $p_{n}$ we get in the following argument extend $\mathrm{tp}(a/\emptyset)$.)

Suppose that these $A_{n}$, $p_{n}$ do not exist for arbitrarily large $n$. Then there is some $n < \omega$ such that for $A$ any set and $p \in S^{k}(A)$, if $p$ is $m$-resolvable for some $m < \omega$, then it is $n$-resolvable. Let $|x| = n$; then by $\omega$-categoricity, there is a formula $\varphi(a, x)$ so that $\varphi(a, \mathbb{M}^{n})=\{c \in \mathbb{M}^{n}: \mathrm{SU}(\mathrm{tp}(a/c)) = \infty\} $, and for any finite $B_{0} \subset B$ there is a formula $\varphi^{B_{0}}(a, B_{0}, x)$ so that $\varphi^{B_{0}}(a, B_{0}, \mathbb{M}^{n})= \{c \in \mathbb{M}^{n}: a \ind_{c} B_{0} \}$. We will show that the set

$$\{\varphi(a, x)\} \cup \{\varphi^{B_{0}}(a, B_{0}, x): B_{0} \subset B, |B_{0}| < \omega \}$$

is consistent. By compactness, for any finite collection $S$ of finite subsets of $B$, we need only show 

$$\{\varphi(a, x)\} \cup \{\varphi^{B_{0}}(a, B_{0}, x): |B_{0}| \in S \}$$

is consistent. But by monotonicity of independence, $\varphi^{\cup S}(a, \cup S, x)$ implies $\varphi^{B_{0}}(a, B_{0}, x)$ for all $B_{0} \in S$, so we may assume $|S| = 1$, and it suffices to show that $\{ \varphi(a, x) \wedge \varphi^{B_{0}}(a, B_{0}, x)\}$ is consistent for an arbitrary finite $B_{0} \subseteq B$. Tautologically, the type $p= \mathrm{tp}(a/B_{0})$ is $|B_{0}|$-resolvable, and therefore, by assumption, $n$-resolvable. There is therefore some $c_{0} $ with $|c_{0}| \leq n$ (enumerable as a tuple of length $n$) and $q \in S(B_{0}c_{0})$ with $p \subseteq q$ that forks over neither $B_{0}$ or $c_{0}$; by an automorphism, we may further choose $c_{0}$ and $q$ so that $a \models q$. Since $\mathrm{SU}(p) = \infty$ and $q$ does not fork over $p$, $\mathrm{SU}(q)= \infty$; therefore, $\mathrm{SU}(q|_{c_{0}})= \mathrm{SU}(\mathrm{tp}(a/c_{0}))= \infty$. Moreover, because $q$ does not fork over $c_{0}$, $a \ind_{c_{0}} B_{0}$, so $c_{0} \models \{ \varphi(a, x) \wedge \varphi^{B_{0}}(a, B_{0}, x)\}$, showing consistency of $\{\varphi(a, x)\} \cup \{\varphi^{B_{0}}(a, B_{0}, x): B_{0} \subset B, |B_{0}| < \omega \}$. Now let $c$ realize this set; then $\mathrm{SU}(a/c) = \infty$ and $a \ind_{c} B_{0}$ for all $B_{0} \subset B$ with $|B_{0}| < \omega$; by finite character of independence, $a \ind_{c} B$. Therefore, $\mathrm{SU}(\mathrm{tp}(a/Bc)) = \infty $, so $\mathrm{SU}(\mathrm{tp}(a/B)) = \infty $, a contradiction. This concludes the proof of Theorem \ref{omegacategoricalsimplebutnotsupersimple}.
\end{proof}

\begin{remark}
   \label{omegacategoricitynotnecessary} In fact, $\omega$-categoricity is not necessary in Theorem \ref{omegacategoricalsimplebutnotsupersimple}, though we only need the $\omega$-categorical case to prove Theorem \ref{koponen}. We show this in the appendix. 
\end{remark}

We now prove Theorem \ref{koponen}. Each step of the proof will be an application of the following observation:
\begin{lemma}\label{fewindseq}  If $T$ has quantifier elimination in a finite relational language, then there is no tuple $b$ with $|b| < \omega$ and, for $i < \omega$, ($\emptyset$-)indiscernible sequences $I^{i}=(b^{i}_{j})_{j < \omega}$ with $b^{i}_{0} = b$, so that $I^{i} \not\equiv I^{j}$ for $i \neq j < \omega$.
\end{lemma}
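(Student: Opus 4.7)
The plan is to exploit quantifier elimination in a finite relational language to bound, in terms of $|b|$ and the arities of the relation symbols, how many distinct EM-types of indiscernible sequences starting with $b$ can exist. Let $n$ be the maximum arity of the finitely many relation symbols in the language, and let $k = |b|$.

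First I would verify the general fact that in an $n$-ary theory, the EM-type of an indiscernible sequence of $k$-tuples is determined by the type of the first $n$ of its terms concatenated as an $nk$-tuple. Given an indiscernible sequence $(b_j)_{j<\omega}$ and any formula $\varphi(y_0,\ldots,y_{m-1})$ (with each $y_j$ of length $k$), by quantifier elimination $\varphi$ is a Boolean combination of atomic formulas, each of which involves at most $n$ variables drawn from the coordinates of at most $n$ of the $b_j$'s. By indiscernibility, the truth value of any such atomic instance on $(b_0,\ldots,b_{m-1})$ depends only on the atomic type of the subtuple $(b_{l_1},\ldots,b_{l_r})$ for some $r \leq n$, and by indiscernibility this equals the atomic type of $(b_0,\ldots,b_{r-1})$, hence is determined by $\mathrm{tp}(b_0,\ldots,b_{n-1})$.

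Next I would invoke that quantifier elimination in a finite relational language implies $\omega$-categoricity: there are only finitely many atomic formulas in any fixed number of variables, so by Ryll--Nardzewski the theory has finitely many types in each finite arity. In particular, there are only finitely many types in $nk$ variables over $\emptyset$ extending $\mathrm{tp}(b, b, \ldots, b)$--well, more precisely, only finitely many types of $nk$-tuples $(c_0, c_1, \ldots, c_{n-1})$ with $c_0 = b$ and each $|c_i| = k$.

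Putting these together, suppose for contradiction that there exist a finite tuple $b$ and infinitely many pairwise inequivalent indiscernible sequences $I^i = (b^i_j)_{j<\omega}$ with $b^i_0 = b$. Each sequence $I^i$ is determined up to $\equiv$ by its EM-type together with the fixed initial term $b$, so in particular, by the type of its first $n$ terms $(b^i_0, b^i_1, \ldots, b^i_{n-1})$, which is an $nk$-tuple whose first $k$ coordinates are $b$. Since the $I^i$ are pairwise inequivalent, these types of $nk$-tuples are pairwise distinct, contradicting that only finitely many such types exist. The main (and only) conceptual point is the reduction of the EM-type to the type of the first $n$ terms, which is immediate from $n$-ariness but crucial; everything else is a counting argument from $\omega$-categoricity.
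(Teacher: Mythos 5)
Your proof is correct and follows essentially the same route as the paper's: reduce the type of the indiscernible sequence to the type of its first $n$ terms via $n$-arity (quantifier elimination plus indiscernibility), then count types using $\omega$-categoricity. The paper states the reduction in one line where you spell out the atomic-formula bookkeeping, but the argument is the same.
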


\begin{proof}
We show that there are only finitely many types which $\emptyset$-indiscernible sequences $I = \{b_{i}\}_{i < \omega}$, where $|b_{0}| = b$, can realize over $\emptyset$, proving the lemma. By $n$-arity of $T$ and indiscernibility of $I$, $\mathrm{tp}(b_{0} \ldots b_{n-1})$ determines $\mathrm{tp}(I)$. But by $\omega$-categoricity, there are only finitely many possibilities for $\mathrm{tp}(b_{0} \ldots b_{n-1})$.

\end{proof}

It is then immediate from Theorem \ref{omegacategoricalsimplebutnotsupersimple} that, if $T$ is a simple theory with quantifier elimination in a finite relational language, then $T$ is supersimple. That if $T$ has quantifier elimination in a finite relational language and is supersimple, $T$ is one-based, can be proven in two ways, which we do next. 

The first proof uses the following result of Koponen, which generalizes the main result of \cite{HKP00}:

\begin{fact}\label{nontrivialregulartype} 

(Theorem 3.1, \cite{Kop19}) Let $T$ be $\omega$-categorical and supersimple, and suppose that forking is not geometricallly trivial (Definition \ref{geometricallytrivialforking}). Then there is a regular type over a finite set with a nontrivial pregeometry.
\end{fact}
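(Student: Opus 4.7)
The plan is to combine a reduction step using $\omega$-categoricity with the regular-type analysis available in supersimple theories. The failure of geometric triviality supplies an instance of non-trivial forking; the goal is to push that instance down the $\mathrm{SU}$-rank analysis until it is detected by a single regular type over a finite set.

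\emph{Reduction to finite parameters.} By hypothesis there exist tuples $a,b,c$ and a set $A$ with $a\nind_A bc$, $a\ind_A b$, and $a\ind_A c$. In a supersimple theory, forking is witnessed by finite subsets of the base, so after enlarging $A$ by finitely many elements I may assume both the witnessing forking formula and the two non-forking statements are detected over a finite $A$; by monotonicity the same phenomenon persists for finite sub-tuples of $a,b,c$. Finally, $\omega$-categoricity ensures that $\acl(A)$ is finite, so I may replace $A$ by $\acl(A)$ at no cost.

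\emph{Localizing non-triviality to a regular type.} Since $T$ is supersimple, every type of positive $\mathrm{SU}$-rank is non-orthogonal to a regular type and, more strongly, admits a regular analysis. I would take a regular analysis of $\tp(a/A)$ by a chain of types $\tp(a/AB_0)$, $\tp(a/AB_1)$, $\ldots$, $\tp(a/AB_n)$, each step internal to a single regular type $q_i$. The non-triviality statement that $a$ forks with the pair $(b,c)$ but with neither coordinate alone cannot dissipate across all levels: there is a smallest index $i$ at which, on moving to the base $A'=AB_{i-1}$ and projecting $a,b,c$ via the analysis, one obtains realizations $a',b',c'$ of the same regular type $q_i\in S(\bdd(A'))$ with $a'\nind_{A'}b'c'$ while $a'\ind_{A'}b'$ and $a'\ind_{A'}c'$. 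The main obstacle lies exactly here: one must rearrange $b$ and $c$ along the analysis, using the forking calculus and the independence theorem over canonical bases, so that the local witnesses live in a common regular fiber and retain the pairwise independence while losing joint independence.

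\emph{Finite base for the regular type.} The canonical base of $q_i$ is contained in the bounded closure of finitely many realizations of $q_i$ by supersimplicity, so $q_i$ is defined over a finitary hyperimaginary parameter. One last application of $\omega$-categoricity---bounded closures of finite sets remain finite---lets me replace $A'$ by a finite real parameter set over which $q_i$ is still a regular type. The witnesses $a',b',c'$ exhibit the non-trivial pregeometry of $q_i$, completing the proof.
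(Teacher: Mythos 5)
The paper does not prove this statement at all: it is quoted verbatim as Theorem~3.1 of Koponen's paper \cite{Kop19} (itself a generalization of the main result of \cite{HKP00}), so there is no in-paper proof to compare your argument against; you are being asked, in effect, to reprove Koponen's theorem.

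Your outline has a genuine gap at exactly the step you yourself flag as ``the main obstacle.'' The easy parts --- passing to a finite base by supersimplicity, replacing $A$ by its (finite) algebraic closure via $\omega$-categoricity, and pulling the canonical base of a regular type into the bounded closure of finitely many of its realizations --- are fine but are not the substance of the result. The substance is the claim that from $a\nind_A bc$, $a\ind_A b$, $a\ind_A c$ one can produce tuples $a',b',c'$ of realizations of a \emph{single} regular type $q$ exhibiting the same pattern. Writing ``there is a smallest index $i$ at which \dots\ one obtains realizations of the same regular type $q_i$ with $a'\nind_{A'}b'c'$ while $a'\ind_{A'}b'$ and $a'\ind_{A'}c'$'' asserts the conclusion rather than deriving it. A priori the witnesses $b$ and $c$ need not be coordinatized by, dominated by, or even non-orthogonal to the same regular type appearing in the analysis of $\tp(a/A)$; the joint dependence could be carried by different regular types at different levels; and when you ``project'' $b$ and $c$ down the analysis there is no reason the pairwise independences survive while the joint dependence does --- preserving that asymmetry is precisely where the weight/domination calculus and the independence theorem have to be deployed with care, and is the content of Koponen's proof. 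Until that localization argument is actually carried out (rather than described as an obstacle to be overcome), the proposal does not constitute a proof.
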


We claim that if $T$ has quantifier elimination in a finite relational language and is supersimple, it suffices to show that every regular type over a finite set is linear. Suppose we have shown this; then either forking in $T$ is trivial, or by the fact, there is a nontrivial linear regular type over a finite set. But note the following result of \cite{TW03} (see the proof of Theorem 5.5.4 of \cite{Tomasic01}, for the case of $\mathrm{SU}(p)=1$):

\begin{fact}
    \label{linearitygroup}
    
    If $T$ is an $\omega$-categorical simple theory with a regular type over a finite set that is either $k$-linear for $k > 1$, or linear with nontrivial pregeometry, there is an infinite group interpretable in $T$.
\end{fact}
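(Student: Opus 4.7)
The plan is to produce an infinite type-interpretable group in $T$ via the \emph{group configuration theorem} for $\omega$-categorical simple theories (also due to \cite{TW03}), applied to a six-element configuration built from the hypothesis. Both hypotheses, $k$-linearity with $k>1$ and linearity with nontrivial pregeometry, are designed to supply such a configuration with ``internal'' sort of $\mathrm{SU}_p$-rank equal to the rank of the canonical bases of the plane curves involved. The sketch included in the paper after Fact \ref{pseudolinearity} already handles the $k$-linear case, so my plan is to write that out cleanly and then give a parallel construction for the linear-with-nontrivial-pregeometry case.

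For the $k$-linear case with $k>1$, I would follow Remark \ref{independenceofpotimesp}: choose realizations $a,b,c$ of $p$ over $A$ with $abc$ $A$-independent, and, after replacing $A$ by $\mathrm{bdd}(A)$ so that $p$ is a Lascar strong type and the independence theorem applies, pick canonical bases $c_{ab}, c_{bc}, c_{ac}$ of plane curves through the respective pairs, each of $\mathrm{SU}_p$-rank $k$, arranged by successive amalgamations so that every non-collinear triple in $\{a,b,c,c_{ab},c_{bc},c_{ac}\}$ is $A$-independent. The rank computations sketched in Remark \ref{independenceofpotimesp} already verify the defining conditions of a group configuration. Feeding it into the group configuration theorem of \cite{TW03} produces a type-interpretable group acting faithfully and transitively on a rank-$1$ type, which is necessarily infinite since the orbit itself is infinite.

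For the linear-with-nontrivial-pregeometry case, nontriviality of the pregeometry on $p(\mathbb M)$ supplies $a,b,c\models p$ that are pairwise $A$-independent but not $A$-independent as a triple. After possibly moving along a nonforking extension of $p$, linearity forces each of $a,b,c$ to be interalgebraic with a single element over $A$ together with the other two, so we can set $c_{ab}=\mathrm{Cb}(\mathrm{tp}(c/abA))$ and cyclically define $c_{bc},c_{ac}$, each of $\mathrm{SU}_p$-rank $1$ by linearity. A second round of amalgamation via the independence theorem, as in the previous case, arranges the non-collinear triples to be independent, giving a genuine group configuration. The group configuration theorem then produces a type-interpretable rank-$1$ group acting on a rank-$1$ type, and this action is nontrivial precisely because the pregeometry was nontrivial, so the group is infinite.

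The main obstacle in both cases is arranging all six non-collinear independencies in the configuration; some are automatic from the choice of $a,b,c$, but others must be produced by repeated use of the independence theorem, which is what forces us to pass to a base over which $p$ is a Lascar strong type. In the $\omega$-categorical simple setting this is harmless, since we may enlarge the finite base $A$ to a finite set on which the Lascar group acts trivially on realizations of $p$, exactly as in the proof of Corollary \ref{pseudolinearitymb}. A secondary point, verifying that the resulting type-interpretable group is infinite rather than finite, is immediate in each case from the fact that the action has a rank-$1$ orbit.
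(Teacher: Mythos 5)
A preliminary point of comparison: the paper does not actually prove this statement. It is imported as a Fact from \cite{TW03}, with the proof of Theorem 5.5.4 of \cite{Tomasic01} cited for the $\mathrm{SU}(p)=1$ case; the only in-paper material is the sketch following Fact \ref{pseudolinearity} together with Remark \ref{independenceofpotimesp}, and that covers only your first branch. For the $k$-linear case with $k>1$ your plan does track that sketch --- build $a,b,c,c_{ab},c_{bc},c_{ac}$ via the independence theorem as in Remark \ref{independenceofpotimesp} and feed the configuration to the group configuration theorem for $\omega$-categorical simple theories --- and is acceptable as a reconstruction, apart from one point you should not wave away: the group configuration theorem in a simple theory produces a type-definable (indeed hyperdefinable) group, whereas the Fact asserts an \emph{interpretable} one; over a finite base in an $\omega$-categorical theory this follows by compactness, but it needs a sentence.

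The linear-with-nontrivial-pregeometry branch has a genuine gap. You set $c_{ab}=\mathrm{Cb}(\mathrm{tp}(c/abA))$ (and cyclically for the others) and propose to arrange independence of the non-collinear triples by a further round of amalgamation. That cannot work: since $\mathrm{tp}(c/abA)$ forks over $A$, its canonical base satisfies $c\nind_A c_{ab}$ --- and in the $\mathrm{SU}$-rank-$1$ case forking even forces $c\in\mathrm{bdd}(c_{ab}A)$ --- while in the configuration with lines $\{a,b,c_{ab}\}$, $\{b,c,c_{bc}\}$, $\{a,c,c_{ac}\}$, $\{c_{ab},c_{bc},c_{ac}\}$ the pair $c,c_{ab}$ is non-collinear and hence must be independent over $A$. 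No amalgamation can undo a dependence that is built into the definition of $c_{ab}$; the six points collapse (each curve parameter is bounded together with the opposite point), which is precisely the degeneracy the genuine construction must avoid. The correct route here, as in \cite{P96} and \cite{TW03}, is to use nontriviality of the pregeometry to produce a rank-one \emph{family} of plane curves through a generic point of $p\otimes p$, take two independent generic members, and build the configuration from the germ of their composition; only then are the curve parameters independent from the points not incident to them. As written, your second case does not produce a group configuration --- and this is exactly the case ($k=1$ with nontrivial pregeometry) that the paper's proof of the Koponen conjecture actually uses.
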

(For $k > 1$, this is the main ingredient of the proof of Fact \ref{pseudolinearity}; however, we only need to use $k=1$ directly here.) However, note also the following result of MacPherson (\cite{Mac91}).

\begin{fact}\label{nogroup}
    A theory with quantifier elimination in a finite relational language cannot interpret an infinite group.
\end{fact}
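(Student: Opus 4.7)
The plan is to argue by contradiction via a type-counting discrepancy. Suppose $T$ has quantifier elimination in a finite relational language, so that $T$ is $\omega$-categorical and $n$-ary for some $n < \omega$, and suppose for contradiction that $T$ interprets an infinite group $G = (G, \cdot)$. Passing to $T^{\mathrm{eq}}$ and absorbing the arity of the interpreting formulas, I may assume $G$ is a $\emptyset$-definable sort with a $\emptyset$-definable group operation and that the induced structure on $G$ is $n'$-ary for some finite $n'$. It then suffices to rule out an infinite $\omega$-categorical $n'$-ary group.

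The upper bound is immediate: $n'$-arity together with $\omega$-categoricity gives $|S_k(\emptyset)| \leq N^{\binom{k}{n'}} = 2^{O(k^{n'})}$, where $N = |S_{n'}(\emptyset)|$ is finite. The lower bound must come from exploiting the group operation: every group word in $k$ variables is a $\emptyset$-definable map $G^k \to G$, and on generic tuples distinct words should produce distinct values, so the $\emptyset$-type of a tuple $(g_1,\ldots,g_k)$ packaged together with selected products ought to encode a large amount of combinatorial information. The objective is to produce, for large $k$, a tuple of length polynomial in $k$ whose $\emptyset$-type realizes more than $2^{C k^{n'}}$ configurations for every constant $C$, directly contradicting the upper bound.

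The main obstacle is the construction of such a tuple, and is the technical heart of MacPherson's proof. I would first reduce to a tractable case using the structure theory of $\omega$-categorical groups (Felgner; Baur--Cherlin--Macintyre; Wilson), which forces $G$ to be nilpotent-by-finite and to contain an infinite $\emptyset$-definable abelian subgroup of bounded exponent. Inside such a subgroup, say an elementary abelian $p$-group, $k$ independent generators $g_1,\ldots,g_k$ yield $p^k$ distinct subset sums, and by iterating products into deeper group words one can package subset-sum data alongside $(g_1,\ldots,g_k)$ into a tuple of length polynomial in $k$ whose orbit under $\mathrm{Aut}(G)$ depends on exponentially many independent bits of combinatorial information. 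The delicate point, which the $\omega$-categorical reduction is designed to enable, is to check that these combinatorial distinctions really correspond to distinct $\emptyset$-types — rather than being collapsed by inner or outer automorphisms of $G$ — and then to tune the exponents so that the counting actually crosses $2^{O(k^{n'})}$. An alternative route, should the explicit Sidon construction prove too cumbersome, would be to run a Ramsey extraction directly in $G$: take a nonconstant $\emptyset$-indiscernible sequence $(g_i)_{i<\omega}$ in $G$ and show that, under $n'$-arity, the $\emptyset$-type of $(g_1,\ldots,g_k,\, g_1 g_2,\, g_1 g_3,\ldots)$ forces $g_i g_j$ to be interalgebraic with one of $g_i, g_j$ — an impossibility in a group with cancellation.
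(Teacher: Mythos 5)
The paper itself gives no proof of this statement: it is imported verbatim as Fact~\ref{nogroup} with a citation to Macpherson \cite{Mac91}, so there is no internal argument to measure yours against. Your sketch does echo the broad shape of Macpherson's strategy (use the structure theory of $\omega$-categorical groups to reduce to special classes of groups, then play the group against $n$-arity), but the step you yourself flag as ``the technical heart'' is precisely where the plan fails.

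The central gap is that the counting discrepancy you are aiming for cannot be manufactured from an elementary abelian $p$-group. A $k$-tuple $(v_1,\dots,v_k)$ in an infinite $\mathbb{F}_p$-vector space is determined up to automorphism by its space of linear relations, a subspace of $\mathbb{F}_p^{k}$, so the number of orbits on $k$-tuples is at most the number of subspaces of $\mathbb{F}_p^{k}$, which is $2^{O(k^{2})}$; the exponent stays quadratic in the tuple length no matter how many subset sums you append, and pulling the count back through the interpretation only rescales $k$ polynomially. Hence for $n'\geq 3$ your lower bound can never exceed the upper bound $2^{O(k^{n'})}$, and for $n'=2$ the two bounds have the same order with the base $p$ imposed by the group, so nothing can be ``tuned.'' A pure orbit-counting argument of the kind you describe is therefore provably incapable of closing, and the actual proof must exploit arity in a finer, non-counting way. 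Two further problems: the reduction you invoke overstates known structure theory --- Baur--Cherlin--Macintyre requires ($\omega$-)stability, and for general $\omega$-categorical groups Wilson's theorem gives only a characteristic series with elementary abelian or boundedly simple factors, so the possible infinite boundedly simple factor is a genuine case that must be treated rather than excluded; and your fallback ``Ramsey extraction,'' asserting that $n'$-arity forces $g_i g_j$ to be interalgebraic with $g_i$ or $g_j$, is exactly the kind of claim that would finish the proof but is given no justification and is not a routine consequence of arity. As written, the proposal amounts to a citation of \cite{Mac91} together with a plan for reproving it whose decisive step does not work.
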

Therefore, there can be no nontrivial linear regular type, so we are in the case where forking in $T$ is trivial. (Note the similarity to the observation directly below Fact 2.6 of \cite{Kop17}, where the main result of \cite{HKP00} is used in place of \ref{nontrivialregulartype}, and \cite{DPK03} is used in place of Fact \ref{linearitygroup}.) However, Corollary 3.12 of \cite{Pal17}, restated in Proposition 3.6 of \cite{Kop19}, states that an omega-categorical supersimple theory with trivial forking must have finite rank. Thus, we can apply the observation between Definition 2.5 and Fact 2.6 of \cite{Kop17}:

\begin{fact}\label{onebasedminimal}
Let $T$ be an $\omega$-categorical, $n$-ary simple theory of finite rank. Then $T$ is one-based if and only if forking is geometrically trivial in $T$.

\end{fact}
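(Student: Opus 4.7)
The plan is to prove each direction by reducing to the level of minimal types and exploiting the restrictive nature of $\omega$-categoricity combined with finite rank.

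For the ``geometrically trivial $\Rightarrow$ one-based'' direction, I would use that in a simple theory of finite $\mathrm{SU}$-rank, every type in $T^{\mathrm{eq}}$ is analyzable in minimal types. If forking is geometrically trivial, each minimal type inherits a trivial pregeometry, and a minimal type with trivial pregeometry automatically has algebraic forking in the sense of Definition \ref{algfork}: any forking extension of such a type over $B$ is forced to be realized in $\mathrm{acl}(B)$, so the intersection $\mathrm{acl}^{\mathrm{eq}}(aA) \cap \mathrm{acl}^{\mathrm{eq}}(BA)$ properly extends $\mathrm{acl}^{\mathrm{eq}}(A)$. A standard induction on the length of an analysis, together with the transitivity of algebraic forking along minimal fibrations, propagates algebraic forking from the base minimal types to all types in $T^{\mathrm{eq}}$. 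By Fact \ref{forkingonebased}, $T$ is one-based.

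For the harder ``one-based $\Rightarrow$ geometrically trivial'' direction, I would argue contrapositively. A failure of geometric triviality descends, along an analysis into minimal types, to a minimal type $p \in S(A)$ over a finite set with nontrivial pregeometry. Since $T$ (and hence $p$) is one-based, the pregeometry of $p$ is locally modular, so $p$ is linear in the sense of Definition \ref{deflin}. Applying Fact \ref{linearitygroup}, we extract an infinite group $G$ interpretable in $T^{\mathrm{eq}}$ whose connected component acts on $p$. Because $T$ is one-based, $\omega$-categorical, and of finite $\mathrm{SU}$-rank, $G$ is a one-based $\omega$-categorical group of finite $\mathrm{SU}$-rank; the structure theory of such groups (built on Hrushovski--Pillay together with $\omega$-categorical restrictions) forces $G$ to be abelian of bounded exponent, hence a vector space over some finite field $\mathbb{F}_q$. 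But an infinite $\mathbb{F}_q$-vector space is necessarily infinite-dimensional, and so has $\mathrm{SU}$-rank $\omega$, contradicting finite rank. Therefore every minimal type has trivial pregeometry, and a second analyzability argument upgrades triviality at the minimal level to geometric triviality of forking for arbitrary types.

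The main obstacle is the hard direction, specifically the classification of the interpretable group $G$: while Fact \ref{linearitygroup} supplies the group, collapsing it to an infinite-dimensional $\mathbb{F}_q$-vector space under one-basedness plus $\omega$-categoricity plus finite rank requires care and appeals to the existing structure theory for $\omega$-categorical one-based (simple) groups. A secondary technical point, in both directions, is making the analyzability transfer rigorous in the simple as opposed to the superstable setting, where one must track $\mathrm{SU}$-rank inequalities and verify that algebraic forking is preserved along minimal fibrations.
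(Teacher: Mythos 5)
The paper does not prove this statement at all: it is quoted as an observation from \cite{Kop17}, and only the direction ``geometrically trivial $\Rightarrow$ one-based'' is actually used in the proof of Theorem \ref{koponen}. So there is no proof in the paper to compare yours against, and I can only assess your argument on its own terms.

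Your first direction is essentially the right reduction: triviality forces every minimal type to have a trivial, hence locally modular, pregeometry, and in a finite-rank supersimple theory in which every minimal type is locally modular, one-basedness of all types follows by coordinatization. Be aware, though, that ``algebraic forking propagates along minimal fibrations'' is exactly where local modularity must enter --- \emph{every} $\mathrm{SU}$-rank $1$ type has algebraic forking regardless of its geometry, yet the generic $2$-type of $\mathrm{ACF}_0$ does not --- so that propagation step is the entire content of the direction and cannot be dispatched as a standard induction; you need the theorem that one-basedness is preserved under analyzability together with local modularity of the coordinatizing minimal types.

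The second direction contains a genuine error, and in fact that implication is false as stated with Definition \ref{geometricallytrivialforking}. Your contradiction rests on the claim that an infinite $\mathbb{F}_q$-vector space ``has $\mathrm{SU}$-rank $\omega$.'' An infinite-dimensional vector space over a \emph{finite} field is totally categorical and strongly minimal, of $\mathrm{SU}$-rank $1$; you are perhaps conflating it with Example \ref{vectorspace}, a vector space over an algebraically closed field, which does have $\mathrm{U}$-rank $\omega$. Worse, that very structure refutes the implication you are proving: it is $\omega$-categorical, simple of finite rank, and one-based, yet for independent generic $b, c$ and $a = b + c$ one has $a \nind bc$ while $a \ind b$ and $a \ind c$, so forking is not geometrically trivial. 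The ``only if'' direction can only hold under an extra hypothesis excluding infinite interpretable groups --- which is available in the paper's application via Fact \ref{nogroup} and is presumably implicit in Koponen's formulation --- and a correct proof must use that hypothesis directly (run your group-configuration argument and then invoke Fact \ref{nogroup}) rather than attempt a rank computation on the resulting group.
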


So $T$ is one-based.

So it remains to show that every regular type over a finite set in $T$ is linear. If a regular type $p(x) \in S(A)$, with $A$ finite, is not linear, then by Corollary \ref{pseudolinearitymb}, $F_{Mb}(p\otimes p) > N$ for $p \otimes p$ some type realized by two $A$-independent realizations of $p$, and arbitrarily large $N$. (In fact, when $T$ is of finite rank, $F_{Mb}(p\otimes p) = \infty$ by Proposition \ref{Mboneinfinity}, or because the type $p$ produced in the proof in \cite{Kop17} of Fact \ref{nontrivialregulartype} can be chosen to be Lascar, we can apply the Lascar case of \ref{pseudolinearitymb} to get $F_{Mb}(p\otimes p) = \infty$.) But recalling the definition of $F_{Mb}(p\otimes p) $, this produces, for arbitrarily large $N$, an $A$-indiscernible sequence $\{a_{i}\}_{i < \omega}$ with $|a_{i}| = 2|x|$, any $N$ terms of which are independent, but any $N + 1$ terms of which are dependent, contradicting (recall that $A$ is assumed to be finite) Lemma \ref{fewindseq}. (Note, however, that it is not strictly necessary to use $F_{Mb}$ here: $p$ is pseudolinear by Fact \ref{pseudolinearity}, so $p\otimes p$ has forking extensions $q\in S(B)$, $B \subseteq A$ with canonical bases of unbounded $\mathrm{SU}_{p}$-rank, where $p \otimes p$ ranges over types over $A$ of two independent realizations of $p$ over $A$. By taking Morley sequences over $B$ in the extensions $q$, we get infinitely many indiscernible sequences realizing different types over $A$, because $\mathrm{SU}_{p}(\mathrm{Cb}(q))$ is an invariant of these indiscernible sequences by Remark \ref{rankofcanonicalbaseinvariant}.) This gives the first proof of the supersimple case of the Koponen conjecture, thereby proving Theorem \ref{koponen}.

In the second proof, we avoid having to use the full force of Fact \ref{pseudolinearity} and Corollary \ref{pseudolinearitymb}, restricting ourselves to the case where $\mathrm{SU}(p)=1$. The case where $T$ is supersimple of finite rank is proved as before: If $T$ is not one-based, then by Fact \ref{onebasedminimal}, forking is not geometrically trivial, so by Fact \ref{nontrivialregulartype} produces a type $p$ over a finite set with $\mathrm{SU}(p) = 1$ and nontrivial pregeometry. Then by the rank-one case of Fact \ref{linearitygroup} (which can be found in the proof of 5.5.4 of \cite{TW03}), and Fact \ref{nogroup}, $p$ cannot be linear; note that, by the proof in \cite{Kop17} of Fact \ref{nontrivialregulartype}, the type $p$ in that fact can clearly be chosen to be Lascar, so the main theorem of \cite{DPK03} will also give us an infinite interpretable group. Since $p$ is not linear and has $\mathrm{SU}(p)=1$, we can then apply Fact \ref{pseudolinearitymb}, but this time only the $\mathrm{SU}$-rank $1$ case, to reach a contradiction as in the first proof.

In the case where $T$ is supersimple, it remains to show that $T$ is of finite rank. But otherwise, there is a type $p$, such that $\mathrm{SU}(p)$ is a limit ordinal. Then by Corollary \ref{omegacategoricalsupersimple}, $F_{Mb}(p) = \infty$, again contradicting Lemma \ref{fewindseq}.\footnote{Observe that to get from the general supersimple case to the finite-rank case, we now apply $F_{Mb}$ without referring to canonical bases. However, note that by Corollary \ref{Mbpsquared}, in the case where $\mathrm{SU}(p) = 1$, $k$-linearity of $p$ is equivalent to the maximum value of $F_{Mb}(p \otimes p)$, where $p \otimes p$ ranges over the types of two independent realizations of $p$, being equal to $k$; in this sense, even the first, finite-rank step step of the proof constitutes an application of $F_{Mb}$. We may also get from the general supersimple case to the finite-rank case without referring to $F_{Mb}$. Namely, we show that any supersimple theory that is \textit{not} of finite rank cannot satisfy the conclusion of Lemma \ref{fewindseq}. Let $x$ be a single variable; then if $T$ is supersimple but not of finite rank, for each $n < \omega$ there is a type $p_{n}(x) \in S(A_{n})$ of $\mathrm{SU}$-rank $n$. Let $I_{n}$ be a Morley sequence over $A_{n}$ consisting of realizations of $p_{n}$. Then $\mathrm{SU}(\mathrm{lim}^{+}(I/I)) = n$. But then for $m < n < \omega$, $I_{m} \not \equiv I_{n}$, so the conclusion of Lemma \ref{fewindseq} fails.} 

We can generalize this question to $\mathrm{NSOP}_{1}$ theories (or even further; see \cite{GFA}, \cite{SOPEXP}.) The independence definition of being one-based (Fact \ref{forkingonebased}) can be generalized to $\mathrm{NSOP}_{1}$ theories:

\begin{definition}\label{onebased}
    Let $T$ be $\mathrm{NSOP}_{1}$. Then $T$ is \emph{one-based} if $A \nind^{K}_{M} B$ implies (equivalently, is equivalent to) $\mathrm{acl}^{eq}(AM) \cap \mathrm{acl}^{eq}(BM) \supsetneq M$.\footnote{To be certain that this definition extends the definition of being one-based in simple theories, we should have used $\mathrm{bdd}$ instead of $\mathrm{acl}^{eq}$. However, as related to us in a personal communication with Nicholas Ramsey, there is no known $\mathrm{NSOP}_{1}$ that does not eliminate hyperimaginaries, so in all known examples $\mathrm{acl}^{eq}$ coincides with $\mathrm{bdd}$.}
\end{definition}

We ask whether Koponen's conjecture holds for all $\mathrm{NSOP}_{1}$ theories.

\begin{question}
 \label{nsop1koponenconjecture}
  
  Let $T$ be $\mathrm{NSOP}_{1}$ and have quantifier elimination in a finite relational language. Is $T$ one-based?  
\end{question}

\section{Definability of Morley sequences and the simple Kim-forking conjecture}
\label{kimforking}

In the previous section, we characterized forking in a simple theory with quantifier elimination in a finite relational language, and asked (Question \ref{nsop1koponenconjecture}) whether this characterization extends to Kim-forking in $\mathrm{NSOP}_{1}$ theories with quantifier elimination in a finite relational language. By Lemma \ref{fewindseq}, a more immediate consequence of an $\mathrm{NSOP}_{1}$ theory having quantifier elimination in a finite relational language (i.e., being $\omega$-catgorical and $n$-ary) is that $F_{Mb}(p)$ is finite for $p$ a type over a finite set; here we extend $F_{Mb}$ to Kim-independence:

\begin{definition}\label{mbdef}
    Let $T$ be $\mathrm{NSOP}_{1}$. Let $p(x)$ be a complete type over a model $M$. Let $F_{Mb}(p)$ be the least $n < \omega$ so that, if $\{b_i\}_{i< \omega}$ is an $M$-indiscernible sequence with $b_i \models p(x)$ and $b_i \ind^{K}_{M} b_{<i}$ for all $i \leq n$, then $\{b_i\}_{i< \omega}$ is an $\ind^{K}$-Morley sequence over $M$; if such $n$ does not exist, define $F_{Mb}(p) = \infty$.
\end{definition}
However, in Section \ref{Mbfinitestable}, we saw that it is possible for $F_{Mb}$ to be finite for all types in a theory $T$, even a stable one, with more complicated forking (i.e., $a \ind_{C} b$ does not just coincide with $\mathrm{acl}(aC) \cap \mathrm{acl}(bC) = \mathrm{acl}(C)$). In this section, we will show that when a type $p$ in an $\mathrm{NSOP}_{1}$ theory shares some properties with types in theories eliminating quantifiers in a finite relational language, including $F_{Mb}(p) < \omega$ or definability of $\ind^{K}$-Morley sequences in $p$ (Definition \ref{definablemorleyproperty} below), the Kim-forking extensions of $p$ must still be described by generalizations of the \emph{stable forking conjecture}. The stable forking conjecture, due to Kim (see, e.g., \cite{Kim01} for the the definition of the ``stable forking property," the conclusion of this conjecture) states:

\begin{conj}\label{stableforkingconjecture} (Stable forking conjecture)
Let $T$ be a simple $\mathcal{L}$-theory. If $a \nind_{C}b$, there is a formula $\varphi(x, c) \in \mathrm{tp}(a/Cb)$ (where $c \subset Cb$) which forks over $C$, so that $\varphi(x, y)$ is a stable $\mathcal{L}$-formula.
\end{conj}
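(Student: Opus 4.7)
The plan is to attempt the stable forking conjecture using the quantitative invariants of indiscernible sequences introduced earlier in the paper, in particular $F_{Mb}$, together with the structure theory of canonical bases in simple theories. The first reduction is standard: by extracting a Morley sequence in $\mathrm{tp}(b/C)$, one may replace the hypothesis $a \nind_{C} b$ by the statement that $\mathrm{tp}(a/Cb)$ divides along some $C$-indiscernible sequence $I = (b_i)_{i < \omega}$ with $b_0 = b$, and it suffices to produce a single stable $\mathcal{L}$-formula $\varphi(x,y)$ and parameters $c \subset Cb$ so that $\varphi(x,c) \in \mathrm{tp}(a/Cb)$ forks over $C$.

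The core strategy is to first handle the case where $p = \mathrm{tp}(b/C)$ satisfies $F_{Mb}(p) < \infty$. In this case, the failure of indiscernible sequences in $p$ to be Morley is uniformly controlled: any $n$-independent $C$-indiscernible sequence in $p$ is already Morley, for $n = F_{Mb}(p)$. Using Corollary \ref{rank2} and the relationship between $F_{Mb}$ and the ranks of canonical bases of forking extensions, I would try to locate a formula $\varphi(x,y)$ in $\mathrm{tp}(a/Cb)$ such that each instance $\varphi(x, b')$, for $b' \models p$, encodes exactly this bounded, uniform pattern of dependence. A formula whose dividing behavior is regulated in such a uniform fashion, combined with symmetry of forking in simple theories, should fail to have the order property and therefore be stable. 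The general case would then reduce to this one by analyzing $\mathrm{tp}(b/C)$ in successive layers each of bounded $F_{Mb}$, and combining the stable formulas from each layer via the chain condition, much as in the reduction scheme used in the proof of Theorem \ref{Mboneinfinity}.

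The hard part, and the reason the conjecture has remained open for more than two decades, is the gap between ``uniformly bounded dependence in indiscernible sequences'' and ``stability of the witnessing formula.'' Stability is a very strong combinatorial condition --- absence of any linear order definable by a single formula on all instances --- whereas uniform control of dependence in Morley sequences typically yields only weaker tameness notions, such as NIP or non-order-like behavior restricted to Morley sequences. The stable theories with $1 < F_{Mb}(p) < \infty$ constructed in Section \ref{Mbfinitestable} from free projective planes are consistent with the strategy above, but one could easily imagine simple (perhaps even $\omega$-categorical) analogues whose forking is witnessed only by NIP-but-not-stable formulas; eliminating this possibility in general seems to require a genuinely new combinatorial mechanism that produces a stable formula from the finite-rank invariants of canonical bases, and I would expect essentially all the difficulty of the conjecture to lie here.
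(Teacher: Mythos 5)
You are attempting to prove a statement that the paper does not prove: it is stated as a \emph{conjecture} (Kim's stable forking conjecture), and the whole point of Section \ref{kimforking} is to establish strictly weaker surrogates for it --- stability or simplicity of the (generally undefinable) forking \emph{relation}, and instances of the simple Kim-forking conjecture under strong extra hypotheses --- precisely because the full conjecture remains open. So there is no proof in the paper to compare against, and your proposal does not supply one.

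The genuine gap is the one you flag yourself in your final paragraph: nothing bridges ``uniformly bounded dependence along indiscernible sequences'' (finiteness of $F_{Mb}(p)$, or bounded ranks of canonical bases via Corollary \ref{rank2}) and the existence of a \emph{stable} $\mathcal{L}$-formula witnessing forking. The sentence ``a formula whose dividing behavior is regulated in such a uniform fashion \ldots should fail to have the order property and therefore be stable'' is an assertion, not an argument; uniform control of dividing along Morley sequences constrains a formula only on indiscernible sequences of realizations of $p$, whereas stability is a condition on all instances of $\varphi(x,y)$. The paper's own machinery never produces such a formula: Theorems \ref{global1} and \ref{global2} concern the forking relation rather than a definable formula, and Theorem \ref{definabilitysimplekimforking} yields only a formula simple in $\mathcal{L}(C)$ under additional hypotheses (definable Morley sequences plus lowness or isolation) --- and the remark following Question \ref{simplekimforkingconjecture} records that demanding a stable or simple formula in $\mathcal{L}$ rather than $\mathcal{L}(M)$ already makes the Kim-forking analogue false. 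Your proposed reduction ``in successive layers each of bounded $F_{Mb}$'' also has no foundation: $F_{Mb}(p)$ is frequently $\infty$ in simple theories (the generic $2$-type in $ACF_0$, Example \ref{ACFMB}; any $\omega$-categorical supersimple type of limit rank, Corollary \ref{omegacategoricalsupersimple}), the equidominance decomposition behind Theorem \ref{Mboneinfinity} is available only for supersimple theories of finite rank, and the examples with $1 < F_{Mb}(p) < \infty$ in Section \ref{Mbfinitestable} live in theories that are already stable, where every formula is trivially stable. As written, this is a research programme whose central step is exactly the open problem.
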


This conjecture can be generalized to $\mathrm{NSOP}_{1}$ theories. Since an $\mathrm{NSOP}_{1}$ theory is not necessarily simple, we can also demand only a simple formula that witnesses Kim-forking, rather than a stable formula.

\begin{question}\label{simplekimforkingconjecture} (Stable (or simple) Kim-forking-conjecture)
    Let $T$ be an $\mathrm{NSOP}_{1}$ $\mathcal{L}$-theory, and suppose $a \nind^{K}_{M} b$. Must there be a formula $\varphi(x, b) \in \mathrm{tp}(a/Mb)$, with $\varphi(x, y) \in \mathcal{L}(M)$ so that: 
    
    (a) $\varphi(x, b)$ Kim-forks over $M$.
    
    (b) $\varphi(x, y)$ is stable (or simple, i.e. lacks the tree property) as an $\mathcal{L}(M)$-formula?

\end{question}

For $\mathrm{NSOP}_{1}$ theories with existence (see \cite{DKR22}), where Kim-independence $a \ind_{C} b$ is well-defined for $C$ an arbitrary set, this question is easily modified to hold over sets. (Likewise, when $T$ is an $\mathrm{NSOP}_{1}$ theory with existence, and $p$ is a type over any set, we define $F_{Mb}(p)$ similarly to the case where $p$ is a type over a model.)

\begin{remark}
    The stable Kim-forking conjecture is referred to as the \textit{weak stable Kim-forking conjecture} in \cite{Bos24}, where Bossut observes that it is in fact necessary to modify the conjecture so that $\varphi(x, y)$ ranges over the stable formulas in $\mathcal{L}(M)$, rather than in $\mathcal{L}$: a theory satisfying the latter, stronger statement (i.e., the conclusion of Conjecture \ref{stableforkingconjecture}, where any reference to forking is replaced with Kim-forking) is in fact simple itself, so that statement is in fact false for strictly $\mathrm{NSOP}_{1}$ theories. Similarly, the statement of the simple Kim-forking conjecture must be modified so that $\varphi(x, y)$ ranges over the simple formulas in $\mathcal{L}(M)$. Suppose that that $T$ is $\mathrm{NSOP}_{1}$, and the conclusion of Conjecture \ref{stableforkingconjecture} holds, but replacing any reference to forking with Kim-forking and replacing stable with simple; we show $T$ must be simple. Let $M \prec M' \subset B$ and $a \nind^{K}_{M'}B$. Then by the assumption, there is a formula $\varphi(x, b) \in \mathrm{tp}(a/B)$ that Kim-forks over $M'$ with $\varphi(x, y)$ a simple $\mathcal{L}$-formula. Then $\varphi(x, b)$ divides over $M$, so because $\varphi(x, y)$ is a simple $\mathcal{L}$-formula, $\varphi(x, b)$ Kim-divides over $M$, and $a \nind^{K}_{M}B$. So $\ind^{K}$ is base-monotone in the sense of Proposition 8.8 of \cite{KR17}, so $T$ is simple by that proposition.
\end{remark}

In this section, we will focus on the simple Kim-forking conjecture. There
are other ways to weaken the stable Kim-forking conjecture besides
requiring only a simple formula, rather than a stable formula, to witness
Kim-forking. For example,  one  can assert that the forking (or
Kim-forking) relation itself, which is not necessarily definable, lacks the
order property or tree property.
%In this section, we will focus on the simple Kim-forking conjecture. In addition to weakening the stable Kim-forking conjecture by requiring only a simple formula, rather than a stable formula, to witness Kim-forking, another way to modify the stable forking (or Kim-forking) conjecture is to assert that the forking (or Kim-forking) relation itself, which is not necessarily definable, lacks the order property or tree property. 
In other words, we are interested in showing the stable forking conjecture, or stable (simple) Kim-forking conjecture, is true up to definability of the independence relation. Depending on how this assertion is formulated, this will weaken the stable forking conjecture or the stable (simple) Kim-forking conjecture.

For example, in Theorem 4.1 of \cite{PW13}, Palacin and Wagner prove stability of the forking \textit{relation}, which is not a priori definable, in supersimple CM-trivial theories. More precisely, for the forking relation $$R(x; y_{1} y_{2}) :=x \nind_{y_{1}} y_{2} ,$$ which is not necessarily definable, \cite{PW13} show that $R(x; y_{1}y_{2})$ is stable, or lacks the order property: there are no $\{a^{i}, b_{1}^{i}b_{2}^{i}\}_{i < \omega}$ so that $\models R(a^{i}, b_{1}^{j}b_{2}^{j})$ exactly when $i < j$.

\begin{remark}\label{globalweakerthanlocal}
    Another weaker formulation of stability of $R(x, y_{1}, y_{2}) =: x \nind_{y_{1}} y_{2} $ is to require that there are no $\{a^{i}, b_{1}^{i}b_{2}^{i}\}_{i < \kappa}$, for $\kappa$ large enough to apply the Erdős-Rado theorem, so that $\models R(a^{i}, b_{1}^{j}b_{2}^{j})$ exactly when $i < j$. Similarly, we can formulate stability for $R(x, y) =: x \nind^{K}_{M} y$, where $M$ is some fixed model, so that $R(x, y)$ is defined to be stable if there are no $\{a^{i} b^{i}\}_{i < \kappa}$ so that $\models R(a^{i}, b^{j})$ exactly when $i < j$. Under this formulation, stability of the forking relation, or Kim-forking relation, itself is weaker than the stable forking conjecture. For example, in the case where $R(x, y)$ is the Kim-forking relation, if the conclusion of the stable Kim-forking conjecture holds we can show that there are no $\{a^{i} b^{i}\}_{i < \kappa}$ so that $R(a^{i}, b^{j})$ exactly when $i < j$. If such $\{a^{i} b^{i}\}_{i < \kappa}$ existed, by the Erdős-Rado theorem we can assume $\{a^{i} b^{i}\}_{i < \kappa}$ is indiscernible over $M$. But by the assumption, whenever $i < j$, there is some formula $\varphi(x, b^{j}) \in \mathrm{tp}(a^{i}/Mb^{j})$, which Kim-forks over $M$ so that $\varphi(x, y)$ is stable as a $\mathcal{L}(M)$-formula; by indiscernibility, for $i < j$, $\varphi(x, y)$ does not depend on $i, j $. Then $ \models \varphi(a^{i}, b^{j})$ exactly when $i < j$, contradicting stability of $\varphi(x, y)$.
\end{remark}

\begin{remark}
    On the other hand, the proof of Corollary 4.4 of \cite{PW13} says that, if $T$ is $\omega$-categorical and supersimple, and the forking relation is assumed to be stable (even in the weaker sense described in the previous remark), the conclusion of the stable forking conjecture holds. The proof from \cite{PW13} is as follows: let $a$, $b$ be finite tuples with $a \nind_{C} b$ and, by supersimplicity, find finite $C_{0} \subset C$ so that $a b\nind_{C_{0}} C $. By $\omega$-categoricity, the relation $R(x, y_{1}y_{2}) =: x \nind_{y_{1}} y_{2}$ is definable by a formula $\varphi(x, y_{1}y_{2})$, and since $R(x, y_{1}y_{2})$ is a stable relation, $\varphi(x, y_{1}y_{2})$ is a stable formula. But $\varphi(x, C_{0} b) \in \mathrm{tp}(a/Cb)$, and $\varphi(x, C_{0} b)$ can be seen to fork over $C$, since it forks over $C_{0}$ and $b \ind_{C_{0}} C$.

    Moreover, when the forking relation is assumed to be stable, it is even easier to see that when $T$ is an $\omega$-categorical simple theory, the stable forking conjecture is true over finite sets: the stable formula witnessing this is just the forking relation itself. When $T$ is $\mathrm{NSOP}_{1}$ with existence (so Kim-forking is defined over all sets, including finite sets), and the Kim-forking relation is stable, the stable Kim-forking conjecture is true over finite sets.
\end{remark}

Having discussed what it means for the forking or Kim-forking relation to lack the order property, we must make precise what it means for the Kim-forking relation over a model (with variables for sets, say, of size less than some given bound) to lack the \textit{tree property}, that is, to be simple as a relation. To this end, we give a notation for consistency and inconsistency of relations that are not necessarily definable, in order to extend the definition of simplicity to these relations.

\begin{definition}
    Let $S$ and $T$ be arbitrary sets. Let $R \subseteq S \times T$ be a relation. Let $\{t_{i}\}_{i \in I} \subseteq T$ be an indexed subset of $T$. Then we use the notation that $\{R(x, t_{i})\}_{i \in I}$ is \textit{consistent} to mean that there is some $s \in S$ so that $(s, t_{i}) \in R$ for all $i \in I$. For $k < \omega$, we use the notation that $\{R(x, t_{i})\}_{i \in I}$ is $k$-\textit{inconsistent} to mean that for every $I_{0} \subseteq I$ with $|I_{0}| \leq k$, it is not the case that $\{R(x, t_{i})\}_{i \in I}$ is consistent.
\end{definition}

\begin{definition}\label{simplerelations}
    Let $\kappa_{1}$ be infinite cardinals, $S$ and $T$ arbitrary sets, and $R \subseteq S \times T$ a relation. Then $R$ is \textit{$(\kappa_{1}, \kappa_{2})$-simple} if there is no $k < \omega$ and tree-indexed subset $\{t_{\eta}\}_{\eta \in \kappa_{2}^{< \kappa_{1}}} \subseteq T$, so that, for all $\sigma \in \kappa_{2}^{\kappa_{1}}$, $\{R(x, t_{\sigma|_{\lambda}})\}_{\lambda < \kappa_1}$ is consistent, and for $\eta \in \kappa_{2}^{< \kappa_{1}}$, $\{R(x, t_{\eta \smallfrown \langle \lambda \rangle})\}_{\lambda < \kappa_{2}}$ is $k$-inconsistent.
\end{definition}

Note that if $S = \mathbb{M}^{n}$, $T = \mathbb{M}^{m}$ and $R =: \{(a, b):\mathbb{M}\models \varphi(a, b)\}$ (where $\varphi(x, y)$ is an $\mathcal{L}
$-formula with $|x| = n$ and $|y| = m$) is an $\mathcal{L}$-definable relation, then for $\kappa_{1}, \kappa_{2}$ any infinite cardinals, $R$ is  $(\kappa_{1}, \kappa_{2})$-simple if and only if $\varphi(x, y)$ is a simple formula. That is, Definition \ref{simplerelations} extends the usual notion of simplicity for formulas.

The main tool we will use is the following theorem of Kaplan and Ramsey (Corollary 5.4 of \cite{KR19}), extended to arbitrary sets in $\mathrm{NSOP}_{1}$ theories with existence by Chernikov, Kim and Ramsey in \cite{CKR20} (the hard direction, from (b) to (a), is Theorem 2.8 of \cite{CKR20}, and the easy direction is as in the easy direction of Corollary 5.4 of \cite{KR19}.)

\begin{fact}\label{witnessing}(Kaplan and Ramsey, \cite{KR19}, and Chernikov, Kim and Ramsey, \cite{CKR20})

Suppose $T$ is $\mathrm{NSOP}_{1}$. For any $M \prec \mathbb{M}$ and $b \subset \mathbb{M}$, if $\{b_{i}\}_{i < \omega}$ is an $M$-indiscernible sequence with $b_{0} = b$, then the following are equivalent:

(a) For all formulas $\varphi(x, b)$ that Kim-divide over $M$, $\{\varphi(x, b_{i})\}_{i < \omega}$ is inconsistent.

(b) The sequence $\{b_{i}\}_{i < \omega}$ is an $\ind^{K}$-Morley sequence over $M$: $b_{i} \ind_{M}^{K} b_{< i}$ for $i < \omega$.

If $T$  is $\mathrm{NSOP}_{1}$ with existence, then the above holds even when $M$ is assumed only to be a (small) subset of $\mathbb{M}$, and not necessarily a model.
    
\end{fact}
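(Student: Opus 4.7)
The plan is to handle the two directions separately, relying on Kim's Lemma for $\mathrm{NSOP}_{1}$ theories (Kaplan--Ramsey) and the independence theorem for $\ind^{K}$ over models. Direction (b) $\Rightarrow$ (a) is the more accessible one, while (a) $\Rightarrow$ (b) is the substantive content.

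For (b) $\Rightarrow$ (a), I would start from Kim's Lemma: if $\varphi(x, b)$ Kim-divides over $M$, then $\{\varphi(x, b_{i}')\}_{i < \omega}$ is $k$-inconsistent for any Morley sequence $\{b_{i}'\}$ in any global $M$-invariant type extending $\mathrm{tp}(b/M)$. The reduction from invariant Morley sequences to arbitrary Kim-Morley sequences is the key step: given a Kim-Morley sequence $\{b_{i}\}$, one produces a comparison invariant Morley sequence $\{b_{i}'\}$ with $\{b_{i}'\} \equiv_{M} \{b_{i}\}$. I would build this iteratively using extension together with the independence theorem for $\ind^{K}$ over models, amalgamating at each stage the current Kim-Morley initial segment with the next term drawn from the invariant extension. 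Indiscernibility over $M$ then transfers $k$-inconsistency from $\{b_{i}'\}$ to $\{b_{i}\}$.

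For (a) $\Rightarrow$ (b), I would proceed by induction on $n$ to show $b_{n} \ind^{K}_{M} b_{<n}$. Suppose toward contradiction that $\psi(x, b_{<n}) \in \mathrm{tp}(b_{n}/Mb_{<n})$ Kim-divides over $M$ at the least such $n$. The blocked $n$-tuples $d^{j} = (b_{nj}, \ldots, b_{nj+n-1})$ form an $M$-indiscernible sequence of realizations of $\mathrm{tp}(b_{<n}/M)$, and by the inductive hypothesis they are even a Kim-Morley sequence over $M$. Let $q(y)$ be a global $M$-invariant extension of $\mathrm{tp}(b_{<n}/M)$ witnessing Kim-dividing of $\psi$. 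Amalgamating $\{d^{j}\}$ with an invariant Morley sequence in $q$ via the independence theorem, and extracting by compactness, I would produce a formula $\chi(x, b_{0})$ that Kim-divides over $M$ but for which $\{\chi(x, b_{i})\}_{i < \omega}$ is consistent---directly contradicting (a).

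The main obstacle is the last transfer step: simultaneously arranging that the extracted formula $\chi$ both Kim-divides over $M$ (as measured against an invariant sequence) and is consistent along the original $\{b_{i}\}$. This requires combining the independence theorem, strong finite character of $\ind^{K}$, and iterated amalgamation---one essentially has to realize the whole sequence $\{b_{i}\}$ inside a richer configuration where $\psi$ fails to be consistent along a Morley variant but remains consistent along $\{b_{i}\}$. For the strengthening to arbitrary sets under the existence hypothesis (Chernikov--Kim--Ramsey), I would bootstrap the above by first re-establishing that Kim's Lemma, extension, and the independence theorem for $\ind^{K}$ all continue to hold over an arbitrary $M$ under existence, then running the same two-direction argument verbatim in that setting.
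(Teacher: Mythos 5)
The paper does not actually prove this statement: it is imported as a black-box Fact, with the direction (b) $\Rightarrow$ (a) attributed to \cite{KR19} (Corollary 5.4) and, for the existence version over arbitrary sets, to Theorem 2.8 of \cite{CKR20}. So the only question is whether your outline would yield a proof, and it would not. First, you have the difficulty of the two directions inverted: as the paper itself notes, (b) $\Rightarrow$ (a) (``$\ind^{K}$-Morley sequences witness Kim-dividing'') is the hard direction, the main theorem of a substantial part of \cite{KR19}, whose known proofs run through transitivity of $\ind^{K}$ --- itself a major theorem that your outline never invokes. The direction (a) $\Rightarrow$ (b), which you call ``the substantive content,'' is a short symmetry argument: if $b_{n} \nind^{K}_{M} b_{<n}$, then $b_{<n} \nind^{K}_{M} b_{n}$, so some $\varphi(y, b_{n}) \in \mathrm{tp}(b_{<n}/Mb_{n})$ Kim-divides over $M$; by indiscernibility $b_{<n}$ realizes $\varphi(y, b_{j})$ for every $j \geq n$, hence $\{\varphi(y, b_{j})\}_{j < \omega}$ is consistent while $\varphi(y, b_{0})$ Kim-divides over $M$ (as $b_{0} \equiv_{M} b_{n}$), contradicting (a). Your induction-plus-amalgamation scheme for this direction is unnecessary, and the ``main obstacle'' you flag there dissolves once symmetry is used.

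The genuine gap is in your plan for (b) $\Rightarrow$ (a). You propose to produce an invariant Morley sequence $\{b_{i}'\}$ with $\{b_{i}'\} \equiv_{M} \{b_{i}\}$ by iterating the independence theorem, and then to quote Kim's lemma. This step fails: at stage $n$ you would need $b_{n}'$ to realize simultaneously the transport of $\mathrm{tp}(b_{n}/Mb_{<n})$ to $Mb_{<n}'$ and the restriction $q|_{Mb_{<n}'}$ of the chosen invariant type. These are two \emph{complete} types over the \emph{same} parameter set $Mb_{<n}'$, so they are either equal (which is exactly what is at issue) or contradictory, and the independence theorem cannot help --- it amalgamates types over two mutually Kim-independent parameter sets, not two types over a common base. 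Moreover, the statement you are reducing to (every $\ind^{K}$-Morley sequence in $\mathrm{tp}(b/M)$ has the same type over $M$ as some invariant Morley sequence) immediately implies the theorem via Kim's lemma, so it cannot be obtained by such soft means; the actual proofs instead establish transitivity of $\ind^{K}$ and then show that a realization of $\{\varphi(x, b_{i})\}_{i<\omega}$ along a Kim-Morley sequence yields an $\mathrm{SOP}_{1}$-configuration. Your sketch of the existence case inherits this gap and additionally glosses over the fact that over arbitrary sets there need be no global invariant extensions of $\mathrm{tp}(b/A)$ at all, so Kim-dividing and Kim's lemma there are formulated via nonforking Morley sequences, not invariant ones.
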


If we allow the Kim-forking relation $R$ to have infinitely many variables, then a global analogue of the simple Kim-forking conjecture turns out to hold in \textit{all} $\mathrm{NSOP}_{1}$ theories. Contrast this with the Theorem 4.1 of \cite{PW13}, where the forking relation with finitely many variables is shown to be stable (in the sense of sequences of length $\omega$), but $T$ is required to be not just simple but supersimple and CM-trivial.

\begin{theorem}\label{global1}
    Let $T$ be $\mathrm{NSOP}_{1}$. Let $M \prec \mathbb{M}$, and let $\kappa = 2^{|T|+|M|}$. Let $S_{1} = : \{a \subset \mathbb{M}: |a| < \kappa\}$, and let $S_{2} = \mathbb{M}^{n}$ for some fixed $n$. Let $R =: \{(a, b) \in S_{1} \times S_{2} : a \nind^{K}_{M} b \}$. Then $R$, as a relation on $S_{1} \times S_{2}$, is $(\kappa, \omega)$-simple.
\end{theorem}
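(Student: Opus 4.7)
The plan is to derive a contradiction from the failure of $(\kappa,\omega)$-simplicity: assume there exist $k < \omega$ and a tree $\{b_\eta\}_{\eta \in \omega^{<\kappa}} \subseteq \mathbb{M}^n$ with path-consistent branches (each witnessed by $a_\sigma \in S_1$ with $a_\sigma \nind^{K}_{M} b_{\sigma|_\lambda}$ for all $\lambda < \kappa$) and $k$-inconsistent sibling sets. By finite character of Kim-dividing, for each $\sigma$ and each $\lambda < \kappa$ there is a finite subtuple $a^\lambda_\sigma \subseteq a_\sigma$ and a formula $\varphi_{\sigma,\lambda}(x;y) \in \mathcal{L}(M)$ with $\models \varphi_{\sigma,\lambda}(a^\lambda_\sigma; b_{\sigma|_\lambda})$ and $\varphi_{\sigma,\lambda}(x; b_{\sigma|_\lambda})$ Kim-dividing over $M$. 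Since the number of such pairs is at most $|a_\sigma| + |T| + |M| < \kappa$ and $\kappa$ is uncountable (as $\kappa = 2^{|T|+|M|}$), pigeonhole yields a single pair $(a'_\sigma, \varphi_\sigma)$ working for $\lambda$ in an infinite $\Lambda_\sigma \subseteq \kappa$.

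A second pigeonhole across the $2^\kappa$ branches, using $|\mathcal{L}(M)| \cdot |S^{<\omega}(M)| \leq 2^{|T|+|M|} = \kappa$, produces a family of branches sharing a common formula $\varphi$ and common $M$-type $p = \mathrm{tp}(a'_\sigma/M)$ for the finite witness. Leveraging the $\omega$-branching of the tree, I would select $k$ branches $\sigma_1, \ldots, \sigma_k$ inside this pigeonholed family agreeing on a common prefix $\eta \in \omega^{<\kappa}$ and diverging through $k$ distinct immediate children $b_{\eta \smallfrown \langle i_j \rangle}$, chosen so that the level $|\eta|+1$ lies in each $\Lambda_{\sigma_j}$. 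Each $a'_{\sigma_j}$ then realizes $p$ over $M$ and satisfies $\models \varphi(a'_{\sigma_j}; b_{\eta \smallfrown \langle i_j \rangle})$, with $\varphi(x; b_{\eta \smallfrown \langle i_j \rangle})$ Kim-dividing over $M$ for each $j$.

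The goal of the final step is to amalgamate these witnesses into a single tuple $a^*$ realizing $p$ with $\models \varphi(a^*; b_{\eta \smallfrown \langle i_j \rangle})$ for every $j$; since $\varphi(x; b_{\eta \smallfrown \langle i_j \rangle})$ Kim-divides over $M$, we would obtain $a^* \nind^{K}_{M} b_{\eta \smallfrown \langle i_j \rangle}$ for every $j$, directly contradicting the sibling $k$-inconsistency of $R$. I expect this amalgamation to be the main obstacle: I would carry it out by an iterated application of the $\mathrm{NSOP}_{1}$ independence theorem of Kaplan and Ramsey, which requires the target siblings to be Kim-independent over $M$. The plan for securing that independence is a preliminary tree-modeling extraction replacing $\{b_\eta\}$ by an $M$-indiscernible Morley-tree whose sibling sets are $\ind^{K}$-Morley sequences over the appropriate base, while preserving path $R$-consistency and sibling $R$-$k$-inconsistency (both transferring by compactness and finite character). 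The cardinality $\kappa = 2^{|T|+|M|}$ is precisely calibrated to allow this extraction, and Fact \ref{witnessing} supplies the sibling Kim-dividing behavior along the Morley-tree that enables the independence theorem to combine the $a'_{\sigma_j}$ into the desired common witness.
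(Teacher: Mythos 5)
The final amalgamation step is a genuine gap, and it is the step on which your whole argument rests. The Kaplan--Ramsey independence theorem amalgamates \emph{Kim-nonforking} types over a model: its hypotheses require $a'_{\sigma_j} \ind^{K}_{M} b_{\eta \smallfrown \langle i_j \rangle}$ for each $j$, whereas your witnesses satisfy the exact negation by construction ($\varphi(x, b_{\eta \smallfrown \langle i_j \rangle})$ Kim-divides over $M$ and is realized by $a'_{\sigma_j}$). Moreover its conclusion would deliver $a^{*} \ind^{K}_{M} b_{\eta \smallfrown \langle i_1 \rangle} \cdots b_{\eta \smallfrown \langle i_k \rangle}$ --- the opposite of the $a^{*} \nind^{K}_{M} b_{\eta \smallfrown \langle i_j \rangle}$ you need. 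Your preliminary step makes matters worse rather than better: once the siblings form an $\ind^{K}$-Morley sequence over $M$, Fact \ref{witnessing} ((b) $\Rightarrow$ (a)) says that $\{\varphi(x, b_{\eta \smallfrown \langle i \rangle})\}_{i}$ is \emph{inconsistent}, so you are asking for a simultaneous realization of Kim-dividing instances along a configuration where Kim's lemma is designed to forbid exactly that. There is no amalgamation theorem for Kim-forking types, and one cannot exist in general.

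The repair is much simpler than what you attempt, and it shows why the theorem is stated with $S_{1}$ consisting of \emph{all} sets of size $< \kappa$: no amalgamation is needed at all. Take $k$ branches $\sigma_1, \ldots, \sigma_k$ through $k$ distinct children of a fixed node $\eta$, with branch-witnesses $a_{\sigma_1}, \ldots, a_{\sigma_k}$. Each $a_{\sigma_j} \nind^{K}_{M} b_{\eta \smallfrown \langle i_j \rangle}$, so by monotonicity of $\ind^{K}$ the single set $a^{*} = a_{\sigma_1} \cup \cdots \cup a_{\sigma_k} \in S_{1}$ satisfies $a^{*} \nind^{K}_{M} b_{\eta \smallfrown \langle i_j \rangle}$ for every $j$, contradicting $k$-inconsistency of the siblings directly. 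This closure of $S_{1}$ under small unions is precisely the mechanism the paper's proof exploits, though it deploys it differently: there one builds, by transfinite induction, a branch whose nodes form an $\ind^{K}$-Morley sequence over $M$ (at each successor step, if every child Kim-forked with the path built so far, that path would itself be a single element of $S_{1}$ witnessing consistency of all the sibling instances), and then derives a contradiction from a bounded-weight claim proved via your pigeonhole-a-formula idea, type-definability of Kim-Morley sequences, and Fact \ref{witnessing}. So your pigeonholing and your appeal to Kim's lemma are the right ingredients, but they belong in a ``no small set can Kim-fork with $\kappa$-many mutually Kim-independent tuples'' argument, not in an amalgamation.
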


In this proof and in the proofs of Theorems \ref{global2} and \ref{definabilitysimplekimforking}, it may have been possible to shorten the proofs using results on indiscernible trees; see, e.g. \cite{Sc15} or \cite{TT12} for an overview, as well as some new results in this area. To keep the proofs self-contained, we instead give direct proofs.

\begin{proof}
    Suppose not, and let $\{b_{\eta}\}_{\omega^{< \kappa}}$ witness this for some degree of inconsistency $k < \omega$. We must account for the notational subtlety about limit ordinals pointed out in Remark 5.8 of \cite{KR17}, so let $\mathrm{Succ}(\kappa)$ denote the successor ordinals, counting $0$, less than $\kappa$. By transfinite induction on $\kappa'$, we construct, for $\kappa' \leq \kappa$, some $\sigma_{\kappa'} \in \omega^{\kappa'}$ so that, for all $\lambda \in \mathrm{Succ}(\kappa')$, $b_{\sigma_{\kappa'}| \lambda} \ind_{M}^{K} \{b_{\sigma_{\kappa'}| \lambda'}\}_{\lambda' \in \mathrm{Succ}(\lambda)} $, and so that $\sigma_{\kappa''} \subset \sigma_{\kappa'}$ for all $\kappa'' < \kappa'$.  The case for $\kappa' = 0$ and $\kappa'$ a limit ordinal is clear, so it remains to assume $\sigma_{\kappa'}$ already constructed and find some $m < \omega$ so that $\sigma_{\kappa' +1}=: \sigma_{\kappa'} \smallfrown \langle m \rangle $ works. By the induction hypothesis, it suffices to find some $m < \omega$ so that $b_{\sigma_{\kappa'} \smallfrown \langle m \rangle} \ind_{M}^{K} \{b_{\sigma_{\kappa'}| \lambda'}\}_{\lambda' \in \mathrm{Succ}(\kappa')} $. Suppose otherwise, so by symmetry, $  \{b_{\sigma_{\kappa'}| \lambda'}\}_{\lambda' \in \mathrm{Succ}(\kappa')}  \nind_{M}^{K} b_{\sigma_{\kappa'} \smallfrown \langle m \rangle}$ for all $m < \omega$. Then $ a = \cup  \{b_{\sigma_{\kappa'}| \lambda'}\}_{\lambda' \in \mathrm{Succ}(\kappa)} $ has cardinality less than $\kappa$, so it witnesses that $\{R(x , b_{\eta \smallfrown \langle m \rangle})\}_{m < \omega }$ is consistent, when $\eta = \sigma_{\kappa'}$. In particular, $\{R(x , b_{\eta \smallfrown \langle m \rangle})\}_{m < \omega }$ cannot be $k$-inconsistent, contradicting that $\{b_{\eta}\}_{\eta \in \omega^{< \kappa}}$ is a witness to the failure of $(\kappa, \omega )$-simplicity of $R$.

This completes the construction, so let $\sigma = \sigma_{\kappa}$. Then $\{R(x, b_{\sigma|_{\lambda}})\}_{\lambda \in \mathrm{Succ}(\kappa)}$ is consistent. Reindexing, we get a sequence $\{ b_{i}\}_{i < \kappa}$ so that $\{R(x, b_i)\}_{i < \kappa}$ is consistent, and $b_{i} \ind^{K}_{M} b_{< i}$ for $i < \kappa$. So some $a \subseteq \mathbb{M}$, with $|a| < \kappa$, witnesses the consistency of $\{R(x, b_i)\}_{i < \kappa}$, meaning that $a \nind_{M}^{K} b_{i}$ for $i < \kappa$. This contradicts the following claim, essentially Lemma C.7 of \cite{KTW22}:

\begin{claim}\label{boundedweight}
    Let $|a| < \kappa$. Then there is no sequence $\{b_{i}\}_{i < \kappa}$, with $b_{i} \ind^{K}_{M} b_{< i}$ for $i < \kappa$, and $a \nind_{M}^{K} b_{i}$ for $i < \kappa$. 
\end{claim}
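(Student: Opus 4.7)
The plan is to assume for contradiction that such $a$ and $(b_i)_{i < \kappa}$ exist, then derive inconsistency by combining a pigeonhole reduction on formulas with Fact~\ref{witnessing}.

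First, for each $i < \kappa$, the hypothesis $a \nind^{K}_{M} b_i$ provides a formula $\varphi_i(x, y)$ over $M$, a finite sub-tuple $a_i \subset a$, and a finite sub-tuple $b'_i \subset b_i$ with $\models \varphi_i(a_i, b'_i)$ and $\varphi_i(x, b'_i)$ Kim-dividing over $M$. The number of possible configurations $(\varphi_i, a_i, b'_i)$ is bounded above by $(|T|+|M|) \cdot |a|^{<\omega} \cdot 2^n$, which is strictly less than $\kappa = 2^{|T|+|M|}$ since $|T|+|M| < \kappa$ by Cantor's theorem and $|a| < \kappa$ by hypothesis. Pigeonhole then produces an infinite subset $I \subseteq \kappa$, a single formula $\varphi(x, y)$ over $M$, a fixed $a^* \subset a$, and a uniform-position sub-tuple $b^*_i \subset b_i$ for each $i \in I$, such that for every $i \in I$ we have $\models \varphi(a^*, b^*_i)$ and $\varphi(x, b^*_i)$ Kim-divides over $M$.

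Second, $(b^*_i)_{i \in I}$, enumerated in the order inherited from $\kappa$, remains an $\ind^{K}$-Morley sequence over $M$: since $b^*_j \subseteq b_j$, the assumption $b_i \ind^{K}_{M} b_{<i}$ combined with monotonicity of Kim-independence gives $b^*_i \ind^{K}_{M} \{b^*_j : j \in I, j < i\}$. Re-indexing, denote this by $(c_k)_{k < \omega}$. By a standard compactness-plus-Ramsey argument, extract an $M$-indiscernible sequence $(d_k)_{k < \omega}$ whose $M$-EM-type is realized inside $(c_k)_{k < \omega}$. By EM-preservation of $M$-formulas, every finite conjunction $\bigwedge_{k \in S} \varphi(x, d_k)$ (for $S \subset \omega$ finite) is consistent, so by compactness there is some $a' \in \mathbb{M}$ with $\models \varphi(a', d_k)$ for all $k$; moreover, since $\tp(d_0/M) = \tp(c_{k_0}/M)$ for some $k_0$ and Kim-dividing depends only on the $M$-type of the parameter, $\varphi(x, d_0)$ Kim-divides over $M$.

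If $(d_k)_{k < \omega}$ is known to be an $\ind^{K}$-Morley sequence over $M$, then Fact~\ref{witnessing} applied to it and the Kim-dividing formula $\varphi(x, d_0)$ yields that $\{\varphi(x, d_k)\}_{k<\omega}$ is inconsistent, contradicting the existence of $a'$. The main obstacle is precisely this step: being $\ind^{K}$-Morley is not transparently an EM-invariant of a sequence, so preserving it through the EM-extraction requires extra input. The resolution relies on Kaplan and Ramsey's machinery (\cite{KR17}, \cite{KR19}): in $\mathrm{NSOP}_{1}$ theories over models, the $M$-EM-type of an $M$-indiscernible $\ind^{K}$-Morley sequence in $\tp(b/M)$ is unique, and coincides with the $M$-EM-type generically realized by any $\ind^{K}$-Morley sequence starting in $\tp(b/M)$; applying this to $(c_k)_{k<\omega}$ identifies the $M$-EM-type of $(d_k)_{k<\omega}$ with this canonical EM-type, whence $(d_k)_{k<\omega}$ is itself an $M$-indiscernible $\ind^{K}$-Morley sequence, completing the proof.
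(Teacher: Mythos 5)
Your overall architecture matches the paper's: pigeonhole down to a single Kim-dividing formula $\varphi$, extract an indiscernible sequence, and contradict the direction (b) $\Rightarrow$ (a) of Fact \ref{witnessing}. You also correctly isolate the one genuinely delicate step, namely that the extracted indiscernible sequence must again be an $\ind^{K}$-Morley sequence over $M$. But the way you resolve that step does not work. The claim that ``the $M$-EM-type of an $M$-indiscernible $\ind^{K}$-Morley sequence in $\tp(b/M)$ is unique'' is false, already in simple theories (where $\ind^{K}$ coincides with forking independence over models): in the random graph, take $p \in S(M)$ nonalgebraic; both the $M$-indiscernible sequence of distinct realizations of $p$ forming a clique and the one forming an anticlique are $\ind^{K}$-Morley over $M$, and they have different EM-types. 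What the Kaplan--Ramsey machinery gives is that all $\ind^{K}$-Morley sequences over $M$ agree about \emph{witnessing Kim-dividing} --- that is Fact \ref{witnessing} itself --- not that they share an EM-type. So the step ``identify the EM-type of $(d_k)$ with a canonical one, hence $(d_k)$ is $\ind^{K}$-Morley'' has no support; moreover you would be applying a statement about $M$-indiscernible Morley sequences to $(c_k)$, which need not be indiscernible.

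The correct repair --- and the route the paper takes in Subclaim \ref{typedefinablemorley} --- is that the condition ``$b'_i \models p$ and $b'_i \ind^{K}_{M} b'_{<i}$ for all $i$'' is type-definable over $M$ in the sequence variables: by symmetry it is equivalent to $b'_{<i} \ind^{K}_{M} b'_{i}$, i.e.\ to the statement that no finite tuple from $b'_{<i}$ satisfies any formula $\psi(\tilde{x}, b'_{i})$ where $\psi(\tilde{x},\overline{y})$ is over $M$ and Kim-forks over $M$ on realizations of $p$. A finitary, type-definable condition of this shape is inherited by any sequence realizing the EM-type of $(c_k)$ over $M$, so the extracted indiscernible sequence is automatically $\ind^{K}$-Morley, and the rest of your argument (consistency of $\{\varphi(x,d_k)\}_{k<\omega}$ by compactness, Kim-dividing of $\varphi(x,d_0)$ by invariance under $\equiv_M$) then goes through. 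Everything else in your write-up --- the cardinality bookkeeping for the pigeonhole, passing to subtuples via monotonicity, and extracting over $M$ rather than over $Ma$ at the cost of producing a fresh realization $a'$ --- is fine.
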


\begin{proof}
    First, we may assume, by choice of $\kappa$, that $b_i \equiv_{M} b_{j}$ for $i < j < \kappa$. Fix an enumeration $a=\{a_{i}\}_{i < \lambda}$ of $a$, where $\lambda < \kappa$, and let $\overline{x} = \{x_{i}\}_{i < \lambda}$; let $\overline{y} = \{y_{i}\}_{i < n}$. For each $i < \kappa$, let $\varphi_{i}(\overline{x}, b_{i}) \in \mathrm{tp}(a/Mb_{i}) $ Kim-fork over $M$. By choice of $\kappa$, and the pigeonhole principle, we can assume that $\varphi(\overline{x}, \overline{y}) =: \varphi_{i}(\overline{x}, \overline{y})$ does not depend on $i$. We need the following subclaim, which is folklore:

    \begin{subclaim}\label{typedefinablemorley}
        The set of sequences $\{b'_{i}\}_{i < \kappa}$ with $b'_{i} \models p(\overline{y}) =: \mathrm{tp}(b_{0}/M)$ and $b'_{i} \ind^{K}_{M} b'_{< i}$ for $i < \kappa$ is type-definable over $M$.
    \end{subclaim}

\begin{proof}
    By symmetry, this is the set of sequences $\{b'_{i}\}_{i < \kappa}$ with $b'_{i} \models p(\overline{y})$ and $b'_{<i} \ind^{K}_{M} b'_{ i}$ for $i < \kappa$. Let $S$ be the set of formulas $\psi(\tilde{x}, \overline{y})$ with parameters in $M$, where $\tilde{x}=\{x_{i}\}_{i < \omega}$, so that $\psi(\tilde{x}, b)$ Kim-forks over $M$ for some (any) $b \models p(\overline{y})$. Then the above set of sequences is just the set of sequences $\{b'_{i}\}_{i < \kappa}$ with $b'_{i} \models p(\overline{y})$ and no tuple from $b'_{<i}$ satisfying $\psi(\tilde{x}, b'_{i})$ for any $\psi(\tilde{x}, \overline{y})$ belonging to $S$; we easily see this is type-definable.
\end{proof}

Find an $Ma$-indiscernible, and a fortiori $M$-indiscernible, sequence $\{b'_{i}\}_{i < \omega}$ with the same EM-type as $\{b_{i}\}_{i < \kappa}$ over $Ma$; by the subclaim, this will be an $\ind^{K}$-Morley sequence over $M$, and $a \models \{\varphi(\overline{x}, b'_{i})\}_{i < \omega} $. So $\{\varphi(\overline{x}, b'_{i})\}_{i < \omega} $ is consistent, contradicting the direction (b) $\Rightarrow$ (a) of \ref{witnessing}.
    
\end{proof}
    
\end{proof}

\begin{remark}
    The only place in the above where we specifically needed $\kappa = 2^{|T|+|M|}$ is in assuming that  $b_i \equiv_{M} b_{j}$ for $i < j < \kappa$. If we had made $S_{2}$ the set of realizations of some complete $n$-type over $M$, we could have chosen $\kappa = (|T| + |M|)^{+}$.
\end{remark}

The previous result, while true of all $\mathrm{NSOP}_{1}$ theories, required infinitely many variables $x$ in the Kim-independence relation $x \nind^{K}_{M} y$; indeed,  for $(\omega, \kappa)$-simplicity, $x$ had to be a variable ranging over all sets of size $\kappa$. This differs from \cite{PW13}, where all variables are finite. Here we show that, when $y$ is a variable for a realization of $p(y) \in S(M)$ where $F_{Mb}(p) < \infty$ (Definition \ref{mbdef}), our results extend to a \textit{finitary} Kim-independence relation over a model. (Our arguments also give us a version, for $x$ ranging over finite sets, of the previous theorem on general $\mathrm{NSOP}_{1}$ theories.)

\begin{theorem}\label{global2}
    Let $T$ be $\mathrm{NSOP}_{1}$, and let $\kappa = (|M| + |T|)^{+}$. Fix a complete $n$-type $p(y)$ over a model $M$. Let $F_{Mb}(p)\leq m < \omega+1$; if $m < \omega$, let $S_{1}= \mathbb{M}^{n(m-1)}$, and if $m = \omega$, let $S_{1}= \{a \subseteq \mathbb{M}: |a| < \omega\}$. Let $S_{2} = \{b \in \mathbb{M}^{n}: b \models p(y)\}$, and let $R =: \{(a, b) \in S_{1} \times S_{2} : a \nind^{K}_{M} b \}$. Then $R$, as a relation on $S_{1} \times S_{2}$, is $(\kappa, \kappa)$-simple.
\end{theorem}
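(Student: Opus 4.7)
The plan is to adapt the proof of Theorem \ref{global1} to the present setting, where $S_{1}$ consists of finite tuples of length $n(m-1)$ (when $m < \omega$) or of arbitrary finite sets (when $m = \omega$), rather than arbitrary small subsets. Suppose for contradiction that $R$ is not $(\kappa,\kappa)$-simple, witnessed by a tree $\{b_{\eta}\}_{\eta \in \kappa^{<\kappa}} \subseteq S_{2}$ with $k$-inconsistency among siblings for some $k < \omega$; since $b_{\eta} \in S_{2}$, each $b_{\eta} \models p(y)$, so all $b_{\eta}$ share type $p$ over $M$. As in Theorem \ref{global1}, the goal is to construct a Kim-Morley sequence $\{b_{\sigma|\lambda}\}_{\lambda < \kappa}$ along a branch of the tree, and then derive a contradiction from the path's witness via an adaptation of Claim \ref{boundedweight} for finite $a$.

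First I would build a branch $\sigma \in \kappa^{\kappa}$ by transfinite recursion, ensuring at each successor stage $\kappa'+1$ that $b_{\sigma_{\kappa'} \smallfrown \langle \mu_{\kappa'} \rangle} \ind_{M}^{K} \{b_{\sigma_{\kappa'}|\lambda'}\}_{\lambda' \in \mathrm{Succ}(\kappa')}$ for some chosen $\mu_{\kappa'} < \kappa$, and taking unions at limits. The key is the successor step: if no such $\mu$ exists, then by symmetry each $b_{\sigma_{\kappa'} \smallfrown \langle \mu \rangle}$ Kim-forks over $M$ with the past path, so by finite character of $\ind^{K}$ over models there is, for each $\mu$, a finite subset $a_{\mu}$ of the past path with $a_{\mu} \nind_{M}^{K} b_{\sigma_{\kappa'} \smallfrown \langle \mu \rangle}$. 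In the case $m = \omega$, pigeonholing over finite subsets of the past path (whose number is $|\kappa'|^{<\omega} < \kappa$ by regularity of $\kappa$) produces a single finite $A_{0} \in S_{1}$ with $A_{0} \nind_{M}^{K} b_{\sigma_{\kappa'} \smallfrown \langle \mu \rangle}$ for $\kappa$-many, hence for at least $k$-many, $\mu$, contradicting $k$-inconsistency. In the case $m < \omega$ the common tuple must further have length at most $n(m-1)$, and this is where $F_{Mb}(p) \leq m$ enters: using that the past path is inductively $M$-indiscernible and Kim-Morley in $p$, I plan to show that each $a_{\mu}$ may be replaced by a subsequence $a'_{\mu}$ consisting of at most $m-1$ of the $n$-tuples lying on the past path, still satisfying $a'_{\mu} \nind_{M}^{K} b_{\sigma_{\kappa'} \smallfrown \langle \mu \rangle}$. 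Then the pigeonhole is carried out on $(m-1)$-element subtuples of the past path (still fewer than $\kappa$), producing a common tuple $A'$ of length at most $n(m-1)$; padding to length exactly $n(m-1)$ by adjoining parameters from $M$ preserves Kim-forking and places $A'$ in $S_{1}$, contradicting $k$-inconsistency.

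Once the Kim-Morley branch $\sigma \in \kappa^{\kappa}$ has been built, consistency of the path produces $a \in S_{1}$ with $a \nind_{M}^{K} b_{\sigma|\lambda}$ for every $\lambda < \kappa$. Since every $b_{\sigma|\lambda}$ realizes $p$ and $\kappa = (|M|+|T|)^{+} > |\mathcal{L}(M)|$, the formula-pigeonhole argument of Claim \ref{boundedweight} applies (the type-pigeonhole there is automatic here): I obtain a uniform $\varphi(\overline{x}, y) \in \mathcal{L}(M)$ such that $\varphi(a, b_{\sigma|\lambda})$ Kim-divides over $M$ for $\kappa$-many $\lambda$. Extracting an $Ma$-indiscernible sequence of the same $M$-EM-type, invoking Subclaim \ref{typedefinablemorley} to see it remains Kim-Morley over $M$, and then applying Fact \ref{witnessing} exactly as in the proof of Claim \ref{boundedweight}, yields the final contradiction.

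The principal obstacle is the weight-reduction step described above: from $F_{Mb}(p) \leq m$, showing that any Kim-forking between a realization of $p$ and a finite segment of an $M$-indiscernible Kim-Morley sequence in $p$ can already be located in at most $m-1$ of that segment's terms. I plan to prove this as a relative form of Claim \ref{boundedweight}: after passing to an $Mb$-indiscernible extraction (for $b = b_{\sigma_{\kappa'} \smallfrown \langle \mu \rangle}$) that preserves a chosen Kim-dividing witness, Subclaim \ref{typedefinablemorley} keeps the extracted sequence Kim-Morley over $M$; then $F_{Mb}(p) \leq m$, combined with Fact \ref{witnessing} applied to the blocks of the extracted sequence, forces the Kim-dividing witness down to the first $m-1$ terms. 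This is the one genuinely new ingredient over Theorem \ref{global1}, and the place where I anticipate the technical delicacy.
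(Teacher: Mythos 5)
Your overall architecture (construct a branch by transfinite recursion, then contradict via a uniform Kim-dividing formula, Subclaim \ref{typedefinablemorley}, and Fact \ref{witnessing}) matches the paper's, and your $m=\omega$ case and your final step are sound. But in the $m<\omega$ case there is a genuine gap, concentrated exactly where you flagged the delicacy: the weight-reduction lemma. You need that whenever $b\models p$ Kim-forks over $M$ with a finite subset $J$ of an $M$-indiscernible $\ind^{K}$-Morley sequence in $p$, it already Kim-forks with some $J'\subseteq J$ of at most $m-1$ terms. This does not follow from $F_{Mb}(p)\leq m$ and is false in general. The paper's own witness to $F_{Mb}(p)\geq n$ at the end of the proof of Theorem \ref{Mbn} is a counterexample in spirit and in substance: in the stable graph theory forbidding finite subgraphs of minimum degree $N+1$ (so $F_{Mb}\leq N$ for every type), take $b,c_{1},\dots,c_{N}$ realizing the generic $1$-type over a model $M$, pairwise and jointly free except for a single common neighbour $d\notin M$. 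Then $c_{1},\dots,c_{N}$ is (the start of) a Morley sequence, $d\in\mathrm{acl}(Mbc_{1}\cdots c_{N})$ forces $b\nind_{M}c_{1}\cdots c_{N}$, yet $b\ind_{M}c_{i_{1}}\cdots c_{i_{N-1}}$ for every proper subset, since $d$ then has only $N$ neighbours in the configuration. With $m=N$ the minimal forking witness has exactly $m$ terms, so your pigeonhole produces a common tuple of length $nm$, which is not in $S_{1}=\mathbb{M}^{n(m-1)}$ and does not contradict $k$-inconsistency. The underlying issue is that $F_{Mb}$ is a statement about indiscernible sequences, and $b$ together with the $c_{i}$ is not one, so there is no sequence to which to apply the hypothesis in your proposed extraction.

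The paper sidesteps the need for any such reduction by weakening the invariant carried along the branch: rather than demanding that each new node be Kim-independent from the entire past path (a full $\ind^{K}$-Morley sequence), it demands only that each node be Kim-independent from every subset of fewer than $m$ path terms. A failure of the successor step then, by definition, hands you for each sibling a forking witness of size at most $m-1$ --- no reduction required --- and after a pigeonhole over the fewer than $\kappa$ such subsets, the common witness lies in $S_{1}$ and contradicts $k$-inconsistency. The price is that the completed branch is only $(<m)$-wise Kim-independent rather than Morley; $F_{Mb}(p)\leq m$ is then invoked exactly once, at the very end, in the analogue of Claim \ref{boundedweight}: after extracting an $Ma$-indiscernible sequence (which remains $(<m)$-wise Kim-independent because that condition is type-definable, as in Subclaim \ref{typedefinablemorley}), the hypothesis upgrades it to a genuine $\ind^{K}$-Morley sequence before Fact \ref{witnessing} is applied. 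If you restructure your induction hypothesis in this way, the remainder of your argument goes through essentially as written.
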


\begin{proof}
    Suppose not, and let $\{b_{\eta}\}_{\omega^{< \kappa}}$ witness this for some degree of inconsistency $k < \omega$. By transfinite induction on $\kappa'$, we construct, for $\kappa' \leq \kappa$, some $\sigma_{\kappa'} \in \kappa^{\kappa'}$ so that, for all $\lambda \in \mathrm{Succ}(\kappa')$, $b_{\sigma_{\kappa'}| \lambda} \ind_{M}^{K} \{b_{\sigma_{\kappa'}| \lambda'}\}_{\lambda' \in I_{0}} $ for every $I_{0} \subset \mathrm{Succ}(\kappa')$ with $|I_{0}| < m$. Similarly to before, it remains to assume $\sigma_{\kappa'}$ already constructed and find some $\gamma < \kappa$ so that $\sigma_{\kappa' +1}=: \sigma_{\kappa'} \smallfrown \langle \gamma \rangle $ works.  By the induction hypothesis, it suffices to find some $\gamma < \kappa$ so that $b_{\sigma_{\kappa'} \smallfrown \langle \gamma \rangle} \ind_{M}^{K} \{b_{\sigma_{\kappa'}| \lambda'}\}_{\lambda' \in I_{0}} $ for every $I_{0} \subset \mathrm{Succ}(\kappa')$ with $|I_{0}| < m$. Suppose otherwise, so by symmetry, for all $\gamma < \kappa$, there is some $I_{0} \subset \mathrm{Succ}(\kappa')$ with $|I_{0}| < m$ so that $  \{b_{\sigma_{\kappa'}| \lambda'}\}_{\lambda' \in I_{0}}  \nind_{M}^{K} b_{\sigma_{\kappa'} \smallfrown \langle \gamma \rangle}$. By the pigeonhole principle and choice of $\kappa$, there is some $I_{0} \subset \mathrm{Succ}(\kappa')$ with $|I_{0}| < m$ so that there is an infinite set $I \subseteq \kappa$ so that, for every $\gamma \in I$, $  \{b_{\sigma_{\kappa'}| \lambda'}\}_{\lambda' \in I_{0}}  \nind_{M}^{K} b_{\sigma_{\kappa'} \smallfrown \langle \gamma \rangle}$. Then $ \cup \{b_{\sigma_{\kappa'}| \lambda'}\}_{\lambda' \in I_{0}}$ can be indexed as a tuple in $S_{1}$, so it witnesses that $\{R(x , b_{\eta \smallfrown \langle \gamma \rangle})\}_{\gamma \in I }$ is consistent, for $\eta = \sigma_{\kappa'}$ In particular, $\{R(x , b_{\eta \smallfrown \langle \gamma \rangle})\}_{\gamma < \kappa }$ cannot be $k$-inconsistent.

    This completes the construction, so let $\sigma = \sigma_{\kappa}$. Then $\{R(x, b_{\sigma|_{\lambda}})\}_{\lambda \in \mathrm{Succ}(\kappa)}$ is consistent. Reindexing, we get a sequence $\{ b_{i}\}_{i < \kappa}$ so that $\{R(x, b_i)\}_{i < \kappa}$ is consistent, and $b_{i} \ind^{K}_{M} \{b_{i}\}_{i \in I_{0}}$ for every $i < \kappa$ and $I_{0} \subset [i]$ with $|I_{0}| < m$. If $m = \omega$ then by the finite character of $\ind^{K}$, this just says $b_{i} \ind^{K}_{M} b_{< i}$ for every $i < \kappa$. Again, there is some $a \subseteq \mathbb{M}$, with $|a| < \kappa$ (indeed, $|a| \leq (m-1)n$), which witnesses the consistency of $\{R(x, b_i)\}_{i < \kappa}$, meaning that $a \nind_{M}^{K} b_{i}$ for $i < \kappa$. If $m = \omega$, we proceed exactly as in the above. In case $m < \omega$, a contradiction follows from the following version of Claim \ref{boundedweight}:

    \begin{claim}
        Let $|a| < \kappa$. Then there is no sequence $\{b_{i}\}_{i < \kappa}$, with  $b_{i} \ind^{K}_{M} \{b_{i}\}_{i \in I_{0}}$ for every $i < \kappa$ and $I_{0} \subset [i]$ with $|I_{0}| < m$, and $a \nind_{M}^{K} b_{i}$ for $i < \kappa$. 
    \end{claim}

    \begin{proof}
        That $b_i \equiv_{M} b_{j}$ for $i < j < \kappa$ is now by definition on $S_{2}$. Again, we may assume there is some formula $\varphi(\overline{x}, \overline{y})$ so that $a \models \{\varphi(x, b_{i})\}_{i < \kappa}$ and $\varphi(x, b)$ Kim-forks over $M$ for $b \models p(\overline{y})$. The proof of the following is a straightforward generalization of the proof of Subclaim \ref{typedefinablemorley}:

\begin{subclaim}
   \label{nindependencetypedefinable}
    
    The set of sequences $\{b'_{i}\}_{i < \kappa}$ with $b'_{i} \models p(\overline{y}) =: \mathrm{tp}(b_{0}/M)$, and with  $b_{i} \ind^{K}_{M} \{b_{i}\}_{i \in I_{0}}$ for every $i < \kappa$ and $I_{0} \subset [i]$ with $|I_{0}| < m$, is type-definable over $M$.
\end{subclaim}

    Again, extract an $Ma$-indiscernible sequence $\{b'_i\}_{i < \omega}$ sharing the same EM-type as $\{b_i\}_{i < \kappa}$ over $Ma$. By the subclaim, $\{b'_i\}_{i < \omega}$ will in particular be an $M$-indiscernible sequence with $b'_{i} \ind^{K}_{M} \{b'_{i}\}_{i \in I_{0}}$ for every $i < \omega$ and $I_{0} \subset [i]$ with $|I_{0}| < m$. Because $F_{Mb}(p) \leq m$, $\{b'_i\}_{i < \omega}$ will be an $\ind^{K}$-Morley sequence over $M$. Conclude as in the proof of Claim \ref{boundedweight}.
        
    \end{proof}

\end{proof}

\begin{remark}
     The proof of the general part of Theorem \ref{global2} should also hold when the argument $x$ of $R(x, y)$ is allowed to range over, for example, countable sets. In \textit{all} $\mathrm{NSOP}_{1}$ theories, the Kim-independence relation $x \nind^{K}_{M} y$ can then be seen by Theorems \ref{global1} and \ref{global2} to lack the $(\kappa, \kappa)$-tree property for sufficiently large $\kappa$ as long as $x$ is a variable for an infinite or unbounded finite set. However, when $x$ and $y$ are both finite sets of variables, it is unknown in general whether the relation $x \ind_{M} y$ is $(\kappa, \kappa)$-simple for large enough $\kappa$. Like in Remark \ref{globalweakerthanlocal}, we still do see that $(\kappa, \kappa)$-simplicity of the relation $R(x, y)=: \{a \nind^{K}_{M} b, a \in \mathbb{M}^{m}, b \in \mathbb{M}^{n}\}$, for $m$, $n$ finite and for large enough $\kappa$, does follow from the stable Kim-forking conjecture itself. To see this, when $\kappa$ is large enough and there is a tree witnessing failure of the $(\kappa, \kappa)$-tree property, if the simple Kim-forking conjecture is true we can find a subtree where the consistency of the paths is witnessed by a single simple $\mathcal{L}(M)$-formula $\varphi(x, y)$, but this subtree will witness that $\varphi(x, y)$ has the tree property, a contradiction. (To find this subtree, we can, say, choose an $\omega^{|T|^{+}}$-subtree inductively so that all paths have the same type, then use the fact that the consistency of this path will involve the same formula appearing infinitely many times to get a further subtree.) So as with stability, $(\kappa, \kappa)$-simplicity of the Kim-independence relation gives a weakening of the simple Kim-forking conjecture. Moreover, when $T$ is $\omega$-categorical and $\mathrm{NSOP}_{1}$ with existence (so, again, Kim-forking is defined over finite sets), $(\omega, \omega)$-simplicity of the $R(x, y)=: \{a \nind^{K}_{C} b, a \in \mathbb{C}^{m}, b \in \mathbb{M}^{n}\}$ for $m, n$ finite is equivalent to the simple Kim-forking cojecture over $C$, when $C$ is a finite set.
\end{remark}

So far, we have discussed analogues of the simple Kim-forking conjecture for the non-definable relation of Kim-forking. This leads to ask: what about the simple Kim-forking conjecture itself? We will will show that, in $\mathrm{NSOP}_{1}$ theories with existence, when $\mathrm{tp}(b/C)$ satisfies the property that the notion of an $\ind^{K}$-Morley sequence in this type is definable, forking with $b$ over $C$ satisfies the simple Kim-forking conjecture. By Example \ref{genericprojectiveplanes} below, we will see that this gives us some nontrivial instances of the simple Kim-forking conjecture. The following property of a type $p$ strengthens the property that $F_{Mb}(p)$ is finite:

\begin{definition}\label{definablemorleyproperty}
    Let $T$ be $\mathrm{NSOP}_{1}$ with nonforking existence, and let $C \subseteq \mathbb{M}$ be a (small) set, and let $p(x)$ be a complete type over $C$. Then $p(x)$ \textit{has definable $\ind^{K}$-Morley sequences} if the set of $\ind^{K}$-Morley sequences $\{b_i\}_{i < \omega}$ over $C$ with $b_{i} \models p(x)$ is definable over $C$ relative to the (necessarily type-definable) set of indiscernible sequences $\{b_i\}_{i < \omega}$ over $C$ with $b_{i} \models p(x)$. (That is, for $S_{1}$ the set of $\ind^{K}$-Morley sequences $\{b_i\}_{i < \omega}$ over $C$ with $b_{i} \models p(x)$, and $S_{2}$ the set of $C$-indiscernible sequences $\{b_i\}_{i < \omega}$ with $b_{i} \models p(x)$, there is a formula $\varphi(x_{1}, \ldots x_{n})$ with parameters in $C$, with $|x_{i}| = |b_{i}|=k$, so that $S_{1} = S_{2} \cap \{\{b_{i}\}_{i < \omega} \in (\mathbb{M}^{k})^{\omega} : \mathbb{M} \models \varphi(b_{1}, \ldots, b_{n}) \}$.)
\end{definition}

\begin{remark}\label{definablemorleysequencesimpliesfinitemb}
  To see that if $p \in S(C)$ has definable $\ind^{K}$-Morley sequences, then $F_{Mb}(p) < \infty$, assume the former. For a $C$-indiscernible sequence $I= \{a_{i}\}_{i < \omega}$, $a_{i} \models p(x)$, to be an $\ind^{K}$--Morley sequence is equivalent to it being $n$-independent for all $n < \omega$. But by the proof of Claim \ref{typedefinablemorley}, being $n$-independent is type-definable relative to $I$ being an indiscernible sequence over $C$. So by definable $\ind^{K}$-Morley sequences and compactness, there is some $n < \omega$ so that if $I$ is $n$-independent, it is an $\ind^{K}$-Morley sequence, and therefore $F_{Mb}(p) < \infty$.
\end{remark}

We obtain an instance of the simple Kim-forking conjecture when, for $a\nind_{C} b$,  $\mathrm{tp}(b/C)$ has definable $\ind^{K}$-Morley sequence and $a$ is given as a tuple \textit{enumerated} by a sufficiently long finite sequence (thereby giving us enough variables to build the simple formula.)

\begin{theorem}
   \label{definabilitysimplekimforking}
    
    Let $T$ be $\mathrm{NSOP}_{1}$ with existence\footnote{Note that, as shown by the third author in \cite{Exist}, not all $\mathrm{NSOP}_{1}$ theories have existence; whether this is true is a formerly open problem posed in \cite{DKR22}.}. Let $C, b \subset M$ be such that $p(y) =: \mathrm{tp}(b/C)$ has definable $\ind^{K}$-Morley sequences, and suppose that either $p(y)$ is isolated or $T$ is low. Then there is some $n < \omega$ so that, for any $a = \{a_i\}_{i \in I}$ where $|I| \geq n$ (with the $a_{i}$ allowed to repeat), and such that $a \nind^{K}_{C} b$, there is some formula $\varphi(x, b) \in \mathrm{tp}(a/Cb)$ that Kim-forks over $C$, where $\varphi(x, y)$ is a formula with parameters in $C$ which is simple as a formula of $\mathcal{L}(C)$. 
\end{theorem}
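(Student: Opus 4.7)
The plan is to combine the definability of $\ind^{K}$-Morley sequences (Definition \ref{definablemorleyproperty}) with the Kaplan--Ramsey witnessing theorem (Fact \ref{witnessing}) to upgrade any Kim-dividing formula to a simple one, at the cost of enumerating $a$ as a tuple of length at least $n := N$, where $\psi(y_{0}, \ldots, y_{N-1})$ is a formula over $C$ witnessing the definability of $\ind^{K}$-Morley sequences in $p$ among $C$-indiscernible sequences in $p$.

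First, given $a \nind^{K}_{C} b$, use existence to extract $\varphi(x, b) \in \mathrm{tp}(a/Cb)$ that Kim-divides over $C$. By Fact \ref{witnessing}, compactness, and indiscernibility, some $k < \omega$ makes $\{\varphi(x, b_{i})\}_{i < \omega}$ $k$-inconsistent on one (hence every) $\ind^{K}$-Morley sequence $(b_{i})$ starting at $b$. Under the low hypothesis the $k$-inconsistency is uniform in $\varphi$; under the isolated hypothesis, the set of realizations of $p$ is definable and $k$ can be uniformized across all Morley sequences in $p$ by compactness. Set
\[
\theta(x_{1}, \ldots, x_{n}, y_{0}) \;:=\; \forall y_{1}, \ldots, y_{n-1}\,\Bigl[\psi(y_{0}, y_{1}, \ldots, y_{n-1}) \;\to\; \bigvee_{j=0}^{n-1} \varphi(x_{j+1}, y_{j})\Bigr].
\]
For the enumeration $\bar{a} = (a, a, \ldots, a)$, the $j = 0$ disjunct reduces to $\varphi(a, y_{0}) = \varphi(a, b)$, which holds, so $\theta(\bar{a}, b) \in \mathrm{tp}(\bar{a}/Cb)$. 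To verify Kim-dividing, suppose $\bar{a}^{*}$ realizes $\bigwedge_{i} \theta(\bar{a}^{*}, b_{i})$ on a Morley sequence $(b_{i})$ starting at $b$. Apply $\theta(\bar{a}^{*}, b_{i})$ with the shifted Morley segment $(b_{i}, b_{i+1}, \ldots, b_{i+n-1})$---which is an initial segment of a Morley sequence by shift-invariance, hence satisfies $\psi$---to obtain, for each $i$, some $j_{i} \in \{0, \ldots, n-1\}$ with $\varphi(a^{*}_{j_{i}+1}, b_{i+j_{i}})$. Pigeonhole on $j_{i}$ produces an infinite $I$ with $j_{i} \equiv j^{*}$, giving a witness $a^{*}_{j^{*}+1}$ to $\{\varphi(x, b_{m})\}_{m \in I+j^{*}}$, contradicting $k$-inconsistency.

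The main obstacle is verifying that $\theta(\bar{x}, y_{0})$ is simple. Assume for contradiction it has the tree property with some tree $(b_{\eta})_{\eta \in \omega^{<\omega}}$ of realizations of $p$, with $k'$-inconsistent siblings. Via the standard Ramsey machinery, promote the tree to one whose paths are $C$-indiscernible; the crucial additional step is to arrange, using the lowness or isolation hypothesis, that these paths can themselves be taken to be $\ind^{K}$-Morley over $C$ (rather than merely indiscernible), so that $\psi$ holds on all length-$n$ shifted segments along a path. Path consistency then yields some $\bar{a}$ with $\bigwedge_{m} \theta(\bar{a}, c_{m})$ along a path $(c_{m})$, and the shifted-Morley pigeonhole argument from the Kim-dividing step applies verbatim to $(c_{m})$, producing an $a_{j^{*}+1}$ realizing infinitely many $\varphi(x, c_{m})$ and contradicting $k$-inconsistency on $(c_{m})$. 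The delicate point---and where the hypotheses are essential---is the promotion of a tree-indiscernible path to an $\ind^{K}$-Morley sequence: lowness provides the uniform finite witness needed to make this promotion definable in the Ramsey extraction, while isolation replaces $p$ by a definable set on which a Morley-tree extraction can be carried out directly; without either, the tree may only exhibit indiscernibility and the key applicability of $\psi$ on shifted segments of the path is lost.
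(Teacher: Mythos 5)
Your construction of $\theta$, and the verifications that $\theta(\bar a, b)\in\mathrm{tp}(\bar a/Cb)$ and that $\theta$ Kim-divides over $C$ (via shifted Morley segments plus pigeonhole and Fact \ref{witnessing}), are sound. The gap is in the simplicity verification, at precisely the point you call ``delicate.'' Two separate problems arise. First, a tree witnessing failure of simplicity of $\theta(\bar x, y_0)$ as an $\mathcal{L}(C)$-formula need not consist of realizations of $p$ at all; you simply assume it does. This is what the isolation/lowness hypotheses are actually for in the paper's argument: isolation lets one conjoin the isolating formula to the candidate, while lowness makes the degree of inconsistency uniform, so that ``being a tree witnessing non-simplicity'' becomes type-definable in the tree's parameters and compactness extracts a single $\rho(y)\in p(y)$ to conjoin. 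Second, and more seriously, even granting a tree of realizations of $p$, nothing in your argument produces a path that can be extracted to an $\ind^{K}$-Morley sequence over $C$, and neither lowness nor isolation supplies this: the tree property of $\theta$ controls only consistency along paths and $k'$-inconsistency among $\theta$-siblings, which says nothing about the Kim-independence relations among path elements, so $\psi$ need not hold on any shifted segment of any path. Without that, $\theta(\bar a, c_m)$ gives you only off-path witnesses (or the useless $j=0$ disjunct along a non-Morley path), and no contradiction follows.

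The paper's proof obtains a Morley path by a mechanism your formula cannot reproduce. It takes the candidate to be $\tilde\varphi(x',b)=\psi(x,b)\vee\psi'(x',b)$, where $\psi=\bigvee_{j<m}\varphi_j$ is the disjunction of the finitely many Kim-dividing formulas that, by compactness from definability of $\ind^{K}$-Morley sequences, detect non-Morleyness of an indiscernible sequence. Because the $\varphi_j$ are \emph{disjuncts} of $\tilde\varphi$, $k$-inconsistency of the siblings of $\tilde\varphi$ entails $k$-inconsistency of the siblings for each $\varphi_j$; after widening the branching to $\kappa=|b|^{+}$ by compactness, a pigeonhole argument at each node selects a child whose $\varphi_j$-instances are satisfied by no tuple from the path constructed so far, and the resulting path, after extracting a $C$-indiscernible sequence with the same EM-type, is an $\ind^{K}$-Morley sequence by the defining property of the $\varphi_j$. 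Your $\theta$ is a universally quantified implication rather than a disjunction containing the $\varphi_j$, so sibling inconsistency for $\theta$ transmits no information about the $\varphi_j$ on the tree, and this selection is unavailable. As written, the simplicity of $\theta$ is not established; either adopt the paper's disjunctive formula or supply a genuinely new argument for extracting a Morley path.
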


\begin{proof}
    We first show that if there is a formula $\tilde{\varphi}(x, b) \in \mathrm{tp}(a/Cb)$, Kim-forking over $C$, so that

    (*) there is no $k < \omega$ and tree-indexed set $\{b_{\eta}\}_{\eta \in \omega^{< \omega}}$, with $b_{\eta} \models p(y)$ for all $\eta \in \omega^{< \omega}$, so that for all $\sigma \in \omega^{\omega}$, $\{\tilde{\varphi}(x, b_{\sigma|_{i}})\}_{i < \omega}$ is consistent, but for all $\eta \in \omega^{< \omega}$, $\{\tilde{\varphi}(x, b_{\eta \smallfrown \langle i \rangle})\}_{i < \omega}$ is $k$-inconsistent.

then there is a formula $\varphi'(x, b) \in \mathrm{tp}(a/Cb)$, Kim-forking over $C$, so that $\varphi'(x, y)$ is a formula with parameters in $C$ that is simple as a formula of $L(C)$. If $p(y)$ is isolated, then $\varphi'(x, b) =: \tilde{\varphi}(x, b) \wedge p(b)$ will work. If $T$ is low, then there is some $k$ so that if $\{c_i\}_{i < \omega}$ is \textit{any} indiscernible sequence, and $\{\tilde{\varphi}(x, c_{i})\}_{i < \omega}$ is $k$-inconsistent, then it is inconsistent. Then for $\{c_{\eta}\}_{\eta \in \omega^{<\omega}}$ to witness failure of simplicity for $\tilde{\varphi}(x, y)$ as a formula of $L(C)$ means that for all $\sigma \in \omega^{\omega}$, $\{\tilde{\varphi}(x, c_{\sigma|_{i}})\}_{i < \omega}$ is consistent, but for all $\eta \in \omega^{< \omega}$, $\{\tilde{\varphi}(x, c_{\eta \smallfrown \langle i \rangle})\}_{i < \omega}$ is $k$-inconsistent. The property $(*)$ says that $c_{\eta} \models p(y)$ for all $\eta \in \omega^{<\omega}$ implies that $\{c_{\eta}\}_{\eta \in \omega^{<\omega}}$ does not witness the failure of simplicity of $\tilde{\varphi}(x, y)$, but we see now that to witness the failure of simplicity of $\tilde{\varphi}(x, y)$ is type-definable over $C$. So by compactness, there is some $\rho(y) \in p(y)$ so that $c_{\eta} \models \rho(y)$ for all $\eta \in \omega^{<\omega}$ implies that $\{c_{\eta}\}_{\eta \in \omega^{<\omega}}$ does not witness the failure of simplicity of $\tilde{\varphi}(x, y)$. Then $\varphi'(x, b) =: \tilde{\varphi}(x, b) \wedge \rho(b)$ will work.

As in the proof of Subclaim \ref{typedefinablemorley}, let $S$ be the set of formulas $\varphi(x, y)$ with parameters in $C$ so that $\varphi(x, b)$ Kim-divides over $C$; then an indiscernible sequence $\{b_i\}_{i < \omega}$ over $C$, with $b_i \models p(y)$, is an $\ind^{K}$-Morley sequence over $C$ if and only if, for all $i < \omega$, no tuple from $b_{<i}$ satisfies $\varphi(x, b_i)$ for $\varphi(x, y) \in S$. So by definability of $\ind^{K}$-Morley sequences and compactness, there is some finite $x$ and finite subset $\{\varphi_j(x, y)\}_{j < m} \subset S$ so that an indiscernible sequence $\{b_i\}_{i < \omega}$ over $C$, with $b_i \models p(y)$, is an $\ind^{K}$-Morley sequence over $C$ if and only if, for all $i < \omega$, no tuple from $b_{<i}$ satisfies $\varphi_j(x, b_i)$ for all $j < m$.

        Let $\psi(x, y) = \bigvee_{i<m} \varphi_i(x, y)$. Let $n= |x|$. Let $a = \{a_i\}_{i \in I}$ where $|I| \geq n$ (with the $a_{i}$ allowed to repeat), and with $a \nind^{K}_{C} b$. We may assume $\{x_i\}_{i \in I}=x' \supseteq x$. Let $\psi'(x', b) \in \mathrm{tp}(a/bC)$ Kim-fork over $C$. Then $\tilde{\varphi}(x', b)=:\psi(x, b) \vee \psi'(x', b) \in \mathrm{tp}(a/bC)$, and $\tilde{\varphi}(x, b)$ Kim-forks over $C$. It suffices to show that $\tilde{\varphi}(x', b)$ has the property (*). Suppose $\{b_{\eta}\}_{\eta \in \omega^{< \omega}}$, with $b_{\eta} \models p(y)$, witness the failure of (*) for $\tilde{\varphi}(x', b)$ and degree of inconsistency $k$. By compactness, we can find $\{b_{\eta}\}_{\eta \in \kappa^{< \omega}}$, where $\kappa = |b|^{+}$ (or $\kappa = \omega$ if $b$ is finite) , so that for all $\sigma \in \kappa^{\omega}$, $\{\tilde{\varphi}(x', b_{\sigma|_{i}})\}_{i < \omega}$ is consistent, but for all $\eta \in \kappa^{< \omega}$, $\{\tilde{\varphi}(x', b_{\eta \smallfrown \langle i \rangle})\}_{i < \kappa}$ is $k$-inconsistent. In particular, for all $\eta \in \kappa^{< \omega}$ and all $j < m$, $\{\varphi_{j}(x, b_{\eta \smallfrown \langle i \rangle})\}_{i < \kappa}$ is $k$-inconsistent. We claim that for all $\eta \in \kappa^{\omega}$, there is some $\gamma < \kappa$ so that, for all $j < m$, $\varphi_j(x, b_{\eta \smallfrown \langle \gamma \rangle})$ is satisfied by no tuple from $\{b_{\eta|_i}\}_{i < | \eta|}$. Suppose otherwise; because $|\{b_{\eta|_i}\}_{i < | \eta|}| < \kappa$, by the pigeonhole principle there is some fixed $j < m$ and tuple from $\{b_{\eta|_i}\}_{i < | \eta|}$ satisfying $\varphi_j(x, b_{\eta \smallfrown \langle \gamma \rangle})$ for infinitely many $\gamma$, contradicting $k$-inconsistency of $\{\varphi_{j}(x, b_{\eta \smallfrown \langle i \rangle})\}_{i < \kappa}$. This proves our claim; it follows by induction that there is some $\sigma \in \kappa^{< \omega}$ so that for all $i < \omega$ and $j < m$, $\varphi_j(x, b_{\sigma|_{i}})$ is satisfied by no tuple from $\{b_{\sigma|_{i'}}\}_{i' < i }$. Extracting a $C$-indiscernible sequence $\{b_{i}\}_{i < \omega}$ satisfying the same EM-type over $C$ as $\{b_{\sigma|_i}\}_{i < \omega }$, we see that for all $i < \omega$ and $j < m$, $b_{i} \models p(y)$ and $b_{<i}$ satisfies $\varphi_j(x, b_i)$--so $\{b_{\sigma|_i}\}_{i < \omega }$ is an $\ind^{K}$-Morley sequence over $C$ by choice of the $\varphi_{j}(x, y)$--and $\{\tilde{\varphi}(x', b_{i})\}_{i < \omega}$ is consistent, though $\tilde{\varphi}(x', b)$ Kim-forks over $C$ with $b \models p(y)$, contradicting the direction (b) $\Rightarrow$ (a) of Theorem \ref{witnessing}.

   \end{proof}

   Note that if we get rid of the assumption that $\mathrm{tp}(b/C)$ is isolated or $T$ is low, the formula $\varphi(x, b)$ obtained in the above proof, though not known to be simple as a formula in $\mathcal{L}(C)$, still does not have the tree property given by a tree $\{b_{\eta}\}_{\eta \in \omega^{< \omega}}$ where $b_{\eta} \models p$. 

   In the setting of the $\mathrm{NSOP}_{1}$ variant of the Koponen conjecture, Question \ref{nsop1koponenconjecture}, we now have 
   the following partial result as a corollary of Theorem \ref{definabilitysimplekimforking}:

   \begin{cor}
       Let $T$ be $\mathrm{NSOP}_{1}$ with existence, and have quantifier elimination in a finite relational language. Let $C$ be finite. Then for any finite tuple $b$, there is some $n < \omega$ so that, for any $a = \{a_i\}_{i \in I}$ where $|I| \geq n$ (with the $a_{i}$ allowed to repeat), and such that $a \nind^{K}_{C} b$, there is some formula $\varphi(x, b) \in \mathrm{tp}(a/Cb)$ that Kim-forks over $C$, where $\varphi(x, y)$ is a formula with parameters in $C$ which is simple as a formula of $\mathcal{L}(C)$. 
   \end{cor}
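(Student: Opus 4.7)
The plan is to apply Theorem \ref{definabilitysimplekimforking} directly to $p(y) = \mathrm{tp}(b/C)$. Since $T$ is $\mathrm{NSOP}_{1}$ with existence by assumption, it only remains to verify (i) that $p(y)$ is isolated (so we may dispense with the lowness hypothesis) and (ii) that $p(y)$ has definable $\ind^{K}$-Morley sequences in the sense of Definition \ref{definablemorleyproperty}.

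For (i), quantifier elimination in a finite relational language makes $T$ both $\omega$-categorical and $n$-ary for some fixed $n < \omega$. With $C$ finite, $\omega$-categoricity gives $|S_{|b|}(C)| < \omega$, and every type over $C$ is isolated; in particular $p(y)$ is isolated.

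For (ii), I would argue as in the proof of Lemma \ref{fewindseq}. By $n$-arity together with $C$-indiscernibility, the type $\mathrm{tp}(\{b_{i}\}_{i < \omega}/C)$ of a $C$-indiscernible sequence of realizations of $p(y)$ is determined by $\mathrm{tp}(b_{0} b_{1} \cdots b_{n-1}/C)$. By $\omega$-categoricity and finiteness of $C$, there are only finitely many possibilities for this latter type; enumerate them, restricted to those extending $p(y)^{\otimes n}$ qua quantifier-free data, as $\psi_{1}(y_{0},\ldots,y_{n-1}),\ldots,\psi_{m}(y_{0},\ldots,y_{n-1})$, each $\psi_{j}$ being a complete $\mathcal{L}(C)$-formula isolating the corresponding type. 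Since being an $\ind^{K}$-Morley sequence over $C$ is determined by the complete $C$-type of the sequence, the set of $j \in \{1,\ldots,m\}$ for which some (equivalently, every) indiscernible sequence realizing $\psi_{j}(b_{0},\ldots,b_{n-1})$ is an $\ind^{K}$-Morley sequence is well-defined; let $J$ be this set. Then
\[
\varphi(y_{0},\ldots,y_{n-1}) \;:=\; \bigvee_{j \in J} \psi_{j}(y_{0},\ldots,y_{n-1})
\]
is an $\mathcal{L}(C)$-formula in finitely many sequence terms which, relative to the type-definable set of $C$-indiscernible sequences of realizations of $p(y)$, picks out exactly the $\ind^{K}$-Morley sequences. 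This verifies definability of $\ind^{K}$-Morley sequences for $p(y)$.

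Having confirmed both hypotheses, Theorem \ref{definabilitysimplekimforking} yields the required $n < \omega$ and simple witnessing formula $\varphi(x,b) \in \mathcal{L}(C)$, completing the proof. I do not expect any serious obstacle: all of the work is in Theorem \ref{definabilitysimplekimforking}, and the present corollary is essentially a bookkeeping exercise extracting the definability of $\ind^{K}$-Morley sequences from quantifier elimination in a finite relational language (the same route by which Lemma \ref{fewindseq} was used in Section \ref{Kop}).
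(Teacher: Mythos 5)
Your proposal is correct and follows essentially the same route as the paper: the paper likewise deduces isolatedness from $\omega$-categoricity over the finite set $C$ and derives definability of $\ind^{K}$-Morley sequences from Lemma \ref{fewindseq} (whose proof is exactly your $n$-arity plus type-counting argument), then invokes Theorem \ref{definabilitysimplekimforking}. Your write-up merely spells out the passage from "finitely many types of indiscernible sequences" to an explicit defining disjunction, which the paper leaves implicit.
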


   This is because, by Lemma \ref{fewindseq}, any type over a finite set under the above assumptions (which will of course be isolated) must have definable $\ind^{K}$-Morley sequences.
   
   Just as we defined types with algebraic forking in simple theories (Definition \ref{algfork}), we can ask whether there are types $p \in S(C)$ in an $\mathrm{NSOP}_{1}$ theory with definable $\ind^{K}$-Morley sequences, but without algebraic Kim-forking--i.e. there are $a \models p$, $a \nind^{K}_{C} B$ so that $\mathrm{acl}^{eq}(aC) \cap \mathrm{acl}^{eq}(BC) = \mathrm{acl}^{eq}(C)$, so not all Kim-forking extensions of $p$ are given by the dependence relation coming from $\mathrm{acl}^{eq}$. We give an example of such a $p$; this will be isolated (and we suspect $T$ is low, which would mean it is not necessary that $p$ is isolated), so we get a nontrivial instance of the simple Kim-forking conjecture from Theorem \ref{definabilitysimplekimforking}.
   \begin{example}\label{genericprojectiveplanes}
            The following example is due to Conant and Kruckman (\cite{CoK19}). We show that it gives us examples of an $\mathrm{NSOP}_{1}$ theory $T$ (with existence) so that $1 <\mathrm{max}(\{F_{Mb}(p): p \in S(A), A \subset \mathbb{M}\}) < \infty $, and with isolated types $p \in S(A)$ with $a \models p$ and $A \subset B$ so that $B \nind_{A}^{K} a$ and $\mathrm{bdd}(aA) \cap \mathrm{bdd}(B) = \mathrm{bdd}(A)$ (i.e., $p$ does not have algebraic Kim-forking), and $p$ has definable Morley sequences. Since $p$ has forking extensions where the forking is not witnessed by algebraic dependence, but satisfies the hypotheses of theorem \ref{definabilitysimplekimforking}, this gives an instance of the simple Kim-forking conjecture that is not covered by Kim's result in \cite{Kim01} that forking extensions where the forking is witnessed by algebraic dependence satisfy the stable forking conjecture. Moreover, two other existing results on the stable forking conjecture, those of Palacin and Wagner in \cite{PW13} showing that $\omega$-categorical CM-trivial supersimple theories satisfy the stable forking conjecture, and of Peretz in \cite{P96} showing that the stable forking conjecture holds between $\mathrm{SU}$-rank $2$ types in $\omega$-categorical supersimple theories, both require $\omega$-categoricity to obtain a formula $\varphi(x, y, z)$ so that for $a, b, c$ with $|a|=|x|$, $|b| = |y|$ and $|c| = |z|$, $\models \varphi(a, b, c)$ if and only if $a \ind_{c} b$. But for finitely many variables $|x|$ and $a \models p$, it will follow from the characterization of Kim-forking in this example that there is no formula $\varphi(x, a, A)$ so that for $|b| = |x|$, $\models \varphi(b, a, A)$ if and only if $b\ind_{A}^{K} a$.  Finally, Brower in \cite{Brower2012} obtains an instance of the stable forking conjecture in simple theories eliminating hyperimaginaries where $a \nind_{C} b$ with $\mathrm{SU}(a/C) < \omega$ and $\mathrm{SU}(b/C)=2$, though the formula obtained is only stable in the sense of $\mathcal{L}(C)$; to do so, he obtains a quasidesign between $a$ and $b$ (see Remark \ref{brower} below). Though in our example, Kim-forking is always given by a quasidesign, the simple formula $\varphi(x, y)$ obtained in Theorem \ref{definabilitysimplekimforking} does not obviously define a quasidesign.

We recall the example of \cite{CoK19}. Let $\mathcal{L}$ be a language with two sorts $P$ and $L$ and an incidence relation $I$ between singletons of $L$ and singletons of $P$. For $m, n > 1$, Conant and Kruckman show in \cite{CoK19} that the theory of $\mathcal{L}$-structures omitting the complete bipartite graph $K_{m, n}$ between $m$ points of $P$ and $n$ points of $L$ has a model companion $T^{m, n}$. Call a $\mathcal{L}$ structure $A$ \textit{closed} if for distinct $p_{1}, \ldots, p_{m} \in P(A)$ there are distinct $l_{1}, \ldots, l_{n-1} \in L(A)$ so that $\models p_{i} I l_{j}$ for all $1 \leq i \leq m$, $1 \leq j \leq n-1$, and for distinct $l_{1}, \ldots, l_{n} \in L(A)$ there are distinct $p_{1}, \ldots, p_{m-1} \in P(A)$ so that $\models p_{i} I l_{j}$ for all $1 \leq i \leq m-1$, $1 \leq j \leq n$. Then \cite{CoK19} show that the algebraically closed sets in $\mathbb{M}$ are exactly the closed sets, and that $T$ is $\mathrm{NSOP}_{1}$, with $a \ind^{K}_{A} b$ exactly when $\mathrm{acl}(aA) \cap \mathrm{acl}(bA) = A$ and there are no $a' \in \mathrm{acl}(aA) \backslash A$, $b' \in \mathrm{acl}(bA) \backslash A$ so that $\models a' I b'$ or $b' I a'$. Isomorphic algebraically closed sets will satisfy the same type.

Assume without loss of generality that $m \leq n$. Suppose $a \not\subseteq \mathrm{acl}(A)$, and let $p = \mathrm{tp}(a/A)$. We show that if $\mathrm{acl}(aA) \backslash \mathrm{acl}(A)$ contains only points of $P$, $F_{Mb}(p) = m$, and that if $\mathrm{acl}(aA) \backslash \mathrm{acl}(A)$ contains points of $L$, $F_{Mb}(p) = n$. Without loss of generality suppose we are in the second case. We may also assume that $A \subseteq a$ and that $a$, $A$ are algebraically closed. Fix a singleton $a_{0} \in L(a) \backslash A$. We first show that $F_{Mb}(p) \geq n$. We can easily construct a $K_{m, n}$-free $\mathcal{L}$-structure $B=\bigcup_{S \subset \omega, |S| \leq n-1} B_{S}$ satisfying the following:

(a) For all $S, T \subset \omega$ with $|S|, |T| \leq n-1$, $B_{S} \cap B_{T} = B_{S \cap T}$, and for all $i < \omega$ and $b^{i} = B_{\{i\}}$, there are $b^{i}_{0} \in b^{i}$ so that $b^{i}_{0}B^{i}B_{\emptyset}$ is isomorphic to $a_{0}aA$;

(b) For all $S \subset \omega$ with $|S| \leq n-1$, $B_{S}$ is closed, and is a minimal closed set containing $\cup_{i \in S}b^{i}$.

(c) For all $S \subset \omega$ with $|S| \leq n-1$ and $i \in \omega \backslash S$, there are no singletons $b' \in b^{i} \backslash B_{\emptyset}$ and $b'' \in B_{S}$ with $(b', b'') \in I(B)$ or $(b'', b') \in I(B)$.

(Just choose freely so that (a) and (b) are satisfied; for example, if $A = \emptyset$ and $a$ is a singleton of $L$, $B$ just consists of infinitely many $L$-points.)

In $B$, there is no $b^{*} \in P(B)$ so that, for at least $n$ values of $i < \omega$, $(b^{*}, b^{i}_{0}) \in I(B)$: such a $b^{*}$ must lie in some $B_{S}$, but then for some $i \notin S$, $(b^{*}, b^{i}_{0}) \in I(B)$, contradicting (c). So we may extend $B$ to a $K_{m, n}$-free $\mathcal{L}$-structure $B \sqcup \{b^{*}\}$, so that $(b^{*}, b) \in I(B \cup \{b^{*}\})$ for $b \in B$ if and only if $b = b^{i}_{0}$ for some $i < \omega$.

Now embed $B \sqcup \{b^{*}\}$ into $\mathbb{M}$ so that the image of $B_{\emptyset}$ is $A$; let $a^{i}$ be the image of $b^{i}$, $a^{i}_{0}$ the image of $b^{i}_{0}$, $a^{*}$ the image of $b^{*}$. Then for $0 \leq i_{1} < \ldots < i_{n+1} < \omega$, $a^{i_{j}} \ind^{K}_{A} a^{i_{1}} \ldots a_{i_{j-1}}$ for $1 \leq j \leq n$, by (a)-(c). However, $a^{i_{n+1}} \nind^{K}_{A} a^{i_{1}} \ldots a^{i_{n}}$, because $a^{*} \in \mathrm{acl}( a^{i_{1}} \ldots a^{i_{n}})$ and $\models I(a^{*}, a_{0}^{i_{n+1}})$. Extracting an $A$-indiscernible sequence, we see $F_{Mb}(p) \geq m$.

Now let us show that $F_{Mb}(p) \leq n$: let $I = \{a_{i}\}_{i < \omega}$ be an $A$-indiscernible sequence of realizations of $p$, and suppose that $I$ is not a Morley sequence over $A$. It suffices to show that $a_{ n} \nind^{K}_{A} a_{< n}$. We may again assume that $A \subseteq a$ and that $a$, $A$ are algebraically closed, and we may also assume that $a_{i} \cap \mathrm{acl}(a_{< i}) = A$ for all $i < \omega$, as otherwise already $a_{1} \nind_{A}^{K} a_{0}$ (see the proof of Proposition \ref{typeswithalgebraicforkinghavembone}). Now for some $k < \omega$, $a_{ k} \nind^{K}_{A} a_{< k}$, so there is some $a'_{k} \in a_{k} \backslash A$ and $a' \in A'=\mathrm{acl}(a_{<k})$ such that $\models I(a'_{k}, a')$, assuming without loss of generality that $a' \in L(A')$. Then $\{a_{i}\}_{k \leq i < \omega}$ is $A'$-indiscernible, so for $0 \leq i \leq n$ there is some $a'_{k+i} \in a_{k+i} \backslash A$ so that $\models I(a'_{k+i}, a')$. Then $a' \in \mathrm{acl}(a_{k}, \ldots, a_{k+n-1})$, so because $\models I(a'_{k+n}, a')$, $a_{n+k} \nind^{K}_{A} a_{k}, \ldots, a_{k+n-1}$, implying $a_{ n} \nind^{K}_{A} a_{< n}$ by $A$-indiscernibility. This shows $F_{Mb}(p) \leq n$, so $F_{Mb}(p) = n$.

Now assume $\mathrm{acl}(aA)$ is finite, which implies that $p$ is isolated; we observe that $p$ has definable Morley sequences; since $F_{Mb}(p) > 1$, $p$ will also have forking extensions where the forking is not witnessed by algebraic dependence. We may assume that $A, a$ are algebraically closed with $A \subset a$; let $I=\{a_{i}\}$ be an indiscernible sequence of realizations of $p$. Then by the above arguments and the characterization of Kim-forking, $I$ is a Morley sequence if and only if $a_{0} \cap a_{1} = A$ and there are no $a'_{i} \in P(a_{i})$ for all $0 \leq i \leq m$ together with some $a^{*}$ so that $\models I(a'_{i}, a^{*})$ for all $0 \leq i \leq m$, or $a'_{i} \in L(a_{i})$ for all $0 \leq i \leq n$ together with some $a^{*}$ so that $\models I(a^{*}, a'_{i})$ for all $0 \leq i \leq n$. This is definable, by finiteness of $a$.
        \end{example}

  \begin{remark}\label{brower}
In Proposition 2.2.2 of \cite{Brower2012}, Brower identifies an instance of the stable forking conjecture, based on the quasidesigns from \cite{P96}, that generalizes the case where the reason why $a \ind_{C} b$ is that $\mathrm{acl}(aC) \cap \mathrm{acl}(bC) \neq \mathrm{acl}(C)$. Let $B(a, b, C)$ denote that

$a \notin \mathrm{acl}(bC)$, $b \notin \mathrm{acl}(aC)$ and for any nonconstant $aC$-indiscernible sequence $I= \{b_{i}\}_{i < \omega}$ with $b_{0} = b $, $a \in \mathrm{acl}(C)$.

In this case, by Proposition 2.2.2 of \cite{Brower2012} there is a formula $\varphi(x, b) \in \mathrm{tp}(a/Cb)$ which forks over $C$, so that $\varphi(x, y)$ is stable in $\mathcal{L}(C)$.

In Example \ref{genericprojectiveplanes} above, any nontrivial instance of Kim-forking is always given by an instance of $B(a, b, C)$. To express this, we define the following property, which generalizes algebraic forking, Definition \ref{algfork}:

\begin{definition}
    Let $T$ be $\mathrm{NSOP}_{1}$ with existence. Then $p \in S(C)$ has the \emph{Brower property} if whenever $a \models p$ and $a \nind^{K}_{C} b$, either $\mathrm{acl}^{eq}(aC) \cap \mathrm{acl}^{eq}(bC) \neq \mathrm{acl}^{eq}(C)$, or there are some $a' \in \mathrm{acl}^{eq}(aC)$ and $b' \in \mathrm{acl}^{eq}(bC)$ so that $B(a', b', C)$.
\end{definition}

So the types $p$ with definable $\ind^{K}$-Morley sequences discussed in Example \ref{genericprojectiveplanes} have the Brower property.

Note that, similarly to Proposition \ref{algforkingprop}, that there are some $a' \in \mathrm{acl}(aC)$ and $b' \in \mathrm{acl}(bC)$ so that $B(a', b', C)$ implies that $a \nind^{K}_{C} b$, so if $p$ has the Brower property, whenever $a \models p$, $a \nind^{K}_{C} b$ if and only if either $\mathrm{acl}(aC) \cap \mathrm{acl}(bC) \neq \mathrm{acl}(C)$, or there are some $a' \in \mathrm{acl}(aC)$ and $b' \in \mathrm{acl}(bC)$ so that $B(a', b', C)$. Moreover, by Proposition 2.2.2, Corollary 2.3.3 and Theorem 2.1.6 of Brower (\cite{Brower2012}), if $p$ has the Brower property, $b \models p$ and $a \nind^{K}_C b$, then the conclusion of the \textit{stable} Kim-forking conjecture holds (the statements of Brower's results are for forking in simple theories, though the proofs will generalize): there is a formula $\varphi(x, b) \in \mathrm{tp}(a/Cb)$ which Kim-forks over $C$, so that $\varphi(x, y)$ is stable in $\mathcal{L}(C)$. We may then ask whether any instance of the simple Kim-forking conjecture coming from Theorem \ref{definabilitysimplekimforking} actually comes from the Brower property:

\begin{question}
    Let $T$ be $\mathrm{NSOP}_{1}$, and $ p \in S(C)$ have definable $\ind^{K}$-Morley sequences. Does $p$ have the Brower property?
\end{question}
    
Note that in the context of Theorem \ref{definabilitysimplekimforking}, we may also ask this question under the assumption that $T$ is low, or $p$ is isolated.

  \end{remark}

The examples of nontrivial instances of a type with definable $\ind^{K}$-Morley seqeunces in Example \ref{genericprojectiveplanes} are $\mathrm{NSOP}_{1}$, but not simple; we can ask whether simple or stable theories have any nontrivial examples of this property. In a simple theory, we adopt the convention of calling a type with definable $\ind^{K}$-Morley sequences a type with \textit{definable Morley sequences}. 

\begin{question}
    Let $T$ be simple (or stable). Does every type $p \in S(A)$ with definable Morley sequences  have algebraic forking? Does every type $p \in S(A)$ with the Brower property have algebraic forking?
\end{question}

We give a partial result on the first question. To motivate this, we extend the degree of nonminimality to Kim-independence, by replacing every reference to forking with a reference to Kim-forking.

\begin{definition}
    Let $T$ be $\mathrm{NSOP}_{1}$ with existence, and let $p \in S(A)$ be a type admitting a nonalgebraic Kim-forking extension. The \textit{degree of nonminimality} of $p$ is the minimal length $n$ of a sequence of realizations of the type $p$, say $a_1, \ldots , a_n$ such that $p$ has a nonalgebraic Kim-forking extension over $a_1, \ldots , a_n$.
\end{definition}

\begin{example}
    Let $T = K_{m, n}$, and let $p$ be the unique type over $\emptyset$ in, without loss of generality, the point sort. In Example \ref{genericprojectiveplanes} we showed that $p$ (which of course has nonalgebraic Kim-forking extensions) has definable $\ind^{K}$-Morley sequences and that $F_{Mb}(p) = m$; we now show that $\mathrm{nmdeg}(p) = m$, so that $\mathrm{nmdeg}(p) = F_{Mb}(p)$. Let $a_{1}, \ldots a_{m}$ be distinct realizations of $p$; it suffices to show that $a_{1} \ind^{K} a_{2} \ldots a_{m}$. But $a_{1} \nind^{K} a_{2} \ldots a_{m}$ only if $\mathrm{acl}(a_{1}) = a_{1}$ has points incident to lines of $\mathrm{acl}(a_{2} \ldots a_{m})=\{a_{2} \ldots a_{m}\} $, or vice versa, impossible as both sets have only points, and no lines.
\end{example}

The above example motivates the following theorem:

  \begin{theorem}\label{nmdegmbdefinablemorleyproperty}
Let $T$ be $\omega$-stable. Then there is no type $p \in S(A)$ such that $\mathrm{RM}(p)$ is a limit ordinal, $\mathrm{nmdeg}(p)= F_{Mb}(p)$, and $p$ has definable Morley sequences.

\end{theorem}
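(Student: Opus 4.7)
The plan is to assume for contradiction that such a $p \in S(A)$ exists and derive a contradiction via canonical-base analysis. By Remark \ref{definablemorleysequencesimpliesfinitemb}, the hypothesis of definable Morley sequences forces $F_{Mb}(p) < \infty$, so I may set $n := \mathrm{nmdeg}(p) = F_{Mb}(p)$ with $n < \omega$.

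The first step is a structural lemma: every nonalgebraic forking extension $q \in S(B)$ of $p$ (over $B = \mathrm{acl}^{eq}(B) \supseteq A$) satisfies $\mathrm{Cb}(q) \subseteq \mathrm{acl}^{eq}(A d_0 \ldots d_{n-1})$ for some Morley sequence $d_0,\ldots,d_{n-1}$ of $p$ over $A$. To prove this I would start with a Morley sequence $(d_i)_{i<\omega}$ of $q$ over $B$ and, using $\omega$-stability together with the chain-condition/conjugation argument from the proof of Claim \ref{Mbpreservedundernonforkingextension}, arrange $(d_i)$ to be $A$-indiscernible while remaining a Morley sequence over a conjugate $B'$ of $B$ with $\mathrm{Cb}(q) \subseteq B'$. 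Since $q$ forks over $A$, this sequence is not Morley over $A$, so by $F_{Mb}(p) = n$ one obtains $d_n \nind_A d_{<n}$. The types $q$ and $q' := \mathrm{tp}(d_n / A d_{<n})$ share the common nonforking extension $\mathrm{tp}(d_n / A B' d_{<n})$, so $\mathrm{Cb}(q) = \mathrm{Cb}(q') \subseteq \mathrm{acl}^{eq}(A d_{<n})$; by $A$-indiscernibility and the position at which dependence first arises, $(d_0,\ldots,d_{n-1})$ is automatically a Morley sequence of length $n$ in $p$ over $A$. Combined with $\mathrm{nmdeg}(p) \geq n$, this shows in fact $\fcb{q/p} = n$ (and not merely $\leq n$) for \emph{every} nonalgebraic forking extension $q$ of $p$.

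Next, I would extract the contradiction using Morley rank. Since $\mathrm{RM}(p) = \alpha$ is a limit ordinal, for each $\beta < \alpha$ there exists a nonalgebraic forking extension $q_\beta$ with $\mathrm{RM}(q_\beta) \geq \beta$. Pick $b \models q_\beta$ and apply the structural lemma to obtain $\mathrm{Cb}(q_\beta) \subseteq \mathrm{acl}^{eq}(A d^\beta_{<n})$ for a length-$n$ Morley sequence of $p$ over $A$; then $\mathrm{tp}(b/A d^\beta_{<n})$ is a nonforking restriction of $q_\beta$ and hence has Morley rank $\geq \beta$. Applying the Lascar inequalities for Morley rank in the $\omega$-stable setting to the pair $(b, d^\beta_{<n})$ over $A$ --- using $\mathrm{RM}(b/A) = \alpha$, $\mathrm{RM}(b/A\mathrm{Cb}(q_\beta)) = \mathrm{RM}(q_\beta) \geq \beta$, and the sharp equality $\fcb{q_\beta/p} = n$ --- I would derive the relation $\alpha \leq \beta \oplus \mathrm{RM}(\mathrm{Cb}(q_\beta)/A)$ (Hessenberg sum), and then extract a uniform ordinal bound $\mathrm{RM}(\mathrm{Cb}(q_\beta)/A) \leq \delta$ for a fixed $\delta < \alpha$ independent of $\beta$. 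This would contradict the arbitrariness of $\beta \in [0,\alpha)$, completing the proof.

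The principal obstacle is the final ordinal arithmetic. The containment $\mathrm{Cb}(q_\beta) \subseteq \mathrm{acl}^{eq}(A d^\beta_{<n})$ only directly gives $\mathrm{RM}(\mathrm{Cb}(q_\beta)/A) \leq \mathrm{RM}(d^\beta_{<n}/A)$, and the latter is Hessenberg-bounded by $\alpha \oplus \cdots \oplus \alpha$ ($n$ copies), which is not in general less than $\alpha$ unless $\alpha = \omega^\gamma$ (i.e.\ is Hessenberg-closed). It is precisely the \emph{sharp} equality $\mathrm{nmdeg}(p) = F_{Mb}(p) = n$, forcing $\fcb{q/p} = n$ exactly rather than $\leq n$, that must be leveraged to sharpen this rank bound and cover arbitrary limit $\alpha$; I anticipate this refinement --- presumably via a more careful use of the Lascar inequalities combined with Morley-degree considerations, or an auxiliary argument restricting how much of $d^\beta_{<n}$ actually contributes to $\mathrm{Cb}(q_\beta)$ --- to be the main technical content of the proof.
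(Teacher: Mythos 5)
Your proposal has a genuine gap, and it misses the one place where the hypothesis of definable Morley sequences does real work. In your argument, definability is used only to conclude $F_{Mb}(p) < \infty$ (via Remark \ref{definablemorleysequencesimpliesfinitemb}); everything afterwards uses only $\mathrm{nmdeg}(p) = F_{Mb}(p) = n$. The paper's proof instead uses definability essentially: after passing to $\mathrm{acl}^{eq}(A)$ so that $p$ is stationary, $\mathrm{nmdeg}(p)=n$ guarantees that the first $n$ terms of any nonconstant $A$-indiscernible sequence in $p$ are independent (hence realize $p^{\otimes n}$), $F_{Mb}(p)=n$ guarantees that failure to be a Morley sequence is detected at the $n$-th term, and then relative definability of the Morley sequences plus compactness against the (type-definable) family of forking formulas produces a \emph{single} formula $\varphi(x,y)$ such that $\varphi(x,b)$ forks over $A$ for every $b\models p^{\otimes n}$ and such that a nonconstant indiscernible sequence is non-Morley exactly when $\models\varphi(a_n,a_{<n})$. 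This one formula has a fixed rank $\lambda'=\mathrm{RM}(\varphi(x,b)\cup p(x))<\lambda=\mathrm{RM}(p)$; since $\lambda$ is a limit one picks an extension $q$ of $p$ of rank $\lambda''$ with $\lambda'<\lambda''<\lambda$, and a Morley sequence in $q$ is nonconstant, non-Morley over $A$, hence satisfies $\varphi(a_n,a_{<n})$ while $\mathrm{RM}(a_n/a_{<n})\geq\lambda''>\lambda'$ --- contradiction. Without the uniformity of a single formula there is no uniform rank bound below $\lambda$: the supremum of the ranks of the individual forking formulas could be $\lambda$ itself precisely because $\lambda$ is a limit. That is exactly the obstacle your canonical-base route runs into.

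Concretely, your final step does not close and you say as much: the containment $\mathrm{Cb}(q_\beta)\subseteq\mathrm{acl}^{eq}(Ad^{\beta}_{<n})$ only bounds $\mathrm{RM}(\mathrm{Cb}(q_\beta)/A)$ by (roughly) $n$ Hessenberg copies of $\alpha$, which is not below $\alpha$, and the refinement you defer to is the entire content of the theorem. Moreover the Lascar inequalities you invoke hold for $U$-rank, not for Morley rank --- Morley rank is not additive in $\omega$-stable theories in general --- while the hypothesis concerns $\mathrm{RM}(p)$ being a limit ordinal, so the rank calculus you plan to run is not available. The sharp equality $\fcb{q/p}=n$ gives no control over the rank of the canonical base. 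I would recommend abandoning the canonical-base computation and working at the level of a single uniformly forking formula as above, which is where the definability hypothesis actually enters.
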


\begin{proof}
Because a set is independent over $A$ if and only if it is independent over $\mathrm{acl}^{eq}(A)$, and is an indiscernible sequence over $A$ if and only if it is an indiscernible sequence over $\mathrm{acl}^{eq}(A)$, the formula witnessing that $p$ has definable Morley sequences also witnesses, for any extension $p'$ of $p$ to $\mathrm{acl}^{eq}(A)$, that $p'$ has definable Morley sequences, and moreover, $\mathrm{nmdeg}(p') = \mathrm{nmdeg}(p) = F_{Mb}(p) = F_{Mb}(p')$ and $\mathrm{RM}(p) = \mathrm{RM}(p')$ is a limit ordinal. So we may assume $A$ is algebraically closed in $T^{eq}$, and therefore that $p$ is stationary. If $p$ has definable Morley sequences, it follows that the set of Morley sequences over $A$ consisting of realizations of $p$ is relatively definable in the set of \textit{nonconstant} $A$-indiscernible sequences consisting of realizations of $p$. Let $\mathrm{nmdeg}(p)= F_{Mb}(p) = n$, and for $q(y) = (p(x))^{\otimes{n}}$ the type of $n$ independent realizations of $p$ (which is well-defined by stationarity of $p$), let $\sigma$ be the set of formulas $\varphi(x, y)$ where $\varphi(x, a)$ forks over $A$ for any $a \models q(y)$. Since $\mathrm{nmdeg}(p)= F_{Mb}(p) = n$, a nonconstant $A$-indiscernible sequence $\{a_{i}\}_{i < \omega}$ is a Morley sequence if and only if $a_{n} \ind_{A} a_{0} \ldots a_{n-1}$, if and only if $\models \neg\varphi(a_{n}, a_{0} \ldots a_{n-1})$ for every $\varphi(x, y) \in \sigma$. By compactness, and the assumption that $p$ has definable Morley sequences over $a$, there is then some $\varphi(x, y)$ so that, for any $a_{0} \ldots a_{n-1} \models q(y)$, $\varphi(x, a_{0} \ldots a_{n})$ forks over $A$, and so that, for $\{a_{i}\}_{i < \omega}$ a nonconstant $A$-indiscernible sequence, $\{a_{i}\}_{i < \omega}$ is not an $A$-Morley sequence if and only if $\models \varphi(a_{n}, a_{0} \ldots a_{n-1})$. Fix some $b_{0} \ldots b_{n} \models q(y)$, and let $\mathrm{RM}(p) = \lambda$; we have assumed that $\lambda$ is a limit ordinal. Then $\varphi(x, b_{0} \ldots b_{n})$ forks over $A$, so $\lambda' =: \mathrm{RM}(\varphi(x, b_{0} \ldots b_{n}) \cup p(x)) < \lambda$, because otherwise there would be a completion $\tilde{p}(x)\in S(Ab)$ of $\varphi(x, b_{0} \ldots b_{n}) \cup p(x)$ so that $\mathrm{RM}(\tilde{p(x)})=\mathrm{RM}(\varphi(x, b_{0} \ldots b_{n}) \cup p(x)) = \mathrm{RM}(p(x))$, implying that $\tilde{p}(x)$ does not fork over $A$ despite containg $\varphi(x, b_{0} \ldots b_{n})$, a contradiction. Since $\lambda$ is a limit ordinal, there is an ordinal $\lambda''$ so that $\lambda' < \lambda'' < \lambda$. There is then some $q(x) \in S(B)$ extending $p(x)$ so that $\mathrm{RM}(q(x)) = \lambda''$. Let $\{a_{i}\}_{i < \omega}$ be a Morley sequence in $q(x)$ over $B$. Then $\{a_{i}\}_{i < \omega}$ is nonconstant, but is not a Morley sequence over $A$, because if it were, by Kim's lemma $B \ind_{A} a_{0}$, contradicting that $a_{0} \models q(x)$, which forks over $A$. However, $\mathrm{RM}(a_{n}/a_{0} \ldots a_{n-1}) \geq \mathrm{RM}(a_{n}/Ba_{0} \ldots a_{n-1}) = \mathrm{RM}(a_{n}/B) = \mathrm{RM}(q(x))= \lambda''$, the first equality because $\{a_{i}\}_{i < \omega}$ is a Morley sequence over $B$. However, because $\{a_{i}\}_{i < \omega}$ is nonconstant and not a Morley sequence over $A$, $\varphi(x, a_{0} \dots a_{n-1}) \in \mathrm{tp}(a_{n}/a_{0} \ldots a_{n-1})$, so $\mathrm{RM}(\varphi(x, a_{0} \ldots a_{n}) \cup p(x)) \geq \mathrm{RM}(a_{n}/a_{0} \ldots a_{n-1}) \geq \lambda''$. But $a_{0} \ldots a_{n-1} \equiv_{A} b_{0} \ldots b_{n-1}$, so $\lambda' = \mathrm{RM}(\varphi(x, b_{0} \ldots b_{n}) \cup p(x)) = \mathrm{RM}(\varphi(x, a_{0} \ldots a_{n}) \cup p(x)) \geq \lambda'' > \lambda'$, a contradiction.

\end{proof}

\begin{example}
    Since it is open whether there are $\omega$-stable theories with types $p$ so that $1 < F_{Mb}(p) < \infty$, it is unknown whether the hypotheses of this theorem can be satisfied with $\mathrm{nmdeg}(p) > 1$. However, one special case of $\mathrm{nmdeg}(p)= F_{Mb}(p)$ is where $F_{Mb}(p) = 1$. In example \ref{Mbonenonalgebraic}, the unique type $p \in S(\emptyset)$, the $\omega$-stable free pseudoplane from example 7.2.10 of \cite{P96}, is exhibited as a type $p$ without algebraic forking and with $F_{Mb}(p) = 1$; moreover, it has $\mathrm{RM}(p) = \omega$, a limit ordinal. So we have found a type (without algebraic forking) whose failure to have definable Morley sequences is a consequence of Theorem \ref{nmdegmbdefinablemorleyproperty}.
\end{example}
    
We now consider the relationship between definable $\ind^{K}$-Morley sequences and type-counting. In $\omega$-categorical theories, the following converse to Remark \ref{definablemorleysequencesimpliesfinitemb} is immediate:

\begin{prop}\label{omegacategoricaldefinablemorleysequencesisequivalenttofinitemb}
    Let $T$ be an $\omega$-categorical $\mathrm{NSOP}_{1}$ theory with existence, and let $A$ be finite. Then $p \in S(A)$ has definable $\ind^{K}$-Morley sequences if and only if $F_{Mb}(p) < \omega$.
\end{prop}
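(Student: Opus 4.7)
My plan is as follows. The forward direction is already the content of Remark \ref{definablemorleysequencesimpliesfinitemb} (note that ``$F_{Mb}(p)<\infty$'' literally means $F_{Mb}(p) < \omega$). For the converse, assume $F_{Mb}(p) = n < \omega$. The goal is to produce a single formula $\varphi(y_{0},\dots,y_{n})$ with parameters in $A$ (where $|y_{i}|=|x|$) such that, restricted to $A$-indiscernible sequences $(b_{i})_{i<\omega}$ of realizations of $p$, the sequence is an $\ind^{K}$-Morley sequence over $A$ if and only if $\mathbb{M}\models\varphi(b_{0},\dots,b_{n})$.

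The first step is to collapse the Morley-sequence condition to the single independence statement $b_{n}\ind^{K}_{A}b_{<n}$ on the initial segment. One direction is immediate from the definition of Morley sequence. For the other, suppose $(b_{i})_{i<\omega}$ is $A$-indiscernible with $b_{i}\models p$ and $b_{n}\ind^{K}_{A}b_{<n}$. Monotonicity of $\ind^{K}$ yields $b_{n}\ind^{K}_{A}b_{<i}$ for each $i\leq n$, and $A$-indiscernibility provides an $A$-automorphism sending $b_{<i}b_{n}$ to $b_{<i}b_{i}$, whence $b_{i}\ind^{K}_{A}b_{<i}$ by $A$-invariance of Kim-independence. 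Since $F_{Mb}(p)\leq n$, having $b_{i}\ind^{K}_{A}b_{<i}$ for all $i\leq n$ forces $(b_{i})_{i<\omega}$ to be an $\ind^{K}$-Morley sequence over $A$ (by Definition \ref{mbdef}).

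The second step is where $\omega$-categoricity enters. The property ``$c_{n}\ind^{K}_{A}c_{<n}$'' of a tuple $(c_{0},\dots,c_{n})\in(\mathbb{M}^{|x|})^{n+1}$ is invariant under $A$-automorphisms and so depends only on $\mathrm{tp}(c_{0}\dots c_{n}/A)$. Hence the set
\[
X=\{(c_{0},\dots,c_{n}):c_{n}\ind^{K}_{A}c_{<n}\}
\]
is a union of complete types in $S^{(n+1)|x|}(A)$. Since $T$ is $\omega$-categorical and $A$ is finite, $S^{(n+1)|x|}(A)$ is finite and every type in it is isolated, so $X$ is a finite union of isolated types and is therefore definable by a formula $\varphi(y_{0},\dots,y_{n})$ with parameters in $A$. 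Combining the two steps, this $\varphi$ witnesses that $p$ has definable $\ind^{K}$-Morley sequences in the sense of Definition \ref{definablemorleyproperty}.

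I do not anticipate a significant obstacle here; this is essentially the standard principle that over a finite parameter set in an $\omega$-categorical theory, any $\mathrm{Aut}(\mathbb{M}/A)$-invariant condition on finitary tuples is first-order definable. The only real content is the first step, which shows that the assumption $F_{Mb}(p)<\omega$ reduces the type-definable condition of Subclaim \ref{typedefinablemorley} (infinitely many instances of Kim-independence) to a single instance on $n+1$ consecutive terms, making it amenable to the isolation argument.
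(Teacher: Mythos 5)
Your proof is correct and is exactly the argument the paper has in mind (the paper states the converse is ``immediate'' and gives no details): reduce Morley-ness of an $A$-indiscernible sequence to the single condition $b_{n}\ind^{K}_{A}b_{<n}$ using indiscernibility, invariance, and $F_{Mb}(p)\leq n$, then observe that this automorphism-invariant condition on $(n+1)$-tuples over the finite set $A$ is definable by $\omega$-categoricity. No gaps.
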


This leads us to ask if there are any nontrivial examples of this equivalence, or more generally:

\begin{question}
    Let $T$ be an $\omega$-categorical $\mathrm{NSOP}_{1}$ theory, and let $F_{Mb}(p) < \omega$. Does $p$ have algebraic forking (and therefore, $F_{Mb}(p) = 1$)?
\end{question}

However, we do not need all of $\omega$-categoricity for the equivalence in Proposition \ref{omegacategoricaldefinablemorleysequencesisequivalenttofinitemb}, only a weaker claim about type-counting. The following stronger statement follows easily from the definition of definable $\ind^{K}$-Morley sequences:

\begin{prop}
    Let $T$ be an $\mathrm{NSOP}_{1}$ theory with existence, and let $A$ be finite. If $p \in S(A)$, and $F_{Mb}(p)$ and $|S^{F_{Mb}(p) + 1}(A)|$ are finite, then $p$ has definable Morley sequences.
\end{prop}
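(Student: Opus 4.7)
The plan is to show that, under the hypothesis $m := F_{Mb}(p) < \omega$, being an $\ind^{K}$-Morley sequence among the $A$-indiscernible sequences of realizations of $p$ is a condition on the type of the first $m+1$ terms over $A$, and then to use the finiteness of $|S^{m+1}(A)|$ to render this condition definable.

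First, I would unpack the definition of $F_{Mb}$ in the $\mathrm{NSOP}_{1}$ setting (Definition \ref{mbdef}): an $A$-indiscernible sequence $\{b_{i}\}_{i<\omega}$ of realizations of $p$ is an $\ind^{K}$-Morley sequence over $A$ if and only if $b_{i} \ind^{K}_{A} b_{<i}$ for every $i \leq m$. The ``if'' direction is precisely the content of $F_{Mb}(p) \leq m$, while the ``only if'' direction is immediate from the definition of a Morley sequence. Crucially, the condition ``$b_{i} \ind^{K}_{A} b_{<i}$'' depends only on the complete type $\tp(b_{0}, \ldots, b_{i}/A)$: Kim-forking of a formula $\varphi(x, b')$ over $A$ depends only on $\tp(b'/A)$, and which formulas $\varphi(x, b')$ belong to $\tp(b_{i}/Ab_{<i})$ is determined by $\tp(b_{0}, \ldots, b_{i}/A)$.

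Next, I would invoke the finiteness of $|S^{m+1}(A)|$. Since $A$ is finite and there are only finitely many types $q_{1}, \ldots, q_{k} \in S^{m+1}(A)$, each $q_{j}$ is isolated by a formula $\psi_{q_{j}}(x_{0}, \ldots, x_{m})$ over $A$ (for each $q_{j'} \neq q_{j}$, pick some formula in $q_{j} \setminus q_{j'}$ and take the conjunction over $j' \neq j$). Let $Q \subseteq S^{m+1}(A)$ consist of those $q$ any (equivalently, every) realization $(b_{0}, \ldots, b_{m})$ of which satisfies $b_{i} \ind^{K}_{A} b_{<i}$ for all $i \leq m$, and set $\varphi(x_{0}, \ldots, x_{m}) := \bigvee_{q \in Q} \psi_{q}(x_{0}, \ldots, x_{m})$.

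Finally, I would verify that $\varphi$ witnesses Definition \ref{definablemorleyproperty}: for any $A$-indiscernible sequence $\{a_{i}\}_{i<\omega}$ of realizations of $p$, the equivalences above give that $\{a_{i}\}_{i<\omega}$ is an $\ind^{K}$-Morley sequence over $A$ iff $\tp(a_{0}, \ldots, a_{m}/A) \in Q$ iff $\models \varphi(a_{0}, \ldots, a_{m})$. There is no serious obstacle here; the only point that needs care is to confirm that $\varphi$ defines the Morley sequences \emph{relative to} the type-definable class of $A$-indiscernible sequences of realizations of $p$ (rather than as an absolute definition), which is exactly what the above chain of equivalences provides.
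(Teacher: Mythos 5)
Your proof is correct and is exactly the argument the paper intends (the paper states this proposition ``follows easily from the definition'' and gives no further details): membership of an $A$-indiscernible sequence of realizations of $p$ in the class of $\ind^{K}$-Morley sequences is determined by $\tp(b_{0},\ldots,b_{F_{Mb}(p)}/A)$ by the definition of $F_{Mb}$ and automorphism-invariance of Kim-forking, and the finitely many types in $S^{F_{Mb}(p)+1}(A)$ are each isolated, so the disjunction of the isolating formulas of the ``independent'' types witnesses definability relative to the indiscernible sequences.
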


Unlike the $\omega$-categorical case, we know we have nontrivial examples of this:

\begin{example}
    Let $T= K_{m , 2}$, and let $p$ be the unique type over $\emptyset$ in the point sort. Then $F_{Mb}(p) = m$, and we show that $|S^{F_{Mb}(p) + 1}(\emptyset)|$ is finite, where $S^{F_{Mb}(p) + 1}(\emptyset)$ is taken in the point sort. Let $p_{1}, \ldots, p_{k}$ be points where $k \leq m+1$, and let $l_{1}, \ldots, l_{\ell} \in \mathrm{acl}(p_{1} \ldots p_{m + 1})$ be the at most $m+1$ many lines obtained by finding a line incident to any $m$ points in $p_{1}, \ldots, p_{m + 1}$. Since isomorphic algebraically closed sets satisfy the same type, it suffices to show that $\{p_{1}, \ldots, p_{k}, l_{1} \ldots l_{\ell}\}$ is algebraically closed. This will hold as long as two criteria are satisfied: first, any $m$ of the $p_{1}, \ldots, p_{k}$ are incident to a common line of $l_{1} \ldots l_{\ell}$, which follows by construction, and second, any two of the $l_{1} \ldots l_{\ell}$ are incident to $m-1$ of the $p_{1}, \ldots, p_{k}$. But any individual line in the $l_{1} \ldots l_{\ell}$ is, by construction, incident to $m$ of the $p_{1}, \ldots, p_{k}$, so because $k \leq m+1$, any two lines of the $l_{1} \ldots l_{\ell}$ must be incident to $m-1$ of the $p_{1}, \ldots, p_{k}$.
\end{example}

The nontriviality of this example motivates the following corollary of Theorem \ref{definabilitysimplekimforking} (note that isolatedness of $p$ follows from the hypotheses):

\begin{cor}
    \label{typecounting}

    Let $T$ be $\mathrm{NSOP}_{1}$ with existence, and let $A$ be finite. Let $p \in S(C)$, and let $F_{Mb}(p)$ and $|S^{F_{Mb}(p) + 1}(C)|$ be finite. Then there is some $n < \omega$ so that, for any $a = \{a_i\}_{i \in I}$ where $|I| \geq n$ (with the $a_{i}$ allowed to repeat), and such that $a \nind^{K}_{C} b$, there is some formula $\varphi(x, b) \in \mathrm{tp}(a/Cb)$ that Kim-forks over $C$, where $\varphi(x, y)$ is a formula with parameters in $C$ which is simple as a formula of $L(C)$. 
\end{cor}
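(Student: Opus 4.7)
The plan is to deduce the corollary as a direct application of Theorem~\ref{definabilitysimplekimforking}, for which one must verify its two hypotheses in the present setting: that $p \in S(C)$ has definable $\ind^{K}$-Morley sequences, and that $p$ is isolated (since $T$ is not assumed to be low here).

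First, I would observe that the finiteness of $|S^{F_{Mb}(p)+1}(C)|$ immediately forces $|S^{1}(C)|$ (in the relevant sort) to be finite, by projecting each $(F_{Mb}(p)+1)$-type onto its first coordinate. A finite Stone space is discrete, so every $1$-type over $C$ in this sort is isolated; in particular, $p$ is isolated.

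Second, the assertion that $p$ has definable $\ind^{K}$-Morley sequences is exactly the content of Proposition~\ref{omegacategoricaldefinablemorleysequencesisequivalenttofinitemb} (the strengthened version stated immediately above the corollary, not the $\omega$-categorical version). The underlying idea, already visible in Subclaim~\ref{typedefinablemorley} and the definition of $F_{Mb}$, is that whether a $C$-indiscernible sequence of realizations of $p$ fails to be an $\ind^{K}$-Morley sequence is decided by the complete type over $C$ of its initial segment of length $F_{Mb}(p)+1$; when only finitely many such types exist, the non-Morley condition is a finite disjunction of formulas isolating those types, hence the Morley condition itself is captured by a single $\mathcal{L}(C)$-formula on the (type-definable) class of $C$-indiscernible sequences of realizations of $p$.

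With both hypotheses in hand --- $p$ isolated and $p$ admitting definable $\ind^{K}$-Morley sequences --- Theorem~\ref{definabilitysimplekimforking} applies in its isolated-type branch and produces exactly the promised $n < \omega$ and simple $\mathcal{L}(C)$-formula $\varphi(x,y)$. There is no genuine obstacle: the corollary is essentially a repackaging of Theorem~\ref{definabilitysimplekimforking} via Proposition~\ref{omegacategoricaldefinablemorleysequencesisequivalenttofinitemb} into the type-counting form motivated by the $K_{m,2}$ example preceding the statement.
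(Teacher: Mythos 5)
Your argument is correct and is essentially the paper's own: the corollary is obtained by feeding Proposition~\ref{omegacategoricaldefinablemorleysequencesisequivalenttofinitemb} (the type-counting version) into the isolated-type case of Theorem~\ref{definabilitysimplekimforking}, with isolatedness of $p$ extracted from the finiteness of $|S^{F_{Mb}(p)+1}(C)|$ exactly as you describe. Your projection argument usefully makes explicit the paper's parenthetical remark that isolatedness follows from the hypotheses.
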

\appendix

\section{$\mathrm{EM}$-types in strictly simple theories}

Theorem \ref{omegacategoricalsimplebutnotsupersimple} produced infinitely many $\mathrm{EM}$-types in a theory that is simple but not supersimple, but only under the additional assumption of $\omega$-categoricity. Here, as mentioned in \ref{omegacategoricitynotnecessary} we remove this assumption.

\begin{theorem}
    Let $T$ be simple, and let $\mathrm{SU}(\mathrm{tp}(b/\emptyset))=\infty$. Then there are ($\emptyset$-)indiscernible sequences $I^{i}=(b^{i}_{j})_{j < \omega}$, for $i < \omega$, with $b^{i}_{0} = b$, so that $I^{i} \not\equiv I^{j}$ for $i \neq j < \omega$.
\end{theorem}

\begin{proof}
    As in Definition \ref{resolvable}, we define the following invariants of indiscernible sequences.

    \begin{definition}
        \label{qresolvable}

        (1) Let $r(x) \in S(C)$ be a complete type, and let $I=\{a_{i}\}_{i < \omega}$ be an indiscernible sequence over $\emptyset$ so that $|a_{i}|= x$. Then $I$ is \textit{r}-resolvable if there is some automorphism $\sigma$ of $\mathbb{M}$, so that for $A = \sigma(C)$, $I$ is a Morley sequence over $A$ of realizations of $\sigma(r)$.  

        (2) Let $p(x) \in S(B)$ be a type. Then $p$ is \emph{$r$-resolvable} if there is some automorphism $\sigma$ of $\mathbb{M}$ so that, for $B = \sigma(C)$, there is some  $ q \in S(AB)$, $q \supseteq p, \sigma(r)$ so that $q$ forks over neither $A$ nor $B$.
    \end{definition}

Then similarly to Proposition \ref{resolvabilitygrank}, the following holds:

\begin{prop}\label{resolvabilitygrank2}
    Let $T$ be simple, and let $I$ be an indiscernible sequence. Then the following are equivalent:

    (1) The indiscernible sequence $I$ is $r$-resolvable.

    (2) For some type $p \in S(B)$, so that $I$ is a Morley sequence in $p$ over $B$, $p$ is $r$-resolvable.

    (2') For every type $p \in S(B)$, so that $I$ is a Morley sequence in $p$ over $B$, $p$ is $r$-resolvable.

    (2'') The type $\mathrm{lim}^{+}(I/I) \in S(I)$ is $r$-resolvable.

\end{prop}

Let $p_{0}(x) = \mathrm{tp}(b)$. It suffices to show that there do not exist finitely many $r_{1}, \ldots, r_{n}$ so that every extension of $p_{0}(x)$ is $r_{i}$-resolvable for some $1 \leq i \leq n$: otherwise, since a type $p(x)$ is always $p$-resolvable, we can find types $p_{i}(x) \in S(A_{i})$ extending $p_{0}$, for $i < \omega$, so that for $i \neq j$, $p_{i}(x)$ is not resolvable with respect to all of the same types as $p_{j}(x)$. By the previous proposition, for $I_{i}$ a Morley sequence over $A_{i}$ consisting of realizations of $p_{i}$, $I_{i}$ is resolvable with respect to exactly the same types as $p_{i}$, so for $i \neq j$, $I_{i} \not \equiv I_{j}$ because resolvability with respect to any given type is an invariant property of an indiscernible sequence.

Suppose otherwise, so such $r_{i} \in S(C_{i})$ do exist for $1 \leq i \leq n$. Because $\mathrm{tp}(b) $ has $\mathrm{SU}$-rank $\infty$, we may find $a$, $B$ as Lemma \ref{forkingchains}, which does not assume $\omega$-categoricity, and additionally so that $\mathrm{tp}(a/\emptyset)=p_{0}$. So $\mathrm{tp}(a/B)$ is ranked but for any finite $B_{0} \subset B$, $\mathrm{SU}(\mathrm{tp}(a/B_{0}))= \infty$. Without loss of generality, we assume that for some $1 \leq m \leq n$ and all $1 \leq i \leq m$, there is some finite $B_{0} \subset B$ so that $\mathrm{tp}(a/B_{0})$ is $r_{i}$-resolvable, and moreover that for any finite $B_{0} \subset B$, $\mathrm{tp}(a/B_{0})$ is $r_{i}$-resolvable for some $1 \leq i \leq m$.   So for $1 \leq i \leq m$, $\mathrm{SU}(r_{i})= \infty$.

Let $Y$ be a set of variables large enough to enumerate all of the $C_{i}$. For $c \models r_{i} \in S(C_{i})$, let $s_{i}(Y, x)= \mathrm{tp}( C_{i}, c/\emptyset)$; for $|Z| = |B|$, let $\tilde{\Phi}_{i}(Z, C_{i}, c) $ denote the set of formulas $\varphi(Z, C_{i}, c)$ that do not fork over  $C_{i}$. Let $\Phi_{i}(Z, Y, x)$ denote the set of conjunctions of formulas from $s_{i}(Y, x) \cup \tilde{\Phi}(Z, Y, x)$.  We show that the set

$$\{\vee^{m}_{i=1} \varphi_{i}(B, Y, a): \varphi_{i} \in \Phi_{i}(B, Y, a)\}$$

is consistent. This will be sufficient, because by compactness, one of the $\Phi_{i}(B, Y ,a)$ will then be consistent, which means that there will be some automorphism $\sigma$ of $\mathbb{M}$ so that, for $A = \sigma(C_{i})$, $\mathrm{tp}(a/A)=\sigma(r_{i})$, and $B \ind_{A} a$, equivalently $a \ind_A B$ by symmetry.  But then $\mathrm{SU}(a/A)= \infty$, so $\mathrm{SU}(a/AB) = \infty$, so $\mathrm{SU}(a/B)=\infty$, a contradiction.  By compactness, to show that the above set is consistent, it suffices to show, for any $B_{0} \subset B$ finite, that the part just mentioning $B_{0}$ and $a$ is consistent, or equivalently, that the part of $\Phi_{i}(B, Y ,a)$ mentioning just $B_{0}$ and $a$ is consistent for some $1 \leq i \leq m$.  But $\mathrm{tp}(a/B_{0})$ is $r_{i}$-resolvable for some $1 \leq i \leq m$, so there is some automorphism $\sigma$ of $\mathbb{M}$ so that, for $A = \sigma(C_{i})$, $\mathrm{tp}(a/A)=\sigma(r_{i})$, and $a \ind_{A} B_{0}$, equivalently $B_{0} \ind_A a$ by symmetry; as above, this is what consistency of that part of $\Phi_{i}(B, Y ,a)$ means.

\end{proof}

\bibliographystyle{plain}
\bibliography{refs}

\end{document}